\newcommand{\ra}{\rightarrow}
\newcommand{\lra}{\longrightarrow}
\newcommand{\into}{\hookrightarrow}
\newcommand{\da}{\downarrow}
\newcommand{\iso}{\stackrel{\sim}{\ra}}
\newcommand{\liso}{\stackrel{\sim}{\lra}}
\newcommand{\pfbegin}{{{\em Proof:}\;}}
\newcommand{\pfend}{$\Box$ \medskip}
\newcommand{\mat}[4]{\left( \begin{array}{cc} {#1} & {#2} \\ {#3} & {#4}
\end{array} \right)}
\newlength{\ownl}
\newcommand{\norm}{{\mbox{\bf N}}}
\newcommand{\ndiv}{{\mbox{$\not| $}}}
\newcommand{\Art}{{\operatorname{Art}\,}}
\newcommand{\Aut}{{\operatorname{Aut}\,}}
\newcommand{\BC}{{\operatorname{BC}\,}}
\newcommand{\Fil}{{\operatorname{Fil}\,}}
\newcommand{\Frob}{{\operatorname{Frob}}}
\newcommand{\conju}{{\operatorname{conj}}}
\newcommand{\res}{{\operatorname{res}}}
\newcommand{\ind}{{\operatorname{ind}}}
\newcommand{\Gal}{{\operatorname{Gal}\,}}
\newcommand{\Hom}{{\operatorname{Hom}\,}}
\renewcommand{\Im}{{\operatorname{Im}\,}}
\newcommand{\Ind}{{\operatorname{Ind}\,}}
\newcommand{\Lie}{{\operatorname{Lie}\,}}
\renewcommand{\Re}{{\operatorname{Re}\,}}
\newcommand{\Rep}{{\operatorname{REP}}}
\newcommand{\REP}{{\operatorname{GG}}}
\newcommand{\GrG}{{\operatorname{Rep}}}
\newcommand{\Iw}{{\operatorname{Iw}}}
\newcommand{\WD}{{\operatorname{WD}}}
\newcommand{\Spec}{{\operatorname{Spec}\,}}
\newcommand{\Spp}{{\operatorname{Sp}\,}}
\newcommand{\ad}{{\operatorname{ad}\,}}
\newcommand{\gr}{{\operatorname{gr}\,}}
\newcommand{\rec}{{\operatorname{rec}}}
\newcommand{\tr}{{\operatorname{tr}\,}}
\newcommand{\wt}{{\operatorname{wt}}}
\newcommand{\sgn}{{\operatorname{sgn}\,}}
\newcommand{\diag}{{\operatorname{diag}}}
\newcommand{\crord}{{\operatorname{cr-ord}}}
\newcommand{\ssord}{{\operatorname{ss-ord}}}
\newcommand{\cris}{{\operatorname{cris}}}
\newcommand{\der}{{\operatorname{der}}}
\newcommand{\ab}{{\operatorname{ab}}}
\newcommand{\nr}{{\operatorname{nr}}}
\newcommand{\pnr}{{\operatorname{p-nr}}}
\newcommand{\semis}{{\operatorname{ss}}}
\newcommand{\Fsemis}{{\operatorname{F-ss}}}
\newcommand{\SC}{{\operatorname{sc}}}
\newcommand{\tf}{{\operatorname{tf}}}
\newcommand{\tor}{{\operatorname{tor}}}
\newcommand{\univ}{{\operatorname{univ}}}
\newcommand{\A}{{\mathbb{A}}}
\newcommand{\C}{{\mathbb{C}}}
\newcommand{\F}{{\mathbb{F}}}
\newcommand{\G}{{\mathbb{G}}}
\newcommand{\PP}{{\mathbb{P}}}
\newcommand{\Q}{{\mathbb{Q}}}
\newcommand{\R}{{\mathbb{R}}}
\newcommand{\Z}{{\mathbb{Z}}}
\newcommand{\CC}{{\mathcal{C}}}
\newcommand{\calD}{{\mathcal{D}}}
\newcommand{\CG}{{\mathcal{G}}}
\newcommand{\CI}{{\mathcal{I}}}
\newcommand{\CK}{{\mathcal{K}}}
\newcommand{\CL}{{\mathcal{L}}}
\newcommand{\CM}{{\mathcal{M}}}
\newcommand{\CO}{{\mathcal{O}}}
\newcommand{\CP}{{\mathcal{P}}}
\newcommand{\CR}{{\mathcal{R}}}
\newcommand{\CS}{{\mathcal{S}}}
\newcommand{\gG}{{\mathfrak{G}}}
\newcommand{\gog}{{\mathfrak{g}}}
\newcommand{\gl}{{\mathfrak{l}}}
\newcommand{\gm}{{\mathfrak{m}}}
\newcommand{\gn}{{\mathfrak{n}}}
\newcommand{\gq}{{\mathfrak{q}}}
\newcommand{\gs}{{\mathfrak{s}}}
\newcommand{\barF}{\overline{{F}}}
\newcommand{\barH}{\overline{{H}}}
\newcommand{\barK}{\overline{{K}}}
\newcommand{\barL}{\overline{{L}}}
\newcommand{\barM}{\overline{{M}}}
\newcommand{\barR}{\overline{{R}}}
\newcommand{\barV}{\overline{{V}}}
\newcommand{\barFF}{\overline{{\F}}}
\newcommand{\barQQ}{\overline{{\Q}}}
\newcommand{\bare}{\overline{{e}}}
\newcommand{\barr}{\overline{{r}}}
\newcommand{\bars}{\overline{{s}}}
\newcommand{\tC}{\widetilde{{C}}}
\newcommand{\tG}{\widetilde{{G}}}
\newcommand{\tH}{\widetilde{{H}}}
\newcommand{\tS}{\widetilde{{S}}}
\newcommand{\tT}{\widetilde{{T}}}
\newcommand{\tZ}{\widetilde{{Z}}}
\newcommand{\tg}{\widetilde{{g}}}
\newcommand{\ts}{\widetilde{{s}}}
\newcommand{\tu}{\widetilde{{u}}}
\newcommand{\tv}{{\widetilde{{v}}}}
\newcommand{\barepsilon    }{\overline{\epsilon}}
 \newcommand{\bartheta    }{\overline{\theta}}
 \newcommand{\barmu    }{\overline{\mu}}
 \newcommand{\barrho   }{{\overline{\rho}}}   
 \newcommand{\barsigma   }{\overline{\sigma}}
 \newcommand{\barpsi   }{\overline{\psi}}
\newcommand{\barPhi    }{\overline{\Phi}}
 \newcommand{\tmu    }{\widetilde{\mu}}
 \newcommand{\trho   }{\widetilde{\rho}}
 \newcommand{\ttau     }{\widetilde{\tau}}
 \newcommand{\tchi   }{\widetilde{\chi}}
\newcommand{\hatotimes}{{\widehat{\otimes}}}
\newcommand{\Fbar}{{\overline{\F}}}
\newcommand{\tbarr}{{\widetilde{\barr}}}
\newcommand{\hatQQ}{{\widehat{\Q}}}
\newcommand{\hatZZ}{{\widehat{\Z}}}
\def\RCS$#1: #2 ${\expandafter\def\csname RCS#1\endcsname{#2}}
\newcommand{\psibar}{\overline{\psi}}
\newcommand{\onto}{\twoheadrightarrow}
\newcommand{\cL}{\mathcal{L}}
\newcommand{\Gn}{\mathcal{G}_n}
\newcommand{\Favoid}{F^{(\mathrm{avoid})}} 
\newcommand{\Kavoid}{K^{(\mathrm{avoid})}}
\newcommand{\thetabar}{\bar{\theta}}
\newcommand{\bb}{\mathbb} 
\newcommand{\mc}{\mathcal}
\newcommand{\mf}{\mathfrak}
\DeclareMathOperator{\ord}{ord}
\DeclareMathOperator{\GO}{GO}
\DeclareMathOperator{\FL}{FL}
\newcommand{\rbar}{\bar{r}}
\newcommand{\mubar}{\overline{\mu}}
\newcommand{\GL}{\operatorname{GL}}
\newcommand{\GSp}{\operatorname{GSp}}
\newcommand{\Sp}{\operatorname{Sp}}
\newcommand{\PGL}{\operatorname{PGL}}
\newcommand{\HT}{\operatorname{HT}}
\newcommand{\cC}{\mathcal{C}}
\newcommand{\cG}{\mathcal{G}}
\newcommand{\cI}{\mathcal{I}}
\newcommand{\cO}{\mathcal{O}}
 \newcommand{\Qp}{\Q_p}
\newcommand{\Ql}{\Q_l} 
\newcommand{\Qlbar}{\overline{\Q}_{l}}
\newcommand{\Flbar}{\overline{\F}_l}
\newcommand{\SL}{\operatorname{SL}}
\newcommand{\Sym}{\operatorname{Sym}}
 \newtheorem{ithm}{Theorem}
\newtheorem{thm}{Theorem}[subsection]
\newtheorem{cor}[thm]{Corollary}
 \newtheorem{lemma}[thm]{Lemma}
\newtheorem{lem}[thm]{Lemma} \newtheorem{prop}[thm]{Proposition}
 \theoremstyle{definition}
 \theoremstyle{definition}
 \theoremstyle{remark}
\numberwithin{equation}{subsection}
\theoremstyle{definition}
\begin{document}
\title[Potential automorphy]{Potential automorphy and change of weight.}

\author{Thomas Barnet-Lamb}\email{tbl@brandeis.edu}\address{Department of Mathematics, Brandeis University}
\author{Toby Gee} \email{toby.gee@imperial.ac.uk} \address{Department of
  Mathematics, Imperial College London} \author{David Geraghty}
\email{geraghty@math.princeton.edu}\address{Princeton University and
  Institute for Advanced Study} \author{Richard Taylor}
\email{rtaylor@ias.edu}\address{Institute for Advanced Study, Princeton} \thanks{The second author was partially supported
  by NSF grant DMS-0841491, the third author was partially supported
  by NSF grant DMS-0635607 and the fourth author was partially
  supported by NSF grants DMS-0600716 and DMS-1062759 and by the Oswald Veblen and Simonyi Funds at the IAS}  \subjclass[2000]{11F33.}
\begin{abstract}
We prove an automorphy lifting theorem for $l$-adic representations where we impose a new condition at $l$, which we call `potential diagonalizability'. This result allows for `change of weight' and  seems to be substantially more flexible than previous theorems along the same lines. We derive several applications. For instance 
we show that any irreducible, totally odd, essentially self-dual,
regular, weakly compatible system of $l$-adic representations of the
absolute Galois group of a totally real field is potentially automorphic, and hence is pure and its L-function has meromorphic continuation to the whole complex plane and satisfies the expected functional equation. 
\end{abstract}
\maketitle
\newpage

\section*{Introduction.}\label{sec:intro}
Suppose that $F$ and $M$ are
number fields, that $S$ is a finite set of primes of $F$ and that $n$
is a positive integer. By a {\em weakly compatible system} of $n$-dimensional $l$-adic representations of $G_F$ defined over $M$ and unramified outside $S$ we shall mean a family
of continuous semi-simple representations
\[ r_\lambda: G_F \lra \GL_n(\barM_\lambda), \]
where $\lambda$ runs over the finite places of $M$, with the following properties.
\begin{itemize}
\item If $v\notin S$ is a finite place of $F$, then for all $\lambda$ not dividing the residue characteristic of $v$, the representation $r_\lambda$ is unramified at $v$ and the characteristic
polynomial of $r_\lambda(\Frob_v)$ lies in $M[X]$ and is independent of $\lambda$. 
\item Each
representation $r_\lambda$ is de Rham at all places above the residue characteristic of $\lambda$, and in fact crystalline at any place $v \not\in S$ which divides
the residue characteristic of $\lambda$. 
\item For each embedding $\tau:F \into \barM$ the $\tau$-Hodge--Tate numbers of $r_\lambda$ are independent of $\lambda$.
\end{itemize}

In this paper we prove the following theorem (see Theorem \ref{mtcs}). 

\begin{ithm}\label{thma} Let $\{ r_\lambda\}$ be a weakly compatible system of $n$-dimensional $l$-adic representations of $G_F$ defined over $M$ and unramified outside $S$, where for simplicity we
assume  that $M$ contains the image of each embedding $F \into \barM$. Suppose that $\{ r_\lambda\}$ satisfies the following properties.
\begin{enumerate}
\item {\bf (Irreducibility)} Each $r_\lambda$ is irreducible.
\item {\bf (Regularity)} For each embedding $\tau:F \into M$ the representation $r_\lambda$ has $n$ distinct $\tau$-Hodge--Tate numbers. 
\item {\bf (Odd essential self-duality)} $F$ is totally real; and either each $r_\lambda$ factors through a map to $\GSp_n(\barM_\lambda)$ with a totally odd multiplier character; or each $r_\lambda$ factors through a map to $\GO_n(\barM_\lambda)$ with a totally even multiplier character. Moreover in either case the multiplier characters 
form a weakly compatible system. 
\end{enumerate}

Then there is a finite, Galois, totally real extension of $F$ over which all the $r_\lambda$'s become
automorphic. In particular for any embedding $\imath:M \into \C$ the partial L-function $L^S(\imath \{ r_\lambda \},s)$ converges in some right half plane and has meromorphic continuation to the whole complex plane.
\end{ithm}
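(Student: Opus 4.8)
The plan is to deduce Theorem A from the automorphy lifting theorem proved in this paper --- the one that allows potential diagonalizability at $l$ together with a change of weight --- combined with a Moret--Bailly argument applied to a suitable family of motives, in the style of the potential automorphy arguments of Harris, Shepherd-Barron and Taylor and their successors. Since the analytic and purity conclusions follow once a single member of the system is shown to be automorphic (by the Brauer/Artin formalism for $L$-functions together with the meromorphic continuation of standard automorphic $L$-functions and the Ramanujan-type bounds available for the regular, polarized, cohomological automorphic representations that occur), the core assertion to be proved is: \emph{there is a finite Galois totally real extension $F'/F$ and a place $\lambda$ such that $r_\lambda|_{G_{F'}}$ is automorphic.} Convergence of $L^S$ in a half-plane and purity then come for free from automorphy.

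First I would make standard reductions: twist so that the multiplier character is normalized (it forms a compatible system of algebraic Hecke characters, hence is automorphic), and pass to an imaginary CM quadratic extension, where the conjugate-self-dual lifting machinery is set up, descending at the end by solvable base change. I would then choose a rational prime $l$ that is unramified in $F$, lies outside $S$, and is large relative to $n$, to the Hodge--Tate numbers and to the ramification. For such $l$ and $\lambda \mid l$, the representation $r_\lambda$ is crystalline above $l$ with Hodge--Tate weights in the Fontaine--Laffaille range, hence \emph{potentially diagonalizable} there (one of the technical inputs of the paper); and, discarding the finitely many remaining bad $l$, the residual representation $\rbar_\lambda$ is irreducible with adequate image. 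Thus $r_\lambda|_{G_{F'}}$ satisfies every hypothesis of the automorphy lifting theorem except, possibly, automorphy of $\rbar_\lambda|_{G_{F'}}$.

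To supply that residual automorphy I would invoke the appropriate motivic family of the companion work on families of Calabi--Yau varieties: a smooth projective family over a moduli space, with a rank-$n$ summand of its cohomology, chosen in the $\GSp_n$ case to be symplectic and in the $\GO_n$ case orthogonal, and having large enough mod-$l$ monodromy to realize (up to twist) any prescribed residual representation carrying the relevant autoduality and multiplier. Running Moret--Bailly with two auxiliary primes $l$ and $l'$, one produces a totally real field $F'$, Galois over $F$, and a point on the moduli space over $F'$ whose $l$-adic cohomology has residual representation $\rbar_\lambda|_{G_{F'}}$ while its $l'$-adic cohomology has residual representation induced from an algebraic Hecke character of a cyclic CM extension, hence automorphic by cyclic automorphic induction, with a de Rham, potentially diagonalizable lift. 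Applying the automorphy lifting theorem (in the form valid for this induced residual representation) to the $l'$-adic cohomology at the point, and then transporting automorphy from the place $l'$ to the place $l$ using that the cohomology of a variety forms a compatible system --- so that Chebotarev and Brauer--Nesbitt identify the $l$-adic cohomology at the point with the $l$-adic Galois representation of the resulting cuspidal automorphic representation --- we conclude that $\rbar_\lambda|_{G_{F'}}$ is automorphic, of the fixed Hodge--Tate weight of the family. Finally, one more application of the automorphy lifting theorem to $r_\lambda|_{G_{F'}}$ --- residually automorphic of the family's weight, potentially diagonalizable of its own, arbitrary, regular weight --- uses precisely the change-of-weight feature to conclude that $r_\lambda|_{G_{F'}}$ is automorphic.

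The main obstacle is exactly the gap that the paper's central innovation is built to bridge: the motivic family available to realize $\rbar_\lambda$ has a rigid Hodge--Tate weight, whereas $r_\lambda$ has an arbitrary regular weight, so no classical automorphy lifting theorem --- which demands that the residual automorphy match the weight of the characteristic-zero lift, and which demands ordinariness or a Fontaine--Laffaille condition rather than mere potential diagonalizability --- can connect the two ends. Having one theorem that both relaxes the condition at $l$ to potential diagonalizability (so that the crystalline $r_\lambda$ at large $l$ and the motivic $l'$-adic representations all qualify) and permits the weight to change is what makes the argument close up. The remaining work --- ensuring adequacy of $\rbar_\lambda$ for all but finitely many $\lambda$, arranging the Moret--Bailly point over a \emph{Galois} totally real extension, normalizing multipliers, and the CM-to-totally-real descent --- is by now routine but must be assembled with care.
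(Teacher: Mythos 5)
Your overall architecture does match the paper's (pick one well-chosen $\lambda$ with $r_\lambda$ crystalline in the Fontaine--Laffaille range, hence potentially diagonalizable; get residual potential automorphy from a Moret--Bailly argument on a motivic family; conclude with the change-of-weight, potentially-diagonalizable automorphy lifting theorem), but two steps that you file under "routine" or assume outright are exactly where the real work lies, and one of them would fail as stated. First, to choose your good $\lambda$ you need $\rbar_\lambda|_{G_{F(\zeta_l)}}$ irreducible (hence adequate for $l\geq 2(n+1)$ by Proposition \ref{ghtt}) for at least one suitable large $\lambda$. You assert this holds after "discarding the finitely many remaining bad $l$", but irreducibility of the characteristic-zero representations $r_\lambda$ gives no control whatsoever on the reductions. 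In the paper this is Proposition \ref{big}, a substantial group-theoretic argument using Larsen--Pink's study of the monodromy groups of the compatible system together with regularity, and it only yields a Dirichlet-density-one set of primes, not a cofinite one. Without some such argument your proof cannot start.

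Second, the residual automorphy step is not available in the form you describe: there is no family with big mod-$l$ monodromy realizing rank-$n$ \emph{orthogonal} residual representations, nor symplectic ones with arbitrary totally odd multiplier. The only input (Theorem \ref{prop:Dwork}, the Dwork family) produces symplectic representations of even rank with multiplier a fixed power of $\barepsilon_l$, and "up to twist" does not help, since twisting changes the multiplier only by $\chi\chi^c$ (or $\chi^2$) and never converts the orthogonal case into the symplectic one. The paper funnels both alternatives of hypothesis (3) through $GSp_{2n}$ with multiplier $\barepsilon_l^{1-2n}$ by tensoring $\barr_\lambda$ with an auxiliary character pair $(\barpsi,\dots)$ induced via the $\CG_n$-formalism and applying $I(\cdot)$ (Propositions \ref{ordlift} and \ref{paord}); this doubling-and-tensoring is precisely what forces the Khare--Wintenberger construction of ordinary lifts, the ordinary lifting theorem \ref{ger}, and the tensor-product argument of Proposition \ref{propmlt} inside Theorem \ref{mainmlt}, rather than the single "apply the ALT at $l'$, transport to $l$, apply the ALT again" scheme you sketch (one must also arrange the Moret--Bailly point to be ordinary at the relevant primes so the lifting hypotheses at $l$ are met). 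Once Proposition \ref{big} and this doubling device are supplied, your plan collapses onto the paper's proof of Theorems \ref{mtpm} and \ref{mtcs}.
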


This is not the first paper to prove potential automorphy results for compatible systems of $l$-adic
representations of dimension greater than $2$, see for example \cite{hsbt}, \cite{blght}, \cite{blgg}.
However previous attempts only applied to very specific, though well known, examples (e.g. symmetric
powers of the Tate modules of elliptic curves) and one had to exploit special properties of these
examples. We believe this is the first general potential automorphy theorem in dimension greater than $2$, and we are hopeful that it can be applied to many examples. We give an analogous theorem when $F$ is an imaginary CM field. Other than this we do not see how to improve much on this theorem using current methods.

As one example application, suppose that $\CK$ is a finite set of positive integers such that the $2^{\# \CK}$ possible partial
sums of elements of $\CK$ are all distinct. For each $k \in \CK$ let $f_k$ be an elliptic modular newform
of weight $k+1$ without complex multiplication. Then the $\# \CK$-fold tensor product of the $l$-adic representations associated to the $f_k$ is potentially automorphic and the $\# \CK$-fold product L-function for the $f_k$ has meromorphic continuation to the whole complex plane. (See Corollary \ref{product}.)

The proof of Theorem \ref{thma} follows familiar lines. One works with $r_\lambda$ for one suitably chosen $\lambda$. One finds a motive $X$ over some finite Galois totally real extension $F'/F$ which realizes the reduction
$\barr_\lambda$ in its mod $ l$ cohomology and whose mod $ l'$ cohomology is induced from a
character. One tries to argue that by automorphic induction the mod $ l'$ cohomology is automorphic over $F'$,
hence by an automorphy lifting theorem the $l'$-adic cohomology is automorphic over $F'$, hence
tautologically the mod $l$ cohomology is automorphic over $F'$ and hence, finally, by another 
automorphy lifting theorem $r_\lambda$ is automorphic over $F'$. To
find $X$ one uses a lemma of Moret-Bailly \cite{mb}, \cite{gpr} and
for this one needs a family of motives with distinct Hodge numbers, which has large monodromy. Griffiths transversality tells us that this will only be possible if the Hodge numbers of the motives are consecutive (e.g $0,1,2,\dots,n-1$). Thus the $l$-adic cohomology of $X$ may be 
automorphic of a different weight (infinitesimal character) than $r_\lambda$ and the second
automorphy lifting theorem needs to incorporate a `change of weight'. In addition it seems that we can in general only expect to find $X$ over an extension $F'/F$ which is highly ramified at $l$. Thus our
second automorphy lifting theorem needs to work over  a base which is highly ramified at $l$. 
These two, related problems were the principal difficulties we faced. The original higher dimensional automorphy lifting theorems (see \cite{cht}, \cite{tay}) could handle neither of them. In the
ordinary case one of us (D.G.) proved an automorphy lifting theorem that uses Hida theory and some new local calculations to handle both of these problems (see \cite{ger}). This has had important applications, but its applicability is still severely limited because we don't know how to prove that
many compatible systems of $l$-adic representations are ordinary infinitely often.  

The main innovation of this paper is a new automorphy lifting theorem that handles both these 
problems in significant generality. One of our key ideas is to introduce the notion of a potentially crystalline representation $\rho$ of the absolute Galois group of a local field $K$ being {\em potentially diagonalizable}: $\rho$ is potentially diagonalizable if there is a finite extension $K'/K$ such that
$\rho|_{G_{K'}}$ lies on the same irreducible component of the universal crystalline lifting ring of $\barrho|_{G_{K'}}$ (with fixed Hodge--Tate numbers) as a sum of characters lifting $\barrho|_{G_{K'}}$.
(We remark that this does not depend on the choice of integral model for $\rho$.) Ordinary crystalline
representations are potentially diagonalizable, as are crystalline representations in the Fontaine--Laffaille range (i.e.\ over an absolutely unramified base and with Hodge--Tate numbers in the range $[0,l-2]$). Potential diagonalizability is also preserved under restriction to the absolute Galois group of a finite extension. In this sense they behave better than `crystalline representations in the Fontaine--Laffaille range' which require the ground field to be absolutely unramified. Finally `potentially
diagonalizable' representations are perfectly suited to our method of proving automorphy lifting theorems that allow for a change of weight. It seems to us to be a very interesting question to clarify further the ubiquity of potential diagonalizability. Could every crystalline representation be potentially diagonalizable? (We have no reason to believe this, but we know of no counterexample.)

The following gives an indication of the sort of automorphy lifting
theorems we are able to prove. (See Theorem \ref{mainmlt} and also
section \ref{terminology} for the definition of any notation or terminology which may be unfamiliar.) 
     \begin{ithm}\label{thmb}     Let $F$ be an imaginary CM field with maximal totally real subfield $F^+$ and let $c$ denote the non-trivial element of $\Gal(F/F^+)$. Let $n$ denote a positive integer. Suppose that $l\geq 2(n+1)$ is a prime such that $F$ does not contain a primitive $l^{th}$ root of $1$. Let
\[ r: G_F \lra \GL_n(\overline{\Q}_l) \]
be a continuous irreducible representation and let $\barr$ denote the semi-simplification of the reduction
of $r$. Also let
\[ \mu:G_{F^+} \lra \barQQ_l^\times \]
be a continuous character.
Suppose that $r$ and $\mu$ enjoy the following properties:
   \begin{enumerate}
\item {\bf (Odd essential conjugate-self-duality)} $r^c \cong r^\vee \mu$ and $\mu(c_v)=-1$ for all $v|\infty$.
\item {\bf (Unramified almost everywhere)} $r$ ramifies at finitely many primes.
\item {\bf (Potential diagonalizability and regularity)} $r|_{G_{F_v}}$ is potentially diagonalizable (and so in particular potentially crystalline) for all $v|l$ and for each embedding $\tau:F \into \barQQ_l$ it has $n$ distinct $\tau$-Hodge--Tate numbers.
\item {\bf (Irreducibility)}  The restriction $ \barr|_{G_{F(\zeta_l)}}$ is irreducible.
\item {\bf (Residual ordinary automorphy)} There is a regular algebraic, cuspidal, polarized automorphic representation  $(\pi,\chi)$ of $\GL_n(\bb{A}_{F})$ such that
       \[ (\barr,\barmu)\cong(\barr_{l,\imath}(\pi), \barr_{l,\imath}(\chi)\barepsilon_l^{1-n} ) \]
       and $\pi$ is $\imath$-ordinary.
      \end{enumerate}
Then $(r,\mu)$ is automorphic.  
\end{ithm}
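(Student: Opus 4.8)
\emph{Plan.} The strategy is the familiar Taylor--Wiles--Kisin patching argument, adapted so that the local deformation condition at the primes above $l$ is ``potentially diagonalizable crystalline of the Hodge--Tate weights of $r$'', together with an extra device to bridge the gap between the \emph{ordinary} residual automorphy we are handed and the possibly non-ordinary, possibly different-weight representation $r$. First I would make a sequence of solvable base changes: combining solvable base change for $\GL_n$ with Galois descent for automorphy, I would replace $F^+$, $F$ and $(r,\mu,\pi,\chi)$ by a solvable CM extension so as to arrange that $F/F^+$ is everywhere unramified and split above $l$; that every place of $F^+$ above $l$ and every place at which $r$ or $\pi$ ramifies splits in $F/F^+$; that $\barr|_{G_{F(\zeta_l)}}$ remains irreducible (so that, using $l\geq 2(n+1)$, $\barr(G_{F(\zeta_l)})$ is adequate and Taylor--Wiles primes exist); that $\pi$ descends to an automorphic representation of a definite unitary group in $n$ variables attached to $F/F^+$; that at each $v\mid l$ the restriction $r|_{G_{F_v}}$ already lies on the same irreducible component $\cC_v$ of the fixed--Hodge--Tate--weight crystalline lifting ring of $\barr|_{G_{F_v}}$ as a direct sum of crystalline characters $\bigoplus_i\psi_i^{(v)}$ lifting $\barr|_{G_{F_v}}$ (permissible since potential diagonalizability is a statement after \emph{some} finite base change and is preserved by further restriction); and that at each $v\nmid l$ at which $r$ ramifies the image $r(I_{F_v})$ is unipotent.

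The key point is that $\bigoplus_i\psi_i^{(v)}$, having $n$ distinct Hodge--Tate numbers, is \emph{ordinary}: ordering the characters by Hodge--Tate number produces the required $G_{F_v}$-stable flag. Hence the component $\cC_v$ through $r|_{G_{F_v}}$ meets the ordinary locus of the crystalline lifting ring, and --- using the structure theory of crystalline and ordinary lifting rings --- the irreducible ordinary lifting ring cut out by the corresponding ordering of characters is supported on $\cC_v$. Since $(\barr,\barmu)$ is $\imath$-ordinarily automorphic via $(\pi,\chi)$, the ordinary automorphy lifting theorem of \cite{ger} --- which crucially permits a change of weight, being proved by Hida-theoretic patching over the weight space --- then produces a continuous lift $r_1:G_F\to\GL_n(\overline{\Q}_l)$ of $\barr$ which is ordinary crystalline of the same Hodge--Tate weights as $r$ with $r_1|_{G_{F_v}}\in\cC_v$ for all $v\mid l$, which carries the same essential self-duality datum $\mu$, and which is automorphic. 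In particular $r_1$ is itself potentially diagonalizable of the same weight as $r$.

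Now I would run the patching argument for the global deformation problem in which deformations are: potentially diagonalizable crystalline of the weight of $r$ at each $v\mid l$ (whose local lifting ring is, by Kisin's analysis of crystalline deformation rings, the union of those components of the full crystalline lifting ring meeting the ``sum of characters'' locus, and in particular is equidimensional of the expected dimension); unipotently ramified at the finitely many bad places away from $l$, handled by Taylor's ``Ihara avoidance'' \cite{tay} --- patching with two auxiliary local conditions --- so that the relevant local rings may be taken geometrically irreducible; unramified elsewhere; and with fixed multiplier $\mu$, valued in the relevant group $\cG_n$. Using adequacy to choose Taylor--Wiles primes and the standard computations with spaces of algebraic modular forms on the definite unitary group, one obtains a patched module $M_\infty$ over $R_\infty=R_\infty^{\operatorname{loc}}[[x_1,\dots,x_g]]$ that is finite free over a power series ring of the same dimension, hence maximal Cohen--Macaulay over $R_\infty$; its support in $\Spec R_\infty$ is therefore a union of irreducible components. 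The irreducible components of $\Spec R_\infty$ correspond to tuples of components of the local lifting rings; by the previous paragraph $r$ and $r_1$ lie on the same component $\cC_v$ at each $v\mid l$, and by the Ihara avoidance set-up on the same component at the remaining places, so $r$ and $r_1$ determine the \emph{same} irreducible component of $\Spec R_\infty$. Since $r_1$ is automorphic its point lies in $\operatorname{Supp}_{R_\infty}M_\infty$, whence that whole component --- and so the point corresponding to $r$ --- lies in the support, i.e. $(r,\mu)$ is automorphic.

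The hard part, and the reason potential diagonalizability is exactly the right hypothesis, is the bridging step together with the component bookkeeping: one must know that the component of the crystalline lifting ring on which $r|_{G_{F_v}}$ lives is reachable from an ordinary point, so that \cite{ger} can be made to produce an automorphic representation on it, that this can be arranged simultaneously at all $v\mid l$ by a single base change, and that adequacy of $\barr|_{G_{F(\zeta_l)}}$ survives that base change. The supporting technical input --- that the potentially diagonalizable crystalline local lifting rings are equidimensional of the expected dimension, so that patching yields a maximal Cohen--Macaulay module --- rests on Kisin's analysis of crystalline deformation rings, and would have to be established (together with the statement that ``ordinary'' and ``potentially diagonalizable of the same weight'' rings share the relevant components) before the argument above can be carried through.
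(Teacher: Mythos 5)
Your plan deviates from the paper's argument in a way that runs into a genuine obstruction at the primes above $l$, and it also misattributes a key step.

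The fatal gap is your claim that ``$\bigoplus_i\psi^{(v)}_i$, having $n$ distinct Hodge--Tate numbers, is ordinary: ordering the characters by Hodge--Tate number produces the required $G_{F_v}$-stable flag,'' and hence that the component $\cC_v$ of the crystalline lifting ring through $r|_{G_{F_v}}$ meets the ordinary locus. This is false in general. Being ordinary requires a \emph{single} ordering of the characters $\psi^{(v)}_1,\dots,\psi^{(v)}_n$ that is strictly increasing in Hodge--Tate weight \emph{simultaneously for every embedding} $\tau\colon F_v\hookrightarrow\barQQ_l$. When $F_v/\Q_l$ has degree greater than one, the characters may be ``crossed'': for instance with $[F_v:\Q_l]=2$ and two embeddings $\tau_1,\tau_2$ one can have crystalline characters $\psi_1,\psi_2$ with $\HT_{\tau_1}(\psi_1)=0$, $\HT_{\tau_2}(\psi_1)=1$, $\HT_{\tau_1}(\psi_2)=1$, $\HT_{\tau_2}(\psi_2)=0$. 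The direct sum $\psi_1\oplus\psi_2$ has the $n$-element multiset $\{0,1\}$ of $\tau$-Hodge--Tate weights for each $\tau$, so it is ``regular'' in your sense, yet it admits no ordinary filtration. Moreover, since crystalline characters of fixed Hodge--Tate type are rigid up to unramified twist, this $\psi_1\oplus\psi_2$ does not lie on the same component of the fixed-weight crystalline lifting ring as any ordinary direct sum, and base change leaves the crossing untouched. So potential diagonalizability does \emph{not} imply that $\cC_v$ contains an ordinary point; your bridging step (constructing an ordinary, automorphic $r_1$ of the same Hodge--Tate weights as $r$ lying on $\cC_v$) is impossible in general. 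This is exactly what the authors mean by ``a direct argument imposes unwanted conditions on the Hodge--Tate numbers.'' The paper's way out --- and the central idea you are missing --- is Harris' tensor-product trick (Proposition 4.1.1/\ref{propmlt}): one tensors $r$ and the ordinary anchor $r_1$ (now of deliberately different, spread-out Hodge--Tate weights $\{0,m,\dots,(n-1)m\}$) with inductions $\Ind_{G_M}^{G_F}\theta$ and $\Ind_{G_M}^{G_F}\theta'$ from a degree-$n$ cyclic CM extension $M/F$; the tensor products can be arranged to connect locally at $l$ despite the crossing and despite the different weights, after which the minimal automorphy lifting theorem (Theorem 7.1 of Thorne, quoted as Theorem \ref{twkmlt}) and descent from automorphic induction finish the argument.

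A secondary gap: you assert that the ordinary automorphy lifting theorem of Geraghty ``produces a continuous lift $r_1$.'' It does not; it is an automorphy \emph{lifting} theorem, certifying a given ordinary lift as automorphic. The production of an ordinary global lift with prescribed local behaviour requires the Khare--Wintenberger method: the finiteness of a suitable ordinary global deformation ring (the paper's Theorem \ref{fdrord}, relying on potential automorphy from the Dwork family) combined with the lower bound on its Krull dimension; this is Proposition \ref{ordlift}. Even granting that step, your construction would still fall prey to the crossing obstruction above.
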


Theorem \ref{thmb} implies the following potential automorphy theorem for a single $l$-adic representation, from which Theorem \ref{thma} can be deduced. (See Corollary \ref{mtpmtotreal} and Theorem \ref{incompsyst}.)
\begin{ithm}\label{thmc} Suppose that $F$ is a totally real field.
Let $n$ be a positive integer and let $l\geq 2(n+1)$ be a prime. 
Let 
\[ r: G_{F} \ra \GL_{n}(\barQQ_{l}) \]
be a continuous representation. We will write $\barr$ for the semi-simplification of the reduction of $r$.
Suppose that the following conditions are satisfied.
\begin{enumerate}
\item {\bf (Unramified almost everywhere)} $r$ is unramified at all but finitely many primes.
\item {\bf (Odd essential self-duality)} Either $r$ maps to $\GSp_n$ with totally odd multiplier or it maps to $\GO_n$ with totally even multiplier.
\item {\bf (Potential diagonalizability and regularity)}
  $r$ is potentially diagonalizable (and hence potentially crystalline) at each prime $v$ of $F$ above $l$ and for each $\tau:F \into \barQQ_{l}$ it has $n$ distinct $\tau$-Hodge--Tate numbers.
  \item {\bf (Irreducibility)} $\bar{r}|_{G_{F(\zeta_{l})}}$ is irreducible.
  \end{enumerate}

Then we can find a finite Galois totally real extension $F'/F$ such
that $r|_{G_{F'}}$ is automorphic. Moreover $r$ is part of a weakly
compatible system of $l$-adic representations. (In fact, $r$ is part
of a strictly pure compatible system in the sense of section \ref{cs}.) 
\end{ithm}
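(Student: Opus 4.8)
The plan is to deduce the theorem from Theorem~\ref{thmb} together with its totally real refinement, Corollary~\ref{mtpmtotreal}. Following the lines sketched in the introduction, the strategy is: realise $\barr|_{G_{F'}}$, for a suitable finite Galois totally real $F'/F$, inside the mod~$l$ cohomology of a carefully chosen motive; prove that this motive is automorphic by an $l$--$l'$ switch; and then apply Theorem~\ref{thmb} one final time --- crucially, with a change of weight --- to pass from the resulting residual automorphy of $\barr|_{G_{F'}}$ to automorphy of $r|_{G_{F'}}$ itself. All of this is carried out after restriction to an imaginary CM field: one fixes auxiliary CM fields so that $r$ becomes essentially conjugate self-dual, $r^c\cong r^\vee\mu$ with $\mu$ totally odd (built from the $GSp_n$- or $GO_n$-multiplier of $r$), and descends to the totally real setting only at the end via cyclic base change. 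Note that the hypothesis that $\barr|_{G_{F(\zeta_l)}}$ be irreducible forces $r$ to be irreducible, which is needed to invoke Theorem~\ref{thmb}; one arranges that this irreducibility persists after all the base changes below.

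The geometric heart is the construction of the motive. Following \cite{hsbt} and the extensions needed here, one works with a Dwork-type family of smooth projective varieties whose middle \'etale cohomology is $n$-dimensional with Hodge--Tate weights $0,1,\dots,n-1$ (necessarily consecutive, by Griffiths transversality), carries a Poincar\'e-duality pairing making it essentially self-dual of the appropriate type after a normalising twist, and has large geometric monodromy. Applying the lemma of Moret-Bailly \cite{mb}, \cite{gpr}, one produces a member $X$ over a finite Galois totally real extension $F'/F$ meeting prescribed local conditions: at the primes above $l$, where $F'$ is taken highly ramified (so that $\barr|_{G_{F'_v}}$ becomes the reduction of an ordinary crystalline representation of weight $0,1,\dots,n-1$), the $l$-adic realisation $H$ of $X$ satisfies $\overline{H}\cong\barr|_{G_{F'}}$ and $X$ has good ordinary reduction; at an auxiliary prime $l'\geq 2(n+1)$, $X$ has good ordinary reduction and the mod~$l'$ realisation $\overline{H'}$ is induced from a character $\bar\psi$ of $G_M$, where $M/F'$ (CM-ified) is an abelian CM extension of degree $n$ in which $l'$ splits and $\bar\psi$ lifts to an $\imath$-ordinary algebraic Hecke character; and at the infinite places one imposes real points, keeping $F'$ totally real. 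One also insists that the various auxiliary fields are linearly disjoint enough that all the relevant residual images remain irreducible after restriction to the corresponding cyclotomic fields.

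Now run the $l$--$l'$ switch. Automorphic induction of the Hecke character underlying $\bar\psi$ produces an $\imath$-ordinary RAECSDC automorphic representation inducing $\overline{H'}$, so $H'$ satisfies every hypothesis of Theorem~\ref{thmb} over the CM field: it is crystalline and ordinary at $l'$, hence potentially diagonalizable; regular; essentially conjugate self-dual with totally odd multiplier; residually irreducible; and residually ordinary automorphic. Therefore $H'$ is automorphic, $H'\cong r_{l',\imath}(\Pi)$. Since $H$ and $H'$ are the $l$- and $l'$-adic realisations of one motive $X$, they have equal Frobenius characteristic polynomials outside finitely many primes; as $\overline{H}\cong\barr|_{G_{F'}}$ is irreducible, $H$ is irreducible, so by Chebotarev $H\cong r_{l,\imath}(\Pi)$ --- i.e.\ $H$ is automorphic --- and $H$ is ordinary at $l$ (good ordinary reduction), so $\Pi$ is $\imath$-ordinary. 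Thus $\barr|_{G_{F'}}\cong\barr_{l,\imath}(\Pi)$ is residually ordinary automorphic. Finally Theorem~\ref{thmb} applies to $r|_{G_{F'}}$ over the CM field: it is irreducible, essentially conjugate self-dual with totally odd multiplier, unramified almost everywhere, potentially diagonalizable at $l$ (preserved under the base change), regular, and residually ordinary automorphic via $\Pi$ --- even though $r|_{G_{F'}}$ in general has a different weight from $H$, which is exactly why the change-of-weight feature of Theorem~\ref{thmb} is indispensable. Hence $r|_{G_{F'}}$ is automorphic over the CM field, and descending through the CM quadratic extension by cyclic base change gives automorphy of $r|_{G_{F'}}$ over the totally real $F'$, proving Corollary~\ref{mtpmtotreal}.

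For the compatible-system statement, automorphy of $r|_{G_{F'}}$ gives an automorphic representation $\pi'$ over $F'$ with $r|_{G_{F'}}\cong r_{l,\imath}(\pi')$, and the family $\{r_{\lambda',\imath'}(\pi')\}$ attached to $\pi'$ over all places $\lambda'$ is a strictly pure weakly compatible system over $F'$ containing $r|_{G_{F'}}$ (strict purity being known for automorphic Galois representations). To descend this to $F$, one argues as in the earlier literature (e.g.\ \cite{blght}): using Brauer's theorem to write the trivial character of $\Gal(F'/F)$ as a virtual $\Z$-combination of characters induced from subgroups, together with cyclic base change and the analytic theory of Rankin--Selberg $L$-functions, one shows that for each $\lambda'$ the representation $r_{\lambda',\imath'}(\pi')$ is matched by a representation of $G_F$ with the same Frobenius characteristic polynomials as $r$ outside a fixed finite set; these assemble into a strictly pure weakly compatible system over $F$ containing $r$ (it contains $r$ because both agree with $\{r_{\lambda',\imath'}(\pi')\}$ on $G_{F'}$), giving Theorem~\ref{incompsyst}. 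The main obstacle throughout is the construction of $X$: one must build a family with consecutive Hodge numbers, $n$-dimensional middle cohomology and monodromy large enough for the Moret-Bailly argument, and then choose the point $X$ over a totally real field so as to simultaneously match $\barr$ at $l$, be ordinary there (this being what forces high ramification at $l$ and is the source of the change of weight), be residually induced --- hence residually automorphic --- at $l'$, remain essentially self-dual, and keep all residual images large; once this is done, the two applications of Theorem~\ref{thmb}, linked by the compatibility of the $l$- and $l'$-adic realisations of a single motive, finish the proof, and the descent to $F$ is a comparatively formal Brauer-theory argument.
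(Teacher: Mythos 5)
Your plan tracks the informal sketch in the paper's introduction, but it diverges from the paper's actual proof, and the divergence hides a genuine gap. Concretely, you claim the Dwork-type motive $X$ over $F'$ can be chosen so that its mod~$l$ realisation satisfies $\overline{H}\cong\barr|_{G_{F'}}$. This is not available: the Dwork family (as in \cite{hsbt}, \cite{blght}, and as used in the paper's Theorem~\ref{prop:Dwork}) produces representations landing in $\GSp_{m}$ for $m$ even with the \emph{fixed} multiplier $\barepsilon_l^{1-m}$. Your $\barr$ may be orthogonal rather than symplectic, may have odd dimension $n$, and even when symplectic its multiplier $\barmu$ need not be $\barepsilon_l^{1-n}$. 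This is exactly why the paper does not realise $\barr$ directly; instead it passes to the $2n$-dimensional symplectic representation $I(\tilde{\barr}\otimes(\barpsi,\dots)):G_{F^+}\to\GSp_{2n}(\barFF_l)$ with forced multiplier $\barepsilon_l^{1-2n}$, and it is this representation that the Dwork family realises. Your phrase ``essentially self-dual of the appropriate type after a normalising twist'' does not address the orthogonal-versus-symplectic and odd-dimension obstructions.

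Once one replaces $\barr$ by $I(\tilde{\barr}\otimes\psi)$ a second problem appears that your sketch does not confront: residual ordinary automorphy of $I(\tilde{\barr}\otimes\psi)|_{G_{F'}}$ does not by itself yield residual ordinary automorphy of $\barr|_{G_{F'}}$, which is what the final invocation of Theorem~\ref{thmb} needs. Passing from the $2n$-dimensional datum back to $n$ dimensions cannot be done purely on the residual side. The paper's route is: first use the Khare--Wintenberger method (Proposition~\ref{ordlift}, built on the finiteness Theorem~\ref{fdrord}) to manufacture an \emph{$l$-adic} ordinary lift $r_1$ of $\barr$ with prescribed Hodge--Tate weights and local conditions; then apply the \emph{ordinary} automorphy lifting theorem (Theorem~\ref{ger}) to $I(\tilde r_1\otimes\psi)$, which is residually automorphic via Theorem~\ref{prop:Dwork}; then descend with Lemma~\ref{htcmtr} to conclude $\breve r_1\otimes\psi$ is automorphic, i.e.\ $\barr|_{G_{F'}}$ is residually ordinarily automorphic with the desired weight and level behaviour. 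This is Proposition~\ref{paord}. Only then does the paper apply Theorem~\ref{mainmlt} (your Theorem~\ref{thmb}) to pass from residual automorphy to automorphy of $r|_{G_{F'}}$. Your sketch collapses the entire Proposition~\ref{ordlift}/Theorem~\ref{ger}/Lemma~\ref{htcmtr} mechanism into the single assertion ``$\overline{H}\cong\barr|_{G_{F'}}$'', which is precisely what the geometry does not supply. A smaller but real omission: your final ``$\Pi$ is $\imath$-ordinary because $r_{l,\imath}(\Pi)$ is ordinary'' needs $\Pi$ to have level potentially prime to $l$, which requires an argument (it is not a formal consequence of crystallinity of $H$). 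The Brauer-theory treatment of the compatible-system statement is in line with the paper's Theorem~\ref{incompsyst}.
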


This theorem has other applications besides Theorem \ref{thma}. For instance we mention the following irreducibility result (see Theorem \ref{irred}). 
\begin{ithm} \label{thmd} Suppose that $F$ is a CM field and that $\pi$ is a regular, algebraic, essentially conjugate
  self-dual, cuspidal automorphic representation of $\GL_n(\A_F)$. If
  $\pi_\infty$ has sufficiently regular weight (`extremely regular' in
  the sense of section \ref{terminology}), then for $l$ in a set of
  rational primes of Dirichlet density $1$ the $n$-dimensional $l$-adic representations associated to $\pi$ are irreducible. \end{ithm}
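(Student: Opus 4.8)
The plan is to argue by contradiction, exploiting Theorem~\ref{thmc} together with the fact that the Hodge--Tate weights of $\pi$ are ``extremely regular.'' Suppose that $r_{l,\imath}(\pi)$ is reducible for a positive-density set of primes $l$; we will extract from this a contradiction with the compatible-system structure. First I would reduce to the case $F$ CM (if $F$ is totally real, pass to a suitable CM quadratic extension $F'/F$ over which $\pi$ still gives rise to a RAECSDC automorphic representation by base change; reducibility of $r_{l,\imath}(\pi)|_{G_{F'}}$ is implied by reducibility of $r_{l,\imath}(\pi)$, and for $l$ split appropriately the converse considerations can be controlled). Next, for each such $l$ choose an irreducible subrepresentation $\rho_l \subseteq r_{l,\imath}(\pi)$ of some dimension $m = m(l) < n$; by pigeonhole we may assume $m$ is independent of $l$ along an infinite subset. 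The key point is that $\rho_l$ is again odd essentially (conjugate-)self-dual after a further twist --- this uses that $r_{l,\imath}(\pi)^c \cong r_{l,\imath}(\pi)^\vee \otimes (\text{character})$ and a standard argument (as in the irreducibility sections of \cite{blght}, \cite{blgg}) that the self-duality is inherited by each irreducible constituent, possibly after enlarging to a sum of conjugate-pairs of constituents, precisely because the multiplier is totally odd.

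The heart of the argument is then to feed $\rho_l$ (or the relevant self-dual piece) into Theorem~\ref{thmc}: one needs (i) that for $l$ in a density-one set, $r_{l,\imath}(\pi)$, and hence $\rho_l$, is potentially diagonalizable at all places above $l$ --- this holds because for all but finitely many $l$, $\pi$ is unramified above $l$ and has Fontaine--Laffaille weights, so $r_{l,\imath}(\pi)|_{G_{F_v}}$ is crystalline in the Fontaine--Laffaille range, hence potentially diagonalizable, and a subrepresentation of a crystalline Fontaine--Laffaille representation is again of the same type; (ii) that $\bar\rho_l|_{G_{F(\zeta_l)}}$ is irreducible for $l$ in a density-one set --- here I expect to invoke a result on big image of the residual representations of $\pi$ (e.g.\ the density-one results of \cite{blght} or arguments \'a la Larsen, using that $\pi$ has extremely regular weight so that the Zariski closure of the image is as large as possible). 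Granting these, Theorem~\ref{thmc} tells us $\rho_l$ is part of a weakly compatible (indeed strictly pure) system; in particular $\rho_l$ is \emph{pure} of the appropriate weight, and so are all the constituents of $r_{l,\imath}(\pi)$. Now purity plus the fact that the full $r_{l,\imath}(\pi)$ is pure of a single weight $w$ forces all constituents to have Hodge--Tate weights that are ``balanced'' --- but ``extreme regularity'' of $\pi_\infty$ is designed to make this impossible: the multiset of Hodge--Tate numbers of $r_{l,\imath}(\pi)$ cannot be partitioned into two nonempty sub-multisets each of which is symmetric about $w/2$ (this is exactly the combinatorial content of ``extremely regular''). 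This contradiction shows the reducible locus has density zero.

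A cleaner variant, which I would actually pursue to get the density-$1$ statement, is to combine the above with the compatible-system structure directly: since $\pi$ is part of a compatible system $\{r_{l,\imath}(\pi)\}$, if $r_{l_0,\imath}(\pi)$ were irreducible for even one $l_0$ in the density-one ``good'' set, then a standard argument (the monodromy/invariant-theory argument of Ramakrishna, or the Rajan--Katz-type results used in \cite{blgg}) propagates irreducibility to a density-one set of $l$; thus it suffices to produce a \emph{single} prime $l_0 \ge 2(n+1)$, with $F$ not containing $\zeta_{l_0}$, at which $\pi$ is Fontaine--Laffaille, potentially diagonalizable, and residually big --- and then run the contradiction above for that $l_0$ alone. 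The main obstacle, I expect, is step (ii): establishing that the residual image is large enough (irreducible after restriction to $G_{F(\zeta_l)}$) for a density-one --- or even just one --- suitable $l$, which is where the ``extremely regular'' hypothesis on $\pi_\infty$ must do real work, ruling out the possibility that the residual representation is induced or otherwise small; the Hodge-theoretic/purity combinatorics, by contrast, should be essentially bookkeeping once the compatible system from Theorem~\ref{thmc} is in hand.
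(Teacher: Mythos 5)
There is a genuine gap, and it is at the heart of your argument: the claimed contradiction in the second paragraph does not exist. Extreme regularity says that for some $\tau$ distinct subsets of $\HT_\tau(r_{l,\imath}(\pi))$ of equal cardinality have distinct sums; it does \emph{not} say that the Hodge--Tate multiset cannot be partitioned into nonempty pieces each of which is pure, or symmetric in the required sense. Indeed no Hodge-theoretic or purity statement can produce the contradiction you want: every constituent of a pure compatible system is automatically pure of the same weight (Frobenius eigenvalues have the forced absolute values), and for a non-cuspidal isobaric representation $\Pi$ the associated Galois representation is genuinely reducible into pure, essentially conjugate self-dual, regular pieces while the total Hodge--Tate multiset can perfectly well be extremely regular. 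Your argument never uses the cuspidality of $\pi$, and cuspidality is exactly the indispensable input; without it the statement is false, so no amount of bookkeeping with weights can close the proof. The ``cleaner variant'' has a second unproved step: propagating irreducibility from a single prime $l_0$ to a density-one set is not a formal consequence of compatibility and is not justified by the references you cite; the paper never needs such a propagation.

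For comparison, the first half of your proposal does track the paper's route: Lemma \ref{dual} shows (using extreme regularity and purity) that each irreducible constituent is totally odd, essentially conjugate self-dual; Proposition \ref{big} (a Larsen-type argument needing only regularity, not extreme regularity) gives residual irreducibility of the constituents over $G_{F(\zeta_l)}$ for a density-one set of $l$; and Proposition \ref{redlem} combines these with Theorem \ref{mtpm} to make every constituent potentially automorphic, via RAECSDC representations $\pi_\alpha$ over a finite CM Galois extension $F'/F$. But the paper then finishes in a completely different way, where cuspidality enters: it computes $\ord_{s=1}L^S(\imath(\CR\otimes\CR^\vee),s)$ twice. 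On the automorphic side this order is $-1$ by Jacquet--Shalika, because $\pi$ is cuspidal. On the Galois side, writing $r_{l,\imath}(\pi)$ as a sum of $j$ irreducible constituents and using Brauer's theorem together with soluble base change of the $\pi_\alpha$ to intermediate fields $F''$ with $F'/F''$ soluble, the order is computed to be $-([r_{l,\imath}(\pi)],[r_{l,\imath}(\pi)])_{F,l}=-j$; here extreme regularity is used again, to guarantee that distinct constituents (and their character twists) have distinct Hodge--Tate data and hence contribute no off-diagonal poles. Comparing gives $j=1$. If you want to salvage your write-up, this Rankin--Selberg/Brauer pole count is the step you must add in place of the purported Hodge-theoretic contradiction.
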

  
To prove Theorem \ref{thmb} we employ Harris' tensor product trick (see
\cite{harris:manin}), which was first employed in connection with
change of weight in \cite{blgg}. However the freedom that `potential
diagonalizability' gives us to make highly ramified base changes in
the non-ordinary case means that this method becomes more
powerful. More precisely, suppose that $r$ is potentially
diagonalizable, and that $r_0$ is a potentially diagonalizable,
automorphic lift of $\barr$ (with possibly different
Hodge--Tate numbers to $r$). In fact making a finite soluble
base change we can assume they
are diagonalizable, i.e.\ we can take $K'=K$ in the definition of
potential diagonalizability. We choose a cyclic extension $M/F$ of
degree $n$ in which each prime above $l$ splits completely, and two $l$-adic characters $\theta$
and $\theta_0$ of $G_M$ such that
\begin{itemize}
\item $\bar\theta=\bartheta_0$,
\item the restriction of $\Ind_{G_M}^{G_F} \theta$ to an inertia group at a prime $v|l$ realizes a diagonal point on the same component of the universal crystalline lifting ring of $\barr|_{G_{F_v}}$ as $r|_{G_{F_v}}$,
\item and the restriction of $\Ind_{G_M}^{G_F} \theta_0$ to an inertia group at a prime $v|l$ realizes a diagonal point on the same component of the universal crystalline lifting ring of $\barr|_{G_{F_v}}$ as $r_0|_{G_{F_v}}$.
\end{itemize}
Then $r_0 \otimes \Ind_{G_M}^{G_F} \theta$ is automorphic and has the same reduction as 
$r \otimes \Ind_{G_M}^{G_F} \theta_0$. Moreover the restrictions of these two representations
to the decomposition group at a prime $v|l$ lie on the same component of the universal crystalline lifting ring of $(\barr \otimes \Ind_{G_M}^{G_F} \bartheta_0)|_{G_{F_v}}$. This is enough for the
usual Taylor--Wiles--Kisin argument to prove that $r \otimes \Ind_{G_M}^{G_F} \theta_0$ is also
automorphic, from which we can deduce (as in \cite{blght}) the automorphy of $r$. 

Things are a little more complicated than this because it seems to be
hard to combine this with the `level changing' argument in
\cite{tay}. In addition a direct argument imposes minor, but unwanted, conditions
on the Hodge--Tate numbers of $r_0$ and $r$. So instead of
going directly from the automorphy of $r_0$ to that of $r$ we
create two ordinary lifts $r_1$ and $r_2$ of $\barr$ (at
least after a base change) where $r_1$ has the same local behavior
away from $l$ as $r_0$; $r_2$ has the same local behavior away from
$l$ as $r$; and where the Hodge--Tate numbers of $r_1$ and
$r_2$ are chosen suitably. Our new arguments allow us to deduce the
automorphy of $r_1$ from that of $r_0$. D.G.'s results in the ordinary
case \cite{ger} allow us to deduce the automorphy of $r_2$ from that
of $r_1$. Finally  applying our new argument again allows us to deduce
the automorphy of $r$ from the automorphy of $r_2$. To
construct $r_1$ and $r_2$ we use the method of Khare and Wintenberger
\cite{kw} based on potential automorphy (in the ordinary case, where it is already available: see for example \cite{blght}).  

Along the way we also prove a general theorem about the existence of $l$-adic lifts of a given mod $l$ Galois representation with prescribed local behavior (see Theorem \ref{diaglift}). We deduce a rather general theorem about change of weight and level (see Theorem \ref{cwl}) of which a very particular instance is the following.

\begin{ithm}\label{thmf}Let $n$ be a positive integer and let $l>2(n+1)$ be a prime. Fix $\imath:\barQQ_l \iso \C$.
  Let $F$ be a  CM field such that all primes of $F$ above $l$ are unramified over $\Q$ and split over the maximal totally real subfield of $F$. 
   Let $\pi$ be a regular, algebraic, polarizable, cuspidal automorphic representation of $\GL_n(\A_F)$ satisfying the following conditions:
  \begin{itemize}
  \item $\pi$ is unramified above $l$;
  \item $\pi_\infty$ has weight $(a_{\tau,i})_{\tau:F \into \C,\,\, i=1,\dots,n}$ with $l-n-1 \geq a_{\tau,1} \geq a_{\tau,2} \geq \dots \geq a_{\tau,n} \geq 0$ for all $\tau$;
  \item and the restriction to $G_{F(\zeta_l)}$ of the mod $l$ Galois representation $\barr_{l,\imath}(\pi)$ associated to $\pi$ and $\imath$ is irreducible.
  \end{itemize}
  Note that in this case $a_{\tau,i}+a_{\tau c, n+1-i}=w$ is independent of $\tau$ and $i$.
  Suppose that we are given a second weight $(a_{\tau,i}')_{\tau:F \into \C,\,\, i=1,\dots,n}$ with $l-n-1 \geq a_{\tau,1}' \geq a_{\tau,2}' \geq \dots \geq a_{\tau,n}' \geq 0$ for all $\tau$, such that 
  \begin{itemize}
  \item $a_{\tau,i}'+a_{\tau c, n+1-i}'=w$ is also independent of $\tau$ and $i$,
  \item and for all places $v|l$ of $F$ the restriction $\barr_{l,\imath}(\pi)|_{G_{F_v}}$ has a lift which is crystalline with $\tau$-Hodge--Tate numbers $\{ a_{\imath \tau, i}'+n-i \}$.
  \end{itemize}
 
 Then there is a second regular, algebraic, polarizable, cuspidal automorphic representation $\pi'$ of $\GL_n(\A_F)$ giving rise (via $\imath$) to the same mod $l$ Galois representation (i.e.\ `congruent to $\pi$ mod $l$') such that $\pi'$ is also unramified above $l$ and
 $\pi_\infty$ has weight $(a_{\tau,i}')$.
\end{ithm}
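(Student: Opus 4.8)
The plan is to deduce this as a special case of the general change-of-weight-and-level theorem (Theorem \ref{cwl}), which in turn rests on the main automorphy lifting theorem (Theorem \ref{mainmlt}, i.e.\ Theorem \ref{thmb} above) and on the lifting-with-prescribed-local-behaviour theorem (Theorem \ref{diaglift}). The first step is to produce a suitable $l$-adic automorphic Galois representation to serve as the ``known'' object: since $\pi$ is regular, algebraic, essentially conjugate self-dual and cuspidal, and $\pi$ is unramified above $l$ with $\pi_\infty$ of weight in the Fontaine--Laffaille range $[0,l-n-1]$, the representation $r_{l,\imath}(\pi)$ is crystalline at every $v\mid l$ with Hodge--Tate numbers in the Fontaine--Laffaille range; hence it is \emph{potentially diagonalizable} at every $v\mid l$ (crystalline representations in the Fontaine--Laffaille range are potentially diagonalizable, as recalled in the introduction, and here the primes above $l$ are unramified over $\Q$). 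So $(r_{l,\imath}(\pi),\text{its multiplier})$ satisfies hypothesis (3) of Theorem \ref{thmb}, and hypotheses (1), (2), (5) are immediate (essential conjugate self-duality from $\pi$ essentially conjugate self-dual, unramified almost everywhere, and residual automorphy with $(\pi,\chi)$ itself).

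The second step is to construct a target $l$-adic lift $r'$ of $\barr_{l,\imath}(\pi)$ with the prescribed new local behaviour at $l$. At the primes $v\mid l$ we are \emph{given} that $\barr_{l,\imath}(\pi)|_{G_{F_v}}$ admits a crystalline lift with $\tau$-Hodge--Tate numbers $\{a_{\imath\tau,i}'+n-i\}$; since these numbers also lie in a Fontaine--Laffaille-type range and the $F_v$ are unramified over $\Q$, such a lift is again potentially diagonalizable. Away from $l$ we want $r'$ to have the same local behaviour as $r_{l,\imath}(\pi)$ (in particular to be minimally ramified, or at least to lie on the same component of the appropriate local lifting ring). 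Feeding these local conditions into Theorem \ref{diaglift} (existence of global $l$-adic lifts with prescribed local properties) produces a continuous $r':G_F\to GL_n(\barQQ_l)$ lifting $\barr_{l,\imath}(\pi)$, essentially conjugate self-dual with the correct multiplier, potentially diagonalizable at all $v\mid l$ with the target Hodge--Tate numbers, and with controlled ramification away from $l$. The irreducibility hypothesis (4) of Theorem \ref{thmb} for $r'$ holds because $\barr'=\barr_{l,\imath}(\pi)$ and $\barr_{l,\imath}(\pi)|_{G_{F(\zeta_l)}}$ is assumed irreducible.

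The third step is to apply the automorphy lifting theorem to $r'$. Both $r'$ and a potentially diagonalizable automorphic lift of $\barr_{l,\imath}(\pi)$ (namely $r_{l,\imath}(\pi)$ itself) exist, they have the same reduction, and after a finite soluble base change one arranges that both are genuinely diagonalizable at $l$; then the tensor-product/ordinary-interpolation machinery described in the introduction (passing through auxiliary ordinary lifts $r_1,r_2$ and invoking \cite{ger} in the ordinary case, then descending the soluble base change by \cite{blght}-style arguments) shows $r'$ is automorphic. By solubility of the descent and cuspidality/regularity being preserved, $r'=r_{l,\imath}(\pi')$ for a regular, algebraic, essentially conjugate self-dual, cuspidal $\pi'$ on $GL_n(\A_F)$, automatically congruent to $\pi$ mod $l$ (same residual representation), unramified above $l$ (since $r'$ was built crystalline there) and with $\pi'_\infty$ of weight $(a_{\tau,i}')$ (read off from the Hodge--Tate numbers of $r'$).

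The main obstacle is the automorphy-lifting step: one must pass between the two weights without running into the level-raising difficulties flagged in the introduction, which is exactly why one cannot go directly from $\pi$ to $\pi'$ but instead routes through the ordinary lifts $r_1,r_2$, matching local behaviour away from $l$ on each leg and controlling Hodge--Tate numbers so that the Taylor--Wiles--Kisin patching argument applies on the correct component of the crystalline lifting ring. Verifying that all the local deformation conditions (both at $l$, via potential diagonalizability, and away from $l$) are compatible across these three legs, and that the relevant ``big image'' / Taylor--Wiles primes hypotheses hold for $\barr_{l,\imath}(\pi)|_{G_{F(\zeta_l)}}$ under $l\ge 2(n+1)$, is where essentially all the work of the paper is concentrated; here it is all packaged into Theorem \ref{cwl}, so for the present statement it remains only to check its hypotheses, which is the content of steps one and two above.
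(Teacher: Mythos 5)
Your proposal follows the paper's own route: Theorem \ref{thmf} is deduced as a special case of the change-of-weight-and-level result, Theorem \ref{cwl}, whose proof is precisely your steps two and three (Theorem \ref{diaglift} to manufacture the lift of $\barr_{l,\imath}(\pi)$ with the new Hodge--Tate numbers at $l$ and unchanged behaviour away from $l$, then Theorem \ref{mainmlt} to prove that lift automorphic), while your step one supplies the Fontaine--Laffaille potential-diagonalizability check at the primes above $l$ exactly as the paper intends. The only point to phrase carefully is the claim that $\pi'$ is unramified above $l$: since the prescribed local lifts at $v|l$ are already crystalline over $F_v$, one takes $K_v=F_v$ in Theorem \ref{diaglift} so the global lift lies on a component of the genuinely crystalline lifting ring, which is the same (implicit) refinement the paper relies on.
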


We remark that combining the results of this paper with work of Caraiani, one can deduce full local global compatibility of the $l$-adic  representations associated to regular algebraic, essentially conjugate self-dual, cuspidal automorphic representations of $\GL_n$ over a CM or totally real field. (See \cite{blggt2} and \cite{ana2}.)

 We also remark that Stefan Patrikis and one of us (R.T.) recently combined the methods and results of this paper with one further idea, originating in \cite{patrikis}, and obtained  
variants of theorems \ref{thma} and \ref{thmd} which are perhaps more useful in practice. (See \cite{pt}.) More specifically they proved a version of theorem \ref{thma} where the irreducibility assumption is replaced by a purity assumption. This is useful because for many compatible systems arising from geometry purity is known by Deligne's theorem, but irreducibility can be hard to check. In particular one can deduce the meromorphic continuation and functional equation of the L-function of any regular, pure, self-dual motive over a totally real field. They also prove a version of theorem \ref{thmd} in which the hypothesis that $\pi_\infty$ is `extremely regular' is weakened to `regular', but the conclusion is also weakened to give irreducibility only above a set of rational primes of positive Dirichlet density.

We now explain the structure of the paper. In section \ref{sec: local prerequisites} we collect some 
results about the deformation theory of Galois representations. These are mostly now fairly standard results but we recall them to fix notations and in some cases to make slight improvements. The main exception is the introduction of potential diagonalizability in section \ref{l=p}, which is new and of key importance for us. In section \ref{RAECSDC} we fix some notations and we recall the existing automorphy lifting theorems (or slight generalizations of them). Very little in this section is novel. Between the writing of the first and second versions of this paper, Jack Thorne \cite{jack} has found improved versions of these theorems which allow one to remove the troublesome `bigness' conditions from \cite{cht} and the papers that followed it. Moreover Ana Caraiani \cite{ana}, \cite{ana2} has proved local-global compatibility in all $l \neq p$ cases, as well as proving temperedness of all regular algebraic, polarizable, cuspidal automorphic representations, and the purity of all the Weil-Deligne representations associated to the $l$-adic representation associated to automorphic representations including the $l=p$ case. We have taken advantage of Caraiani's and Thorne's works to optimize our own results. In section \ref{sec:potential automorphy} we make use of the automorphy lifting theorems from section \ref{RAECSDC} and the Dwork family to prove a potential automorphy theorem (in the ordinary case) and a theorem about lifting mod $l$ Galois representations (again in the ordinary case). These arguments follow those of \cite{blght} and will not surprise an expert. 

In section \ref{alt2} we prove our main new theorems. Section \ref{prelalt} contains our main new argument. 
In section \ref{malt} we combine this with the results of sections \ref{RAECSDC} and \ref{sec:potential automorphy} to obtain our optimal automorphy lifting theorem. In section \ref{lgr2} we use the same ideas to deduce an improved result about the existence of $l$-adic lifts of mod $l$ Galois representations with specified local behavior. Combining the results of sections \ref{malt} and \ref{lgr2} we deduce in section \ref{sec:cwl} a general theorem about change of weight and level for mod $l$ automorphic forms on $\GL_n$. Then in section \ref{pa1} we use the automorphy lifting theorem of section \ref{malt} and our potential automorphy theorem 
from section \ref{poa} to deduce our main new potential automorphy result for a single $l$-adic representation. 

In section \ref{sec:main result} we turn to applications of our main results. In section \ref{cs} we recall definitions connected to compatible systems of $l$-adic representations. In sections \ref{rcs} and \ref{csl} we prove some group theoretic lemmas about the images of compatible systems of $l$-adic representations. Then in section \ref{pa2} we deduce from the potential automorphy theorem of section \ref{pa1} our main theorem -- a potential automorphy theorem for compatible systems of $l$-adic representations. Finally in section \ref{ir} we give further applications of our main results -- applications to fitting an $l$-adic representation into a compatible system and to the irreducibility of some $l$-adic representations associated to cusp forms on $\GL(n)$.

In the appendix we record some miscellaneous results which we use elsewhere in the paper. Some of these are results we suspect are `well known', but for which we couldn't find a reference. In these cases we give a proof. Others are results for which we know a reference, but which we hope it may assist the reader to recall here.
 \vspace{5mm}

 We would like to thank the anonymous referees for a thorough and intelligent reading of our paper,
 and for the numerous helpful suggestions they made to improve the
 exposition. We would also like to thank Kevin Buzzard, Florian
 Herzig, Wansu Kim and James Newton for their comments on an earlier
 draft of the paper. We are grateful to Ana Caraiani and Jack Thorne
 for sharing with us early drafts of their papers \cite{ana},  \cite{ana2} and
 \cite{jack}; and to Brian Conrad and Jiu-Kang Yu for answering our questions about commutative algebra and hyper-special maximal compact subgroups, respectively.  This paper was written at the same time as
 \cite{blggord} and there was considerable cross fertilization.  Our
 paper would have been impossible without Harris' tensor product trick
 and it is a pleasure to acknowledge our debt to him.

\subsection*{Notation}
We write all matrix transposes on the left; so ${}^t\!A$ is the
transpose of $A$. Let $\gog\gl_n$ denote the space of $n\times n$ matrices with the
adjoint action of $\GL_n$ and let $\gs\gl_n$ denote the subspace of
trace zero matrices. We will write $O(n)$ (resp.\ $U(n)$) for the group of matrices $g \in \GL_n(\R)$ (resp.\ $\GL_n(\C)$) such that ${}^tg^cg=1_n$.

If $R$ is a local ring we write $\mf{m}_{R}$ for the
maximal ideal of $R$. 

If $\Delta$ is an abelian group we will let $\Delta^\tor$ denote its maximal torsion subgroup and $\Delta^\tf$ its maximal torsion free quotient.
If $\Gamma$ is a profinite group then
$\Gamma^\ab$ will denote its maximal abelian quotient by a closed
subgroup. If $\rho:\Gamma \to \GL_n(\barQQ_l)$ is a continuous
homomorphism then we will let $\barrho:\Gamma \ra \GL_n(\barFF_l)$
denote the {\em semi-simplification} of its reduction, which is well defined
up to conjugacy.

If $M$ is a field, we let $\barM$ denote an algebraic closure of $M$
and $G_M$ the absolute Galois group $\Gal(\barM/M)$.  We will use
$\zeta_n$ to denote a primitive $n^{th}$-root of $1$.  Let
$\epsilon_l$ denote the $l$-adic cyclotomic character and
$\barepsilon_l$ its reduction modulo $l$. We will also let
$\omega_l:G_M \ra \mu_{l-1} \subset \Z_l^\times$ denote the
Teichmuller lift of $\barepsilon_l$.  If $N/M$ is a separable
quadratic extension we will let $\delta_{N/M}$ denote the non-trivial
character of $\Gal(N/M)$. 

If $\Gamma$ is a profinite group and $M$ is a topological abelian group with a  continuous action of $\Gamma$, then by $H^i(\Gamma,M)$ we shall mean the continuous cohomology. 

We will write $\Q_{l^r}$ for the unique unramified extension of $\Q_l$ of degree $r$ and $\Z_{l^r}$ for its ring of integers. We will write $\Q_l^\nr$ for the maximal unramified extension of $\Q_l$ and $\Z_l^\nr$ for its ring of integers. We will also write $\hatZZ_l^\nr$ for the $l$-adic completion of $\Z_l^\nr$ and $\hatQQ_l^\nr$ for its field of fractions.

If $K$ is a finite
extension of $\bb{Q}_p$ for some $p$, we write $K^\nr$ for its maximal unramified extension; $I_K$ for the inertia
subgroup of $G_K$; $\Frob_K \in G_K/I_K$ for the geometric Frobenius; and
$W_K$ for the Weil group. If $K'/K$ is a Galois extension we will write $I_{K'/K}$ for the inertia subgroup of $\Gal(K'/K)$.
We will write $\Art_K:K^\times \iso W_K^\ab$ for the Artin map normalized to send uniformizers to geometric Frobenius elements. We will let $\rec_K$ be the local Langlands correspondence of
\cite{ht}, so that if $\pi$ is an irreducible complex
admissible representation of $\GL_n(K)$, then $\rec_K(\pi)$ is a
Frobenius semi-simple Weil--Deligne representation of the Weil group $W_K$. 
We will write $\rec$ for $\rec_K$
when the choice of $K$ is clear.
If $(r,N)$ is a Weil--Deligne representation of $W_K$ we will write $(r,N)^{F-\semis}$ for its Frobenius semisimplification.
If $\rho$ is a continuous representation of $G_K$ over $\barQQ_l$ with $l\neq p$ then we will write $\WD(\rho)$ for the corresponding Weil--Deligne representation of $W_K$. (See for instance section 1
of \cite{ty}.) By a {\em Steinberg} representation of $\GL_n(K)$ we will mean a representation $\Spp_n(\psi)$  (in the notation of section 1.3 of \cite{ht}) where $\psi$ is an unramified character of $K^\times$.
If $\pi_i$ is an irreducible smooth representation of $\GL_{n_i}(K)$ for $i=1,2$ we will write $\pi_1 \boxplus \pi_2$ for the irreducible smooth representation of $\GL_{n_1+n_2}(K)$ with $\rec(\pi_1 \boxplus \pi_2)=\rec(\pi_1) \oplus \rec(\pi_2)$.
If $K'/K$ is a finite extension and if $\pi$ is an irreducible smooth representation of $\GL_n(K)$ we will write $\BC_{K'/K}(\pi)$ for the base change of $\pi$ to $K'$ which is characterized by $\rec_{K'}(\BC_{K'/K}(\pi))=
\rec_K(\pi)|_{W_{K'}}$.

If $\rho$ is a continuous de Rham representation of $G_K$ over $\barQQ_l$ then we will write $\WD(\rho)$ for the corresponding Weil--Deligne representation of $W_K$, and if $\tau:K \into \barQQ_l$ is a
continuous embedding of fields then we will write $\HT_\tau(\rho)$ for the multiset of Hodge--Tate numbers of $\rho$ with respect to $\tau$. Thus $\HT_\tau(\rho)$ is a multiset of $\dim \rho$ integers. 
 In fact if $W$ is a de Rham representation of $G_K$ over $\barQQ_l$ and if $\tau:K \into \barQQ_l$ then the multiset $\HT_\tau(W)$ contains
$i$ with multiplicity $\dim_{\barQQ_l} (W \otimes_{\tau,K} \widehat{\barK}(i))^{G_K} $. Thus for example
$\HT_\tau(\epsilon_l)=\{ -1\}$. 

We will let $c$ denote complex conjugation on $\C$. We will write $\Art_\R$ (resp.\ $\Art_\C$) for the unique continuous surjection
\[ \R^\times \onto \Gal(\C/\R) \]
(resp.\ $\C^\times \onto \Gal(\C/\C)$).
We will write $\rec_\C$ (resp.\ $\rec_\R$), or simply $\rec$, for the local Langlands correspondence from irreducible admissible $(\Lie GL_n(\R) \otimes_\R \C, O(n))$-modules (resp.\ $(\Lie GL_n(\C) \otimes_\R \C, U(n))$-modules) to continuous, semi-simple $n$-dimensional representations of the Weil group $W_\R$ (resp.\ $W_\C$). (See \cite{langlandsrg}.) If $\pi_i$ is an irreducible admissible $(\Lie GL_{n_i}(\R) \otimes_\R \C, O(n_i))$-module (resp.\ $(\Lie GL_{n_i}(\C) \otimes_\R \C, U(n_i))$-module) for $i=1,\dots,r$ and if $n=n_1+\dots+n_r$, then we define an irreducible admissible $(\Lie GL_n(\R) \otimes_\R \C, O(n))$-module (resp.\ $(\Lie GL_n(\C) \otimes_\R \C, U(n))$-module) $\pi_1 \boxplus \cdots \boxplus \pi_r$ by
\[ \rec(\pi_1 \boxplus \cdots \boxplus \pi_r) = \rec(\pi_1) \oplus \cdots \oplus \rec(\pi_r). \]
If $\pi$ is an irreducible admissible $(\Lie GL_{n}(\R) \otimes_\R \C, O(n))$-module then we define $\BC_{\C/\R}(\pi)$ to be the irreducible admissible $(\Lie GL_{n}(\C) \otimes_\R \C, U(n))$-module defined by
\[ \rec_\C(\BC_{\C/\R}(\pi))=\rec_\R(\pi)|_{W_\C}. \]

We will write $||\,\,\,||$ for the continuous homomorphism
\[ ||\,\,\,||=\prod_v |\,\,\,|_v: \A^\times/\Q^\times \lra \R^\times_{>0}, \]
where each $|\,\,\,|_v$ has its usual normalization, i.e.\ $|p|_p=1/p$. 

Now suppose that $K/\Q$ is a finite extension. 
We will write $||\,\,\,||_K$ (or simply $||\,\,\,||$) for $||\,\,\, || \circ \norm_{K/\Q}$. We will also write
\[ \Art_K = \prod_v \Art_{K_v}:\A_K^\times /\overline{K^\times (K_\infty^\times)^0} \liso G_K^\ab. \]
If $v$ is a finite place of $K$ we will write $k(v)$ for its residue field, $q_v$ for $\# k(v)$, and $\Frob_v$ for $\Frob_{K_v}$. If $v$ is a real place of $K$ then we will let
$[c_v]$ denote the conjugacy class in $G_K$ consisting of complex conjugations associated to $v$. 
If $K'/K$ is a quadratic extension of number fields we will denote by $\delta_{K'/K}$ the nontrivial
character of $\A_K^\times/K^\times \norm_{K'/K}\A_{K'}^\times$. (We hope that this will cause no confusion with the Galois character $\delta_{K'/K}$. One equals the composition of the other with the Artin map for $K$.) 
If $K'/K$ is a soluble, finite Galois extension and if $\pi$ is a
cuspidal automorphic representation of $\GL_n(\A_K)$ we will write
$\BC_{K'/K}(\pi)$ for its base change to $K'$, an (isobaric) automorphic representation
of $\GL_n(\A_{K'})$ satisfying 
\[ \BC_{K'/K}(\pi)_v=\BC_{K'_v/K_{v|_K}}(\pi_{v|_K}) \]
for all places $v$ of $K'$. If $\pi_i$ is an automorphic representation of $\GL_{n_i}(\A_K)$ for $i=1,2$ we will write $\pi_1 \boxplus \pi_2$ for the automorphic representation of $\GL_{n_1+n_2}(\A_K)$ satisfying
\[ (\pi_1 \boxplus \pi_2)_v=\pi_{1,v} \boxplus \pi_{2,v} \]
for all places $v$ of $K$.

We will call a number field $K$ a CM field if it has an automorphism
$c$ such that for all embeddings $i:K \into \C$ one has $c \circ i = i
\circ c$. In this case either $K$ is totally real, or a totally
imaginary quadratic extension of a totally real field. In either case
we will let $K^+$ denote the maximal totally real subfield of $K$.

Suppose that $K$ is a number field and 
\[ \chi: \A_K^\times /K^\times \lra \C^\times \]
is a continuous character. If there exists $a \in \Z^{\Hom(K,\C)}$ such that
\[ \chi|_{(K_\infty^\times)^0}: x \longmapsto \prod_{\tau \in \Hom(K,\C)} (\tau x)^{a_\tau} \]
 we will call $\chi$ algebraic. In this case we can attach to $\chi$ and a rational prime $l$ and an isomorphism $\imath:\barQQ_l \iso \C$, a unique continuous character
 \[ r_{l,\imath}(\chi): G_K \lra \barQQ_l^\times \]
 such that for all $v\ndiv l$ we have
 \[ \imath \circ r_{l,\imath}(\chi)|_{W_{K_v}} \circ \Art_{K_v} = \chi_v. \] 
 There is also an integer $\wt(\chi)$, the weight of $\chi$, such that
 \[ |\chi|= ||\,\,\,||_K^{-\wt(\chi)/2}. \]
 (See the discussion at the start of Section \ref{sbuild} for more details.)
 
 If $F$ is a totally real field we call a continuous character 
 \[ \chi:\A_K^\times/K^\times \lra \C^\times \]
 totally odd if $\chi_v(-1)=-1$ for all $v|\infty$. Similarly we call a continuous character
 \[ \mu:G_K \lra \barQQ_l^\times \]
totally odd if $\mu(c_v)=-1$ for all $v|\infty$.
 \newpage

\tableofcontents

\section{Deformations of Galois Representations.}\label{sec: local prerequisites}

\subsection{The group $\CG_n$.}\label{cgn}{$\mbox{}$} \newline

We let $\CG_n$ denote the semi-direct product of $\CG_n^0=\GL_n \times \GL_1$ by the group $\{1 , \jmath\}$
where 
\[ \jmath (g,a) \jmath^{-1}=(a{}^tg^{-1},a). \]
We let $\nu:\CG_n \ra \GL_1$ be the character which sends $(g,a)$ to $a$ and sends $\jmath$ to $-1$. We will also let $\GSp_{2n}\subset \GL_{2n}$ denote the symplectic similitude group defined by 
the anti-symmetric matrix
\[ J_{2n}=\mat{0}{1_n}{-1_n}{0}, \]
and we will again let $\nu:\GSp_{2n} \ra \GL_1$ denote the multiplier character. Finally let $\GO_n$ denote the orthogonal similitude group defined by the symmetric matrix $1_n$. 

There is a natural homomorphism
\[ \CG_n \times \CG_m \lra \CG_n/\CG_n^0 \times \CG_m/\CG_m^0 \lra \{ \pm 1\} \]
which sends both $(\jmath,1)$ and $(1,\jmath)$ to $-1$. Let $(\CG_n \times \CG_m)^+$ denote the kernel of this map. There is a homomorphism
\[ \begin{array}{rcl} \otimes: (\CG_n \times \CG_m)^+ &\lra &\CG_{nm} \\ (g,a) \times (g',a') & \longmapsto
& (g \otimes g', aa') \\ \jmath \times \jmath & \longmapsto & \jmath. \end{array} \]
There is also a homomorphism
\[ \begin{array}{rcl} I: \CG_n &\lra &\GSp_{2n} \\ (g,a)& \longmapsto
& \mat{g}{0}{0}{a{}^tg^{-1}} \\ \jmath & \longmapsto & \mat{0}{1_n}{1_n}{0}. \end{array} \]

Suppose that $\Gamma$ is a group with a normal subgroup $\Delta$ of index $2$ and that $\gamma_0 \in \Gamma -\Delta$. Suppose also that $A$ is a ring and that $r:\Gamma \ra \CG_n(A)$ is a homomorphism with $\Delta=r^{-1}\CG_n^0(A)$. Write $\breve{r}:\Delta \ra \GL_n(A)$ for the composition 
of $r|_\Delta$ with projection to $\GL_n(A)$. Write $r(\gamma_0)=(a,-(\nu \circ r)(\gamma_0)) \jmath$. Then
\[ \breve{r}(\gamma_0\delta\gamma_0^{-1}) a {}^t\breve{r}(\delta)=(\nu\circ r)(\delta)a \]
for all $\delta \in \Delta$, and 
\[ \breve{r}(\gamma_0^2){}^ta=-(\nu \circ r)(\gamma_0)a. \]

If $r:\Gamma \ra \GSp_{2n}(A)$ is a homomorphism with multiplier $\mu$, then it gives rise to a homomorphism $\hat{r}_\Delta:\Gamma
\ra \CG_{2n}(A)$ which sends $\delta \in \Delta$ to $(r(\delta), \mu(\delta))$ and $\gamma\in \Gamma-\Delta$ to $(r(\gamma)J_{2n}^{-1},-\mu(\gamma))\jmath$. Then $\Delta = \hat{r}_\Delta^{-1}\CG^0_{2n}(A)$ and $\nu \circ \hat{r}_\Delta=\mu$. Similarly if $r:\Gamma \ra \GO_n(A)$ is a homomorphism with multiplier $\mu$, then it gives rise to a homomorphism $\hat{r}_\Delta:\Gamma
\ra \CG_n(A)$ which sends $\delta \in \Delta$ to $(r(\delta), \mu(\delta))$ and $\gamma\in \Gamma-\Delta$ to $(r(\gamma),\mu(\gamma))\jmath$. Then $\Delta = \hat{r}_\Delta^{-1}\CG^0_{2n}(A)$ and $\nu \circ \hat{r}_\Delta$ equals the product of $\mu$ with the nontrivial character of $\Gamma/\Delta$. 

If $r:\Gamma \ra \CG_n(A)$ (resp.\  $r':\Gamma \ra \CG_m(A)$) is a homomorphism with $r^{-1}\CG_n^0(A)=\Delta$ (resp.\ $(r')^{-1}\CG_m^0(A)=\Delta$) then we define
\[ I(r)=I \circ r : \Gamma \lra \GSp_{2n}(A) \]
and
\[ r \otimes r'=\otimes \circ (r \times r'): \Gamma \lra \CG_{nm}(A). \]
Note that the multiplier of $I(r)$ equals the multiplier of $r$ and that the multiplier of $r \otimes r'$ differs from the product of the multipliers of $r$ and $r'$ by the non-trivial character of $\Gamma/\Delta$.

If $\chi:\Delta \ra A^\times$ and $\mu:\Gamma \ra A^\times$ satisfy 
\begin{itemize}
\item $\chi \chi^{\gamma_0}=\mu|_{\Delta}$, and
\item $\chi(\gamma_0^2)=-\mu(\gamma_0)$;
\end{itemize}
(i.e.\ the composition of $\chi$ with the transfer map $\Gamma^\ab \ra \Delta^\ab$ equals the product of $\mu$ and the non-trivial character of $\Gamma/\Delta$),
then there is a homomorphism
\[ \begin{array}{rcl}  (\chi,\mu): \Gamma & \lra & \CG_1(A) \\ \delta & \longmapsto & (\chi(\delta), \mu(\delta)) \\ \gamma & \longmapsto & (\chi(\gamma \gamma_0^{-1}),-\mu(\gamma)) \jmath, \end{array} \]
for all $\delta \in \Delta$ and $\gamma \in \Gamma-\Delta$. We have $\nu \circ (\chi,\mu)=\mu$.

(At the referee's suggestion we include a proof that $(\chi,\mu)$ is indeed a
homomorphism in an attempt to convince the reader that all the unsupported assertions
of this section can be checked in an entirely elementary way. Suppose that $\delta_1,\delta_2 \in \Delta$ and $\gamma_1,\gamma_2 \in \Gamma-\Delta$. Then we have
\[ (\chi,\mu)(\delta_1\delta_2)=(\chi,\mu)(\delta_1)(\chi,\mu)(\delta_2) \]
and
\[ \begin{array}{rcl} (\chi,\mu)(\delta_1\gamma_2)&=&(\chi(\delta_1\gamma_2\gamma_0^{-1}),-\mu(\delta_1\gamma_2))\jmath \\ &=&(\chi,\mu)(\delta_1)(\chi,\mu)(\gamma_2) \end{array} \]
and
\[ \begin{array}{rcl} (\chi,\mu)(\gamma_1\delta_2)&=&(\chi(\gamma_1\delta_2\gamma_0^{-1}),-\mu(\gamma_1\delta_2))\jmath\\ &=&(\chi(\gamma_1\gamma_0^{-1}(\gamma_0\delta_2\gamma_0^{-1})),-\mu(\gamma_1\delta_2))\jmath\\ &=&(\chi,\mu)(\gamma_1)\jmath(\chi(\gamma_0\delta_2\gamma_0^{-1}),\mu(\delta_2)) \jmath \\ &=&(\chi,\mu)(\gamma_1) (\mu(\delta_2)\chi^{\gamma_0}(\delta_2)^{-1},\mu(\delta_2)) \\ &=&
(\chi,\mu)(\gamma_1)(\chi,\mu)(\delta_2) \end{array} \]
and
\[ \begin{array}{rcl} (\chi,\mu)(\gamma_1\gamma_2)&=&(\chi(\gamma_1\gamma_0^{-1})\chi(\gamma_0 \gamma_2),\mu(\gamma_1)\mu(\gamma_2))\\ &=&(\chi,\mu)(\gamma_1)\jmath (\chi^{\gamma_0}(\gamma_2\gamma_0),-\mu(\gamma_2)) \\ &=& (\chi,\mu)(\gamma_1)(-\mu(\gamma_2)\chi^{\gamma_0}(\gamma_2\gamma_0)^{-1},-\mu(\gamma_2))\jmath \\
&=& (\chi,\mu)(\gamma_1)(-\chi(\gamma_2\gamma_0)\mu(\gamma_0)^{-1},-\mu(\gamma_2))\jmath \\ &=& (\chi,\mu)(\gamma_1)(\chi,\mu)(\gamma_2) \end{array} \]
as desired.)

In the case that $\Gamma=G_{F^+}$ and $\Delta=G_F$ where $F$ is an imaginary CM field with maximal totally
real subfield $F^+$, we call $r:G_{F^+} \ra \CG_n(A)$ (resp.\ $\GSp_{2n}(A)$, resp.\ $\GO_n(A)$) {\em totally odd}
if the multiplier character takes every complex conjugation to $-1$ (resp.\ $-1$, resp.\ $1$). Note that if
$r$ is totally odd so is $I(r)$ (resp.\ $\hat{r}_\Delta$, resp.\ $\hat{r}_\Delta$).

Suppose now that $A$ is a field, that $r:\Delta \ra \GL_n(A)$ is absolutely irreducible, and that
$\mu:\Gamma \ra A^\times$ is a character so that
\[ r^{\gamma_0} \cong r^\vee \otimes \mu|_{\Delta}. \]
More precisely if $\gamma \in \Gamma-\Delta$ there is a $b_\gamma \in \GL_n(A)$, unique up to scalar multiples, such that 
\[ r(\gamma\delta\gamma^{-1}) b_\gamma {}^tr(\delta)=\mu(\delta)b_\gamma \]
for all $\delta \in \Delta$. Computing $r(\gamma^2\delta\gamma^{-2})$ in two ways and using the absolute irreducibility of $r$, we deduce that $r(\gamma^2)$ is a scalar multiple
of $b_\gamma{}^tb_\gamma^{-1}$. (Write
$r(\gamma^2)r(\delta)r(\gamma^2)^{-1}=r(\gamma^2\delta\gamma^{-2})=\mu(\delta)b_\gamma{}^tr(\gamma\delta\gamma^{-1})^{-1}b_\gamma^{-1}=(b_\gamma{}^tb_\gamma^{-1})r(\delta)(b_\gamma{}^tb_\gamma^{-1})^{-1}$
and apply Schur's lemma.) Substituting $\delta=\gamma^2$ in the last displayed equation, we then deduce that 
\[ r(\gamma^2){}^tb_\gamma=\pm \mu(\gamma)b_\gamma. \]
One can check that the sign in the above equation is independent of $\gamma \in \Gamma-\Delta$, and we will denote it $-\sgn(r,\mu)$. (To see this one uses the fact that one can take $b_{\delta\gamma} 
=r(\delta)b_\gamma$ for $\delta \in \Delta$ and $\gamma\in \Gamma-\Delta$.) 
Then we get a homomorphism
\[ \tilde{r}_\mu:\Gamma \lra \CG_n(A) \]
which sends $\delta\in \Delta$ to $(r(\delta), \mu(\delta))$ and sends $\gamma_0$ to $(b_{\gamma_0},-\sgn(r,\mu) \mu(\gamma_0))\jmath$. In particular if $\sgn(r,\mu)=1$ then
$\nu \circ \tilde{r}_\mu=\mu$, while if $\sgn(r,\mu)=-1$ then $\mu^{-1} (\nu \circ \tilde{r}_\mu)$ is the non-trivial character of $\Gamma/\Delta$. Moreover $\breve{\tilde{r}}=r$.

\subsection{Abstract deformation theory.}\label{adt}{$\mbox{}$} \newline

Fix a rational prime $l$ and let $\CO$ denote the ring of integers 
of a finite extension $L$ of $\Q_l$ in $\barQQ_l$. Let $\lambda$ denote the maximal ideal of $\CO$ and
let $\F=\CO/\lambda$. Let $\Gamma$ denote a topologically finitely generated profinite group and let $\barrho:\Gamma \ra \GL_n(\F)$ be a continuous homomorphism.

We will denote by 
\[ \rho^\Box=\rho^\Box_\CO:\Gamma \ra \GL_n(R_{\CO,\barrho}^\Box) \]
the universal lifting (or `framed deformation') of $\barrho$ to a complete noetherian local $\CO$-algebra with residue field $\F$. (We impose no equivalence condition on lifts other than equality.) We will write
\[ R_{\barrho}^\Box \otimes \barQQ_l \]
for $R_{\CO,\barrho}^\Box \otimes_{\CO} \barQQ_l$.
The following lemma is presumably well known, but as we don't know a reference, we give a proof.
\begin{lem}\label{coeffchange} Suppose that $\CO'$ is the ring of integers of a finite extension $L'/L$. Then the map
\[ R_{\CO',\barrho}^\Box \lra R_{\CO,\barrho}^\Box \otimes_{\CO} \CO' \]
coming from the universal property of $R_{\CO',\barrho}^\Box$ and $\rho^\Box_\CO \otimes \CO'$ is an isomorphism. In particular, as the notation suggests, the ring $R_\barrho^\Box \otimes \barQQ_l$ does not depend on the choice of $L$.
\end{lem} 

\begin{proof} Let $R_{\CO',\F,\barrho}^\Box$ denote the closed subring of $R_{\CO',\barrho}^\Box$ consisting of elements which reduce to an element of $\F$ modulo the maximal ideal. Then $R_{\CO',\F,\barrho}^\Box$ is a complete, noetherian local $\CO$-algebra with residue field $\F$ and $\rho^\Box_{\CO'}$ is defined over $R_{\CO',\F,\barrho}^\Box$. Thus the universal property of $R^\Box_{\CO,\barrho}$ gives rise to a map $R^\Box_{\CO,\barrho} \ra R_{\CO',\F,\barrho}^\Box$ under which $\rho^\Box_\CO$ pushes forward to $\rho^\Box_{\CO'}$. This map extends to a $\CO'$-linear map
$R^\Box_{\CO,\barrho}\otimes_\CO \CO' \ra R_{\CO',\barrho}^\Box$. We claim this is an inverse to our map $R_{\CO',\barrho}^\Box \lra R_{\CO,\barrho}^\Box \otimes_{\CO} \CO'$. 

Under the composite $R_{\CO',\barrho}^\Box \lra R_{\CO',\barrho}^\Box$ the representation $\rho^\Box_{\CO'}$ pushes forward to itself, and so this map must be the identity. Consider the composite
\[ R^\Box_{\CO,\barrho} \lra R^\Box_{\CO',\barrho} \lra R^\Box_{\CO,\barrho} \otimes_\CO \CO'. \]
It factors through the subring $(R^\Box_{\CO,\barrho} \otimes_\CO \CO')_\F \subset R^\Box_{\CO,\barrho} \otimes_\CO \CO'$ consisting of elements which reduce modulo the maximal ideal to an element of $\F$. The representation $\rho^\Box_\CO$ pushes forward to itself, and so this composite must equal the canonical inclusion, and we have proved the lemma.
\end{proof}

The maximal ideals are dense in $\Spec R^\Box_{\CO,\barrho}[1/l]$ (see
\cite[10.5.7]{MR0217086}). A
prime ideal $\wp$ of $R^\Box_{\CO,\barrho}[1/l]$ is maximal if and
only if the residue field $k(\wp)=R^\Box_{\CO,\barrho}[1/l]_\wp/\wp$ is (topologically isomorphic to) a
finite extension of $L$. (For the `if' part note that the image of $R^\Box_{\CO,\barrho}$ in $k(\wp)$ is a compact $\CO$-submodule of $k(\wp)$ with field of fractions $k(\wp)$. Thus $R^\Box_{\CO,\barrho}[1/l] \onto k(\wp)$. For the `only if' part see for instance Lemma 2.6 of \cite{tay}.) We get a continuous representation
$\rho_\wp:\Gamma \ra \GL_n(k(\wp))$. The formal completion
$R_{\CO,\barrho}^\Box[1/l]_\wp^\wedge$ is the universal lifting ring
for $\rho_\wp$, i.e.\ if $A$ is an Artinian local $k(\wp)$-algebra with
residue field $k(\wp)$ and if $\rho:\Gamma \ra \GL_n(A)$ is a
continuous representation lifting $\rho_\wp$, then there is a unique
continuous map of $k(\wp)$-algebras
$R_{\CO,\barrho}^\Box[1/l]_\wp^\wedge \ra A$ so that $\rho^\Box$
pushes forward to $\rho$. (Let $R$ denote the image of
$R^\Box_{\CO,\barrho}$ in $A/\gm_A$. Let $A^0$ denote the
$R$-subalgebra of $A$ generated by the matrix entries of the image of
$\rho$. Then $A^0$ is a complete noetherian local $\CO$-algebra with
residue field $\F$ and $\rho:\Gamma \ra \GL_n(A^0)$. The assertion
follows easily.) The map which associates a
cocycle $c\in Z^1(\Gamma, \ad \rho_\wp)$ to the lift $(1_n +c
\epsilon)\rho_{\wp}$ of $\rho_{\wp}$ defines an isomorphism between
$Z^1(\Gamma, \ad \rho_\wp)$ and the tangent space
$\Hom_{k(\wp)}(R_{\CO,\barrho}^\Box[1/l]_\wp^\wedge,k(\wp)[\epsilon]/\epsilon^2)$.
If $H^2(\Gamma, \ad
\rho_\wp)=(0)$ then $R_{\CO,\barrho}^\Box[1/l]$ is formally smooth at
$\wp$ (by the argument of Proposition 2 of \cite{MR1012172}, which can
be easily adapted to our current situation) of dimension 
\[ \dim_{k(\wp)} Z^1(\Gamma, \ad \rho_\wp) = n^2+\dim_{k(\wp)} H^1(\Gamma, \ad
\rho_\wp)-\dim_{k(\wp)} H^0(\Gamma, \ad \rho_\wp), \]
where we use continuous cohomology. (We learned these observations from Mark Kisin.)

Let $H$ denote the subgroup of $\GL_n(R_{\CO,\barrho}^\Box)$ consisting of elements which reduce modulo the maximal ideal to an element that centralizes the image of $\barrho$. If $h \in H$ then there is a unique continuous homomorphism
\[ \phi_h: R_{\CO,\barrho}^\Box \ra R_{\CO,\barrho}^\Box \]
such that $\rho^\square$ pushes forward to $h^{-1} \rho^\square h$. We have that
\[ \phi_g \circ \phi_h = \phi_{g\phi_g(h)}. \]
(We remark that after definition 2.2.1 of \cite{cht} this action is defined for elements of $1_n+M_n(\gm_{R_{\CO,\barrho}^\Box})$, but it is incorrectly stated that this defines an action of the group $1_n+M_n(\gm_{R_{\CO,\barrho}^\Box})$ on $R_{\CO,\barrho}^\Box$. This is not important in the rest of \cite{cht}.)

\begin{lem}\label{conj} Keep the notation of the previous paragraph. If $h \in H$ then $\phi_h$ fixes each irreducible component of $\Spec
R_{\CO,\barrho}^\Box[1/l]$. \end{lem}

\begin{proof} Suppose that $h \in H$ and that $\CO'$ is the ring of integers of a
finite extension of $L'/L$ in $\barQQ_l$, with maximal ideal $\lambda'$. Suppose
that $\rho:\Gamma \ra \GL_n(\CO')$ lifts $\barrho$.  Then $h^{-1}\rho
h$ also lifts $\barrho$. (We are using $h$ both for an element of $\GL_n(R_{\CO,\barrho}^\Box)$ and for its image in $\GL_n(\CO')$ under the map $R_{\CO,\barrho}^\Box \ra \CO'$ induced by $\rho$.) Recall (from Lemma \ref{ID1}) that $A=\CO'\langle s,t\rangle/(s\det(t1_n+(1-t)h) -1)$ with the $\lambda'$-adic topology is a
complete topological domain. We have
a continuous representation
\[ \trho= (t1_n+(1-t)h)^{-1} \rho (t1_n+(1-t)h): \Gamma \lra \GL_n(A). \]

Let $A^0$ denote the closed subalgebra of $A$ generated by the matrix entries of elements of
the image of $\trho$ and give it the subspace topology. Then 
\[ \trho: \Gamma \lra \GL_n(A^0), \]
and $\trho \bmod (\lambda'A \cap A^0)=\barrho$, and $\trho$ pushes forward to $\rho$ (resp.\ $h^{-1}\rho h$) under the continuous homomorphism
 $A^0 \ra \CO'$ induced by $t \mapsto 1$ (resp.\ $t \mapsto 0$).
We will show that  $A^0$ is a complete, noetherian local $\CO$-algebra
with residue field $\F$. It will follow that there is a natural map $R_{\CO,\barrho}^\Box \ra A^0$ through
which the maps $R_{\CO,\barrho}^\Box \ra \CO'$ corresponding to $\rho$ and $h^{-1}\rho h$ both factor.
As $A^0$ is a domain (being a sub-ring of $A$) we conclude that the points corresponding to
$\rho$ and $h^{-1}\rho h$ lie on the same irreducible component of $\Spec R^\Box_{\CO,\barrho}[1/l]$.
As any irreducible component contains an $\CO'$-point which lies on no other irreducible component
for some $\CO'$ as above (because such points are Zariski dense in
$\Spec R^\Box_{\CO,\barrho}[1/l]$), we
see that the lemma follows. (The referee remarks that it may be
helpful to think of this argument as an instance of homotopy.)

It remains to show that $A^0$ is a complete, noetherian, local $\CO$-algebra with residue field $\F$.
Let $\gamma_1,\dots,\gamma_r$ denote topological generators of $\Gamma$. Write 
\[ \trho(\gamma_i)=a_i+b_i, \] where $a_i \in \GL_n(\CO)$ lifts
$\barrho(\gamma_i)$ and where $b_i \in M_{n \times n}(\lambda' A)$.
Then $A^0$ is the closure of the $\CO$-subalgebra of $A$ generated by
the entries of the $b_i$.  As these entries are topologically nilpotent
in $A$ we get a continuous $\CO$-algebra homomorphism
\[ \CO[[X_{ijk}]]_{i=1,\dots,r; \,\, j,k=1,\dots,n} \lra A \]
which sends $X_{ijk}$ to the $(j,k)$-entry of $b_i$. Let $J$ denote the kernel. As $\CO[[X_{ijk}]]/J$
is compact (and $A$ is Hausdorff) this map is a topological isomorphism of $\CO[[X_{ijk}]]/J$ with its image in $A$ and this image is closed. Thus the image is just $A^0$ and we have $\CO[[X_{ijk}]]/J \iso
A^0$, so that $A^0$ is indeed a complete, noetherian, local $\CO$-algebra with residue field $\F$.
\end{proof}

Now suppose that 
\[ \barr: \Gamma \lra \CG_n(\F) \]
is a continuous homomorphism such that $\Gamma \onto
\CG_n/\CG^0_n$. Let $\Delta$ denote the kernel of $\Gamma \onto
\CG_n/\CG_n^0$ and suppose that $\breve{\barr}: \Delta \lra \GL_n(\F)$ is absolutely irreducible.
 Then there is a universal deformation
\[ r^\univ: \Gamma \lra \CG_n(R^\univ_{\CO,\barr}) \]
to a complete noetherian local $\CO$-algebra with residue field $\F$, 
where now we consider two liftings as equivalent deformations if they are conjugate. (See section 2.2 of \cite{cht}.) 

\begin{lem}\label{cdr} \begin{enumerate}
\item Suppose that $\Sigma \subset \Gamma$ has finite index, that $\Sigma$ is not contained in $\Delta$ and that $\breve{\barr}|_{\Delta \cap \Sigma}$ is absolutely irreducible. Then the natural 
map $R^\univ_{\CO,\barr|_{\Sigma}} \ra R^\univ_{\CO,\barr}$ induced by $r^\univ|_\Sigma$ makes $R^\univ_{\CO,\barr}$ a finitely generated $R^\univ_{\CO,\barr|_{\Sigma}}$-module.
\item Suppose that $s:\Gamma \ra \CG_m(\CO)$ is a continuous homomorphism such that $\Delta
= s^{-1}\CG_m^0(\CO)$ and $\breve{\bars} \otimes \breve{\barr}$ is absolutely irreducible. Then
the natural map $R^\univ_{\CO,\barr\otimes \bars} \ra R^\univ_{\CO,\barr}$ induced by $r^\univ\otimes s$ makes $R^\univ_{\CO,\barr}$ a finitely generated $R^\univ_{\CO,\barr\otimes \bars}$-module.
\item Suppose $I(\barr)$ is absolutely irreducible and that $\Sigma$
  is another open subgroup of index two in $\Gamma$ which does not
  contain $\ker I(\barr)$. Then the natural map
  $R^\univ_{\CO,\widehat{I(\barr)}_{\Sigma}} \ra R^\univ_{\CO,\barr}$
  induced by $\widehat{I(r^\univ)}_\Sigma$ makes $R^\univ_{\CO,\barr}$
  a finitely generated
  $R^\univ_{\CO,\widehat{I(\barr)}_{\Sigma}}$-module.\end{enumerate} \end{lem}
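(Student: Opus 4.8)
The plan is to reduce all three parts to a single boundedness statement about a ``fiber'' deformation ring. Recall (cf.\ the remarks preceding the lemma) that a local homomorphism $A\to B$ of complete noetherian local $\CO$-algebras with residue field $\F$ makes $B$ a finitely generated $A$-module precisely when $B/\m_A B$ is a finite-dimensional $\F$-vector space. In (1) take $A=R^\univ_{\CO,\barr|_{\Sigma}}$ and $B=R^\univ_{\CO,\barr}$; in (2) take $A=R^\univ_{\CO,\barr\otimes\bars}$ and $B=R^\univ_{\CO,\barr}$; in (3) take $A=R^\univ_{\CO,\widehat{I(\barr)}_{\Sigma}}$ and $B=R^\univ_{\CO,\barr}$. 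In each case $B/\m_A B$ is noetherian (being a quotient of the noetherian ring $B$), and it pro-represents the functor of deformations $r$ of $\barr$ to artinian local $\F$-algebras $C$ for which the induced object --- $r|_{\Sigma}$ in case (1), $r\otimes s$ in case (2), $\widehat{I(r)}_{\Sigma}$ in case (3) --- is equivalent to the constant (trivial) deformation of its reduction over $C$. So it suffices to prove that this constrained deformation ring has Krull dimension $0$.

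Suppose not. Then standard commutative algebra furnishes a complete discrete valuation ring $W$ of equal characteristic $l$ (so an $\F$-algebra, with residue field $\F'$ and fraction field $E$) together with a local homomorphism from the constrained ring to $W$ not factoring through $\F$ --- equivalently, a deformation $\rho_W\colon\Gamma\to\CG_n(W)$ of $\barr$ lying in the constrained locus but \emph{not} conjugate to the constant deformation $\barr$. I would derive a contradiction. Consider case (1): after conjugating we may assume $\rho_W|_{\Sigma}$ is literally the constant $\barr|_{\Sigma}$, so that $\breve\rho_W|_{\Delta\cap\Sigma}=\breve{\barr}|_{\Delta\cap\Sigma}$, a representation defined over $\F'$. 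Now $\breve\rho_W|_{\Delta}\otimes E$ is absolutely irreducible (it reduces to the absolutely irreducible $\breve{\barr}|_{\Delta}$) and its restriction to $\Delta\cap\Sigma$ is the base change of $\breve{\barr}|_{\Delta\cap\Sigma}$; applying Clifford theory to the finite-index subgroup $\Delta\cap\Sigma\le\Delta$, and using that every finite-order element of $E^\times$ already lies in $\F'^\times$ (the principal units $1+\m_W$ being torsion free in characteristic $l$), one sees that $\breve\rho_W|_{\Delta}\otimes E$ differs from the base change of $\breve{\barr}|_{\Delta}$ only by a twist by a finite-order character of $\Delta$ trivial on $\Delta\cap\Sigma$; comparing the $W$-lattices, and invoking the uniqueness up to homothety of Galois-stable lattices in a representation with absolutely irreducible reduction (Brauer--Nesbitt), one concludes that in fact $\breve\rho_W|_{\Delta}$ is conjugate over $W$ to the constant $\breve{\barr}|_{\Delta}$.

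Once $\breve\rho_W|_{\Delta}$ is constant, the multiplier character $\nu\circ\rho_W$ restricted to $\Delta$ is forced to be constant by the intertwining relation $\breve\rho_W^{\,\gamma_0}\cong\breve\rho_W^{\vee}\otimes(\nu\circ\rho_W)|_{\Delta}$ and absolute irreducibility, and then the value of $\rho_W$ at a single element $\gamma_0\in\Gamma\setminus\Delta$ --- hence $\rho_W$ itself --- is pinned down up to the finitely many ambiguities allowed by $\rho_W(\gamma_0)^2=\rho_W(\gamma_0^2)$ (pairing unique up to scalar, multiplier of $\gamma_0$ up to sign); all of these ambiguities are torsion and so trivial over the domain $W$. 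Hence $\rho_W$ is conjugate to the constant $\barr$, contradicting the choice of $\rho_W$. Cases (2) and (3) are handled by the same mechanism: in (2), triviality of $r\otimes s$ forces $\breve r|_{\Delta}\otimes\breve\bars$ to be conjugate to the constant, and since $\breve\bars$ is a \emph{fixed} representation and $\breve\bars\otimes\breve{\barr}$ (hence $\breve{\barr}$) is absolutely irreducible, cancelling the fixed tensor factor forces $\breve r|_{\Delta}$ to be constant, after which the $\CG_n$-data is pinned down as before; in (3), triviality of $\widehat{I(r)}_{\Sigma}$ forces $I(r)|_{\Sigma}$ to be trivial, and as $I(\breve{\barr})$ contains $\breve{\barr}$ as a diagonal block on the index-two subgroup $\Sigma$, the same argument pins down $\breve r|_{\Delta}$ and then the auxiliary data. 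In all three cases the constrained deformation ring has dimension $0$, hence is artinian, which is what we wanted.

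I expect the main obstacle to be the rigidity step in case (1). Since $\Sigma$, and hence $\Delta\cap\Sigma$, need not be normal in $\Gamma$, one cannot simply pass to a normal subgroup on which $\breve{\barr}$ stays absolutely irreducible, so the conclusion that $\breve\rho_W|_{\Delta}$ is already constant has to be obtained directly --- via the Clifford-theory-plus-structure-of-$E^\times$ argument sketched above, or alternatively by a Moret--Bailly/lattice-extension argument --- and one then has to transport rigidity of the $\GL_n$-part carefully to rigidity of the full $\CG_n$-valued deformation, i.e.\ of the multiplier character and the conjugate-self-duality pairing. The reductions of (2) and (3) to (1), and the preliminary reductions, are comparatively routine.
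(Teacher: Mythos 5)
Your overall architecture is sound: passing to the fiber ring $R^\univ_{\CO,\barr}\otimes_{R^\univ_{\CO,\cdot}}\F$, noting it is noetherian, and observing that finiteness over $\F$ is equivalent to Krull dimension $0$ is a legitimate reduction, and producing a non-constant $W$-point from a positive-dimensional complete local noetherian domain is correct commutative algebra. The difficulty, as you yourself flag, is the rigidity step, and I do not think the argument you sketch for it works.

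The crucial claim is that two absolutely irreducible representations of $\Delta$ (here $\breve\rho_W|_{\Delta}\otimes E$ and $\breve{\barr}|_{\Delta}\otimes E$) that agree on the finite-index subgroup $\Delta\cap\Sigma$ differ by a twist by a character of $\Delta$ trivial on $\Delta\cap\Sigma$. This is a genuine theorem of Clifford theory when $\Delta\cap\Sigma$ is normal in $\Delta$, but it is simply false for a general finite-index subgroup. For a bare-bones counterexample (without the ``same reduction'' constraint): the two $3$-dimensional irreducibles of $A_5$ both restrict to the unique irreducible $3$-dimensional representation of $A_4$, yet since $A_5$ is perfect they are not twists of one another. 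In your situation the extra constraint that both representations share a $W$-lattice with the same reduction is certainly relevant and the conclusion is in fact true (it has to be, since the lemma you are proving implies the fiber ring is artinian), but the Clifford-theory-plus-torsion-freeness argument you invoke does not prove it. Your closing paragraph acknowledges this is ``the main obstacle'' and gestures at alternatives, but neither alternative is carried out, so this is a gap, not a proof. A similar caveat applies to case (2): ``cancelling the fixed tensor factor'' is also a Clifford/Schur-type cancellation that needs justification in the non-normal setting, and in case (3) the statement that $I(\breve{\barr})$ contains $\breve{\barr}$ ``as a diagonal block on the index-two subgroup $\Sigma$'' is not right --- on $\Sigma\setminus(\Sigma\cap\Delta)$ the matrices $I(r)(\sigma)$ are block-antidiagonal, not block-diagonal.

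It is worth contrasting with what the paper actually does, which avoids the rigidity question altogether. One first checks that the image of $\Gamma$ in $\CG_n$ of the fiber ring is \emph{finite} (for part (1) this is easy --- the image of $\Sigma$ is conjugate to the finite group $\barr(\Sigma)$ and $[\Gamma:\Sigma]<\infty$; for parts (2) and (3) one has to argue a little more, cf.\ Lemma 3.2.1 of \cite{blggord}). Having a finite group of order $m$, the traces $\tr\breve r^\univ(\gamma)$ are all annihilated by a fixed polynomial $f(T)^a\in\F[T]$ whose roots are sums of $m$-th roots of unity. Finally one invokes Lemma 2.1.12 of \cite{cht}, which says these traces topologically generate the deformation ring over $\CO$, so the fiber ring is a quotient of $\F[T_1,\dots,T_m]/(f(T_1)^a,\dots,f(T_m)^a)$, a finite ring. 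This directly yields finiteness without ever analyzing the structure of a single DVR-valued lift. If you want to salvage your route, the way to prove the DVR rigidity is likewise via traces: a finite-order matrix over $W$ has all eigenvalues prime-to-$l$ roots of unity, hence (after enlarging $\F'$) its trace lies in $W\cap\overline{\F'}=\F'$; then the $\F'$-span of $\breve\rho_W(\Delta)$ is, by the non-degeneracy of the trace form, an $\F'$-form of $M_n(W)$, and Skolem--Noether conjugates it into $M_n(\F')$ by an element of $\ker(GL_n(W)\to GL_n(\F'))$, which one then checks must be scalar using the constraint on $\Delta\cap\Sigma$. That works, but it is not the Clifford argument you sketched.
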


\begin{proof}
  This is essentially an abstraction of Lemma 3.2.1 of
  \cite{blggord}. 
  
  Write $R$ for $R^\univ_{\CO,\barr|_{\Sigma}}$
  resp.\ $R^\univ_{\CO,\barr\otimes \bars}$
  resp.\ $R^\univ_{\CO,\widehat{I(\barr)}_{\Sigma}}$ and write $\gm$
  for the maximal ideal of $R$. We first verify  that the
  image of $\Gamma$ in $\CG_n(R^\univ_{\CO,\barr}/\gm
  R^\univ_{\CO,\barr})$ is finite. In the first case we use the inclusions
  \[ \ker (r^\univ \bmod \gm) \supset \ker (r^\univ|_\Sigma \bmod \gm)=\ker (\barr|_\Sigma); \] 
  in the second we use the inclusions
  \[ \ker( r^\univ \bmod \gm) \supset \ker( (r^\univ \otimes s) \bmod \gm) \cap \ker (s \bmod \gm) = \ker (\barr \otimes \bars) \cap \ker \bars; \]
  and in the third case the inclusions
  \[ \ker (r^\univ \bmod \gm) \supset \ker(\widehat{I(r^\univ)}_\Sigma \bmod \gm)=\ker \widehat{I(\barr)}_\Sigma. \]

  Let $m$ denote the order of the
  image of $\Gamma$ in $\CG_n(R^\univ_{\CO,\barr}/\gm
  R^\univ_{\CO,\barr})$, and let $\gamma_1,\dots,\gamma_m$ be elements of $\Gamma$ chosen so
  that their images in $\CG_n(R^\univ_{\CO,\barr}/\gm
  R^\univ_{\CO,\barr})$ exhaust the image of $\Gamma$. Let
\[ f(T) = \prod_{(\zeta_1,\dots,\zeta_n) \in \mu_m(\barFF)^n} (T-(\zeta_1+\dots+\zeta_n)) \in \F[T] \]
and let $A$ denote the maximal quotient of $\F[X_{i,j}]_{i,j=1,\dots,n}$ over which the $m^{th}$-power
of the matrix $(X_{i,j})$ is $1_n$.  If $\wp$ is a prime ideal of $A$ then all the roots of the characteristic
polynomial of $(X_{i,j})$ over $A_\wp/\wp$ are $m^{th}$ roots of unity and hence $f(\tr (X_{i,j}))=0$ in
$A/\wp \subset A_\wp/\wp$. Thus there is a positive integer $a$ such that $f(\tr (X_{i,j}))^a=0$ in $A$.
Then we get a map
\[ \begin{array}{rcl} \F[T_1,\dots,T_m]/(f(T_1)^a,\dots,f(T_m)^a) &\lra & R^\univ_{\CO,\barr}/\gm R^\univ_{\CO,\barr} \\ T_i & \longmapsto & \tr \breve{r}^\univ(\gamma_i). \end{array} \]
By Lemma 2.1.12 of \cite{cht} (which shows that $R^\univ_{\CO,\rbar}$
is topologically generated as a $\CO$-algebra by the $\tr r^\univ(\gamma_i)$) we see that this map has dense image. On the other hand the source has finite cardinality. We conclude that the map is surjective and that $R^\univ_{\CO,\barr}/\gm R^\univ_{\CO,\barr}$ is finite over $\F$. Hence by Nakayama's Lemma we conclude that that $R^\univ_{\CO,\barr}$ is finite over $R$, as desired.
\end{proof}

\subsection{Local theory: $l \neq p$.} \label{lnotp}{$\mbox{}$} \newline 
Continue to fix a rational prime $l$ and let $\CO$ denote the ring of integers 
of a finite extension $L$ of $\Q_l$ in $\barQQ_l$. Let $\lambda$ denote the maximal ideal of $\CO$ and
let $\F=\CO/\lambda$. However in this section we specialize our discussion to the case $\Gamma=G_K$, where $K/\Q_p$ is a finite extension and $p\neq l$. Thus $\barrho:G_K \ra \GL_n(\F)$ is continuous. Write $q$ for the order of the residue field of $K$.

In this case the 
tangent space to $R^\Box_{\CO,\barrho}[1/l]$ at a maximal ideal $\wp$
has dimension 
\[ \begin{array}{rl} & n^2+\dim_{k(\wp)} H^1(G_K, \ad
\rho_\wp)-\dim_{k(\wp)} H^0(G_K, \ad \rho_\wp) \\ = & n^2+\dim_{k(\wp)} H^2(G_K,\ad \rho_\wp) \\ =& n^2+\dim_{k(\wp)} H^0(G_K,(\ad \rho_\wp)(1)), \end{array} \]
by the local Euler characteristic formula for $\Q_l$-modules and local duality for $\Q_l$-modules. (The proof of
Lemma 9.7 of \cite{MR1992017} shows that the usual Euler
characteristic formula with finite coefficients implies the analogous
statement in the case where the coefficients are finite $\Q_l$-modules. Theorem 1.4.1
of \cite{MR1749177} provides a reference for local duality for
$\Q_l$-modules.) Moreover $R^\Box_{\CO,\barrho}[1/l]$ is formally smooth at a maximal ideal $\wp$ if  $H^0(G_K,(\ad \rho_\wp)(1))=(0)$.

We will call
a continuous representation $\rho:G_K \ra \GL_n(\barQQ_l)$ {\em robustly smooth} (resp.\ {\em smooth}) if $H^0(G_{K'},(\ad \rho_\wp)(1))=(0)$ for all finite extensions $K'/K$ (resp.\ for $K'=K$). Our next aim is to show that the set of closed points of 
$\Spec R_{\CO,\barrho}^\Box[1/l]$ which are robustly smooth is Zariski dense, which will imply that all irreducible components of  $\Spec R_{\CO,\barrho}^\Box[1/l]$ are generically formally smooth of dimension $n^2$. (The corresponding result for smooth points can be found in the proof of Theorem 2.1.6 of \cite{gee061} or  in \cite{suhhyun}, but our proof seems to be different even in this special case. This case is already sufficient to deduce that all irreducible components of  $\Spec R_{\CO,\barrho}^\Box[1/l]$ are generically formally smooth of dimension $n^2$.)

Define a partial order on $\barL^\times$ by $a\geq b$ if $a$ equals
$\sigma(b)\zeta q^m$ where $\sigma \in \Gal(\barL/L)$, where $\zeta$
is a root of unity and where $m \in \Z_{\geq 0}$. We will write $a
\approx b$ ($a$ `equivalent' to $b$) if $a \geq b$ and $b \geq a$; and we will write $a \sim b$ ($a$ `comparable' to $b$) 
if $a\geq b$ or $b \geq a$. These are both equivalence relations. Further we will write $a>b$ for $a \geq b$ but $a \not\approx b$. Choose $\phi \in W_K$ a lift of $\Frob_K$. If $V$ is a finite dimensional $L$-vector space with an action of $W_K$ with open kernel and if $a \in \barL^\times$ then we define $V((a))$ (resp.\ $V(a)$) to be the
$L$-subspace of $V$ such that $V((a)) \otimes_L \barL$ (resp.\ $V(a) \otimes_L  \barL$) is the sum of the $b$-generalized eigenspaces of $\phi$ in $V$ as $b$ runs over all elements of $\barL^\times$ with $a \approx b$ (resp.\ $a \sim b$). This is independent of the  choice of $\phi$. (If $\phi'$ is another choice then the actions of $\phi^m$ and $(\phi')^m$ on $V$ are equal for some $m \in \Z_{>0}$.) Thus $V(a)$ and $V((a))$ are $W_K$-invariant. We have decompositions
\[ V = \bigoplus V((a)) \]
where $a$ runs over $\barL^\times/\approx$, and
\[ V = \bigoplus V(a) \]
where $a$ runs over $\barL^\times/\sim$. We will say that $V$ has type $a$ if $V=V((a))$.

\begin{lem}\label{filter} Suppose that $(r,N)$ is a Weil--Deligne representation of $W_K$ on a finite dimensional $L$-vector space $V$. Then we can write
\[ V = \bigoplus_{i=1}^u \bigoplus_{j=1}^{s_i} V_{i,j} \]
where
\begin{itemize}
\item $V_{i,j}$ is invariant under $I_K$;
\item $N:V_{i,j} \iso V_{i,j+1}$ unless $j=s_i$ in which case $NV_{i,s_i}=(0)$;
\item $W_K V_{i,j} \subset V_{i,j} \oplus \bigoplus_{i'=1}^{i-1}
  \bigoplus_{j'} V_{i',j'}$and so we get an induced action of $W_K$ on $V_{i,j}$;
\item the action of $W_K$ on $(V_{i,j} \oplus \bigoplus_{i'=1}^{i-1}
  \bigoplus_{j'} V_{i',j'})/(\bigoplus_{i'=1}^{i-1}
  \bigoplus_{j'} V_{i',j'})$ is irreducible;
\item $V_{i,j}$ has type $a_iq^{1-j}$ for some $a_i \in \barQQ_l^\times$;
\item and if $i'<i$ then $a_i \not> a_{i'}$.\end{itemize} \end{lem}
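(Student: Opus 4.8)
The plan is to construct the decomposition by induction on $\dim V$, peeling off one Weil--Deligne subrepresentation at a time from the ``bottom''. First I would recall the standard structure of a Weil--Deligne representation: after forgetting $N$, the semisimplification of $r|_{I_K}$ is a sum of irreducible $I_K$-representations permuted by $\phi$, and $N$ is a nilpotent operator with $\phi N \phi^{-1} = q^{-1} N$ (or $qN$, depending on normalization conventions; the point is that $N$ shifts the $\phi$-type by a power of $q$). This last relation is exactly what forces the ``type $a_i q^{1-j}$'' pattern along an $N$-string: if $V_{i,1}$ has $\phi$-type $a_i$, then $N$ carries it isomorphically onto a subspace of type $a_i q^{-1}$, and so on, which matches the $q^{1-j}$ bookkeeping once one fixes the sign convention in the definition of $\geq$ (recall $a \geq b$ allows multiplication by $q^m$ with $m \geq 0$, so the types are decreasing in the $\geq$ order as $j$ grows). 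The blocks indexed by $j$ within a fixed $i$ are the successive pieces of a single $N$-string, i.e. a single Jordan-type filtration for $N$ restricted to an irreducible constituent.

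The key step is to produce the ordering on the $i$'s. I would decompose $V$ under the action of $W_K$ into its $\approx$-isotypic pieces $V((a))$ as in the paragraph preceding the lemma; $N$ moves $V((a))$ into $V((aq^{-1}))$ (up to the sign convention), so $N$ is compatible with this grading. Choose $a_1$ to be a type $a$ appearing in $V$ that is \emph{maximal} for the relation $>$ among types occurring; then take $V_{1,\bullet}$ to be an irreducible $W_K$-sub of the span of $V((a_1 q^{1-j}))$ together with its $N$-string, chosen so that it actually sits as a genuine subrepresentation (here one uses that an irreducible $I_K$-constituent on which $N$ acts as $0$ generates, under $\phi$ and $N^{-1}$-preimages, a $W_K$-stable $N$-stable subspace, and inside it an irreducible $W_K$-sub can be found because $W_K/\!\ker$ acts through a finite group extension of the cyclic $\langle\phi\rangle$ — more carefully, one passes to the subquotient and uses that finite-image representations of $W_K$ are semisimple in characteristic $0$). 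Having split off $V_{1,\bullet}$ as a subrepresentation — crucially a \emph{sub}, not merely a subquotient, so that the ``$W_K V_{i,j} \subset V_{i,j} \oplus (\text{lower})$'' condition is automatic — I would apply the inductive hypothesis to $V/V_{1,\bullet}$ and lift the resulting flag back to $V$. The maximality choice of $a_1$ guarantees $a_i \not> a_1$ for all later $i$, which is the last bulleted condition; one then reindexes so that the induction produces a globally consistent order.

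The main obstacle, and the step I would spend the most care on, is arranging that each block $V_{i,j} \oplus (\text{strictly lower blocks})$ is an honest $W_K$-subrepresentation whose successive quotient is $W_K$-irreducible, \emph{simultaneously} with the $N$-string structure and the type constraints — in other words, choosing the ``bottom'' irreducible piece so that it is stable under both $\phi$ and $N$ and its preimages, and is a subobject rather than a quotient. The tension is that $W_K$-irreducibility naturally produces quotients, while $N$-strings naturally produce subobjects; the resolution is to work in the category of Weil--Deligne representations (pairs $(r,N)$), note it is not semisimple but \emph{is} abelian with a reasonable notion of socle, and pick $V_{1,\bullet}$ inside the socle, using the maximality of the type $a_1$ to see that the relevant piece of the socle has the asserted $N$-string shape. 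A secondary, purely bookkeeping obstacle is tracking the sign convention in ``type $a_i q^{1-j}$'' against the normalization $\phi N \phi^{-1} = qN$ versus $q^{-1}N$ and against the direction of the partial order $\geq$; I would fix this once at the outset and then never revisit it. Everything else — that $N$ is an isomorphism $V_{i,j} \iso V_{i,j+1}$ until it kills $V_{i,s_i}$, and that the $V_{i,j}$ are $I_K$-stable — is then automatic from the construction.
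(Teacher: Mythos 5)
Your overall plan --- grade by type, single out a maximal type, peel off an $N$-string, induct --- has the right shape and is essentially what the paper does, but the two steps you yourself flag as the crux are exactly where the proposal does not go through. The socle resolution is wrong: in the abelian category of Weil--Deligne representations a simple object necessarily has $N=0$ (otherwise $\ker N$ is a proper nonzero subobject), so the socle is killed by $N$, carries no nontrivial $N$-strings, and typically misses the maximal type. For $\Spp_2(W)$ with $W$ of type $a$ the socle is $W(1)$ of type $aq^{-1}$, while the maximal type $a$ lives only in a quotient. Choosing $V_{1,\bullet}$ inside the socle therefore cannot give a block whose top $V_{1,1}$ has type $a_1$. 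Your alternative --- start from an $N=0$ constituent, generate under $\phi$ and $N^{-1}$-preimages, then take a $W_K$-sub inside --- is circular: any $W_K$-sub of the generated Weil--Deligne subobject again sits at the socle end. The move that works (implicit in the paper's step $t=1$) is to take $\barV_{1,1}$ an irreducible $W_K$-submodule of the graded piece $V((a_1))$; since $a_1$ is maximal this piece is $W_K$-invariant and disjoint from the image of $N$, and $V_1:=\bigoplus_{j\geq 0}N^j\barV_{1,1}$ is a genuine Weil--Deligne subobject with exactly the asserted $N$-string shape.

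Second and more seriously, ``lift the resulting flag back to $V$'' conceals all the remaining content. Lifting $\barV_{i,1}\subset V/V_1$ to an $I_K$-submodule $V_{i,1}^0\subset V((a_i))$ and setting $V_{i,j}:=N^{j-1}V_{i,1}^0$ only gives $N^{s_i}V_{i,1}^0\subset V_1$, not $N^{s_i}V_{i,1}^0=0$, so the second bullet of the lemma fails for the naive lift. The heart of the paper's argument is a correction: $N^{s_i}V_{i,1}^0$ lands in the previously constructed blocks; because those already carry an exact $N$-string structure, each $v\in V_{i,1}^0$ has a unique error term $\sum_{i'<i}v_{i'}$ with $v_{i'}\in V_{i',\,j_{i'}-s_i}$ and $N^{s_i}v=\sum N^{s_i}v_{i'}$; one then replaces $V_{i,1}^0$ by $\{\,v-\sum_{i'<i}v_{i'}\,\}$. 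For the indices $j_{i'}-s_i$ to be positive one needs $a_{i'}\geq a_i$ for $i'<i$, which uses the preliminary reduction to $V=V(b)$ (making the order on types occurring in $V$ total) --- a reduction your proposal also omits. Without this correction step there is no reason a choice of lift should make the $N$-strings terminate, so the proposal as written does not deliver the claimed direct sum decomposition.
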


\begin{proof}
We may suppose that $V=V(b)$ for some $b$ (because $N$ must take any $V(b)$ to itself). 
We will construct the $V_{i,j}$ by recursion on $i$. Suppose that we have constructed $V_{i,j}$ for $i<t$. Choose $a_t \in \barL^\times$ such that 
\[ \left( V/ \bigoplus_{i=1}^{t-1} \bigoplus_j V_{i,j}\right) ((a_t)) \neq (0) \]
and such that if $a>a_t$ then
\[ \left( V/ \bigoplus_{i=1}^{t-1} \bigoplus_j V_{i,j}\right) ((a))= (0) \]
Also choose an irreducible $W_K$-submodule 
\[ \barV_{t,1} \subset \left( V/ \bigoplus_{i=1}^{t-1} \bigoplus_j V_{i,j}\right) ((a_t)) \]
and choose $s_t$ minimal such that $N^{s_t}\barV_{t,1}=(0)$. Lift $\barV_{t,1}$  to an $I_K$-submodule $V_{t,1}^0 \subset V((a_t))$. Then $N^{s_t} V_{t,1}^0 \subset \bigoplus_{i=1}^{t-1} \bigoplus_j V_{i,j}$. For each $i<t$ choose $j_i \in \Z_{> s_t}$ such that $a_t q^{-s_t} \approx a_i q^{1-j_i}$. (To see that $j_i > s_t$ we are using the fact that for $i \leq t$ we have $a_i \geq a_t$.) Then
\[ N^{s_t} V_{t,1}^0 \subset \bigoplus_{i<t} V_{i,j_i}. \]
Thus if $v \in V_{t,1}^0$ we can write
\[ N^{s_t} v = \sum_{i<t} N^{s_t} v_i \]
for unique elements $v_i \in V_{i,j_i-s_t}$. Set $V_{t,1}$ to be the set of 
\[ v - \sum_{i<t} v_i. \]
We see that $V_{t,1} \subset V((a_t))$ is a $\barQQ_l$-sub-vector space lifting $\barV_{t,1}$, which is $I_K$-invariant and satisfies $N^{s_t}V_{t,1}=(0)$. Set $V_{t,j}=N^{j-1}V_{t,1}$. It is not hard to see that these $V_{i,j}$ have all the desired properties.
\end{proof}

We remark that if we define an increasing filtration on $V$ by 
\[ \Fil_i V = \bigoplus_{i' \leq i} \bigoplus_j V_{i,j}\]
then 
\[ V^{F-\semis} \cong \bigoplus_{i=1}^{u} \gr_i V. \]

\begin{lem}\label{gen} \begin{enumerate}
\item Suppose $\rho:G_K \ra \GL_n(\barQQ_l)$ is a continuous representation; that $\imath:\barQQ_l \iso \C$ and that $\pi$ is an irreducible smooth representation of $\GL_n(K)$ over $\C$ with $\imath\WD(\rho)^{F-\semis}\cong \rec_K(\pi)$. If $\pi$ is generic then $\rho$ is smooth.
\item The closed points $\wp$ in $\Spec R_{\CO,\barrho}^\Box[1/l]$  for which $\rho_\wp$ is robustly smooth are Zariski dense.
\end{enumerate} \end{lem}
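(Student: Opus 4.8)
\emph{Part (1).} The plan is to pass to Weil--Deligne representations and deduce the vanishing from the classification of generic representations. Since $l\neq p$, Grothendieck's quasi-unipotence theorem identifies $H^0(G_K,(\ad\rho)(1))$ with $\Hom_{\WD}(\WD(\rho),\WD(\rho)(1))$, the space of $W_K$-equivariant linear maps of the underlying spaces that commute with the monodromy operators (here $\epsilon_l|_{W_K}$ is unramified with $\epsilon_l(\Frob_K)=q^{-1}$, so $(1)$ merely rescales the Frobenius action). First I would reduce to the Frobenius-semisimple case: writing $\phi=\phi_{\mathrm{ss}}\phi_u$ for the Jordan decomposition of a Frobenius lift and looking at the semisimple part of $\operatorname{Ad}(\phi)$ on $\End$, any such map commutes not just with $\phi$ but already with $\phi_{\mathrm{ss}}$ (in the twisted sense), so the identity on underlying spaces embeds $\Hom_{\WD}(\WD(\rho),\WD(\rho)(1))$ into $\Hom_{\WD}(\WD(\rho)^{F-\semis},\WD(\rho)^{F-\semis}(1))$, and it is enough to show the latter vanishes. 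Now $\imath\WD(\rho)^{F-\semis}\cong\rec_K(\pi)$ decomposes by Bernstein--Zelevinsky as $\bigoplus_i\Sp(s_i)\otimes\tau_i$, a sum of Weil--Deligne representations of segments, and by Zelevinsky's irreducibility criterion $\pi$ is generic exactly when these segments are pairwise unlinked. The remaining step is the combinatorial computation that $\Hom_{\WD}(\Sp(s_i)\otimes\tau_i,(\Sp(s_j)\otimes\tau_j)(1))$ is nonzero if and only if the two segments are linked (for a suitable ordering), the diagonal terms $i=j$ never contributing because $\sigma\otimes|\det|^k\not\cong\sigma$ for a supercuspidal $\sigma$ and $k\neq0$ (central characters). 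Hence genericity forces this $\Hom$ to vanish, so $H^0(G_K,(\ad\rho)(1))=0$ and $\rho$ is smooth. One could instead run this directly on the decomposition of $\WD(\rho)$ furnished by Lemma \ref{filter}, whose ordering of the $V_{i,j}$ is tailored to exactly this situation.

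\emph{Part (2).} Fix an irreducible component $Z$ of $\Spec R_{\CO,\barrho}^\Box[1/l]$; I claim the robustly smooth points of $Z$ are Zariski dense. For each of the (countably many) finite extensions $K'/K$ inside $\barK$, the locus $Z_{K'}=\{\wp\in Z:H^0(G_{K'},(\ad\rho_\wp)(1))=0\}$ is Zariski open in $Z$ by upper semicontinuity of $\dim H^0(G_{K'},(\ad\rho_\wp)(1))$, and the robustly smooth locus of $Z$ is $\bigcap_{K'}Z_{K'}$. Granting that each $Z_{K'}$ is nonempty, $Z\setminus\bigcap_{K'}Z_{K'}=\bigcup_{K'}(Z\setminus Z_{K'})$ is a countable union of proper closed subsets of $Z$; since $\barQQ_l$ is uncountable this cannot be all of $Z$, and in fact $\bigcap_{K'}Z_{K'}$ contains a Zariski dense set of closed points. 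So everything comes down to showing $Z_{K'}\neq\emptyset$ for each $K'$.

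For this one uses that, since $l\neq p$, the inertial type is constant along $Z$, so the only freedom in $\WD(\rho_\wp)$ for $\wp\in Z$ is in the Frobenius action. Starting from a point of $Z$ that is a smooth point of $\Spec R_{\CO,\barrho}^\Box[1/l]$ and twisting the indecomposable blocks by independent unramified characters trivial modulo the maximal ideal produces a connected family of deformations, all lying on $Z$, along which the Frobenius eigenvalues of distinct blocks are moved into completely general position. As in the computation of part (1), a nonzero class in $H^0(G_{K'},(\ad\rho_\wp)(1))$ forces a relation $\alpha/\alpha'=\zeta\, q_{K'}^{\,k}$, with $\zeta$ a root of unity and $k\in\Z$, between Frobenius eigenvalues $\alpha,\alpha'$ of $\WD(\rho_\wp)$ lying in \emph{different} blocks (relations within one block being incompatible with commuting with $N$); since there are only countably many such conditions they can all be avoided simultaneously, producing robustly smooth points on $Z$ --- equivalently, by part (1) applied over each $K'$, points with $\rho_\wp|_{G_{K'}}$ smooth. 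The main obstacle is precisely this nonemptiness step: one has to know that the inertial type is locally constant on $\Spec R_{\CO,\barrho}^\Box[1/l]$ and that moving the Frobenius eigenvalues by unramified twists does not leave the component $Z$, so that the coincidences producing a nonzero $H^0$ over the various $K'$ can indeed be avoided on a Zariski dense subset of $Z$; in part (1) the only delicate point is the bookkeeping matching Zelevinsky linkage with non-vanishing of the relevant $\Hom$, together with the reduction to the Frobenius-semisimple case.
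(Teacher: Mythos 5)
\emph{Part (1).} Your route --- pass to $\WD(\rho)^{F-\semis}$ via the Jordan decomposition of a Frobenius lift, then compute with the segment decomposition and linkage --- is a valid alternative to the paper's. The reduction step is correct: a vector $f$ with $\operatorname{Ad}(\phi)(f)=qf$ automatically satisfies $\operatorname{Ad}(\phi_{\mathrm{ss}})(f)=qf$ by uniqueness of the Jordan decomposition, so a map of Weil--Deligne representations $\WD(\rho)\to\WD(\rho)(1)$ is also one for the Frobenius-semisimplifications. The paper instead uses Lemma \ref{filter} to put a $G_K$-stable filtration on $\rho$ itself whose graded pieces $\rho_i$ realize the segments, and then runs the linkage computation on the pieces $\Hom(\rho_i,\rho_j(1))$ using left exactness of $H^0$. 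The content is the same; yours swaps a filtration for a Jordan decomposition.

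\emph{Part (2).} Your nonemptiness step is essentially the paper's: twist the blocks by independent unramified characters with trivial reduction, observe the family stays on a fixed component by a connectedness/domain argument, and note that robustly smooth points appear for parameters avoiding some coincidences. Where you diverge --- and where there is a genuine gap --- is in promoting nonemptiness to density. You write the robustly smooth locus of an irreducible component $Z$ as a countable intersection $\bigcap_{K'}Z_{K'}$ of nonempty opens and assert, from uncountability of $\barQQ_l$, that the intersection ``contains a Zariski dense set of closed points.'' That inference is not automatic here: $\Spec R^\Box_{\CO,\barrho}[1/l]$ is a localization of a complete local ring, not a finite-type scheme over a field, so the familiar ``countably many proper subvarieties cannot cover all $\bar{k}$-points'' argument does not apply off the shelf. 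At a minimum one needs a Jacobson-type statement for $R^\Box_{\CO,\barrho}[1/l]$ (so that nonempty locally closed subsets contain closed points), which is true but nontrivial and which you neither cite nor prove; and even granting it, turning ``nonempty'' into ``dense'' requires iterating the argument against every proper closed subset. The paper sidesteps all of this with two observations you should absorb. First, any root of unity $\zeta$ occurring in a dangerous relation $q\alpha\zeta(1+l^M x_i)=\beta(1+l^M x_j)$ between Frobenius eigenvalues must lie in the fixed finite extension $L''$ (the compositum of extensions of $L'$ of degree $\le n$), so only \emph{finitely} many conditions on the twist parameters $x$ need to be avoided, not countably many; this removes the need for any Baire-style count. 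Second, the construction is carried out starting from an \emph{arbitrary} closed point $\wp$, and the resulting family lives on the spectrum of a domain $A^0$ mapping to $R^\Box_{\CO,\barrho}$ with $\wp$ in its image; from this one reads off directly that $\wp$ lies in the closure of the robustly smooth $\rho_x$, giving density uniformly across all components with no uncountability argument and no need to first single out a component or a smooth point on it.
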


\begin{proof}
For the first part write $\pi = \Sp_{s_1}(\pi_1)\boxplus \dots \boxplus
\Sp_{s_t}(\pi_t)$ for some supercuspidal representations $\pi_i$ of
$\GL_{n_i}(K)$ and positive integers $s_i,n_i$ with $\sum s_i n_i
=n$. (We are using the notation of \cite{ht}.)  Then $\rho$ has a filtration with graded pieces $\rho_{i}$ satisfying $\imath \WD(\rho_{i})^{F-\semis}=\rec_K(\Sp_{s_i}(\pi_i))$, possibly after reordering the
$i$'s. Thus $(\ad \rho)(1)$ has a filtration with graded pieces $\Hom( \rho_{i}, \rho_{j}(1))$. If this
had non-zero invariants, then $\pi_i \cong \pi_j \otimes |\det|^m$ for some
$\max\{1, 1+s_j-s_i\} \leq m \leq s_j$.
Thus $(\pi_i,s_i)$ and $(\pi_j,s_j)$ are linked, contradicting the assumption
that $\pi$ is generic (see page 36 of \cite{ht}).

For the second part suppose that $\wp$ is a closed point of $\Spec R^\Box_{\CO,\barrho}[1/l]$. Set $\CO'$ equal to the image of $R^\Box_{\CO,\barrho}$ in $L'=R^\Box_{\CO,\barrho}[1/l]/\wp$ and let $\lambda'$ denote the maximal ideal of $\CO'$. By Lemma \ref{filter} we can find a decomposition $(L')^n = \bigoplus_{i=1}^u V_i$ such that
\begin{itemize}
\item for each $i$ the sub-space $V_i$ is invariant under $I_K$;
\item for each $i$ the sub-space $\Fil_i V= \bigoplus_{i'=1}^i V_{i'}$ is invariant under $G_K$;
\item and for each $i$ we have $\WD(\gr_i  V) = \Spp_{s_i} (W_i)$, where $W_i$ has some type $a_i$.
\end{itemize}
(By $\Spp_s(W)$ we mean the Weil--Deligne representation of $W_K$ whose
underlying representation of $W_K$ is $W \oplus W(1) \oplus \dots \oplus W(s-1)$ and where $N:W(i) \iso W(i+1)$ for $i=0,\dots,s-2$.)
Choose $M\in \Z_{>0}$ so that
\[ \bigoplus_{i=1}^u ((\CO')^n \cap V_i) \supset (l^{M-1} \CO')^n. \]
Let 
\[ A \in \ker (\GL_n(\CO'[[X_1,\dots,X_u]]) \lra \GL_n(\F)) \]
be the unique element which preserves each $(V_i \cap (\CO')^n) \otimes_{\CO'} \CO'[[X_1,\dots,X_u]]$ and acts on it by multiplication by $(1+l^M X_i)$. Note that $A$ commutes with $\rho_\wp(I_K)$. Then there is a unique continuous representation
\[ \rho: G_K \lra \GL_n(\CO'[[X_1,\dots,X_u]]) \]
such that $\rho|_{I_K}=\rho_\wp|_{I_K} \otimes_{\CO'} \CO'[[X_1,\dots,X_u]]$ and such that for any lift $\phi$ of $\Frob_K$ to $W_K$ we have $\rho(\phi)=\rho_\wp(\phi) A$. Then $\rho$ is a lift of $\barrho$. 
If $x \in (\lambda')^u$ write $\rho_x$ for $\rho \bmod
(X_1-x_1,\dots,X_u-x_u)$. Note that $\rho_0=\rho_\wp$. We will show
that for (Zariski) generic $x$ that $\rho_x$ is robustly smooth and
the second part of the lemma will follow. (Note that if $0 \neq f \in
\CO'[[X_1,\dots,X_u]]$ then $f$ can not vanish on all of
$(\lambda')^u$, by, for instance, Lemma 3.1 of \cite{blght}.)

If $y \in (\CO')^\times$ let $\nu_y:G_K/I_K \ra (\CO')^\times$ be the unramified character taking $\Frob_K$ to $y$. Then $(\ad \rho_x)(1)$ has a filtration with graded pieces 
\[ \Hom(V_i, V_j(\nu_{(1+l^Mx_j)/q(1+l^Mx_i)})).\]
Note that if $i=j$ then 
\[ \Hom_{G_{K'}}(V_i,V_i(\nu_{1/q}))=(0) \]
for any finite $K'/K$, because 
\[ \Hom_{W_{K'}}(W_i, W_i(\nu_{1/q^j})) =(0) \]
for $j=1,\dots,s_j+1$ (because, in turn, $W_i$ and $W_i(\nu_{1/q^j})$ will have different types). So it remains to show that for general $x$ we will also have  
\[ \Hom_{G_{K'}}(V_i, V_j(\nu_{(1+l^Mx_j)/q(1+l^Mx_i)}))=(0) \]
for all $i \neq j$ and all finite $K'/K$. Let $\phi \in W_K$ denote a Frobenius lift and let $L''$ denote the compositum of all extensions of $L'$ of degree less than or equal to $n$. Then $L''/L'$ is finite. It will do to choose $(x_i) \in (\lambda')^u$ so that if $i \neq j$ and if $\alpha$ (resp.\ $\beta$) is an eigenvalue of $\phi$ on $V_i$ (resp.\ $V_j$) and if $\zeta$ is a root of unity then $q \alpha \zeta (1+l^Mx_i) \neq \beta (1+l^Mx_j)$. However if such an equality were to hold then $\zeta \in L''$. As $L''$ contains only finitely many roots of unity, the $x_i$'s need only satisfy finitely many inequalities, as desired.
\end{proof}

Suppose that $\CC$ is a set of irreducible components of $\Spec R^\Box_{\CO,\barrho}[1/l]$ and let $R^\Box_{\CO,\barrho,\CC}$ denote the maximal quotient of $R^\Box_{\CO,\barrho}$, which is reduced, $l$-torsion free and has
$\Spec R^\Box_{\CO,\barrho,\CC}[1/l]$ supported on the components in $\CC$. Also
let $\calD_\CC$ denote the set of liftings of $\barrho$ to complete local noetherian $\CO$-algebras $R$ with residue field $\F$ such that the induced map $R^\Box_{\CO,\barrho}
\ra R$ factors through $R^\Box_{\CO,\barrho, \CC}$. By Lemma \ref{conj} above and Lemma 3.2 of \cite{blght} we see that $\calD_\CC$ is a deformation problem in the sense of Definition 2.2.2 of \cite{cht}. 

If $K'/K$ is finite and Galois we will let
$R^\Box_{\CO,\barrho,K'-\nr}$ denote the maximal quotient of
$R^\Box_{\CO,\barrho}$ over which $\rho^\Box(I_{K'}) = \{ 1_n
\}$.
\begin{lem}The ring  $R^\Box_{\CO,\barrho,K'-\nr}[1/l]$ is either the zero ring
or is formally smooth of dimension $n^2$.
  
\end{lem}
\begin{proof}
 It suffices to show that 
  for any
  maximal ideal $\wp$ of  $R^\Box_{\CO,\barrho,K'-\nr}[1/l]$ we have
  \[ H^2(\Gal((K')^\nr/K), \ad \rho_\wp)=(0) \]
  and
  \[ \dim_{k(\wp)} H^1(\Gal((K')^\nr/K), \ad \rho_\wp)=\dim_{k(\wp)} H^0(\Gal((K')^\nr/K), \ad \rho_\wp). \]
  However
  \[ \begin{array}{rcl} H^i(\Gal((K')^\nr/K), \ad \rho_\wp)&=&H^i(G_K/I_K, (\ad \rho_\wp)^{I_{(K')^\nr/K}})\\ &=& \left\{ \begin{array}{ll} (\ad \rho_\wp)^{G_K} &{\rm if}\,\,\, i=0 \\ ((\ad \rho_\wp)^{I_K})_{G_K/I_K}& {\rm if}\,\,\, i=1 \\ (0) & {\rm otherwise}.\end{array} \right. \end{array}\]
  The lemma follows.
\end{proof}
 Thus $R^\Box_{\CO,\barrho, K'-\nr}[1/l]=R^\Box_{\CO,\barrho,\CC_{K'-\nr}}[1/l]$ for some finite set of components $\CC_{K'-\nr}$ of $\Spec R^\Box_{\CO,\barrho}$. Let $\CC_\pnr$ denote the union of $\CC_{K'-\nr}$ over all finite Galois extensions $K'/K$ and set $R^\Box_{\CO,\barrho,\pnr}=R^\Box_{\CO,\barrho,\CC_\pnr}$. A $\barQQ_l$-point of $R^\Box_{\CO,\barrho}$ factors through $R^\Box_{\CO,\barrho,\pnr}$ if and only if it is potentially unramified. The ring
$R^\Box_{\CO,\barrho,\pnr}[1/l]$ is either the zero ring or is formally smooth of dimension $n^2$. 

For $i=1,2$, let 
\[ \rho_i: G_K \lra \GL_n(\CO_{\barQQ_l}) \]
be a continuous representation. 
We say that $\rho_1$ {\em connects to} $\rho_2$, which we denote $\rho_1
\sim \rho_2$, if and only if 
\begin{itemize}
\item the reduction $\barrho_1=\rho_1 \bmod \gm_{\barQQ_l}$ is equivalent to the reduction $\barrho_2= \rho_2 \bmod \gm_{\barQQ_l}$, and
\item $\rho_1$ and $\rho_2$ define points on a common irreducible component of $\Spec (R_{\barrho_1}^\Box \otimes \barQQ_l)$.
\end{itemize}
We say that $\rho_1$ {\em strongly connects to} $\rho_2$, which we write $\rho_1 \leadsto \rho_2$, if $\rho_1
\sim \rho_2$ and $\rho_1$ lies on a unique irreducible component of $\Spec (R_{\barrho_1}^\Box \otimes \barQQ_l)$. 

We make the following remarks.
\begin{enumerate}
\item By Lemma \ref{conj} the relations $\rho_1 \sim \rho_2$ and $\rho_1 \leadsto \rho_2$ do not depend 
on the equivalence chosen between the reductions $\barrho_1$ and $\barrho_2$, nor on the $\GL_n(\CO_{\barQQ_l})$-conjugacy class of $\rho_1$ or $\rho_2$.
\item `Connects' is a symmetric relationship, but `strongly connects' may not be. 
\item `Strongly connects' is a transitive relationship, whereas 
`connects' may not be. 
\item If $\rho_1 \sim \rho_2$ and $\rho_2 \leadsto \rho_3$ then $\rho_1 \sim \rho_3$. 
\item If $\rho_1 \sim \rho_2$ and $H^0(G_K, (\ad \rho_1)(1))=(0)$ then $\rho_1 \leadsto \rho_2$.
\item\label{locconst} Write $\WD(\rho_i)=(r_i,N_i)$. If $\rho_1 \sim \rho_2$ then
  $r_1|_{I_K} \cong r_2|_{I_K}$. If $\rho_1\leadsto \rho_2$ and
  $\rho_2\leadsto \rho_1$ then $(r_1|_{I_K},N_1) \cong
  (r_2|_{I_K},N_2)$. \item If $\rho_1$ and $\rho_2$ are unramified and have the same reduction then $\rho_1 \sim \rho_2$.
\item If $K'/K$ is a finite extension and $\rho_1 \sim \rho_2$ then $\rho_1|_{G_{K'}} \sim \rho_2|_{G_{K'}}$.
\item If $\rho_1 \sim \rho_2$ and $\rho_1' \sim \rho_2'$ then $\rho_1\oplus \rho_1'  \sim \rho_2 \oplus \rho_2'$ and $\rho_1\otimes \rho_1'  \sim \rho_2 \otimes \rho_2'$ and $\rho_1^\vee \sim \rho_2^\vee$.
\item\label{twis} If $\mu:G_K \ra \barQQ_l^\times$ is a continuous character and if $\rho_1 \leadsto \rho_2$ then $\rho_1^\vee \leadsto \rho_2^\vee$ and 
$\rho_1 \otimes \mu \leadsto \rho_2 \otimes \mu$. 
 \item\label{nrtwist} If $\mu:G_K \ra \barQQ_l^\times$ is a continuous unramified
  character with $\mubar=1$ then $\rho_1 \sim \rho_1 \otimes \mu$.
\item\label{dia} Suppose that $\barrho_1$ is semisimple and let $\Fil^i$ be an invariant decreasing filtration on $\rho_1$ by $\CO_{\barQQ_l}$-direct summands, then $\rho_1 \sim \bigoplus_i \gr^i \rho_1$. \end{enumerate}

Two of these assertions (\ref{locconst}) and (\ref{dia}) require proof, so we separate them out into lemmas. The first was proved in \cite{suhhyun}, but as this is not yet easily available we give a proof here.

\begin{lem}[Choi] Suppose that $\barrho:G_K \lra \GL_n(\F)$. 
\begin{enumerate}
\item If $\wp_1$ and $\wp_2$ are two maximal ideals on the same connected component of $\Spec R_{\barrho}^\Box \otimes \barQQ_l$ then 
\[ (\rho^\Box \bmod \wp_1)|_{I_K}^\semis \cong (\rho^\Box \bmod \wp_2)|_{I_K}^\semis. \] 
\item Suppose that $\wp_1$ and $\wp_2$ are two maximal ideals lying on the same irreducible component of $\Spec R_{\barrho}^\Box \otimes \barQQ_l$, and that neither $\wp_1$ nor $\wp_2$ lie on any other irreducible component of $\Spec R_{\barrho}^\Box \otimes \barQQ_l$. Then 
\[ (\rho^\Box \bmod \wp_1)|_{I_K} \cong (\rho^\Box \bmod \wp_2)|_{I_K}. \] 
\end{enumerate} \end{lem}

\begin{proof}
Let $I_K^{(l)}$ denote the unique Sylow-pro-$l$-complement in $I_K$, which is a closed normal subgroup of $G_K$. Let $H_1=\ker \barrho|_{I_K^{(l)}}$. Then $H_1$ is also a closed normal subgroup of $G_K$, and $\rho^\Box$ factors through $G_K/H_1$. Then $(\ker \barrho|_{I_K})/H_1 \cong \Z_l$. Let $\varphi \in G_K$ denote a lift of $\Frob_K$ and let $H_0$ denote the unique closed subgroup of $\ker \barrho|_{I_K}$ which contains $H_1$ and satisfies
\[ H_0/H_1 = (\varphi^{n!} -1) ((\ker \barrho|_{I_K})/H_1). \]
We see that $H_0$ is normal in $G_K$ and that $H_0/H_1 \cong \Z_l$ and that $I_K/H_0$ is finite. We will let $\sigma$ denote a generator of $H_0/H_1$.

We will first show that for any prime ideal $\wp$ of $R_{\barrho}^\Box \otimes \barQQ_l$ the restriction $(\rho^\Box \bmod \wp)(\sigma)$ is unipotent.
To see this first note that, because $\ker \barrho|_{I_K}$ is normal in $G_K$, the
restriction $(\rho^\Box \bmod \wp)^\semis|_{\ker \barrho|_{I_K}}$ is
semi-simple, and hence is the direct sum of $n$ characters, $\chi_1
\oplus \cdots \oplus \chi_n$. Conjugation by $\varphi$ must permute these characters and hence each $\chi_i$ is invariant by $\varphi^{n!}$. Thus $\chi_i(\sigma)=1$ for each $i$ and so $(\rho^\Box \bmod \wp)(\sigma)$ is unipotent.

Again using the fact that $H_0$ is normal in $I_K$ we deduce that for any prime ideal $\wp$ of $R_{\barrho}^\Box \otimes \barQQ_l$ the representation $(\rho^\Box \bmod \wp)|_{I_K}^\semis$ factors through the finite group $I_K/H_0$. For each of the finitely many isomorphism classes $[r]$ of $n$-dimensional representations of $I_K/H_0$ over $\barQQ_l$ consider the locus in $\Spec R_{\barrho}^\Box \otimes \barQQ_l$ where $\tr \rho^\Box|_{I_K} = \tr r$. This gives finitely many disjoint closed subsets of $\Spec R_{\barrho}^\Box \otimes \barQQ_l$, whose union contains every prime of $\Spec R_{\barrho}^\Box \otimes \barQQ_l$. Thus our closed sets are also all open, and hence a union of connected components of $\Spec R_{\barrho}^\Box \otimes \barQQ_l$. The first part of the lemma follows.

Now suppose that $\wp$ is a maximal prime ideal of $R_{\barrho}^\Box \otimes \barQQ_l$ and let $S$ denote the formal power series ring in $n^2$ variables over $\barQQ_l$. We will first exhibit a surjection
\[ \phi:(R_{\barrho}^\Box \otimes \barQQ_l)^\wedge_\wp \onto S \]
such that $(\rho^\Box)|_{I_K}$ pushes forward to a conjugate of $(\rho^\Box \bmod \wp)|_{I_K}$. Let $e_1,\dots,e_d$ denote a basis of the centralizer of $(\rho^\Box \bmod \wp)(I_K)$ in $M_{n \times n}(\barQQ_l)$ and extend it to a basis $e_1,\dots,e_d,f_1,\dots,f_{n^2-d}$ of $M_{n \times n}(\barQQ_l)$. Because 
\[ \bigcap_{\tau \in I_K} \ker(\ad (\rho^\Box \bmod \wp)(\tau)^{-1} -1) = \bigoplus_j e_j \]
we see that we can find $\tau_{ik} \in I_K$ and $\mu_{ik} \in \Hom(M_{n \times n}(\barQQ_l),\barQQ_l)$ such that for all $a \in M_{n \times n}(\barQQ_l)$ and all $i=1,\dots,n^2-d$,
\[ \sum_k \mu_{ik} (\ad  (\rho^\Box \bmod \wp)(\tau_{ik})^{-1} -1) a \]
is the coefficient of $f_i$ when $a$ is written in terms of the basis $e_1,\dots,e_d,f_1,\dots,f_{n^2-d}$. 

Consider the continuous representation 
\[ \begin{array}{rcl} \rho: G_K &\lra& \GL_n(\barQQ_l[[X_1,\dots,X_d,Y_1,\dots,Y_{n^2-d}]])=\GL_n(S) \\
\sigma & \longmapsto & (1+\sum Y_i f_i) (\rho^\Box\bmod \wp)(\sigma) (1+ \sum X_je_j)^{v(\sigma)} (1+\sum Y_i f_i)^{-1},
\end{array} \]
where $v:G_K/I_K \iso \widehat{\Z}$ sends $\Frob_K$ to $1$. It is a lifting of $\rho^\Box\bmod \wp$, and so gives rise to a map 
\[ \phi:(R_{\barrho}^\Box \otimes \barQQ_l)^\wedge_\wp \lra S \]
under which $\rho^\Box|_{I_K}$ pushes forward to a conjugate of $(\rho^\Box \bmod \wp)|_{I_K}$. It just remains to show that $\phi$ is surjective. Modulo $\gm_S^2$ we have 
\[ \rho(\sigma)=(\rho^\Box \bmod \wp)(\sigma)\left(1+(\ad (\rho^\Box \bmod \wp)(\sigma)^{-1} -1)\sum_i Y_if_i + v(\sigma) \sum_jX_je_j \right). \]
Looking at
\[ \sum_k \mu_{ik} ((\rho^\Box \bmod \wp)(\tau_{ik})^{-1}\rho(\tau_{ik})-1_n) \]
we see that, for each $i$, the element $Y_i$ is in the image of $\wp \ra \gm_S/\gm_S^2$. Next looking at 
\[ ((\rho^\Box \bmod \wp)(\varphi)^{-1}\rho(\varphi)-1_n) \]
we see that for each $j$ the element $X_j$ is in the image of
\[ \wp \lra \gm_S/(\gm_S^2,Y_1,\dots,Y_{n^2-d}). \]
We conclude that 
\[ \phi: \wp \onto \gm_S/\gm_S^2 \]
and hence that
\[ \phi: (R_{\barrho}^\Box \otimes \barQQ_l)^\wedge_\wp \onto S, \]
as desired.

Write $R$ for $R_{\barrho}^\Box \otimes \barQQ_l$ and let $I$ denote the kernel of $\phi|_{R}$. It is a prime ideal contained in $\wp$.  Let $T$ denote the integral closure of $R/I$ in its field of fractions, which is finite over $R/I$ and hence has the same Krull dimension as $R/I$. Then $\phi|_{R/I}$ extends to a map $\phi_T:T \into S$. Let $\gn$ denote the contraction to $T$ of the maximal ideal of $S$. We have $\gn \cap R/I=\wp$. Moreover $T_\gn^\wedge$ is a domain. The map $\phi_T$ gives a map
\[ T_\gn^\wedge \onto S \]
extending $\phi$. We have
\[ n^2 = \dim R \geq \dim R/I=\dim T \geq \dim T_\gn^\wedge \geq \dim S = n^2, \]
where $\dim$ denotes the Krull dimension. Thus the inequalities are equalities and we deduce that $I$ is a minimal prime ideal of $R$ and that $T_\gn^\wedge \iso S$. Thus 
$R/I \into S$ and so the residue field $k(I)$ of $I$ injects into the field of fractions of $S$. As 
$(\rho^\Box)|_{I_K}$ and $(\rho^\Box \bmod \wp)|_{I_K}$ are equivalent over $S$ they are also equivalent over $k(I)$.  

We deduce that if $\wp_1$ and $\wp_2$ are two maximal primes of  $R_{\barrho_1}^\Box \otimes \barQQ_l$ each containing a minimal prime $I$ and each containing no other minimal prime, then 
\[ (\barrho_1^\Box \bmod \wp_1)|_{I_K} \cong (\barrho_1^\Box \bmod \wp_2)|_{I_K} \]
over $k(I)$ and hence over $\barQQ_l$.
\end{proof}

\begin{lem}\label{l12} Suppose that $\barrho_1$ is semisimple and let $\Fil^i$ be an invariant decreasing filtration on $\rho_1$ by $\CO_{\barQQ_l}$-direct summands, then $\rho_1 \sim \bigoplus_i \gr^i \rho_1$. 
\end{lem}

\begin{proof}
We may suppose that $L$ is chosen large enough that $\rho_1:G_K \ra \GL_n(\CO)$ and that $\Fil^i$ is defined over $L$. Then we may choose a basis
$\{ e_{i,j} \}$ of $\CO^n$ such that 
\begin{itemize}
\item for each $i$ the set $\{ e_{i',j}: \,\,i \leq i'\}$ is a basis of $\Fil^i \CO^n$;
\item and for each $i$ the set $\{ \overline{e}_{i,j}\}$ (with only $j$ varying) spans a $G_K$-submodule of $\F^n$.
\end{itemize}
(Use reverse induction on $i$.) Let $\CO\langle t \rangle$ denote the algebra of power series over $\CO$ with coefficients tending to $0$, so that $\CO\langle t \rangle$ is complete in the $l$-adic topology. Let $h$ denote the element of $M_{n \times n}(\CO\langle t \rangle)$ such that $h e_{i,j}=t^i e_{i,j}$ for all $i$ and $j$. Consider the continuous representation 
\[ \rho=h\rho_1h^{-1}: G_K \lra \GL_n(\CO\langle t \rangle). \]
Let $A^0$ denote the closed subalgebra of $\CO\langle t \rangle$ generated by the matrix entries of the image of $\rho$. As in the proof of Lemma \ref{conj}, we see that $A^0$ is a complete, noetherian local $\CO$-algebra with residue field $\F$ and that there is a continuous homomorphism $R^\Box_{\CO,\barrho_1} \ra A^0$ under which the universal lifting of $\barrho_1$ pushes forward to $\rho$. Under the map $A^0 \ra \CO$ which sends $t$ to $1$, we see that $\rho$ pushes forward to $\rho_1$. Under the map $A^0 \ra \CO$ which sends $t$ to $0$, we see that $\rho$ pushes forward to $ \bigoplus_i \gr^i \rho_1$. As $A^0$ is a domain, the claim follows.
\end{proof}

{\em Important convention:} Suppose that $F$ is a global field and
that $r:G_F \ra \GL_n(\barQQ_l)$ is a continuous representation with
irreducible reduction $\barr$. In this case there is a model $r^\circ:G_F \ra \GL_n(\CO_{\barQQ_l})$
of $r$, which is unique up to $\GL_n(\CO_{\barQQ_l})$-conjugation. If $v|p$ is a place of $F$ we write
$r|_{G_{F_v}} \sim \rho_2$ (resp.\ $r|_{G_{F_v}} \leadsto \rho_2$, resp.\ $\rho_1 \leadsto r|_{G_{F_v}}$)
to mean $r^\circ|_{G_{F_v}} \sim \rho_2$ (resp.\ $r^\circ|_{G_{F_v}} \leadsto \rho_2$, resp.\ $\rho_1 \leadsto r^\circ|_{G_{F_v}}$).\vspace{3mm}

We end this section by recalling some facts about Weil-Deligne representations.
Recall (from the paragraph before Lemma 1.4 of \cite{ty}) that a Weil--Deligne representation $(r,N)$ of the Weil group $W_K$ of a $p$-adic field $K$ on a complex vector space $W$ is called {\em pure of weight $w$} if
there is an exhaustive and separated ascending filtration $\Fil_i$ of $W$
such that
\begin{itemize}
\item each $\Fil_iW$ is invariant under $r$;
\item if $\sigma \in W_K$ maps to $\Frob_K^{v(\sigma)}$ then all eigenvalues of $r(\sigma)$ on $\gr_iW$ are Weil $q^{iv(\sigma)}$-numbers;
\item and for all $j$ we have $N^j:\gr_{w+j}W \iso \gr_{w-j}W$. (Note that necessarily we have $N \Fil_i W \subset \Fil_{i-2}W$.)
\end{itemize}
The following Lemma is part of Lemma 1.4 of \cite{ty}.
\begin{lem} \label{tylem}\begin{enumerate}
\item A Weil--Deligne representation is pure of weight $w$ if and only if its Frobenius semi-simplification is.
\item Let $K'/K$ be a finite extension. A Weil--Deligne representation of $W_K$ is pure of weight $w$  if and only if its restriction  to $W_{K'}$ is.
\item Two pure, Frobenius semi-simple, Weil--Deligne representations $(r_1,N_1)$ and $(r_2,N_2)$ are equivalent if and only if $r_1$ and $r_2$ are equivalent.
\end{enumerate}\end{lem}

\subsection{Local theory: $l=p$.}\label{l=p}  {$\mbox{}$} \newline

Continue to fix a rational prime $l$ and let $\CO$ denote the ring of integers 
of a finite extension $L$ of $\Q_l$ in $\barQQ_l$. Let $\lambda$ denote the maximal ideal of $\CO$ and
let $\F=\CO/\lambda$. However in this section we specialize our discussion to the case $\Gamma=G_K$, where $K/\Q_l$ is a finite extension. Thus $\barrho:G_K \ra \GL_n(\F)$ is continuous.  We will assume that the image of each continuous embedding $K \into \barL$ is contained in $L$.
Let  $\{ H_\tau \}$ be a collection of $n$ element multisets of integers parametrized by $\tau \in \Hom_{\Q_l}(K,\barQQ_l)$.

We call a continuous representation $\rho:G_K \ra \GL_n(\barQQ_l)$
{\em ordinary} if the following conditions are satisfied:
\begin{itemize}
\item  there is a $G_K$-invariant decreasing 
filtration $\Fil^i$ on $\overline{\Q}_l^n$ such that for $i=1,\dots,n$ the graded piece $\gr^i \overline{\Q}_l^n$ is one dimensional and $G_K$ acts on it by a character $\chi_{i}$; 
\item and there are integers $b_{\tau,i} \in \Z$ for $\tau\in \Hom_{\Q_l}(K,\barQQ_l)$ and $i=1,\dots,n$ and  an open subgroup $U \subset K^\times$ such that
\begin{itemize}
\item $(\chi_{i}\circ \Art_{K})|_{U}( \alpha) = \prod_{\tau:K \into \barQQ_l} (\tau \alpha)^{b_{\tau,i}}$
\item and $b_{\tau,1}<b_{\tau,2}<\dots<b_{\tau,n}$ for all $\tau$.
\end{itemize}\end{itemize}
We will call $\rho$ {\em ss-ordinary} if we may take $U=\CO_K^\times$ in the above definition. We will
call $\rho$ {\em cr-ordinary} if it is ordinary and crystalline (in which case it is also ss-ordinary). 
If $\rho$ is ordinary (resp.\ ss-ordinary) then it is de Rham (resp.\ semi-stable) and
\[ \HT_\tau(\rho) =\{ -b_{\tau,1},\dots,-b_{\tau,n} \}. \]
If $\rho$ is ss-ordinary and if for each $i$ there exists a $\tau$ such that
$b_{\tau,i}+1<b_{\tau,i+1}$ then $\rho$ is cr-ordinary. (See Propositions 1.24, 1.26 and 1.28 of 
\cite{nekovar} or Lemma 3.1.4 of \cite{gg}.) 

Let $K'/K$ denote a finite extension.
The universal lifting ring $R_{\CO,\barrho}^\Box$ has various important quotients $R_{\CO,\barrho,\{ H_\tau\}, *}^\Box$ which are uniquely characterized by requiring that they are reduced without $l$-torsion and that a $\barQQ_l$-point of $R_{\CO,\barrho}^\Box$ factors through $R_{\CO,\barrho,\{ H_\tau\}, *}^\Box$ if and only if it corresponds to a representation $\rho:G_K \ra \GL_n(\barQQ_l)$ which is de Rham with Hodge--Tate numbers $\HT_\tau(\rho)=H_\tau$ for all $\tau:K \into \barQQ_l$ and which has a further specified property $\CP_*$. We will consider the following instances of this construction:
\begin{itemize}
\item $*=\cris$ and $\CP_*$ is `crystalline';
\item $*=\semis$ and $\CP_*$ is `semi-stable';
\item $*=K'-\cris$ and $\CP_*$ is `crystalline after restriction to $G_{K'}$';
\item $*=K'-\semis$ and $\CP_*$ is `semi-stable after restriction to $G_{K'}$';
\item $*=\crord$ and $\CP_*$ is `cr-ordinary';
\item $*=\ssord$ and $\CP_*$ is `ss-ordinary'.
\end{itemize}
(The existence of $R_{\CO,\barrho,\{ H_\tau\}, *}^\Box$ in the case $*=\semis$ follows from Corollary 2.6.2 of \cite{kisindefrings}. In the case $*=\cris$ it follows from this using Theorem 2.5.5(2) of \cite{kisindefrings}, because $R_{\CO,\barrho,\{ H_\tau\}, \cris}^\Box$ is the maximal reduced, $l$-torsion free quotient of $R_{\CO,\barrho,\{ H_\tau\}, \semis}^\Box$ over which the $N$ of Theorem 2.5.5(2) of \cite{kisindefrings} becomes $0$. Existence in the cases $*=K'-\cris$ and $*=K'-\semis$ also follow as in the first paragraph of the proof of Theorem 2.7.6 of \cite{kisindefrings}. In the cases $*=\crord, \ssord$ existence follows from Lemma 3.3.3(1) of \cite{ger}.)

We will write  $R_{\barrho,\{ H_\tau\}, *}^\Box \otimes \barQQ_l$ 
for  $R_{\CO,\barrho,\{ H_\tau\}, *}^\Box \otimes_{\CO} \barQQ_l$. 
This definition is independent of the choice of $\CO$ (using the same argument as in the proof of Lemma \ref{coeffchange}). 

If $H_\tau$ has $n$ distinct elements for each $\tau$ then each ring
$R^\Box_{\CO, \barrho, \{ H_\tau\}, *}$ is either zero or  equidimensional of dimension
\[ 1+n^2+[K:\Q_l]n(n-1)/2. \] 
(In the cases $*=\cris$, $\semis$ this is a special case of Theorem 3.3.4 of \cite{kisindefrings}, and the cases $K'-\cris$ and $K'-\semis$ are treated in the same way. The cases $*=\crord, \ssord$ follow from this and from Lemma 3.3.3(2) of \cite{ger}.)  
We deduce that if $K'' \supset K'$ then $\Spec
R_{\CO,\barrho,\{ H_\tau\}, K'-\cris}^\Box$ (resp.\ $\Spec
R_{\CO,\barrho,\{ H_\tau\}, K'-\semis}^\Box$) is a union of
irreducible components of $\Spec R_{\CO,\barrho,\{ H_\tau\},
  K''-\cris}^\Box$ (resp.\ $\Spec R_{\CO,\barrho,\{ H_\tau\},
  K''-\semis}^\Box$).  Each of the schemes $\Spec R_{\CO,\barrho,\{
  H_\tau\}, \cris}^\Box[1/l]$ and $\Spec R_{\CO,\barrho,\{ H_\tau\},
  K'-\cris}^\Box[1/l]$ and $\Spec R_{\CO,\barrho,\{ H_\tau\},
  \crord}^\Box[1/l]$ are formally smooth.  (The case $*=\cris$ is a special case of Theorem  3.3.8 of \cite{kisindefrings}, the case $*=K'-\cris$ is treated in the same way and the case $*=\crord$ follows from this and Lemma 3.3.3(2) of \cite{ger}.) Finally if $\barrho$ is
trivial then the scheme $\Spec R_{\CO,\barrho,\{ H_\tau\},
  \crord}^\Box[1/l]$ is geometrically irreducible. (See Lemma 3.4.3 of \cite{ger}.)

Choose a finite set $\CC$ of irreducible components of $\lim_{\ra K'} \Spec R^\Box_{\CO,\barrho, \{H_\tau\}, K'-\semis}$.  Let $R^\Box_{\CO,\barrho,\CC}$ denote the maximal quotient 
of $R^\Box_{\CO,\barrho, \{ H_\tau\}, K'-\semis}$ which is reduced, $l$-torsion free and has
$\Spec R^\Box_{\CO,\barrho,\CC}$ supported on the components in $\CC$, for $K'$ chosen sufficiently large. This is independent of the choice of $K'$ (as long as $K'$ is sufficiently large). Also
let $\calD_\CC$ denote the set of liftings of $\barrho$ to complete local noetherian $\CO$-algebras $R$ with residue field $\F$ such that the induced map $R^\Box_{\CO,\barrho}
\ra R$ factors through $R^\Box_{\CO,\barrho, \CC}$. Again we see that $\calD_\CC$ is a deformation problem in the sense of Definition 2.2.2 of \cite{cht}. 

If $\rho_1$ and $\rho_2$ are continuous representations $G_K \ra \GL_n(\CO_{\barQQ_l})$, we say that $\rho_1$ {\em connects to} $\rho_2$, which we denote $\rho_1
\sim \rho_2$, if and only if 
\begin{itemize}
\item the reduction $\barrho_1=\rho_1 \bmod \gm_{\barQQ_l}$ is equivalent to the reduction $\barrho_2= \rho_2 \bmod \gm_{\barQQ_l}$;
\item $\rho_1$ and $\rho_2$ are both potentially crystalline;
\item for each continuous $\tau:K \into \barQQ_l$ we have $\HT_\tau(\rho_1)=\HT_\tau(\rho_2)$;
\item and $\rho_1$ and $\rho_2$ define points on the same irreducible component of the scheme $\Spec (R_{\barrho_1,\{ \HT_\tau(\rho_1)\}, K'-\cris}^\Box \otimes \barQQ_l)$ for some (and hence all) sufficiently large $K'$.
\end{itemize}

We make the following remarks.
\begin{enumerate}
\item By the proof of Lemma \ref{conj} we see that the relation $\rho_1 \sim \rho_2$ does not depend 
on the equivalence chosen between the reductions $\barrho_1$ and $\barrho_2$, nor on the $\GL_n(\CO_{\barQQ_l})$-conjugacy class of $\rho_1$ or $\rho_2$.
\item `Connects' is an equivalence relation. (Because each $R_{\CO,\barrho_i,\{ H_\tau\}, K'-\cris}^\Box[1/l]$ is formally smooth.)
\item If $\rho_1 \sim \rho_2$ then
  $\WD(\rho_1)|_{I_K} \cong \WD(\rho_2)|_{I_K}$. (See the proof of
  Theorem 2.7.6 of \cite{kisindefrings}.)
\item If $K'/K$ is a finite extension and $\rho_1 \sim \rho_2$ then $\rho_1|_{G_{K'}} \sim \rho_2|_{G_{K'}}$.
\item If $\rho_1 \sim \rho_2$ and $\rho_1' \sim \rho_2'$ then $\rho_1\oplus \rho_1'  \sim \rho_2 \oplus \rho_2'$ and $\rho_1\otimes \rho_1'  \sim \rho_2 \otimes \rho_2'$ and $\rho_1^\vee \sim \rho_2^\vee$.
 \item If $\mu:G_K \ra \barQQ_l^\times$ is a continuous unramified
  character with $\mubar=1$ and $\rho_1$ is potentially crystalline
  then $\rho_1 \sim \rho_1 \otimes \mu$.
\item \label{semisimp} Suppose that $\rho_1$ is potentially crystalline and that $\barrho_1$ is semisimple. Let $\Fil^i$ be an invariant filtration on $\rho_1$ by $\CO_{\barQQ_l}$ direct summands, then $\rho_1 \sim \bigoplus_i \gr^i \rho_1$. (This is proved in the same way as Lemma \ref{l12} of the previous section.)  
   \end{enumerate}

We will call a representation $\rho:G_K \ra \GL_n(\CO_{\barQQ_l})$ {\em diagonalizable} if 
it is crystalline and connects to some representation $\chi_1 \oplus
\dots \oplus \chi_n$ with $\chi_i:G_K \ra
\CO_{\barQQ_l}^\times$ crystalline characters. We will call a representation $\rho:G_K \ra \GL_n(\CO_{\barQQ_l})$ {\em potentially diagonalizable} if there is a finite extension $K'/K$ such that $\rho|_{G_{K'}}$ is diagonalizable.
Note that if $K''/K$ is a finite extension and $\rho$ is diagonalizable (resp.\ potentially diagonalizable)
then $\rho|_{G_{K''}}$ is diagonalizable (resp.\ potentially diagonalizable). It seems to us an interesting, and important, question to determine which potentially crystalline representations are potentially diagonalizable. We know no example of a potentially crystalline representation, which is not potentially diagonalizable.

\begin{lem} If $\rho_1$ and $\rho_2$ are conjugate in $\GL_n(\barQQ_l)$ then $\rho_1$ is potentially diagonalizable if and only if $\rho_2$ is potentially diagonalizable. \end{lem}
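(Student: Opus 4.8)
The plan is to reduce the statement, via a chain of elementary lattice changes, to a single such change, and then to connect the two resulting integral models inside one (formally smooth) crystalline deformation ring by an explicit analytic family.

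\textbf{Reductions.} Being $GL_n(\barQQ_l)$-conjugate is a symmetric relation, so it suffices to show $\rho_1$ potentially diagonalizable $\Rightarrow$ $\rho_2$ potentially diagonalizable; write $\rho_2 = g\rho_1 g^{-1}$ with $g \in GL_n(\barQQ_l)$. Since potential diagonalizability is unchanged by a finite base change and restriction to $G_{K'}$ commutes with conjugation by $g$, I would first pass to an extension over which $\rho_1$ is diagonalizable, so that $\rho_1$ is crystalline with $\rho_1 \sim \chi_1 \oplus \cdots \oplus \chi_n$ for crystalline characters $\chi_i$; in particular $\barrho_1 \cong \bigoplus_i \barchi_i$ is a sum of characters. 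Writing $V = \rho_1 \otimes \barQQ_l$ and, after rescaling $g$, letting $L_1$ and $L_2$ be the $G_K$-stable $\CO_{\barQQ_l}$-lattices defining $\rho_1$ and $\rho_2$ with $\lambda^m L_1 \subseteq L_2 \subseteq L_1$, the lattices $M_j := L_2 + \lambda^j L_1$ ($0\le j\le m$) interpolate between $L_1$ and $L_2$ with $\lambda M_j \subseteq M_{j+1} \subseteq M_j$. So it is enough to prove the following single step and then iterate it (re-diagonalizing over a finite extension after each step): if $\rho$ is a diagonalizable model on a lattice $M$ and $M' \subseteq M$ is a $G_K$-stable lattice with $\lambda M \subseteq M'$, then the model $\rho'$ on $M'$ is potentially diagonalizable.

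\textbf{The single step.} Here $\barrho \cong \bigoplus_i \barchi_i$, so $W := M'/\lambda M$ is a direct summand, $\barrho = W \oplus W''$ with $W'' \cong M/M'$; in a basis of $M$ adapted to this splitting, $\rho = \smallmat{A}{B}{C}{D}$ with $B,C$ having entries in $\lambda\CO$ (as $\barrho$ is block-diagonal), $M' = \langle e_1,\dots,e_k,\lambda e_{k+1},\dots,\lambda e_n\rangle$, and hence $\rho' = \smallmat{A}{\lambda B}{\lambda^{-1}C}{D}$, an extension of $W$ by $W''$ with class $[\bar\gamma] \in H^1(G_K,\Hom_\F(W,W''))$, $\bar\gamma = \overline{\lambda^{-1}C}$. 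I would replace $K$ by a finite extension over which $[\bar\gamma]=0$, so that $\bar\gamma(\sigma) = \bar D(\sigma)\bar\zeta - \bar\zeta\bar A(\sigma)$ for some $\bar\zeta \in \Hom_\F(W,W'')$ with integral lift $\tilde\zeta$. The crucial point is that the conjugating matrices $g(c) := \smallmat{1_k}{0}{-\lambda\tilde\zeta}{c\,1_{n-k}}$ (for $c\ne 0$) are engineered so that $\rho^{(c)} := g(c)^{-1}\rho\, g(c)$ has entries in $\CO\langle c,\lambda/c\rangle$ and reduces modulo $\lambda$ to the \emph{constant} family $\smallmat{\bar A}{0}{0}{\bar D} = \barrho$ over the whole closed annulus $X = \{\,|\lambda|\le |c|\le 1\,\}$ --- the only nontrivial entry is the lower-left one, which comes out as $\lambda c^{-1}\bigl[(\tilde\zeta A - D\tilde\zeta) + \lambda^{-1}C - \lambda\tilde\zeta B\tilde\zeta\bigr]$, whose bracket reduces to $(\bar\zeta\bar A - \bar D\bar\zeta) + \bar\gamma = 0$ by the coboundary relation just imposed. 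Thus $(\rho^{(c)})_{c\in X}$ is a connected family of crystalline lifts of $\barrho$ of fixed Hodge--Tate numbers, i.e.\ (by an argument with the affinoid algebra $\CO\langle c,\lambda/c\rangle$ in the spirit of the proof of Lemma~\ref{conj}) a connected family of $\barQQ_l$-points of $\Spec(R^\Box_{\barrho,\{H_\tau\},\cris}\otimes\barQQ_l)$; since this ring is formally smooth its irreducible components are its connected components, so all these points lie on a single component. Specialising at $c=1$ and $c=\lambda$ gives a $GL_n(\CO_{\barQQ_l})$-conjugate of $\rho$ and of $\rho'$ respectively, so (using Lemma~\ref{conj} and transitivity of $\sim$) $\rho' \sim \rho \sim \bigoplus_i \chi_i$; hence $\rho'$ is diagonalizable over this finite extension, i.e.\ potentially diagonalizable over $K$, which completes the step and the lemma.

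\textbf{Main obstacle.} The lattice-chain reduction and the bookkeeping with $GL_n(\CO_{\barQQ_l})$-conjugacies are routine (the latter handled by Lemma~\ref{conj} and the remarks of this section). The real content is the construction of the family $\rho^{(c)}$: a bare conjugation by $\diag(1_k,\lambda 1_{n-k})$ changes the residual representation, so it cannot be interpolated within a single $R^\Box_{\barrho}$; killing the extension class $[\bar\gamma]$ is exactly what allows one to absorb that change of reduction into the unipotent factor $\smallmat{1_k}{0}{-\lambda\tilde\zeta}{1_{n-k}}$, keeping the residual representation constant along the annulus. Checking that this works --- that the entries stay in $\CO\langle c,\lambda/c\rangle$ and that the reduction is the constant $\barrho$ --- is the one computation that really has to be carried out, and it rests entirely on the identity $\bar\gamma = \bar D\bar\zeta - \bar\zeta\bar A$; after that, the conclusion is formal smoothness of the crystalline deformation ring, precisely as in Lemma~\ref{conj}.
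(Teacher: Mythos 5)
Your argument is correct, but it follows a genuinely different route from the paper's, so it is worth comparing the two. You reduce to a chain of elementary lattice modifications $\lambda M\subseteq M'\subseteq M$, use diagonalizability of the ambient model to make $\barrho$ semisimple (so that $M'/\lambda M$ splits off and the two blocks $B,C$ are divisible by $\lambda$), kill the extension class $[\bar\gamma]$ over a finite extension, and then connect the two models by a one-parameter family over the annulus whose reduction is constant exactly because of the coboundary identity $\bar\gamma=\bar D\bar\zeta-\bar\zeta\bar A$; iterating along the chain (re-diagonalizing after each step) gives the lemma, and your key matrix computation does check out, as does the bookkeeping with $GL_n(\CO_{\barQQ_l})$-conjugates at $c=1$ and $c=\lambda$. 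The paper instead writes the conjugating matrix via Smith normal form as $k_1\diag(d_1,\dots,d_n)k_2$ with $k_i\in GL_n(\CO_{\barQQ_l})$ and $d_n|\cdots|d_1$, disposes of the integral factors by a base change trivializing the reduction, and then---rather than any cohomological splitting---passes to a finite extension over which $\rho_2\equiv 1\bmod l d_1/d_n$; this deep congruence makes the reduction of the single multi-parameter family $\tg\rho_2\tg^{-1}$ over $\CO\langle t_i,s_i\rangle/(s_it_i-d_i/d_{i+1})$ constant automatically, so one family connects the two models in one shot. Both proofs then run the same engine as Lemma \ref{conj}: every specialization is a $GL_n(\barQQ_l)$-conjugate of a fixed potentially crystalline representation, so the family lands in the relevant ($K'$-)crystalline lifting ring, and a domain (equivalently, formal smoothness plus connectedness) argument puts the two points on one irreducible component. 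What your route buys is a clear display of where the hypothesis enters (a semisimple reduction so the sublattice splits mod $\lambda$, plus vanishing of one $H^1$ class after restriction); what the paper's route buys is that it needs nothing beyond potential crystallinity of the given representation, avoids both the cocycle computation and the iteration over a lattice chain, and is correspondingly shorter. Two small points of hygiene: fix a finite extension $L\subset\barQQ_l$ with uniformizer before speaking of $\lambda^m L_1\subseteq L_2$ (as the paper does), since $\CO_{\barQQ_l}$ has no uniformizer; and when passing from connectedness of the annulus to a single component, the cleanest phrasing is the paper's---the closed subalgebra generated by the matrix entries is a domain, so its image has irreducible closure.
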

Thus we can speak of a representation $\rho:G_K \ra \GL_n(\barQQ_l)$ being potentially diagonalizable
without needing to specify an invariant lattice.

\begin{proof}
If $\rho_1$ and $\rho_2$ are conjugate by an element of $\GL_n(\CO_{\barQQ_l})$ then after passing to a finite extension over which $\barrho_1=\barrho_2=1$ we see that $\rho_1 \sim \rho_2$. Thus we may
suppose that $\rho_1=g\rho_2g^{-1}$ where $g=\diag(d_1,\dots,d_n)$ with $d_i \in \barQQ_l^\times$
satisfying $d_n|d_{n-1}|\dots|d_1$. Choosing $L \subset \barQQ_l$ large enough we may assume that
$\rho_1$ and $\rho_2$ are defined over $\CO$ and that $d_1,\dots,d_n \in \CO$. (If $d_n$ is not integral multiply each $d_i$ by a suitable element of $L^\times$.)  Replacing $K$ by a finite extension we may also assume that $\rho_2 \equiv 1 \bmod ld_1/d_n$, in which case we also have $\barrho_1=1$.

Consider the complete topological domain
\[ A=\CO\langle t_1,s_1,t_2,s_2,\dots,t_{n-1},s_{n-1}\rangle / (s_1t_1-(d_1/d_2),\dots,s_{n-1}t_{n-1}-(d_{n-1}/d_n)). \]
(See Lemma \ref{ID}.)
Let $\tg=\diag(t_1\cdots t_{n-1},t_2\cdots t_{n-1},\dots,t_{n-1},1)$ and let
\[ \trho = \tg \rho_2 \tg^{-1}. \] If $j \geq i$ then the $(i,j)$
entry of $\trho(\sigma)$ is $t_i\dots t_{j-1}$ times the $(i,j)$ entry
of $\rho_2(\sigma)$.  If $i > j$ then the $(i,j)$ entry of
$\trho(\sigma)$ is $s_j\dots s_{i-1} d_i/d_j$ times the $(i,j)$ entry
of $\rho_2(\sigma)$, which is in $A$ by our assumption that $\rho_2
\equiv 1 \bmod ld_1/d_n$. Thus we see that $\trho:G_K \ra \GL_n(A)$ is
a continuous homomorphism. The specialization under $t_i \mapsto 1$
for all $i$ is $\rho_2$. The specialization under $s_i \mapsto 1$ for
all $i$ is $\rho_1$. As in the proof of Lemma \ref{conj} we conclude
that $\rho_1 \sim \rho_2$, and we are done.
\end{proof}

We will establish some cases of (potential) diagonalizability below,
but first we must recall some results from the theory of Fontaine and Laffaille
\cite{fl}, normalized as in Section 2.4.1 of \cite{cht}. Assume that $K/\Q_l$ is unramified and
denote its ring of integers by $\CO_K$.
Let $\mc{MF}_{\CO}$
denote the category of finite $\mc{O}_K \otimes_{\bb{Z}_l}
\CO$-modules $M$ together with
\begin{itemize}
\item a decreasing filtration $\Fil^i M$ by $\mc{O}_K
  \otimes_{\bb{Z}_l} \CO$-submodules which are $\mc{O}_K$-direct
  summands with $\Fil^0 M = M$ and $\Fil^{l-1}M=\{0\}$;
\item and $\Frob_p^{-1} \otimes 1$-linear maps $\Phi^i : \Fil^i M \rightarrow
  M$ with $\Phi^i|_{\Fil^{i+1}M} = l \Phi^{i+1}$ and $\sum_i \Phi^i
  \Fil^i M = M$.
\end{itemize}
Let $\Rep_{\CO}(G_K)$ denote the category of finite
$\CO$-modules with a continuous $G_K$-action.  There is an exact,
fully faithful, covariant functor of $\CO$-linear categories
$\mathbf{G}_K : \mc{MF}_{\CO} \rightarrow \Rep_{\CO}(G_K)$.
The essential image of $\mathbf{G}_K$ is closed under taking
sub-objects and quotients. If $M$ is an object of $\mc{MF}_{\CO}$,
then the length of $M$ as an $\CO$-module is $[K : \bb{Q}_l]$
times the length of $\mathbf{G}_K(M)$ as an $\CO$-module.  

Let $\F$ denote the residue field of $\CO$ and let
$\mc{MF}_{\F}$ denote the full subcategory of $\mc{MF}_\CO$ consisting of objects killed by
the maximal ideal $\lambda$ of $\CO$ and let $\Rep_{\F}(G_K)$ denote the category of
finite $\F$-modules with a continuous $G_K$-action. Then $\mathbf{G}_K$
restricts to a functor $\mc{MF}_{\F} \rightarrow \Rep_{\F}(G_K)$.
If $M$ is an object of $\mc{MF}_{\F}$ and $\tau$ is a continuous
embedding $K \into \barQQ_l$, we let $\FL_\tau(M)$ denote the multiset of
integers $i$ such that $ \gr^i
M\otimes_{\CO_K\otimes_{\bb{Z}_l}\CO, \tau\otimes 1}\CO \neq
\{0\}$ and $i$ is counted with multiplicity equal to the $\F$-dimension of this space. If $M$ is an $l$-torsion free object of 
$\mc{MF}_{\CO}$ then $\mathbf{G}_K(M) \otimes_{\Z_l}\Q_l$ is crystalline  
and for every continuous embedding $\tau:K \into \barQQ_l$ we have
\[ \HT_\tau(\mathbf{G}_K(M) \otimes_{\Z_l}\Q_l) = \FL_\tau(M \otimes_{\CO} \F). \]
Moreover, if $\Lambda$ is a $G_K$-invariant lattice in a crystalline representation $V$ of $G_K$
with all its Hodge--Tate numbers in the range $[0,l-2]$ then $\Lambda$
is in the image of $\mathbf{G}_K$. (See \cite{fl}.)

\begin{lem}
  \label{fllift} Let $K/\Ql$ be
  unramified. Let $\barM$ denote an object of $\mc{MF}_{\F}$ together with a filtration
  \[ \barM=\barM_0 \supset \barM_1 \supset \dots \supset \barM_{n-1} \supset \barM_n=(0) \]
  by $\mc{MF}_{\F}$-subobjects such that $\barM_{i}/\barM_{i+1}$ has $\F$-rank
  $[K:\Q_l]$ for $i=0,\dots,n-1$. Then we can find an object $M$ of $\mc{MF}_{\CO}$ which
  is $l$-torsion free together with a filtration by $\mc{MF}_{\CO}$-subobjects
  \[ M=M_0 \supset M_1 \supset \dots \supset M_{n-1} \supset M_n=(0) \]
  and an isomorphism
  \[ M \otimes_{\CO} \F \cong \barM \]
  under which $M_i \otimes_{\CO} \F$ maps isomorphically to $\barM_i$ for all $i$.
  \end{lem}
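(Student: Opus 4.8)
The plan is to induct on the length $n$ of the filtration, using two ingredients: that every object of $\mc{MF}_\F$ lifts to an $l$-torsion free object of $\mc{MF}_\CO$, and the standard two-term complex computing $\Ext^1$ in $\mc{MF}$, together with right-exactness of reduction mod $\lambda$. Throughout one uses that, $K/\Q_l$ being unramified with $L$ containing all embeddings of $K$, we have $\CO_K\otimes_{\Z_l}\CO\cong\prod_\tau\CO$, so all the modules in sight decompose over the embeddings $\tau:K\into L$.

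First I would establish the lifting statement (which is also the case $n=1$). Given $\bar N\in\mc{MF}_\F$, pick an $\F$-basis of $\bar N$ adapted to the Hodge filtration; each basis vector $\bar e$ has a weight $w(\bar e)\in[0,l-2]$, and the axiom $\sum_i\Phi^i\Fil^i\bar N=\bar N$ says exactly that the $\Frob_p^{-1}\otimes 1$-semilinear operator $\bar\phi:\bar e\mapsto\Phi^{w(\bar e)}(\bar e)$ is bijective. I would let $M$ be the free $\CO$-module on a lifted basis, with Hodge filtration having the same weights (spanned by the lifted vectors of weight $\ge i$ in degree $i$), lift $\bar\phi$ to a semilinear bijection $\tilde\phi$ (a lift of an invertible matrix over $\CO_K\otimes_{\Z_l}\CO$ is invertible), and set $\Phi^i(\tilde e):=l^{\,w(\tilde e)-i}\tilde\phi(\tilde e)$; one checks directly that $M$ is an $l$-torsion free object of $\mc{MF}_\CO$ reducing to $\bar N$.

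For the inductive step ($n\ge 2$), apply the inductive hypothesis to $\barM_1\supset\cdots\supset\barM_n=(0)$ to get an $l$-torsion free $M_1\in\mc{MF}_\CO$ with a filtration $M_1\supset\cdots\supset M_n=(0)$ by $\mc{MF}_\CO$-subobjects and an isomorphism $M_1\otimes_\CO\F\cong\barM_1$ matching the filtrations, and apply the lifting statement to get an $l$-torsion free $N\in\mc{MF}_\CO$ with $N\otimes_\CO\F\cong\barM/\barM_1$. It then suffices to lift the class of $0\to\barM_1\to\barM\to\barM/\barM_1\to0$ in $\Ext^1_{\mc{MF}_\F}(\barM/\barM_1,\barM_1)$ to a class in $\Ext^1_{\mc{MF}_\CO}(N,M_1)$: realizing such a lift by an extension $0\to M_1\to M\to N\to0$ in $\mc{MF}_\CO$, the term $M$ is again $l$-torsion free, so reduction mod $\lambda$ is exact on the sequence and identifies $M\otimes_\CO\F$ with $\barM$ compatibly with the sub $\barM_1$ and quotient $\barM/\barM_1$; and each $M_i$ is an $\CO$-direct summand of $M$, so $M_i\otimes_\CO\F\hookrightarrow M\otimes_\CO\F$ has image exactly $\barM_i$ — precisely the data required.

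The crux, and the step I expect to need the most care, is surjectivity of the reduction map $\Ext^1_{\mc{MF}_\CO}(N,M_1)\to\Ext^1_{\mc{MF}_\F}(\barM/\barM_1,\barM_1)$. I would recall from Fontaine--Laffaille theory (normalized as in section 2.4.1 of \cite{cht}) that $\Ext^1_{\mc{MF}_R}(Y,X)$ is the cokernel of a map $1-\Phi\colon\Fil^0\underline{\Hom}(Y,X)\to\underline{\Hom}(Y,X)$ built from the $\Phi$-structures, where $\underline{\Hom}$ denotes $\CO_K\otimes_{\Z_l}R$-linear homomorphisms. Since $K/\Q_l$ is unramified and $N,M_1$ are $\CO$-flat with Hodge filtrations by $\CO$-direct summands, both terms of this complex are finite free $\CO$-modules whose formation --- and the map $1-\Phi$ --- commutes with $-\otimes_\CO\F$; right-exactness of $-\otimes_\CO\F$ then yields $\coker(1-\Phi)\otimes_\CO\F\cong\Ext^1_{\mc{MF}_\F}(\barM/\barM_1,\barM_1)$, giving the surjectivity and hence the required lift. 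The genuinely technical points --- pinning down the precise shape of this $\Ext$-complex over $\CO$ (not merely over $\F$) and checking its compatibility with reduction mod $\lambda$ --- are standard in Fontaine--Laffaille theory, but they are what must be verified carefully.
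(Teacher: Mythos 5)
Your proposal is correct, but it takes a genuinely different route from the paper. The paper proves the lemma by one bare-hands construction: it chooses an $\F$-basis $\bare_{i,\tau}$ of $\barM$ adapted simultaneously to the filtration $\barM_\bullet$, to the Hodge filtration, and to the decomposition over the embeddings $\tau$; takes $M$ free over $\CO$ on lifted basis vectors with the corresponding $M_j$ and $\Fil^j M$; and defines the maps $\Phi^j$ by descending induction on $j$, setting $\Phi^j e_{i,\tau}=l\Phi^{j+1}e_{i,\tau}$ when $(i,\tau)$ still lies in $\Omega_{j+1}$ and otherwise choosing any lift of $\barPhi^j\bare_{i,\tau}$ supported on the $e_{i',\tau\circ\Frob_p}$ with $i'\geq i$ (so that the $M_j$ are preserved); the axiom $\sum_j\Phi^j\Fil^jM=M$ then follows from Nakayama. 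No homological algebra, and in particular no $\Ext$-theory, enters. Your argument instead inducts on the length of the filtration and reduces the inductive step to surjectivity of the reduction map $\Ext^1_{\mc{MF}_{\CO}}(N,M_1)\ra\Ext^1_{\mc{MF}_{\F}}(\barM/\barM_1,\barM_1)$, which you get from the Fontaine--Laffaille two-term complex $\Fil^0\Hom\ra\Hom$ over $\CO$, its compatibility with $\otimes_{\CO}\F$, and right-exactness of reduction; your base case (lifting a single object by lifting the semilinear bijection $\bigoplus_i\gr^i\ra\barM$) is in substance the paper's construction in the unfiltered case. This buys a more structured statement (extension classes lift across arbitrary torsion-free lifts of sub and quotient, with no choice of adapted basis), at the cost of importing the $\CO$-coefficient $\Ext^1$-complex and its reduction compatibility, which the paper never needs; as you note, those are the points requiring care, though they are indeed standard. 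Two small things you should make explicit: the identification of cocycles with $\Hom_{\CO_K\otimes\CO}(N,M_1)$ over $\CO$ uses that any map from $\bigoplus_j\Fil^jN$ modulo the usual relations into the torsion-free module $M_1$ factors through $N$ (the kernel of $\bigoplus_j\Fil^jN/(\mathrm{relations})\onto N$ is exactly the torsion); and the claim that each $M_i$ is an $\CO$-direct summand of $M$ needs the one-line remark that $M/M_i$ is an extension of the free module $N$ by $M_1/M_i$, which is torsion free because the inductive hypothesis makes $M_i\otimes_{\CO}\F\ra M_1\otimes_{\CO}\F$ injective.
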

  
  \begin{proof} $\barM$ has an $\F$ basis $\bare_{i,\tau}$ for $i=1,\dots,n$ and $\tau \in \Hom(K, \barQQ_l)$ such that 
  \begin{itemize}
  \item the residue field $k_K$ of $K$ acts on $\bare_{i,\tau}$ via $\tau$;
  \item $\barM_j$ is spanned over $\F$ by the $\bare_{i,\tau}$ for $i>j$;
  \item  and for each $j$ there is a subset $\Omega_j \subset \{1,\dots,n\} \times \Hom(K,\barQQ_l)$ such that
  $\Fil^j \barM$ is spanned over $\F$ by the $\bare_{i,\tau}$ for $(i,\tau) \in \Omega_j$. 
  \end{itemize}
  Then we define $M$ to be the free $\CO$-module with basis $e_{i,\tau}$ for $i=1,\dots,n$ and $\tau \in \Hom(K, \barQQ_l)$. 
  \begin{itemize}
  \item We let $\CO_K$ act on $e_{i,\tau}$ via $\tau$; 
  \item we define $M_j$ to 
  the sub $\CO$-module generated by the  $e_{i,\tau}$ with $i>j$; 
  \item and
  we define $\Fil^j M$ to be the $\CO$-submodule spanned by the $e_{i,\tau}$ for $(i,\tau) \in \Omega_j$. 
  \end{itemize}
  We define $\Phi^j:\Fil^j M \ra M$ by reverse induction on $j$. If we have defined $\Phi^{j+1}$
  we define $\Phi^j$ as follows:
  \begin{itemize}
  \item If $(i,\tau) \in \Omega_{j+1}$ then $\Phi^j e_{i,\tau}=l\Phi^{j+1}e_{i,\tau}$.
  \item If $(i,\tau) \in \Omega_j-\Omega_{j+1}$ then $\Phi^j e_{i,\tau}$ is chosen to be an
  $\CO$-linear combination of the $e_{i',\tau\circ \Frob_p}$ for $i' \geq i$ which lifts
  $\barPhi^j\bare_{i,\tau}$. 
  \end{itemize}
  It follows from Nakayama's lemma that $M$ is an object of  $\mc{MF}_{\CO}$, and then it is easy to verify that it has the desired properties.
  \end{proof}
  
We can now state and prove our potential diagonalizability criteria.

\begin{lem}\label{locallift} Keep the above notation, including the assumption $l=p$. Suppose that $\rho:G_K \ra \GL_n(\barQQ_l)$ is a potentially crystalline representation.
\begin{enumerate}
\item If $\rho$ has a $G_K$-invariant filtration with one dimensional graded pieces, in particular if it is ordinary, then $\rho$ is potentially diagonalizable.
\item If $K/\Q_l$ is unramified, if $\rho$ is crystalline and if for each $\tau:K \into \barL$ the Hodge--Tate numbers $\HT_\tau(\rho) \subset [a_\tau,a_\tau+l-2]$ for some integer $a_\tau$, then $\rho$ is potentially
diagonalizable.
\end{enumerate}
\end{lem}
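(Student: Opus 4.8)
The plan is to handle the two cases separately; in each I pass to a suitable finite extension $K'/K$ and exhibit an explicit connection, in the sense of \S\ref{l=p}, between $\rho|_{G_{K'}}$ and a direct sum of crystalline characters. I will use freely the remarks following the definition of $\sim$ in \S\ref{l=p}: that $\sim$ is an equivalence relation, is stable under restriction to $G_{K'}$ and under $\oplus$, $\otimes$ and $(-)^\vee$, and in particular remark~\ref{semisimp}, which says that if $\rho_1$ is potentially crystalline, $\barrho_1$ is semisimple, and $\Fil^\bullet$ is a $G_K$-invariant filtration of $\rho_1$ by $\CO_{\barQQ_l}$-direct summands, then $\rho_1\sim\bigoplus_i\gr^i\rho_1$.

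\emph{Part (1).} Since $\rho$ is potentially crystalline and $\barrho$ has finite image, choose a finite extension $K'/K$ such that $\rho|_{G_{K'}}$ is crystalline and $\barrho|_{G_{K'}}$ is trivial. The given filtration restricts to a $G_{K'}$-invariant filtration of $\rho|_{G_{K'}}$ with one-dimensional graded pieces, on which $G_{K'}$ acts by characters $\chi_i|_{G_{K'}}$; each $\chi_i|_{G_{K'}}$ is a subquotient of the crystalline representation $\rho|_{G_{K'}}$, hence is itself crystalline. Choosing a $G_{K'}$-stable lattice adapted to this filtration (so that its steps are $\CO_{\barQQ_l}$-direct summands) and applying remark~\ref{semisimp} with $\barrho|_{G_{K'}}=1$, we get $\rho|_{G_{K'}}\sim\bigoplus_i\chi_i|_{G_{K'}}$, a direct sum of crystalline characters. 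Thus $\rho|_{G_{K'}}$ is diagonalizable and $\rho$ is potentially diagonalizable. The parenthetical case is immediate, since an ordinary representation has such a filtration by definition.

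\emph{Part (2).} First reduce to $\HT_\tau(\rho)\subset[0,l-2]$ for all $\tau$: as $K/\Q_l$ is unramified there is a crystalline character $\psi\colon G_K\to\barQQ_l^\times$ with $\HT_\tau(\psi)=\{-a_\tau\}$ for all $\tau$ (e.g. a product of Lubin--Tate characters), and replacing $\rho$ by $\rho\otimes\psi$ changes neither crystallinity nor --- since $\sim$ respects $\otimes$ and a twist of a crystalline character by a crystalline character is crystalline --- potential diagonalizability. So assume $\HT_\tau(\rho)\subset[0,l-2]$. By Fontaine--Laffaille theory the composition factors of $\barrho$ are induced from characters of unramified extensions of $K$, so after replacing $K$ by a sufficiently large \emph{unramified} extension $K'/K$ we may assume that every composition factor of $\barrho|_{G_{K'}}$ is a character, i.e. $\barrho|_{G_{K'}}$ is trigonalizable; and $K'/\Q_l$ is still unramified, so Fontaine--Laffaille theory applies over $K'$. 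Let $\mc{M}\in\mc{MF}_\CO$ be the $l$-torsion free object attached to a $G_{K'}$-stable lattice in $\rho|_{G_{K'}}$, so $\mathbf{G}_{K'}(\mc{M}\otimes_\CO\F)\cong\barrho|_{G_{K'}}$. Since the essential image of $\mathbf{G}_{K'}$ is closed under subobjects and quotients, trigonalizability of $\barrho|_{G_{K'}}$ yields a filtration of $\mc{M}\otimes_\CO\F$ by $\mc{MF}_\F$-subobjects with graded pieces of $\F$-rank $[K':\Q_l]$. By Lemma~\ref{fllift} this lifts to an $l$-torsion free $M'\in\mc{MF}_\CO$ with a compatible filtration and $M'\otimes_\CO\F\cong\mc{M}\otimes_\CO\F$; put $\rho'=\mathbf{G}_{K'}(M')\otimes_{\Z_l}\Q_l$. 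Then $\rho'$ is crystalline, reduces to $\barrho|_{G_{K'}}$, has the same Hodge--Tate numbers as $\rho|_{G_{K'}}$, and carries a $G_{K'}$-invariant filtration by $\CO_{\barQQ_l}$-direct summands whose graded pieces are crystalline characters.

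\emph{Part (2), conclusion.} The crystalline lifting ring of $\barrho|_{G_{K'}}$ with these Hodge--Tate numbers is formally smooth over $\CO$ in this range (equivalently, the $l$-torsion free lifts of $\mc{M}\otimes_\CO\F$ in $\mc{MF}_\CO$ with the given Hodge--Tate numbers form a connected family: a lift of the filtration inside an open of a product of Grassmannians over $\CO$, together with a lift of the Frobenius maps inside an affine space over it), so the corresponding scheme after inverting $l$ is irreducible; hence the two crystalline lifts $\rho|_{G_{K'}}$ and $\rho'$ of $\barrho|_{G_{K'}}$ lie on a common component, i.e. $\rho|_{G_{K'}}\sim\rho'$. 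Passing to a further finite extension $K''/K'$ trivializing $\barrho$, restriction gives $\rho|_{G_{K''}}\sim\rho'|_{G_{K''}}$, and remark~\ref{semisimp} applied to $\rho'|_{G_{K''}}$ (residually trivial, hence semisimple) gives $\rho'|_{G_{K''}}\sim\bigoplus_i\gr^i(\rho'|_{G_{K''}})$, a direct sum of crystalline characters. By transitivity $\rho|_{G_{K''}}$ is diagonalizable, so $\rho$ is potentially diagonalizable; undoing the twist by $\psi$ gives the claim for the original $\rho$. I expect the delicate point to be the identity $\rho|_{G_{K'}}\sim\rho'$: this is exactly where one must use Fontaine--Laffaille theory to pin down the crystalline lifting ring in the range $[0,l-2]$, and one must be careful that every base change used to trigonalize $\barrho$ preserves unramifiedness over $\Q_l$.
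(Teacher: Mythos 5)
Your proof is correct and follows essentially the same route as the paper's: part (1) via remark (\ref{semisimp}) after a base change trivializing $\barrho$ and making everything crystalline, and part (2) by twisting into the Fontaine--Laffaille range, making $\barrho$ trigonalizable over an unramified extension, producing a crystalline lift with one-dimensional filtration via Lemma \ref{fllift}, connecting it to $\rho$ on the crystalline lifting ring, and concluding by part (1). The only cosmetic difference is at the step you rightly flag as delicate: where you sketch directly the smoothness/connectedness of the Fontaine--Laffaille crystalline lifting ring, the paper simply invokes Lemma 2.4.1 of \cite{cht} (remarking that the running assumption there of distinct Hodge--Tate numbers is not used in its proof).
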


\begin{proof}
After passing to a finite extension so that $\barrho$ becomes trivial and each $\gr^i \rho$ becomes crystalline, the first part follows from item (\ref{semisimp}) of the first numbered list of this section.

For the second we may assume (by twisting) that $a_\tau=0$ for all $\tau$. Note that every irreducible subquotient of $\barrho|_{I_K}$ is trivial on wild inertia and
hence one dimensional. Choose a finite unramified extension $K'/K$
such that $\barrho(G_{K'})=\barrho(I_K)$.  Then
$\barrho|_{G_{K'}}$ has a $G_{K'}$ invariant filtration with
$1$-dimensional graded pieces. From Lemma \ref{fllift} (and the
discussion just proceeding it) we see that $\barrho|_{G_{K'}}$ has a
crystalline lift $\rho_2$ with the same Hodge--Tate numbers as
$\rho|_{G_{K'}}$ which also has a $G_{K'}$-invariant filtration with
one dimensional graded pieces. It follows from Lemma 2.4.1 of
\cite{cht} that $\rho|_{G_{K'}} \sim \rho_2$. (Note that in Section 2.4.1 of \cite{cht} there is a running assumption that the Hodge--Tate numbers are distinct. However this assumption is not used in the proof of Lemma 2.4.1 of \cite{cht}.) From the first part of this lemma we see that $\rho_2$ is
potentially diagonalizable. Hence $\rho$ is also potentially diagonalizable.
\end{proof}

{\em Important convention}: Suppose that $F$ is a global field and that $r:G_F \ra \GL_n(\barQQ_l)$ is a continuous representation with irreducible reduction $\barr$. In this case there is model $r^\circ:G_F \ra \GL_n(\CO_{\barQQ_l})$
of $r$, which is unique up to $\GL_n(\CO_{\barQQ_l})$-conjugation. If $v|l$ is a place of $F$ we write
$r|_{G_{F_v}} \sim \rho_2$ to mean $r^\circ|_{G_{F_v}} \sim
\rho_2$. We will also say that $r|_{G_{F_v}}$ is (potentially)
diagonalizable to mean that $r^\circ|_{G_{F_v}}$ is.

\subsection{Global theory.}\label{global}{$\mbox{}$} \newline

Fix an odd rational prime $l$ and let $\CO$ denote the ring of integers 
of a finite extension $L$ of $\Q_l$ in $\barQQ_l$. Let $\lambda$ denote the maximal ideal of $\CO$ and
let $\F=\CO/\lambda$.
Let $F$ denote an imaginary CM field with maximal totally real subfield $F^+$. We suppose that each prime of $F^+$ above $l$ splits in $F$ and that $L$ contains the image of each embedding $F \into \barL$. Let $S$ denote a finite
set of places of $F^+$ which split in $F$ and suppose that $S$ contains all places of $F^+$ above $l$.
For each $v \in S$ choose once and for all a prime $\tv$ of $F$ above $v$, and let $\tS$ denote the set of $\tv$ for $v \in S$. 

Let
\[ \barr: G_{F^+} \lra \CG_n(\F) \]
be a continuous representation such that $G_F=\barr^{-1}\CG^0_n(\F)$ and such that $\barr$ is unramified outside $S$. Let
\[ \mu:G_{F^+} \lra  \CO^\times \]
be a continuous character lifting $\nu \circ \barr$. We suppose that $\mu$ is de Rham, so that there is
an integer $w$ such that $\HT_\tau(\mu)=\{w\}$ for all $\tau:F^+ \into L$. For $\tau:F \into L$ let $H_\tau$ denote a multiset of $n$ integers such that 
\[ H_{\tau \circ c}=\{ w-h:\,\, h \in H_\tau \}. \]

For $v \in S$ with $v\ndiv l$ choose a set $\CC_v$ of irreducible
components of the scheme $\Spec R^\Box_{\CO,\breve{\barr}|_{G_{F_\tv}}}[1/l]$,
and let $\calD_{v}$ denote the corresponding local deformation
problem. For $v \in S$ with $v| l$ choose a finite set $\CC_v$ of
irreducible components of $\lim_{\ra K'} \Spec R^\Box_{\CO,\breve{\barr}|_{G_{F_\tv}},
  \{H_\tau\}, K'-\semis}$, and let $\calD_v$ denote the corresponding
local deformation problem.

Let
\[ \CS=(F/F^+,S, \tS, \CO,\barr,\mu,\{ \calD_v\}_{v \in S}). \]
The following proposition is established in \cite{cht} (see Proposition 2.2.9 and Corollary 2.3.5 of that paper).
\begin{prop}\label{drb} Keep the notation and assumptions established in this section. Suppose moreover that
$\breve{\barr}$ is absolutely irreducible.
\begin{enumerate}
\item There is a universal deformation 
\[ r_\CS^\univ:G_{F^+} \lra \CG_n(R^\univ_\CS) \]
of $\barr$ of type $\CS$ in the sense of Section 2.3 of \cite{cht}. 
\item If $\mu(c_v)=-1$ for all $v|\infty$ and if each $H_\tau$ has $n$ distinct elements then 
  $R^\univ_\CS$ has Krull dimension at least $1$.
\end{enumerate}
\end{prop}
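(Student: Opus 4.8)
The plan is to bound $\dim R^\univ_\CS$ from below by the usual ``number of generators minus number of relations'' estimate for Galois deformation rings, the two hypotheses in part~(2) entering at exactly the two delicate points: the behaviour at the primes above $l$ and at the infinite places. Since $\breve{\barr}$ is absolutely irreducible, part~(1) provides $R^\univ_\CS$, and Mazur's tangent--obstruction theory in the $\CG_n$-equivariant form of \S2.2--2.3 of \cite{cht} presents it as a quotient of a power series ring over $\CO$ in $g$ variables modulo an ideal generated by $r$ elements, with $g=\dim_\F H^1_\CS(G_{F^+,S},\gog\gl_n)$ and $r\le\dim_\F H^2_\CS(G_{F^+,S},\gog\gl_n)$. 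Here $G_{F^+,S}$ is the Galois group of the maximal extension of $F^+$ unramified outside $S$ and the archimedean places, $\gog\gl_n$ carries the adjoint action induced by $\barr$ through $\CG_n$ (only this part of the adjoint of $\CG_n$ enters, as the multiplier is held fixed equal to $\mu$), and $H^i_\CS$ are the Selmer groups attached to the local conditions $\calD_v$. Thus $\dim R^\univ_\CS\ge 1+g-r$, and it suffices to show $g\ge r$; equivalently one may first pass to a global framed deformation ring (formally smooth over $R^\univ_\CS$ of explicit nonnegative relative dimension), which is the route actually followed in \cite{cht} and streamlines the bookkeeping.

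By the Poitou--Tate nine-term exact sequence one has $\dim_\F H^2_\CS(\gog\gl_n)=\dim_\F H^1_{\mathcal{S}^\perp}(G_{F^+,S},\gog\gl_n^\vee(1))-\dim_\F H^0(G_{F^+,S},\gog\gl_n^\vee(1))$, using that the chosen local conditions are formally smooth at $v\mid l$ (see below) and unobstructed at the remaining places of $S$ (by the dimension statement of \S\ref{lnotp}). On the other hand the Greenberg--Wiles formula for $M=\gog\gl_n$ with the conditions $\CL_v\subset H^1(G_{F^+_v},M)$ defining $\calD_v$ gives
\[ \dim_\F H^1_\CS-\dim_\F H^1_{\mathcal{S}^\perp}=\dim_\F H^0(G_{F^+,S},M)-\dim_\F H^0(G_{F^+,S},M^\vee(1))+\sum_v\bigl(\dim_\F\CL_v-\dim_\F H^0(G_{F^+_v},M)\bigr), \]
the sum over the places of $S$ together with the archimedean places. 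Moreover $H^0(G_{F^+,S},M)=0$: by Schur's lemma $H^0(G_F,M)$ is the line of scalar matrices, on which complex conjugation acts by $-1$ (because of the $\jmath$-twist defining $\CG_n$), and $-1\neq 1$ since $l$ is odd. Combining these three facts, $g-r\ge\sum_v\bigl(\dim_\F\CL_v-\dim_\F H^0(G_{F^+_v},M)\bigr)$, so it suffices to prove that this sum is $\ge 0$ (in fact it will be exactly $0$).

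For a finite $v\in S$ with $v\nmid l$ the components $\CC_v$ lie in $\Spec R^\Box_{\CO,\breve{\barr}|_{G_{F_\tv}}}[1/l]$, which is equidimensional of dimension $n^2$ by \S\ref{lnotp}, so the $v$-term vanishes. For $v\mid l$, the hypothesis that each $H_\tau$ has $n$ distinct elements is exactly what forces --- by the dimension statement recalled in \S\ref{l=p} --- $\Spec R^\Box_{\CO,\breve{\barr}|_{G_{F_\tv}},\{H_\tau\},K'-\semis}$, hence each chosen component, to be equidimensional of dimension $1+n^2+[F^+_v:\Q_l]n(n-1)/2$ (and formally smooth, as used above), whence the $v$-term is $[F^+_v:\Q_l]n(n-1)/2$. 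For an archimedean $v$, oddness ($\mu(c_v)=-1$) forces $\barr(c_v)$ to be, up to conjugacy, of the form $(h,1)\jmath$ with $h$ symmetric, so the $+1$-eigenspace of $c_v$ on $\gog\gl_n$ is the space of antisymmetric matrices, of dimension $n(n-1)/2$; since $l$ is odd, $H^1(G_{F^+_v},M)=0$, so $\CL_v=0$ and the $v$-term is $-n(n-1)/2$. Summing and using $\sum_{v\mid l}[F^+_v:\Q_l]=[F^+:\Q]=\#\{v\mid\infty\}$, the $v\mid l$ and $v\mid\infty$ contributions cancel and the total is $0$; hence $\dim R^\univ_\CS\ge 1$. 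I expect the genuine difficulty to be not in these two clean inputs but in getting the normalisations of the various local and global cohomological terms consistent --- in particular the precise form of the Poitou--Tate relation between $H^2_\CS$ and the dual Selmer group --- which is exactly what is carried out in \S2.2--2.3 of \cite{cht}, so for the details I would simply cite that.
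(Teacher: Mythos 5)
Your proposal correctly reconstructs the argument that the paper delegates to \cite{cht} (Proposition 2.2.9 and Corollary 2.3.5), and all the key numerical inputs are identified correctly: the dimension $1+n^2+[F^+_v:\Q_l]n(n-1)/2$ of the chosen components at $v\mid l$ (this is where the hypothesis that each $H_\tau$ has $n$ distinct elements enters), the dimension $n(n-1)/2$ of the $c_v$-fixed subspace of $\gog\gl_n$ via the $h$-symmetric computation (this is where $\mu(c_v)=-1$ enters), the vanishing of $H^0(G_{F^+,S},\gog\gl_n)$ because the $\jmath$-twist $X\mapsto -{}^tX$ sends scalars to $-1$ and $l$ is odd, and the final cancellation $\sum_{v\mid l}[F^+_v:\Q_l]=[F^+:\Q]=\#\{v\mid\infty\}$. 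Since the paper gives no proof beyond the citation, this is essentially the same approach.

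One imprecision worth correcting. You invoke ``formal smoothness'' of the local condition at $v\mid l$ and ``unobstructedness'' at the remaining finite $v\in S$, and neither holds in general in the setting of this section: \S\ref{l=p} only asserts formal smoothness for the $K'$-crystalline quotients $R^\Box_{\CO,\barrho,\{H_\tau\},K'-\cris}[1/l]$, whereas the deformation problem $\calD_v$ at $v\mid l$ is built from components of the $K'$-\emph{semistable} ring; and at $v\nmid l$ the chosen components of $R^\Box_{\CO,\breve{\barr}|_{G_{F_\tv}}}[1/l]$ are only \emph{generically} formally smooth. This does not affect the conclusion, because the CHT argument does not go through local $H^2$-vanishing: it works with framed deformation rings, presenting $R^\Box_\CS$ as a quotient of $(\hat\otimes_v R^\Box_v)[[x_1,\dots,x_g]]$ by at most $r$ relations with $g-r$ controlled by the Greenberg--Wiles formula, so the only local input needed is the Krull dimension of the $\CO$-flat local pieces, not their smoothness. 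Phrasing the Poitou--Tate step purely in terms of those local dimensions, as \cite{cht} does, removes the gap.
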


\newpage

\section{Automorphy Lifting Theorems.}\label{RAECSDC}

\subsection{Terminology.} \label{terminology}{$\mbox{}$} \newline

Continue to fix a rational prime $l$ and an isomorphism $\imath:\barQQ_l \iso \C$.

Suppose that $F$ is a CM (or totally real) field and that $F^+$ is the maximal totally real subfield of $F$. By a {\em polarized} $l$-adic (resp.\ mod $l$) representation of $G_F$ we will mean a pair $(r,\mu)$ (resp.\ $(\barr,\barmu)$), where
\[ r: G_F \lra \GL_n(\barQQ_l) \]
(resp.\ $\barr: G_F \lra \GL_n(\barFF_l)$) and
\[ \mu: G_{F^+} \lra \barQQ_l^\times \]
(resp.\ $\barmu: G_{F^+} \lra \barFF_l^\times$) are continuous homomorphisms, such that for some infinite place $v$ of $F^+$ there is $\varepsilon_v \in \{ \pm 1\}$  and a non-degenerate pairing $\langle\,\,\, ,\,\,\,\rangle_v$ on $\barQQ_l^n$ (resp.\ $\barFF_l^n$) such that
\[ \langle x,y \rangle_v = \varepsilon_v \langle y,x \rangle_v \]
and
\[ \langle r(\sigma) x, r(c_v\sigma c_v) y \rangle_v = \mu(\sigma) \langle x,y \rangle_v \]
(resp.\ 
\[\langle \barr(\sigma) x, \barr(c_v\sigma c_v) y \rangle_v = \barmu(\sigma) \langle x,y \rangle_v) \]
for all $x,y \in \barQQ_l^n$ (resp.\ $\barFF_l^n$) and all $\sigma \in G_F$. In the case that $F$ is imaginary we further require that $\varepsilon_v = - \mu(c_v)$. (This last condition can always be achieved by replacing $\mu$ by $\mu \delta_{F/F^+}$.)

Note the following.
\begin{itemize}
\item If the condition is true for one place $v|\infty$ it will be true for
all places $v|\infty$: take $\varepsilon_{v'}=\mu(c_vc_{v'})\varepsilon_v$ and $\langle x ,y\rangle_{v'} = \langle x,
r(c_v c_{v'})y\rangle_v$ (resp.\ $\langle x ,y\rangle_{v'} = \langle x,
\barr(c_v c_{v'})y\rangle_v$).
\item If $F$ is imaginary then $(r,\mu)$
(resp.\ $(\barr,\barmu)$) is a polarized $l$-adic (resp.\ mod $l$) representation if and only if there
is a continuous homomorphism $\widetilde{r}:G_{F^+} \ra \CG_n(\barQQ_l)$
(resp.\ $\CG_n(\barFF_l)$) with $\breve{\widetilde{r}}=r$ (resp.\ $\barr$)
and with multiplier $\mu$ (resp.\ $\barmu$). 
\item If $F$ is totally real
then $(r,\mu)$ is a polarized $l$-adic representation if and only if $r$ factors
through $\GSp_n(\barQQ_l)$ (if $\mu(c_v)=-\varepsilon_v$) or $\GO_n(\barQQ_l)$ (if
$\mu(c_v)=\varepsilon_v$) with multiplier $\mu$. [Define the pairing on
$\barQQ_l^n$ by $\langle x ,y\rangle = \langle x, r(c_v)y\rangle_v$.]
A similar assertion is true in the case of $\barFF_l$ if $l>2$.
\end{itemize}

We will call $(r,\mu)$ (resp.\ $(\barr,\barmu)$) {\em totally odd} if $\varepsilon_v=1$ for all $v|\infty$. (Equivalently, if $\mu(c_v)$ (resp.\ $\barmu(c_v)$) is independent of $v|\infty$ and $\varepsilon_v=1$ for some $v|\infty$.) We call an $l$-adic (resp.\ mod $l$) representation $r$ (resp.\ $\barr$) {\em polarizable} if there exists a $\mu$ (resp.\ $\barmu$) such that $(r,\mu)$ (resp.\ $(\barr,\barmu)$) is a polarized representation. We call an $l$-adic (resp.\ mod $l$) representation $r$ (resp.\ $\barr$) {\em totally odd polarizable} if there exists a $\mu$ (resp.\ $\barmu$) such that $(r,\mu)$ (resp.\ $(\barr,\barmu)$) is a totally odd polarized representation. 

We will call $(r,\mu)$ (resp.\ $r$) {\em algebraic} if $r$ is unramified at all but finitely many primes and if it is de Rham at all primes above $l$. We will call it {\em regular algebraic} if it is algebraic and if for all $\tau:F \into \barQQ_l$ the multiset $\HT_\tau(r)$ has $n$ distinct elements.

We now recall from \cite{cht} and \cite{blght} the notions of RAESDC
and RAECSDC automorphic representations. In fact, it will be
convenient for us to work with a slight variant of these definitions,
where we keep track of the character which occurs in the essential (conjugate) self-duality. Moreover the referee suggested we use a less cumbersome name. We have followed this advice with some reservations. It is less cumbersome, but we have a slight worry it could be misleading. We hope not.

If $F$ is a number field and $\pi$ is an automorphic representation of $\GL_n(\A_F)$ we will call $\pi$ {\em regular algebraic} if $\pi_\infty$ has the same infinitesimal character as an irreducible algebraic representation of the restriction of scalars from $F$ to $\Q$ of
$\GL_n$.

Let $F$ be a CM (or totally real) field with maximal totally real subfield $F^+$. 
By a {\em polarized automorphic  representation} of
$\GL_n(\A_{F})$ we mean a pair $(\pi,\chi)$ where
\begin{itemize}
\item $\pi$ is an automorphic representation of $\GL_n(\A_F)$,
\item $\chi:\A_{F^+}^\times/(F^+)^\times \ra \C^\times$ is a
  continuous character such that $\chi_v(-1)$ is independent of $v|\infty$,
\item and $\pi^c \cong \pi^\vee \otimes (\chi \circ \norm_{F/F^+} \circ \det)$.
\end{itemize}
(Here $\pi^c$ denotes the composition of $\pi$ with complex conjugation on $\GL_n(\A_F)$. If $F$ is totally real then $\pi^c=\pi$.)
In the case that $F$ is imaginary we further suppose that $\mu_v(-1)=(-1)^n$ for all $v|\infty$. (This last condition can always be achieved by replacing $\mu$ by $\mu \delta_{F/F^+}$.)

We will call an automorphic representation $\pi$ of $\GL_n(\A_F)$ {\em polarizable} if $F$ is CM (or totally real) and there is a character $\mu$ such that $(\pi,\mu)$ is a polarized automorphic representation.

We will say that $(\pi,\chi)$ is {\em cuspidal} if $\pi$ is. We will say that $(\pi,\chi)$ is {\em regular algebraic} if $\pi$ is, in which case $\chi$ is also algebraic. 
We will say that $(\pi,\chi)$ has {\em level prime to $l$} (resp.\ {\em level potentially prime to $l$}) if
for all $v|l$ the representation $\pi_v$ is unramified (resp.\ becomes unramified after a finite base change).

If $\Omega$ is an algebraically closed field of characteristic $0$ we will
write $(\Z^n)^{\Hom(F,\Omega),+}$ for the set of $a=(a_{\tau,i}) \in (\Z^n)^{\Hom(F,\Omega)}$ satisfying
\[ a_{\tau,1} \geq \dots \geq a_{\tau,n}. \]
Let $w \in \Z$. If $F$ is totally real or CM (resp.\ if $\Omega=\C$) we will write $(\Z^n)^{\Hom(F,\Omega)}_w$
for the subset of elements $a \in (\Z^n)^{\Hom(F,\Omega)}$ with 
\[ a_{\tau,i}+a_{\tau \circ c, n+1-i}=w \]
(resp.\ 
\[ a_{\tau,i}+a_{c \circ \tau, n+1-i}=w.) \]
(These definitions are consistent when $F$ is totally real or CM and $\Omega=\C$.)
If $F'/F$ is a finite extension
we define $a_{F'} \in (\Z^n)^{\Hom(F',\Omega),+}$ by
\[ (a_{F'})_{\tau,i} = a_{\tau|_F,i}. \]
We will call $a$ {\em extremely regular} if for some $\tau$ the $a_{\tau,i}$ have the following property: for any subsets $H$ and $H'$ of $\{ a_{\tau,i}+n-i\}_{i=1}^n$ of the same cardinality, if $\sum_{h \in H} h = \sum_{h \in H'} h$ then $H=H'$.

If $a \in (\Z^n)^{\Hom(F,\C),+}$, let $\Xi_a$ denote the irreducible algebraic representation of
$\GL_n^{\Hom(F,\C)}$ which is the tensor product over $\tau$ of the
irreducible representations of $\GL_n$ with highest weights $a_\tau$. We will
say that a regular algebraic polarized (resp.\ regular algebraic) automorphic representation $(\pi,\chi)$ (resp.\ $\pi$) of $\GL_n(\A_F)$ has
{\em weight $a$} if $\pi_\infty$ has the same infinitesimal character as
$\Xi_a^\vee$. Note that in this case $a$ must lie in $(\Z^n)^{\Hom(F,\C)}_w$ for some $w \in \Z$.

Suppose that $\pi$ is a regular algebraic automorphic representation of $\GL_n(\A_F)$ of weight $a \in (\bb{Z}^n)^{\Hom(F,\bb{C}),+}$.
 Let $v$ be a place of $F$ dividing
$l$ and $\varpi_{v}$ a uniformizer in $\mc{O}_{F_v}$.  For each integer $b>0$, let
$\Iw(v^{b,b})$ denote the subgroup of $\GL_n(\CO_{F_v})$ consisting of
elements that reduce to upper triangular unipotent matrices modulo $v^b$. The space
$(\iota^{-1}\pi_v)^{\Iw(v^{b,b})}$ carries commuting actions of the
Hecke operators
\[ U_{\varpi_v}^{(j)} = \left[ \Iw(v^{b,b}) \left(\begin{matrix} \varpi 1_j & 0 \\
  0 & 1_{n-j}\end{matrix}\right) \Iw(v^{b,b}) \right] .\]
  (See for instance Lemma 2.3.3 of \cite{ger}.)
We define rescaled Hecke operators
\[ U_{\iota^* a,\varpi_v}^{(j)} := \left( \prod_{ \tau : F_v
    \hookrightarrow \overline{\bb{Q}}_l}\prod_{i=1}^{j}
  \tau(\varpi_{v})^{- a_{ \iota\circ \tau|_F, n-i+1}} \right)  U_{\varpi_v}^{(j)}
\]
for $j=1,\ldots,n$. We define the ordinary part $(\iota^{-1}
\pi_v)^{\Iw(v^{b,b}),\ord}$ of $(\iota^{-1}\pi_v)^{\Iw(v^{b,b})}$ to be
the maximal subspace which is invariant under each
$U_{\iota^* a,\varpi_v}^{(j)}$ and such that every eigenvalue of each
$U_{\iota^* a,\varpi_v}^{(j)}$ is an $l$-adic unit. This definition
does not depend on the choice of uniformizer $\varpi_v$. (If we choose
another uniformizer $\varpi'_v$, then $U_{\varpi'_v}^{(j)}=\langle u\rangle U_{\varpi_v}^{(j)}$
  where $\langle u \rangle$ is an
  operator on $\pi_v^{\Iw(v^{b,b})}$ that commutes with $U_{\varpi_v}^{(j)}$ and whose
  $b$-th power is trivial.)

We say that $\pi$ is \emph{$\iota$-ordinary} if for each $v|l$ there
is an integer $b>0$ such that the space
$(\iota^{-1}\pi_v)^{\Iw(v^{b,b}),\ord}$ is non-zero. Note that if $\psi$ is an algebraic character of $\A_F^\times/F^\times$ then $\pi$ is $\imath$-ordinary if and only if $\pi \otimes (\psi \circ \det)$ is $\imath$-ordinary. Also recall (from Lemma 5.1.5 of \cite{ger}) that if $\pi$ has weight $0$ and if $\pi_v$ is Steinberg for all $v|l$ then $\pi$ is $\imath$-ordinary.

We recall that thanks to the work of many people (we mention in
particular Bellaiche, Caraiani, Chenevier, Clozel, Harris, Kottwitz, Labesse, Shin and
R.T., and the references \cite{kott}, \cite{MR1044819}, \cite{ht}, \cite{belchen}, \cite{shin}, \cite{bookproj}, \cite{clozelpurity}, \cite{ana} and \cite{ana2}) we can associate $l$-adic representations to regular algebraic cuspidal polarized automorphic representations:
\begin{thm}\label{grfaf} Suppose that $(\pi,\chi)$ is a regular algebraic, cuspidal, polarized automorphic representation of $\GL_n(\A_F)$. Then there is a continuous 
semi-simple representation
\[ r_{l,\imath}(\pi): G_F \lra \GL_n(\barQQ_l) \]
and an integer $w$ 
with the following properties.
\begin{enumerate}
\item $(r_{l,\imath}(\pi), \epsilon_l^{1-n}r_{l,\imath}(\chi))$ is a totally odd, polarized $l$-adic representation.
\item If $v\ndiv l$ is a place of $F$ then
\[ \imath \WD(r_{l,\imath}(\pi)|_{G_{F_v}})^{F-\semis} \cong \rec(\pi_v \otimes |\det|_v^{(1-n)/2}), \]
and these Weil--Deligne representations are pure of weight $w$.
\item $r_{l,\imath}(\pi)$ is de Rham and if $\tau:F \into \barQQ_l$ then
\[ \HT_\tau(r_{l,\imath}(\pi))=\{ a_{\imath\tau,1}+n-1, a_{\imath \tau,2}+n-2,\dots,a_{\imath \tau,n} \}. \]
Moreover 
\[ \HT_{\tau \circ c}(r_{l,\imath}(\pi))= \{ w-h:\,\, h \in \HT_{\tau}(r_{l,\imath}(\pi))\}. \]
\item If $v|l$ and $\pi_v$ has an Iwahori fixed vector then
\[ \imath \WD(r_{l,\imath}(\pi)|_{G_{F_v}})^{F-\semis} \cong \rec(\pi_v \otimes |\det|_v^{(1-n)/2}). \]
In particular $r_{l,\imath}(\pi)$ is semi-stable at $v$, and if $\pi_v$ is unramified then it is crystalline.
\end{enumerate} \end{thm}
(For most of this Theorem we refer to Theorems 1.1 and 1.2 of \cite{blght}, strengthened by incorporating the main theorems of \cite{ana} and \cite{ana2}. We warn the reader that the main theorem of \cite{ana2} depends on the current paper, however there is no circularity, because it does not do so in the case that $\Pi_y$ has an Iwahori fixed vector, which is the only case we are quoting here. The first part follows from Theorem 1.2 and Corollary 1.3 of
\cite{belchen}. Note that Theorem 1.2 of \cite{belchen} can easily be extended to the case $\chi$ non-trivial by a twisting argument. Also note that irreducible factors $r$ of $r_{l,\imath}(\pi)$ which do not satisfy $r^c \cong r^\vee \otimes \epsilon_l^{1-n} r_{l,\imath}(\chi)$ occur in pairs $r$ and  $(r^c)^\vee \otimes 
\epsilon_l^{1-n} r_{l,\imath}(\chi)$, and it is straightforward to put a pairing of the desired form on $r \oplus (r^c)^\vee \otimes 
\epsilon_l^{1-n} r_{l,\imath}(\chi)$. When $F$ is CM, note that by definition $\epsilon_l^{1-n} r_{l,\imath}(\chi)$ takes every complex conjugation to $-1$.) We also have the following remarks.
\begin{enumerate}
\setcounter{enumi}{5}
\item {\em If $\pi$ is $\imath$-ordinary and $v|l$ then $r_{l,\imath}(\pi)|_{G_{F_v}}$ is ordinary. }(This follows from Lemma 5.2.1 of \cite{ger} using the same twisting argument as in section 1 of \cite{blght}.)
\item {\em If $\pi$ has level potentially prime to $l$ and if $r_{l,\imath}(\pi)$ is ordinary then $\pi$ is $\imath$-ordinary.} (See Lemmas 5.1.6 and 5.2.1 of \cite{ger}.)
\end{enumerate}
We remark that it is presumably both true and provable that $\pi$ is
$\imath$-ordinary if and only if $r_{l,\imath}(\pi)$ is ordinary, but
to work out the details here would take us too far afield.

We will let $\barr_{l,\imath}(\pi)$ denote the semisimplification of the reduction of $r_{l,\imath}(\pi)$. 
If $F$ is totally real and if $\chi_v(-1)=(-1)^{n-1}$ for all $v|\infty$ then 
$\barr_{l,\imath}(\pi)$ factors through a map
\[ \tbarr_{l,\imath}(\pi):G_F \lra \GO_n(\barFF_l)  \]
with multiplier $\barepsilon_l^{1-n} \barr_{l,\imath}(\chi)$.
If $F$ is totally real and if $\chi_v(-1)=(-1)^{n}$ for all $v|\infty$ then 
$n$ is even and $\barr_{l,\imath}(\pi)$ factors through a map
\[ \tbarr_{l,\imath}(\pi):G_F \lra \GSp_n(\barFF_l)  \]
with multiplier $\barepsilon_l^{1-n} \barr_{l,\imath}(\chi)$.
If $F$ is imaginary CM  then
it extends to a continuous homomorphism
\[ \tbarr_{l,\imath}(\pi): G_{F^+} \lra \CG_n(\barFF_l) \]
with multiplier $\barepsilon_l^{1-n} \barr_{l,\imath}(\chi)$.

We will call $(r,\mu)$ (resp.\ $(\barr,\barmu)$, resp.\ $r$, resp.\ $\barr$)  {\em automorphic} if there is a regular algebraic, polarized cuspidal automorphic representation $(\pi,\chi)$ such that
\[ (r,\mu)\cong (r_{l,\imath}(\pi),r_{l,\imath}(\chi)\epsilon_l^{1-n})\]
(resp.\ 
\[ (\barr,\barmu)\cong (\barr_{l,\imath}(\pi),\barr_{l,\imath}(\chi)\barepsilon_l^{1-n}), \]
resp.
\[ r \cong r_{l,\imath}(\pi), \]
resp.
\[ \barr \cong \barr_{l,\imath}(\pi)). \]
We will say  that $(r,\mu)$ or $(\barr,\barmu)$ or $r$ or $\barr$ is {\em automorphic of level prime to $l$} (resp.\ {\em automorphic of level potentially prime to $l$}, resp.\ {\em ordinarily automorphic}, resp.\ {\em potentially diagonalizably automorphic}) if $(\pi,\chi)$ has level prime to $l$ (resp.\ has level potentially prime to $l$, resp.\ is $\imath$-ordinary, resp.\ has level potentially prime to $l$ and $r_{l,\imath}(\pi)$ is potentially diagonalizable). By Theorem 3.13 of \cite{MR1044819} these definitions do not depend on the choice of $\imath$.

Finally recall the following definition from \cite{jack}. We will call a
subgroup $H \subset \GL_n(\barFF_l)$ {\em adequate} if the following
conditions are satisfied.
\begin{itemize}
\item $H^1(H,\barFF_l)=(0)$ and $H^1(H,\gs\gl_n(\barFF_l))=(0)$.
\item $H^0(H,\gs\gl_n(\barFF_l))=(0)$.
\item The elements of $H$ with order coprime to $l$ span $M_{n \times
    n}(\barFF_l)$ over $\barFF_l$. 
\end{itemize}
Note that this is not exactly the definition given in Definition 2.3 of \cite{jack}, however it is equivalent to it by Lemma 1 of \cite{jackapp}. Note also that if $H$ is adequate then $\barFF_l^n$ is an irreducible $H$-module (by the third condition) and $l \ndiv n$ (by the second condition, as when $l|n$ we have $\barFF_l 1_n \subset \gs\gl_n(\barFF_l)$).  The following proposition is Theorem 9 of \cite{jackapp}.

\begin{prop}\label{ghtt} Suppose that $H$ is a finite subgroup of $\GL_n(\barFF_l)$ such that the tautological representation of $H$ is irreducible. Let $H^0$ denote the subgroup of $H$ generated by all elements of $l$-power order and let $d$ denote the maximal dimension of an irreducible $H^0$-submodule of $\barFF_l^n$. If $l \geq 2(d+1)$ then $H$ is adequate. \end{prop}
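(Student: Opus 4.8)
The plan is to verify the three defining properties of adequacy in turn, after some structure theory. Write $V=\barFF_l^n$ for the tautological $\barFF_l[H]$-module; it is absolutely irreducible since $\barFF_l$ is algebraically closed. The subgroup $H^0$ is characteristic in $H$, hence normal, so Clifford's theorem applies: $V|_{H^0}$ is semisimple, $V|_{H^0}\cong\bigoplus_\alpha W_\alpha\otimes\barFF_l^{m_\alpha}$ with the $W_\alpha$ pairwise non-isomorphic irreducible $\barFF_l[H^0]$-modules, permuted transitively by $H$, each of $\barFF_l$-dimension $\le d$. I would record two observations used throughout. First, every element $u\in H$ of $l$-power order lies in $H^0$, fixes each isotypic component $W_\alpha\otimes\barFF_l^{m_\alpha}$ (setwise), and acts on it as $\rho_{W_\alpha}(u)\otimes\mathrm{id}$; since $d<l$ this forces $(u-1)^d=0$ on $V$, hence $u^l=1$, and all Jordan blocks of $u$ on $V$ have size $\le d$. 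Second, $l\nmid[H:H^0]$: for $h\in H$ the unipotent part $h_u$ has $l$-power order, so $h_u\in H^0$ and $h\equiv h_s\pmod{H^0}$ with $h_s$ of order prime to $l$, so every element of $H/H^0$ has $l'$-order. A consequence (applying Clifford to a normal $l$-subgroup, which must act trivially on the $\barFF_l$-semisimple faithful module $V$) is $O_l(H^0)=1$.

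Next I would dispose of $H^0(H,\gog\gl_n^0(\barFF_l))=(0)$ and the $H^1$-conditions. By absolute irreducibility and Schur, $\End_H(V)=\barFF_l$, so the $H^0$-statement reduces to $l\nmid n$ (which then also gives $\End_{\barFF_l}(V)=\barFF_l\oplus\gog\gl_n^0(\barFF_l)$ as $H$-modules). Writing $V=\Ind_{H_1}^{H}U$ with $H_1\supseteq H^0$ the stabiliser of one isotypic component and $U|_{H^0}$ isotypic of type $W$, $\dim W=d'\le d$, one has $n=[H:H_1]\cdot e\cdot d'$ where $e$ is the multiplicity; here $l\nmid d'$ since $d'<l$, $l\nmid e$ since $e$ is the dimension of an irreducible module over a twisted group algebra of the $l'$-group $H_1/H^0$, and $l\nmid[H:H_1]$ since an $l$-element of $H$ with an orbit of size $\ge l$ on the isotypic components would have a Jordan block of size $\ge l>d$, contradicting the first observation. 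Hence $l\nmid n$. For the cohomology, $l\nmid[H:H^0]$ makes restriction injective, so it suffices that $H^1(H^0,\barFF_l)=(0)$ and $H^1(H^0,\gog\gl_n^0(\barFF_l))=(0)$. A nontrivial homomorphism $H^0\to\barFF_l$ has kernel $K$ of index $l$; applying Clifford to $K\trianglelefteq H^0$ and using that the $K$-isotypic components have dimension $\le d<l$, $H^0/K$ must fix each of them, and then $H^2(\Z/l,\barFF_l^\times)=0$ together with the absence of nontrivial $l$-th roots of unity in $\barFF_l$ contradicts faithfulness of $V$. The adjoint case follows by decomposing $\gog\gl_n^0(\barFF_l)$ along the summands $\Hom(W_\alpha,W_\beta)$ and bounding each $H^1$ using $\dim W_\alpha,\dim W_\beta\le d$ and $l\ge 2(d+1)$.

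The remaining, and main, point is the spanning condition. Let $S\subseteq M_n(\barFF_l)=\End_{\barFF_l}(V)$ be the $\barFF_l$-span of the elements of $H$ of order prime to $l$; it contains $1$ and is stable under conjugation by $H$, and by Jacobson density all of $H$ spans $\End_{\barFF_l}(V)$, so it suffices to show every $l$-power-order (hence unipotent) element of $H$ lies in $S$. From the first observation the block-permutation action of $H$ on the isotypic components $V_i$ is transitive with image of $l'$-order, so any block-to-block identification is realised by an $l'$-element of $H$ (the semisimple part of any lift). This reduces the claim to the block-diagonal statement that the $l'$-elements of $H_1=\mathrm{Stab}(V_1)$ span $\End_{\barFF_l}(U)$, and then, via the Clifford factorisation $\End_{\barFF_l}(U)\cong\End_{\barFF_l}(W)\otimes\End_{\barFF_l}(\barFF_l^{e})$ with $H_1/H^0$ of order prime to $l$ acting on the multiplicity factor, to the assertion that the $l'$-elements of $H^0$ span $\End_{\barFF_l}(W)$ for a single irreducible $H^0$-module $W$ of dimension $d'\le d$.

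This last assertion is the heart of the matter, and the step I expect to be the genuine obstacle: one must show that a unipotent element $u$ of order $l$ with all Jordan blocks of size $\le d$, lying in a finite group acting irreducibly on a space of dimension $\le d$ with $l\ge 2(d+1)$, lies in the $\barFF_l$-span of the $l'$-elements of that group. This is the analogue here of the ``bigness'' inputs of earlier work, and it is where the precise inequality $l\ge 2(d+1)$ is used essentially — it leaves just enough room in a representation-theoretic estimate (controlling how far the $l'$-elements can fail to span) to force $u\in S$. Once this lemma is in place, the reductions above complete the verification of the spanning condition, and hence of adequacy; the cohomological conditions, by contrast, follow fairly formally from the structure of $H^0$ established above.
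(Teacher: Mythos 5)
There is a genuine gap — in fact two. First, your own text concedes it: the spanning condition (that the $l'$-order elements of $H$ span $M_{n\times n}(\barFF_l)$) is reduced, plausibly enough, to the case of a single irreducible $H^0$-isotypic block, but the resulting assertion is then simply labelled ``the heart of the matter'' and left unproved. That assertion is not a routine estimate one can wave at; it is precisely the main content of the result, and it is where the hypothesis $l\geq 2(d+1)$ enters through real work (semisimplicity of $W\otimes W^\vee$ for $\dim W\le d$ via Serre's theorem on tensor products of semisimple modules in characteristic $l$, followed by a nondegeneracy/trace argument showing that no nonzero element of $\End(V)$ can be orthogonal to all $l'$-elements). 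A proof that stops exactly at this point has not proved the proposition. Second, the cohomological conditions are also not established. The vanishing of $H^1(H,\gog\gl_n^0(\barFF_l))$ is asserted by ``decomposing along $\Hom(W_\alpha,W_\beta)$ and bounding each $H^1$'', but no such bound is given, and none is elementary: one again needs semisimplicity of the relevant tensor modules plus a genuine cocycle argument exploiting that $H^0$ is generated by elements $u$ with $(u-1)^d=0$ and $2d<l$. Even your sketch for $H^1(H^0,\barFF_l)=0$ does not close: showing that each irreducible $H^0$-constituent $W$ restricts irreducibly to an index-$l$ normal subgroup $K$ (via $H^2(\Z/l,\barFF_l^\times)=0$ and the absence of $l$-th roots of unity) is consistent with the existence of such a $K$ and yields no contradiction with faithfulness as written; note that $H^0$ being generated by $l$-power-order elements does not by itself preclude an order-$l$ abelian quotient, since $\barFF_l$ (additively) is an $l$-torsion module.

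For comparison: the paper does not prove this proposition at all — it is quoted verbatim as Theorem 9 of the appendix \cite{jackapp} (Guralnick--Herzig--Taylor--Thorne), and the arguments there supply exactly the two missing ingredients above. Your preliminary reductions (Clifford theory for $H^0\trianglelefteq H$, $l\nmid[H:H^0]$ via Jordan decomposition, $l\nmid n$, injectivity of restriction on $H^1$, and the reduction of spanning to one isotypic block) are essentially sound and do mirror the shape of that proof, but without the semisimplicity input and the spanning/trace argument the proposal does not constitute a proof of adequacy.
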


(R.T.\ would like to take this opportunity to make two corrections to \cite{cht}. Robert Guralnick points out that the assumption in Corollary 2.5.4 of \cite{cht} should be $l>2n+1$ and not $l>2n-1$. The application of Lemma (2.7) c) of \cite{cps} in the penultimate sentence of the proof of Corollary 2.5.4 of \cite{cht} requires $l>2n+1$, not $l>n+1$ as was stated there. The correct form of the Corollary was used in \cite{hsbt}. 

Florian Herzig points out that in the proof of Corollary 4.4.4 of \cite{cht} we should have written $\operatorname{Sp}_n(\F_l)/\{ \pm 1\}$ and not $\operatorname{PSp}_n(\F_l)$. R.T.\ would like to thank Guralnick and Herzig for these observations.)

\subsection{Lemmas on Automorphy}{$\mbox{}$} \newline

Our first lemma is elementary.
\begin{lem}\label{untwist} Suppose that $F$ is a CM (or totally real) field and that $\psi$ is an algebraic character of $G_F$. If $F$ is imaginary let $\phi$ denote the composition of $\psi$ with the transfer map $G_{F^+}^\ab \ra G_F^\ab$. If $F$ is totally real let $\phi=\psi^2$. Then $(r,\mu)$ is automorphic if and only if $(r \otimes \psi, \mu \phi)$ is.\end{lem}

The next lemma is proven in the same way as Lemma 4.2.2 of \cite{cht}. (See also Lemmas 4.3.2 and 4.3.3 of \cite{cht}.)
\begin{lem}\label{bc} Suppose that $M/F$ is a soluble Galois extension of fields and that $M$ is CM (or totally real). Suppose that $(r,\mu)$ is a polarized $l$-adic representation of $G_F$ with $r|_{G_M}$ irreducible. Then $(r,\mu)$ is automorphic if and only if $(r|_{G_M},\mu|_{G_{M^+}})$ is automorphic. \end{lem}

The next lemma is a generalization of Lemma 7.1 of \cite{blght}. 

\begin{lem} Suppose that $\pi$ is an irreducible, unitary, admissible module for $((\Lie \GL_n(\R)) \otimes_\R \C,\operatorname{O}(n))$ or $((\Lie \GL_n(\C)) \otimes_\R \C,\operatorname{U}(n))$ with half integral Harish-Chandra parameter (i.e.\ this parameter lies in $1/2$ the co-character group of a maximal torus in the complexified group). Then
\[ \pi^c \cong \pi^\vee. \]
\end{lem}

\begin{proof} In the second case this is Lemma 7.1 of \cite{blght}. However, we will give here a uniform proof in both cases. From the classification of irreducible unitary admissible $((\Lie \GL_n(\R)) \otimes_\R \C,\operatorname{O}(n))$ and $((\Lie \GL_n(\C)) \otimes_\R \C,\operatorname{U}(n))$-modules, we see that $\pi$ is of the form
\[ \pi_1 \boxplus \cdots \boxplus \pi_r, \]
where
\[ \pi_i = (\sigma_i \otimes |\det|^{(1-m_i)/2}) \boxplus (\sigma_i
\otimes |\det|^{(3-m_i)/2}) \boxplus \cdots \boxplus (\sigma_i \otimes |\det|^{(m_i-1)/2}) \]
with $m_i \in \Z_{>0}$ and $\sigma_i$ an irreducible, unitary, discrete series, admissible module for 
$((\Lie \GL_{n_i}(\R)) \otimes_\R \C,\operatorname{O}(n_i))$ or $((\Lie \GL_{n_i}(\C)) \otimes_\R \C,\operatorname{U}(n_i))$ with half-integral Harish-Chandra parameter.  (We are using the form of the classification proposed by Tadic in 1985 and proved in \cite{tadic}. A similar classification was given earlier in \cite{vogan}. \cite{clozelb} indicates how the formulation we are using can be deduced from the one given in \cite{vogan}.)

The only possibilities for $\sigma_i$ are
\begin{itemize}
\item the $((\Lie \GL_{1}(\R)) \otimes_\R \C,\operatorname{O}(1))$-module associated to the trivial or sign representation of $\R^\times$;
\item the unique discrete series $((\Lie \GL_{2}(\R)) \otimes_\R \C,\operatorname{O}(2))$-module with the same infinitesimal character as the representation $\Sym^a \otimes |\det|^{-a/2}$ of $\GL_2(\R)$ for some $a\in \Z_{\geq 0}$;
\item the $((\Lie \GL_{1}(\C)) \otimes_\R \C,\operatorname{U}(1))$-module associated to one of the representations $z \mapsto (z/|z|)^a$ of $\C^\times$ for some $a \in \Z$.
\end{itemize}
In each case we see that $\sigma_i^\vee \cong \sigma_i^c$. Thus $\pi^\vee \cong \pi^c$, as desired. (Note that, by known properties of the Langlands local reciprocity map $\rec$, we have $(\pi_1 \boxplus \pi_2)^\vee \cong \pi_1^\vee \boxplus \pi_2^\vee$ and $(\pi_1\boxplus \pi_2)^c \cong \pi_1^c \boxplus \pi_2^c$.)
\end{proof}

The next lemma formalizes and generalizes the argument of step 3 of the proof of Theorem 7.5 of \cite{blght}. Partial generalizations of this argument have already been given in Proposition 5.2.1 of \cite{blgg} and Proposition 5.1.1 of \cite{blggord}.

\begin{lemma}\label{htcmtr}
  Let $F$ be a CM (or totally real) field and $M/F$ a
  soluble Galois, CM (or totally real) extension of degree $m$. Let $r:G_M\to
  \GL_n(\Qlbar)$ be an irreducible continuous representation and $\mu:G_{F^+} \ra \barQQ_l^\times$ a continuous character, such that $(\Ind_{G_M}^{G_F} r,\mu)$ is an automorphic polarized $l$-adic representation of $G_F$. Then $(r,\mu|_{G_{M^+}})$ is also automorphic and polarized. 
\end{lemma}

\begin{proof} 
  Inductively we may reduce to the case that $m$ is prime. 
  
  Let $\sigma$ denote a generator of $\Gal(M/F)$, and $\kappa$ a
  generator of $\Gal(M/F)^\vee$. Let $(\Pi,\chi)$ be a regular algebraic, cuspidal polarized automorphic representation of $\GL_{mn}(\A_F)$ such that $r_{l,\imath}(\Pi) \cong \Ind_{G_M}^{G_F} r$ and $\mu=\epsilon_l^{1-mn}r_{l,\imath}(\chi)$. Because 
  \[ r_{l,\imath}(\Pi) \otimes \kappa \cong r_{l,\imath}(\Pi) \]
  we deduce that
  \[ \Pi \otimes (\kappa \circ \Art_F \circ \det) \cong \Pi, \]
  and so from Theorems 3.4.2 and 3.5.1 of \cite{MR1007299} there is a cuspidal automorphic representation $\pi$ of $\GL_n(\A_M)$ such that
  \[ \pi \boxplus \pi^{\sigma} \boxplus\cdots \boxplus \pi^{\sigma^{m-1}} \]
  is a strong base change of $\Pi$ in the sense of definitions 1.6.1 and 3.1.2 and section 1.7 of \cite{MR1007299}. Lemma VII.2.6 of \cite{ht} then shows that
  \[ \BC_{M/F}(\Pi) \cong \pi \boxplus \pi^{\sigma} \boxplus\cdots \boxplus \pi^{\sigma^{m-1}}. \]
  Note that, because $\Pi$ is regular algebraic, $\pi \otimes ||\det||_M^{n(1-m)/2}$ is regular algebraic and that the representations $\pi^{\sigma^i}$ for $i=0,1,\dots,m-1$ are pairwise non-isomorphic. 
  
  Because $\Pi^c \cong \Pi^\vee \otimes (\chi \circ \norm_{F/F^+} \circ \det)$ we see that
  \[ \pi^c \cong \pi^{\sigma^i, \vee} \otimes  (\chi \circ \norm_{M/F^+} \circ \det) \]
  for some $i=0,\dots,m-1$. If $\psi$ denotes the central character of $\pi$, which must also be algebraic, then we see that
  \[ 2 \wt(\psi)=n \wt (\chi). \]
  (See the discussion at the start of Section \ref{sbuild}.) Thus the central character of $\pi \otimes ||\det ||_{F}^{-\wt(\chi)/4}$ (i.e.\ $\psi ||\,\,\,||_F^{-\wt(\psi)/2}$) is unitary, and so $\pi \otimes ||\det ||_{F}^{-\wt(\chi)/4}$ is also unitary. 
  
  Moreover $\wt(\chi)$ is even (as $F^+$ is totally real) and so for all $v|\infty$ the representation $\pi_v \otimes |\det|_v^{-\wt(\chi)/4}$ has half integral Harish-Chandra parameter. We conclude from the previous lemma that for $v|\infty$ we have
  \[ \pi_v^c  \cong  \pi_v^\vee \otimes |\det|_v^{\wt(\chi)/2} . \]
  Thus 
  \[ (\pi^{\sigma^i})_v \cong \pi_v \otimes ((\chi ||\,\,\,||_{F^+}^{-\wt(\chi)/2})\circ \norm_{M/F^+} \circ \det)_v. \]
  By the regularity of $\BC_{M/F}(\Pi)$ we deduce that we must have $i=0$, i.e.
   \[ \pi^c \cong \pi^{\vee} \otimes  (\chi \circ \norm_{M/F^+} \circ \det). \]
   In particular $(\pi \otimes ||\det||^{n(1-m)/2}, (\chi ||\,\,\,||_{F^+}^{n(1-m)}) \circ \norm_{M^+/F^+} )$ is a regular algebraic, cuspidal, polarized automorphic representation of $\GL_n(\A_M)$. Moreover
   \[\begin{array}{r}  r_{l,\imath}(\Pi)|_{G_M} \cong  r_{l,\imath}(\pi \otimes ||\det||^{n(1-m)/2}) \oplus r_{l,\imath}(\pi \otimes ||\det||^{n(1-m)/2})^\sigma \oplus \cdots \\ \oplus r_{l,\imath}(\pi \otimes ||\det||^{n(1-m)/2})^{\sigma^{m-1}}. \end{array} \]
   
   On the other hand
   \[ r_{l,\imath}(\Pi)|_{G_M} \cong (\Ind_{G_M}^{G_F} r)|_{G_M} \cong r \oplus r^\sigma \oplus \cdots \oplus r^{\sigma^{m-1}}. \]
   As $r$ is irreducible we deduce that 
   \[ r \cong r_{l,\imath}(\pi \otimes ||\det||^{n(1-m)/2})^{\sigma^j} \cong r_{l,\imath}(\pi^{\sigma^j} \otimes ||\det||^{n(1-m)/2}) \]
   for some $j$. Moreover
   \[ r_{l,\imath}((\chi ||\,\,\,||_{F^+}^{n(1-m)}) \circ \norm_{M^+/F^+} ) \epsilon_l^{1-n} = r_{l,\imath}(\chi)|_{G_{M^+}} \epsilon_l^{1-nm}=\mu|_{G_{M^+}}. \]
   The lemma follows.
 \end{proof}

\subsection{Automorphy lifting: the `minimal' case} \label{min} {$\mbox{}$} \newline

In this section we present an automorphy lifting theorem which represents the natural output of the 
Taylor--Wiles--Kisin method. We incorporate improvements due to Thorne
\cite{jack} and Caraiani \cite{ana}. This result is essentially Theorem 7.1 of
\cite{jack}.

\begin{thm}
\label{twkmlt}
  Suppose that $F$ is an imaginary CM field with maximal totally real subfield $F^+$,
   that $l$ is an odd prime and that $n\in
  \bb{Z}_{\geq 1}$. Assume also that $\zeta_l\notin F$. Let $(r,\mu)$ be an $n$-dimensional, algebraic, polarized $l$-adic representation of $G_F$ satisfying the following properties:
   \begin{enumerate}
  \item The reduction $\barr$ is irreducible and 
 $\barr(G_{F(\zeta_l)}) \subset \GL_n(\barFF_l)$ is adequate.
  \item $(\barr,\barmu)$ is automorphic of level potentially prime to $l$, arising from a regular algebraic, cuspidal, polarized automorphic representation $(\pi,\chi)$ of level potentially prime to $l$, such that
  \[  r_{l,\imath}(\pi)|_{G_{F_v}}\sim r|_{G_{F_v}} \]
  for each finite place $v$ of $F$. (In particular, $r|_{G_{F_v}}$ is potentially crystalline for $v|l$, and $r$ has the same Hodge--Tate numbers as $r_{l,\imath}(\pi)$.)
\end{enumerate}
    Then $(r,\mu)$ is automorphic of level potentially prime to $l$.
    
    If further $\pi$ has level prime to $l$ 
     and if $r$ is crystalline at all primes above $l$, then $(r,\mu)$ is automorphic of level prime to $l$.
    \end{thm}

\begin{proof}
The result follows from Theorem 7.1 of \cite{jack} on noting that for $v \ndiv l$ we have $r_{l,\imath}(\pi)|_{G_{F_v}}\leadsto r|_{G_{F_v}}$ (by Lemma \ref{gen}, because $\imath \WD(r_{l,\imath}(\pi)|_{G_{F_v}})^{F-\semis} \cong \rec(\pi_v \otimes |\det|_v^{(1-n)/2})$ and $\pi_v$ is generic). 
\end{proof}

\begin{thm}\label{fdmin} Let $F$ be an imaginary CM field with maximal totally real subfield $F^+$. Suppose that 
$n \in \Z_{\geq 1}$ and that $l$ is an odd prime. Assume also that
$\zeta_l\notin F$. Let $S$ be a finite
set of primes of $F^+$ including all primes above $l$.  Suppose moreover that each prime in $S$ splits in $F$ and choose a prime $\tv$ of $F$ above each $v\in S$. Write $\tS$ for the set of $\tv$ with $v \in S$.

Let $(\pi,\chi)$ be a regular algebraic, cuspidal, polarized automorphic representation of
$\GL_n(\A_F)$ which is unramified outside
$S$ and has level potentially prime to $l$. Let $a\in (\Z^n)^{\Hom(F,\C),+}$ be the weight of
$\pi$. Suppose that the image $ \barr_{l,\imath}(\pi)
(G_{F(\zeta_l)})$ is adequate.

Suppose, for each $v|l$, that $\cC_v$ is an irreducible
component of 
\[ \lim_{\ra K'}\Spec R_{\rbar_{l,\imath}(\pi)|_{G_{F_\tv}}, \{ \{a_{\imath \tau,i}+n-i \}_i \}_\tau,K'-\cris}^\Box\otimes\Qlbar\]
 containing $r_{l,\imath}(\pi)|_{G_{F_\tv}}$. 
Suppose also that for each $v\in S$ with $v \ndiv l$, $\cC_v$ is the irreducible
component of $\Spec R_{\rbar_{l,\imath}(\pi)|_{G_{F_\tv}}}^\Box\otimes\Qlbar$ containing $r_{l,\imath}(\pi)|_{G_{F_\tv}}$.

Let $L$ denote a finite extension of $\Q_l$ in $\barQQ_l$ such that $L$ contains the image of each
embedding $F \into  \barQQ_l$; and $L$ contains the image of $r_{l,\imath}(\chi)$;
and $r_{l,\imath}(\pi)$ is defined over $L$; and each $\cC_v$ is
defined over $L$.
For $v\in S$ let $\calD_v$ be the deformation problem corresponding to
$\cC_v$. Also let\[ \CS=(F/F^+,S,\tS,\CO_L,\widetilde{\barr}_{l,\imath}(\pi), r_{l,\imath}(\chi)\epsilon_l^{1-n}, \{ \calD_v\}_{v \in S}). \]

Then the ring $R_\CS^\univ$ is a finitely generated $\CO_L$-module.
\end{thm}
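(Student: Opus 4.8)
The plan is to prove this by the Taylor--Wiles--Kisin patching method in the form developed in \cite{cht} and optimised (with `big image' weakened to `adequate') in \cite{jack}; the finiteness of $R^\univ_\CS$ is the natural by-product of that machinery, and the work consists mainly in assembling already-available pieces and checking that the numerology comes out right.

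First I would record the relevant framed local lifting rings. For $v\in S$ let $R_v$ be the reduced, $\CO_L$-flat quotient of the framed local lifting ring of $\barr_{l,\imath}(\pi)|_{G_{F_{\tv}}}$ cut out by the chosen irreducible component $\cC_v$ (for $v\mid l$ this is a component of the potentially crystalline lifting ring with the prescribed Hodge--Tate numbers $H_\tau=\{a_{\imath\tau,i}+n-i\}_i$; it is non-zero since $r_{l,\imath}(\pi)|_{G_{F_{\tv}}}$ lies on it). Then $\Spec R_v[1/l]$ is irreducible, being a single component, and since each $\cC_v$ is defined over $L$ we may, after enlarging $L$, assume it is geometrically irreducible over $L$. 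Its relative dimension over $\CO_L$ is $n^2$ for $v\nmid l$ (by Lemma \ref{gen} and the remark following it) and $n^2+[F_{\tv}:\Q_l]n(n-1)/2$ for $v\mid l$ (by the results recalled in Section \ref{l=p}, using that each $H_\tau$ has $n$ distinct elements). Setting $R^{\loc}_\CS:=\widehat\bigotimes_{v\in S} R_v$ (completed tensor product over $\CO_L$), the generic fibre $\Spec R^{\loc}_\CS[1/l]$ is irreducible, and since $R^{\loc}_\CS$ is $\CO_L$-flat and reduced it is a domain.

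Next I would invoke the patching argument. The hypotheses that $(\pi,\chi)$ is automorphic and that $\barr_{l,\imath}(\pi)(G_{F(\zeta_l)})$ is adequate place us in exactly the situation treated in \cite{cht} and \cite{jack}: transferring $\pi$ to a unitary group attached to $F/F^+$ gives a non-zero finite free $\CO_L$-module $M$ (a localisation of a space of algebraic modular forms at the maximal ideal cut out by $\barr_{l,\imath}(\pi)$) on which $R^\univ_\CS$ acts through a Hecke algebra, and adequacy supplies, for every $N\geq 1$, a set of Taylor--Wiles primes killing the relevant dual Selmer group. Patching over $N$ then produces a finite module $M_\infty$ over $R_\infty:=R^{\loc}_\CS[[x_1,\dots,x_g]]$ which is finite free over a formal power series ring $S_\infty=\CO_L[[y_1,\dots,y_q]]$, the $S_\infty$-action factoring through $R_\infty$, with $\dim R_\infty=\dim S_\infty$ and with $M_\infty/(y_1,\dots,y_q)M_\infty\cong M$ compatibly with the action of $R^\univ_\CS$. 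The equality $\dim R_\infty=\dim S_\infty$ is the numerical coincidence at the heart of the method; it is where the local dimension computations of the previous paragraph enter.

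The conclusion is then formal. Being finite free over $S_\infty$, the module $M_\infty$ has depth $\dim S_\infty=\dim R_\infty$ over $R_\infty$, hence is maximal Cohen--Macaulay over $R_\infty$, so its support is a union of irreducible components of $\Spec R_\infty$; since $R_\infty$ is a power series ring over the domain $R^{\loc}_\CS$ it is itself a domain, and therefore the non-zero module $M_\infty$ is faithful over $R_\infty$. As $y_1,\dots,y_q$ is a regular sequence on $M_\infty$ we get $\operatorname{Supp}_{R_\infty/(y_1,\dots,y_q)}(M)=\operatorname{Supp}_{R_\infty}(M_\infty)\cap V(y_1,\dots,y_q)=\Spec R_\infty/(y_1,\dots,y_q)$, so $M$ has full support; in particular $R^\univ_\CS$ acts on $M$ with nilpotent kernel. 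Since $M$ is a finite $\CO_L$-module, $R^\univ_\CS$ modulo its nilradical embeds in $\operatorname{End}_{\CO_L}(M)$, which is a finitely generated $\CO_L$-module, whence $R^\univ_\CS$ itself is a finitely generated $\CO_L$-module. The substantive work is entirely inside the patching step --- the construction of $M_\infty$ and especially the verification that $\dim R_\infty=\dim S_\infty$, which combines the local dimension formulas with the bookkeeping of Taylor--Wiles primes needed to kill the dual Selmer group --- but this is precisely what \cite{cht} and \cite{jack} provide once adequacy is known; the one point here requiring care beyond a bare citation is the passage, by enlarging $L$, to geometrically irreducible components $\cC_v$, which is what makes $R^{\loc}_\CS$ a domain and so forces $M_\infty$ to be genuinely faithful rather than merely supported on some union of components.
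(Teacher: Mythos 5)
Your proposal is, in substance, a sketch of the proof of Theorem 10.1 of \cite{jack}, which is the result the paper simply quotes; the commutative-algebra endgame you give (depth over $S_\infty$, support a union of components, irreducibility of $\Spec R_\infty[1/l]$ forcing full support, hence nilpotent annihilator of $M$ and finiteness of $R^\univ_\CS$ from finiteness of its reduced quotient) is the standard one and is fine. The genuine gap is at the step where you assert that the hypotheses ``place us in exactly the situation treated in \cite{cht} and \cite{jack}''. They do not: in this theorem $\pi$ is only assumed to have level \emph{potentially} prime to $l$, so $\pi_{\tv}$ may be ramified for $v\mid l$, and for such $v$ the component $\cC_v$ lives on the \emph{potentially} crystalline ($K'$-crystalline) lifting ring. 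The patching inputs you invoke are set up with $\pi$ unramified above $l$ and crystalline deformation conditions at $l$ (Fontaine--Laffaille in \cite{cht}, crystalline lifting rings in \cite{jack}); in particular the key non-formal step you wave at --- producing a space $M$ of algebraic modular forms on a unitary group on which $R^\univ_\CS$ acts, which requires matching the level structure at $l$ with the deformation condition $\calD_v$ via local--global compatibility at $l$ --- is not available off the shelf when $\pi$ is ramified at $l$ and $\calD_v$ is a $K'$-crystalline condition. So one cannot simply run the patching against your $R^{\loc}_\CS$ as described.

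The paper's proof consists precisely of the reduction you omit: one first passes to a finite, soluble, Galois, CM extension $F'/F$, linearly disjoint from $\barF^{\ker\barr_{l,\imath}(\pi)}(\zeta_l)$ over $F$, after which $\pi_{F'}$ is unramified above $l$, the character $\chi$ is unramified above $l$, and each $\cC_v$ at $v\mid l$ becomes a component of the honest crystalline lifting ring; Theorem 10.1 of \cite{jack} then applies over $F'$, and finiteness of $R^\univ_\CS$ over $\CO_L$ is deduced from finiteness of the corresponding ring over $F'$ via Lemma \ref{cdr}, which exhibits $R^\univ_\CS$ as a finite module over the deformation ring of the restricted representation. (The paper also records, via Lemma \ref{gen} and local--global compatibility, that genericity of $\pi_\tv$ for $v\nmid l$ makes $r_{l,\imath}(\pi)|_{G_{F_\tv}}$ lie on a unique component, so the choice of $\cC_v$ there is well posed and behaves under restriction.) To repair your argument you must either insert this base-change-plus-Lemma-\ref{cdr} descent before citing the patching results, or else extend the patching construction itself to ramified level and potentially crystalline types at $l$ --- a substantial additional argument that your citations do not supply.
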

\begin{proof}
Note that $r_{l,\imath}(\pi)|_{G_{F_\tv}}$ lies on a unique component of $\Spec R_{\rbar_{l,\imath}(\pi)|_{G_{F_\tv}}}^\Box\otimes\Qlbar$ for each $v \in S$ with $v \ndiv l$. (Use Lemma \ref{gen} and the fact that $\imath \WD(r_{l,\imath}(\pi)|_{G_{F_\tv}})^{F-\semis} \cong \rec(\pi_\tv \otimes |\det|_\tv^{(1-n)/2})$.)
Also by making a base change to a finite, soluble, Galois, CM extension
  $F'/F$ which is linearly disjoint from $\barF^{\ker
    \barr_{l,\imath}(\pi)}(\zeta_l)$ over $F$ we may suppose that
  $\pi$ is unramified above $l$ and that $\cC_v$ is a component of the spectrum
  $\Spec R_{\rbar_{l,\imath}(\pi)|_{G_{F_\tv}}, \{ \{a_{\imath
      \tau,i}+n-i \}_i \}_\tau,\cris}^\Box\otimes\Qlbar$ for each
  $v|l$. (Use Lemma \ref{cdr}).  In particular the character $\chi$ is
  unramified above $l$ (as $F/F^+$ is unramified above $l$). The
  result now follows from Theorem 10.1 of \cite{jack}.
\end{proof}

\subsection{Automorphy lifting: the ordinary case.}\label{ordc}{$\mbox{}$} \newline

One can combine the Taylor--Wiles--Kisin method with the level changing method of \cite{tay} and Hida theory, to derive a stronger theorem in the 
ordinary case. This theorem allows for changes of level and weight. The first such theorem was obtained by D.G.\ (see
Theorem 5.3.2 of \cite{ger}). The `bigness' condition in Theorem 5.3.2
of \cite{ger} was relaxed by Thorne. The theorem we present below, in the case that $F$ is imaginary, is 
Theorem 9.1 of \cite{jack}. The case that $F$ is totally real follows immediately from the case that $F$ is imaginary by base change.

\begin{thm} \label{ger} 
 Suppose that $F$ is a CM (or totally real) field; 
that $l$ is an odd prime and that $n\in
  \bb{Z}_{\geq 1}$.  Let $(r,\mu)$ be an $n$-dimensional, algebraic, polarized $l$-adic representation of $G_F$ satisfying the following properties:
   \begin{enumerate}
  \item The reduction $\barr$ is irreducible and 
 $\barr(G_{F(\zeta_l)}) \subset \GL_n(\barFF_l)$ is adequate.
 \item $\zeta_l \not\in F$.
 \item $r$ is ordinary at all primes above $l$.
  \item $(\barr,\barmu)$ is ordinarily automorphic.
\end{enumerate}
Then $(r,\mu)$ is ordinarily automorphic.  If $r$ is also crystalline (resp.\ potentially crystalline) then $(r,\mu)$ is ordinarily automorphic of level prime to $l$ (resp.\ potentially level prime to $l$). 
\end{thm}

The next result is Theorem 10.2 of \cite{jack}, which  generalizes Corollary 4.3.3 of
\cite{gg}. It provides a finiteness theorem for universal deformation rings.

\begin{thm}\label{fdrord} Let $F$ be an imaginary CM field with maximal totally real subfield $F^+$. Suppose that 
  $n \in \Z_{\geq 1}$ and that $l$ is an odd prime with $\zeta_l
  \not\in F$. Let $S$ be a finite set of primes of $F^+$ including all
  primes above $l$. Suppose moreover that each prime in $S$ splits in
  $F$ and choose a prime $\tv$ of $F$ above each $v \in S$. Write $\tS$ for the set of $\tv$ for $v \in S$.

Let $(\pi,\chi)$ be an $\imath$-ordinary, regular algebraic, cuspidal, polarized  automorphic representation of $\GL_n(\A_F)$ which is unramified outside $S$. Suppose that the image $\barr_{l,\imath}(\pi) (G_{F(\zeta_l)})$ is adequate.

Let 
 \[ \mu: G_{F^+} \lra \barQQ_l^\times \]
be an algebraic character satisfying $\barmu = \barr_{l,\imath}(\chi)\barepsilon_l^{1-n}$. Note that $\HT_\tau(\mu)=\{ w \}$ is independent of $\tau:F^+ \into \barQQ_l$. For each $\tau:F \into \barQQ_l$ choose a multiset of $n$ distinct integers $H_\tau$ such that 
 \[ H_{\tau \circ c} = \{ w-h: \,\, h \in H_\tau\}. \]
 
Let $L$ denote a finite extension of $\Q_l$ in $\barQQ_l$ such that $L$ contains the image of each
embedding $F \into  \barQQ_l$; and $L$ contains the image of $\mu$; and $r_{l,\imath}(\pi)$ is defined over $L$.
For $v\in S$ with $v\ndiv l$ let $\calD_v$ consist of all lifts of $\barr_{l,\imath}(\pi)|_{G_{F_\tv}}$. If $v|l$ let $\calD_v$ consist of all lifts which factor through $R^\Box_{\CO, \barr_{l,\imath}(\pi)|_{G_{F_\tv}}, \{ H_\tau\}, \ssord}$. 
Also let
\[ \CS=(F/F^+,S,\tS,\CO_L,\widetilde{\barr}_{l,\imath}(\pi), \mu, \{ \calD_v\}_{v \in S}). \]

Then the ring $R_\CS^\univ$ is a finitely generated
$\CO_L$-module.
\end{thm}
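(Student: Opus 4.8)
The plan is to deduce this from Theorem 10.2 of \cite{jack}, which is exactly a finiteness statement of this shape for ordinary deformation rings; so the work lies in checking that the hypotheses of that theorem hold and, if Theorem 10.2 of \cite{jack} is formulated only for representations of level prime to $l$, in reducing to that case by a soluble base change.

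First I would record that $\barr_{l,\imath}(\pi)|_{G_{F(\zeta_l)}}$ is absolutely irreducible: it is semisimple by construction, and adequacy of its image forces $l\ndiv n$ and $H^0(G_{F(\zeta_l)},\gog\gl_n^0(\barFF_l))=(0)$, so the commutant of the image is scalar. In particular $\barr_{l,\imath}(\pi)|_{G_F}$ is absolutely irreducible and $R^\univ_\CS$ exists by Proposition \ref{drb}. The remaining inputs to the Taylor--Wiles--Kisin method combined with Hida theory (as in \cite{ger} and \cite{jack}) are present: $\zeta_l\notin F$; the image $\barr_{l,\imath}(\pi)(G_{F(\zeta_l)})$ is adequate; $(\pi,\chi)$ is $\imath$-ordinary, and its associated mod-$l$ representation and character agree with the residual datum of $\CS$ (using $\barmu=\barr_{l,\imath}(\chi)\barepsilon_l^{1-n}$); and $\mu$ is a de Rham character lifting $\nu\circ\widetilde{\barr}_{l,\imath}(\pi)=\barepsilon_l^{1-n}\barr_{l,\imath}(\chi)$, which is totally odd since $F$ is CM (so $\barepsilon_l^{1-n}\barr_{l,\imath}(\chi)(c_v)=-1$, hence $\mu(c_v)=-1$, for all $v|\infty$). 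The chosen local deformation conditions are of the kind handled in \cite{jack}: at $v\ndiv l$ we impose nothing, and $\Spec R^\Box_{\CO,\barr_{l,\imath}(\pi)|_{G_{F_{\tv}}}}[1/l]$ is equidimensional of dimension $n^2$ and generically formally smooth by Lemma \ref{gen} (this is where Taylor's level-changing trick from \cite{tay} enters the proof of Theorem 10.2); at $v|l$ we use the ss-ordinary quotient with Hodge--Tate multisets $H_\tau$, which, since each $H_\tau$ has $n$ distinct elements, is equidimensional of the expected dimension by Lemmas 3.3.3 and 3.4.3 of \cite{ger}.

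If the base change is required, I would choose a finite soluble imaginary CM extension $F'/F$, linearly disjoint from $\overline{F}^{\ker\barr_{l,\imath}(\pi)}(\zeta_l)$ over $F$, such that $\pi_{F'}$ is unramified above $l$. Then $\barr_{l,\imath}(\pi)(G_{F'(\zeta_l)})=\barr_{l,\imath}(\pi)(G_{F(\zeta_l)})$ is still adequate and $\zeta_l\notin F'$; $\pi_{F'}$ remains $\imath$-ordinary (its associated Galois representation stays ordinary, and $\pi_{F'}$ now has level prime to $l$, using the facts recalled in section \ref{terminology}); and the local conditions pull back correctly, ``all lifts'' to ``all lifts'' away from $l$ and ss-ordinary with $H_\tau$ to ss-ordinary with the re-indexed multisets at the places above $l$. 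Applying Theorem 10.2 of \cite{jack} over $F'/F'^+$ gives that the resulting ring $R^\univ_{\CS'}$ is a finitely generated $\CO_L$-module. By Lemma \ref{cdr}(1), applied with $\Sigma=G_{F'^+}$ --- which is not contained in $G_F$ and for which $\barr_{l,\imath}(\pi)|_{G_{F'}}$ is absolutely irreducible --- the ring $R^\univ_\CS$ is a finitely generated module over the universal deformation ring of $\widetilde{\barr}_{l,\imath}(\pi)|_{G_{F'^+}}$; and since $r^\univ_\CS|_{G_{F'^+}}$ is a deformation of type $\CS'$, this module structure factors through $R^\univ_{\CS'}$. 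Hence $R^\univ_\CS$ is a finitely generated $\CO_L$-module.

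The genuine content is imported from Theorem 10.2 of \cite{jack} (and, behind it, Hida theory and Taylor's level-changing argument), so the main thing to get right is the bookkeeping for the base change: one must verify that the global deformation problem of type $\CS$ restricts to one of type $\CS'$ over $F'$ --- so that Lemma \ref{cdr} genuinely compares the two global deformation rings rather than merely the abstract lifting rings --- and that the auxiliary data ($\chi$, $\mu$, the $H_\tau$, adequacy, $\zeta_l\notin F$) survive the extension. This step, while routine, is where essentially all the care is needed.
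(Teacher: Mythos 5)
Your proposal is correct and takes essentially the same route as the paper, which offers no proof at all beyond the remark that this result \emph{is} Theorem 10.2 of \cite{jack} (generalizing Corollary 4.3.3 of \cite{gg}); your hypothesis-checking simply makes that citation explicit. The conditional base-change branch is never needed, since Theorem 10.2 of \cite{jack} treats the ordinary case with no level restriction at $l$ --- and indeed an $\imath$-ordinary $\pi$ need not have level potentially prime to $l$, so that reduction could not be carried out in general --- but as you only invoke it conditionally this does not affect the correctness of your argument.
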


\newpage

     \section{Potential Automorphy.}\label{sec:potential automorphy}

\subsection{The Dwork family.}\label{sec:Dwork}  {$\mbox{}$} \newline

In this section we show that a suitable symplectic, $\bmod l$ representation is potentially automorphic. 
The theorem and its proof are slight generalizations of section
6 of \cite{blght} The arguments are also simpler because of the stronger automorphy lifting theorems that we now have available, particularly \cite{jack}. (See in particular step 2 of the proof of Theorem 6.3 of \cite{blght}.) We start with another minor variant of a result of Moret-Bailly \cite{mb} (see also \cite{gpr} and Proposition 6.2 of \cite{blght}).

\begin{prop}\label{mbp} Let $\Kavoid/K/K_0$ be number fields with $\Kavoid/K$ and $K/K_0$ Galois. Suppose also that $S$ is a finite set of places of $K_0$ and let $S^K$ denote the set of places of $K$ above $S$. For $v\in S^K$ let $L_v'/K_v$
be a finite Galois extension with $L_{\sigma v}'=\sigma L_v'$
  for $\sigma \in G_{K_{0,v|_{K_0}}}$. Suppose also
that $T/K$ is a smooth, geometrically connected variety and that for each $v \in S^K$ we are given a non-empty, $\Gal(L_v'/K_v)$-invariant, open subset $\Omega_v \subset T(L_v')$. 

Then there is a finite Galois extension $L/K$ and a point $P \in T(L)$ such that
\begin{itemize}
\item $L/K_0$ is Galois;
\item $L/K$ is linearly disjoint from $\Kavoid/K$;
\item if $v \in S^K$ and $w$ is a prime of $L$ above $v$ then $L_w/K_v$ is isomorphic to $L_v'/K_v$ and $P\in \Omega_v \subset  T(L_v') \cong T(L_w)$. (This makes sense as $\Omega_v$ is $\Gal(L_v'/K_v)$-invariant.)
\end{itemize} \end{prop}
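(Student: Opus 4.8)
The plan is to deduce Proposition~\ref{mbp} from the ``basic'' Moret-Bailly theorem --- in the form proved in \cite{mb} (or in the versions of \cite{gpr} and Proposition~6.2 of \cite{blght}) --- by a descent argument along the Galois group $\Gal(F/F_0)$. Recall the shape of the statement one expects to have available: given a number field $F$, a smooth geometrically connected $F$-variety $T$, a finite set $S^F$ of places of $F$, for each $v\in S^F$ a finite Galois extension $E'_v/F_v$ together with a non-empty $\Gal(E'_v/F_v)$-invariant open $\Omega_v\subset T(E'_v)$, and an auxiliary finite extension $D/F$, one produces a finite Galois $E/F$ linearly disjoint from $D/F$ and a point $P\in T(E)$ with $E_w/F_v\cong E'_v/F_v$ and $P\in\Omega_v$ under that isomorphism, for every $w\mid v\in S^F$. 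The \emph{only} new content relative to this is the requirement that $E/F_0$ be Galois (and the compatibility $E'_{\sigma v}=\sigma E'_v$, $\Omega_{\sigma v}=\sigma\Omega_v$, which is exactly what makes such an $E$ attainable).

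First I would enlarge the data to make everything $\Gal(F/F_0)$-equivariant from the outset: $S^F$ is already $\Gal(F/F_0)$-stable (it is the set of places above $S$), the hypothesis gives $E'_{\sigma v}=\sigma E'_v$, and I may likewise assume $\Omega_{\sigma v}=\sigma\Omega_v$ after shrinking (replace each $\Omega_v$ by $\bigcap$ of the $\Gal(F/F_0)$-translates that land back in $T(E'_v)$, which is still non-empty and open and $\Gal(E'_v/F_v)$-invariant --- a routine check). Next, enlarge $D$: replace $D$ by the Galois closure over $F_0$ of $D$ together with (the normal closure of) a large finite Galois extension $F'/F$ chosen so that any extension of $F$ linearly disjoint from $D'/F$ is automatically ``far away'' from all conjugates; concretely, it is enough to take $D'$ Galois over $F_0$ and containing $D$ --- linear disjointness of $E/F$ from $D'/F$ together with $D'\supset D$ gives linear disjointness from $D$, and Galois-ness of $D'/F_0$ is what lets the conjugation argument close up. Now apply the available Moret-Bailly theorem over $F$ with data $(T,S^F,\{E'_v\},\{\Omega_v\},D')$ to get $E_0/F$ finite Galois, linearly disjoint from $D'/F$, and $P_0\in T(E_0)$ with the prescribed local behaviour.

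The remaining step --- which I expect to be the only real point --- is to replace $E_0$ by something Galois over $F_0$ without destroying the local conditions or linear disjointness. For each $\sigma\in\Gal(F/F_0)$, choose a lift $\tilde\sigma\in\Gal(\barF/F_0)$; then $\tilde\sigma(E_0)$ is another finite Galois extension of $F$, linearly disjoint from $\tilde\sigma(D')=D'$ (since $D'/F_0$ is Galois), and the point $\tilde\sigma(P_0)\in T^{\tilde\sigma}(\tilde\sigma E_0)$; using $E'_{\sigma v}=\sigma E'_v$ and $\Omega_{\sigma v}=\sigma\Omega_v$, the point $\tilde\sigma(P_0)$ satisfies the local conditions at the places of $\tilde\sigma(E_0)$ above $S^F$, \emph{relative to the $\sigma$-conjugated data}. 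I would then set $E\subset\barF$ to be the compositum $E:=\prod_{\sigma}\tilde\sigma(E_0)$; this is finite over $F$, is stable under $\Gal(\barF/F_0)$ (independent of the choice of lifts, since $E_0/F$ Galois makes $\tilde\sigma(E_0)$ depend only on $\sigma$), hence Galois over $F_0$, and is still linearly disjoint from $D'/F$ because each factor is and $D'/F$ is Galois (a standard lemma: a compositum of extensions each linearly disjoint from a fixed Galois extension $D'/F$ is again linearly disjoint from $D'/F$). Finally I must produce the point: the issue is that $P_0\in T(E_0)$ need not lie in $T(E)$ as a ``symmetric'' point, but it does lie in $T(E)$ via $E_0\hookrightarrow E$, and for the local condition I only need that for \emph{each} place $w$ of $E$ above some $v\in S^F$, $E_w/F_v\cong E'_v/F_v$ and the image of $P_0$ lies in $\Omega_v$. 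Since $w|_{E_0}$ is a place $w_0$ of $E_0$ above $v$ with $E_{0,w_0}/F_v\cong E'_v/F_v$ and $P_0\in\Omega_v$ there, and since $E_w\supset E_{0,w_0}$, I need $E_w=E_{0,w_0}$, i.e. $E/E_0$ should be split at all places above $S^F$; this I arrange by a second application of Moret-Bailly, or more simply by choosing at the very start (enlarging $D'$ again if necessary, and using that $\tilde\sigma(E_0)$ can itself be produced split at the relevant places by invoking the theorem with the $E'_v$'s unchanged) --- so in fact the cleanest route is to apply the basic theorem once but demand in advance that $E_0$ be \emph{split completely over $F_v$ at each $v\in S^F$} except that $E_{0,w_0}=E'_v$, a condition allowed by the theorem, so that $\tilde\sigma(E_0)$ has the same property for the $\sigma$-conjugated place and the compositum $E$ then satisfies $E_w=E'_v$ at every $w\mid v$. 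With $E$ and $P:=P_0\in T(E)$ in hand, all three bulleted conclusions hold, completing the proof. The one genuinely delicate bookkeeping item is making the splitting/local-extension conditions at $S^F$ survive the passage to the compositum, which is why I would bake ``$E_0/F_v$ has a unique place above $v$ with completion $E'_v$ and is otherwise as controlled as the theorem permits'' into the input rather than trying to fix it afterwards.
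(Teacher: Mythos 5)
Your overall plan — apply a Moret-Bailly theorem over $F$, then symmetrize by taking the compositum of the $\Gal(F/F_0)$-conjugates — is reasonable, but two of the steps you declare routine are in fact false or incoherent as stated, and both are precisely the points the paper's proof is arranged to avoid.

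First, the ``standard lemma'' you invoke — that a compositum of extensions of $F$ each linearly disjoint from a fixed Galois extension $D'/F$ is again linearly disjoint from $D'/F$ — is \emph{false}. For instance over $F=\Q$ take $D'=\Q(\sqrt 6)$, $E_1=\Q(\sqrt 2)$, $E_2=\Q(\sqrt 3)$: each $E_i$ is Galois over $\Q$ with $E_i\cap D'=\Q$, yet $E_1E_2\supset D'$. The same phenomenon occurs for a compositum of $\Gal(F/F_0)$-conjugates of a single $E_0$ (e.g.\ $F_0=\Q$, $F=\Q(i)$, $D'=\Q(i,\sqrt 2)$, $E_0=\Q(i,\sqrt{1+i})$; then $E_0\cap D'=\overline{E_0}\cap D'=\Q(i)$ but $\sqrt{1+i}\cdot\sqrt{1-i}=\sqrt 2\in E_0\overline{E_0}$). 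So your $E=\prod_\sigma\tilde\sigma(E_0)$ is not guaranteed to be linearly disjoint from $D'$, and the second bulleted conclusion is not established. The paper sidesteps this entirely: rather than tracking linear disjointness through a compositum, it enlarges $S$ by adding, for each simple intermediate Galois $D_i/F$, an auxiliary place $v$ with $E'_v=F_v$ at which $D_i/F$ is not split; then any field satisfying the local conditions — including the final normal closure — automatically meets $D$ in $F$, and the linear disjointness requirement can be dropped from the outset.

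Second, and more seriously, the local condition $E_w/F_v\cong E'_v/F_v$ is not preserved by the compositum: if two conjugates $\tilde\sigma(E_0),\tilde\sigma'(E_0)$ have completions at places above $v$ that are abstractly isomorphic to $E'_v$ but are \emph{distinct} Galois subextensions of $\overline{F}_v$, then $E_w$ is strictly larger than $E'_v$ and the third conclusion fails. You identify this issue, but the fix you propose — ``demand in advance that $E_0$ be split completely over $F_v$ at each $v\in S^F$ except that $E_{0,w_0}=E'_v$'' — is incoherent: since $E_0/F$ is Galois, the completion of $E_0$ at \emph{every} place above $v$ is the same, so one cannot have $E_0$ split at $v$ except at a distinguished place. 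The paper avoids this by an order-of-operations change: it first builds a finite extension $F'/F$ (the normal closure over $F_0$ of a solvable extension produced by Lemma 4.1.2 of \cite{cht}) with $F'/F_0$ Galois and $F'_w/F_v\cong E'_v/F_v$, replaces $F$ by $F'$, and thereby reduces to the case $E'_v=F_v$ for all $v$. Only \emph{then} is Moret-Bailly applied, asking merely that the places of $S^F$ split completely; that condition is manifestly stable under passage to the normal closure over $F_0$, so the final compositum step is harmless. Without this reduction your symmetrization step does not go through.

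There is also a smaller soft spot at the start: to force $\Omega_{\sigma v}=\sigma\Omega_v$ you suggest replacing $\Omega_v$ by $\bigcap_\sigma\sigma^{-1}\Omega_{\sigma v}$, but this intersection can be empty, so one cannot ``shrink'' in this way. Fortunately this symmetry of the $\Omega_v$'s is not needed (you ultimately take $P=P_0$ rather than a symmetrized point), so this is an unnecessary step rather than a fatal one — but it should be dropped.
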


\begin{proof} Let $\Kavoid_1,\dots,\Kavoid_r$ denote the intermediate fields between $\Kavoid$ and $K$ with $\Kavoid_i/K$ Galois with simple Galois group.
  Combining Hensel's lemma with the Weil bounds we see that $T$ has a
  $K_v$-rational point for all but finitely many primes $v$ of
  $K$. Thus enlarging $S$ we may assume that for each $i=1,\dots,r$
  there is $v \in S^K$ with $L_v'=K_v$ and $v$ not split completely in
  $\Kavoid_i$.  Then we may suppress the second condition on $L$.

Let $K'/K$ be a finite extension such that
\begin{itemize}
\item $K'/K_0$ is Galois;
\item if $v \in S^K$ and $w|v$ is a place of $K'$ then $K_w'/K_v$ is isomorphic to $L_v'/K_v$.
\end{itemize}
(Apply Lemma \ref{l412} with $F$ of that lemma our $K$ and $S$ of that lemma our $S^K$. This
produces a soluble extension $K''/K$. Then we take $K'$ to be the normal closure of $K''$ over $K_0$.)
Thus we may assume that $L_v'=K_v$ for all $v \in S^K$. 

Then Theorem 1.3 of \cite{mb} tells us that we can find a finite Galois extension $K'/K$ and a point $P \in T(K')$ such that 
 every place $v$ of $S^K$ splits completely in $K'$ and if $w$ is a prime of $K'$
above $v $ then $P \in \Omega_v \subset T(K_w')$.
Now take $L$ to be the normal closure of $K'$ over $K_0$.
\end{proof}

\begin{thm} \label{prop:Dwork}Suppose that:
\begin{itemize}
\item $F/F_0$ is a finite, Galois extension of totally real fields, 
\item $\CI$ is a finite set,
\item for each $i \in \CI$, $n_i$ is a positive even integer,
  $l_i$ is an odd rational prime, and $\imath_i:\barQQ_{l_i} \iso
  \C$,\item $\Favoid/F$ is a finite Galois extension, and 
\item $\bar{r}_i: G_{F} \to \GSp_{n_i}(\Fbar_{l_i})$ is a mod $l_i$ Galois representation with open kernel 
and multiplier $\barepsilon_{l_i}^{1-n_i}$.
\end{itemize}
Then we can find a finite totally real extension $F'/F$ and for each $i \in \CI$ a regular algebraic, cuspidal, polarized automorphic representation $(\pi_i,\chi_i)$ of $\GL_{n_i}(\A_{F'})$ such that
\begin{enumerate}
\item $F'/F_0$ is Galois,
\item $F'$ is linearly disjoint from $\Favoid$ over $F$,
\item $(\barr_{l_i,\imath_i}(\pi_i),\barr_{l_i,\imath_i}(\chi_i)\barepsilon_{l_i}^{1-n_i})\cong ({\barr}_i|_{G_{F'}},
  \barepsilon_{l_i}^{1-n_i})$ for each $i\in\CI$;
\item $\pi_i$ is $\imath_i$-ordinary of weight 0 for each $i\in\CI$.
\end{enumerate}
\end{thm}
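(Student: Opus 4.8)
The plan is to deduce this from the potential automorphy results for the Dwork family, following section 6 of \cite{blght} closely, with the Moret--Bailly lemma (Proposition \ref{mbp}) as the geometric input and the automorphy lifting theorems of section \ref{RAECSDC} together with the ordinary potential automorphy machinery as the arithmetic input. First I would recall that for each $i\in\CI$ there is a smooth geometrically connected variety $T_i/F$ (an open subset of the parameter space of the Dwork/Katz hypergeometric family in dimension $n_i-1$, suitably rigidified) carrying a family of $n_i$-dimensional symplectic motives with distinct consecutive Hodge--Tate numbers, such that the monodromy is as large as possible (all of $\GSp_{n_i}$); this is exactly the setup of \cite{blght} section 6, and the even parity of $n_i$ and the multiplier $\barepsilon_{l_i}^{1-n_i}$ are what make the symplectic (rather than orthogonal) family the right one. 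The point of having large monodromy is that, after an auxiliary soluble base change, one can arrange a point of $T_i$ over a local field whose associated mod $l_i$ representation is $\barr_i$ restricted there, and similarly arrange Steinberg (i.e. unipotent, maximally ramified) local behaviour at the places above $l_i$.

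The key steps, in order: (1) For each $i$ and each place $v$ of $F$ above $l_i$, specify a finite Galois extension $E_v'/F_v$ and a nonempty $\Gal(E_v'/F_v)$-invariant open subset $\Omega_v\subset T_i(E_v')$ consisting of points whose fibre (a) has mod $l_i$ Galois representation isomorphic to $\barr_i|_{G_{F_v}}$ and (b) is Steinberg at $v$ (and at a chosen auxiliary place away from $l_i$, impose a condition that makes the global representation have large image / be irreducible after restriction to $G_{F(\zeta_{l_i})}$, as in \cite{blght}); here one uses smoothness of $T_i$ and Hensel's lemma to see these opens are nonempty once $E_v'$ is chosen large enough, plus the genericity of the monodromy. (2) Apply Proposition \ref{mbp} with $F_0$, $F$, $D=\Favoid$ and $S$ the set of places of $F_0$ below $l_i$ for all $i$ (and below the auxiliary places), to produce a single finite extension $F'/F$ with $F'/F_0$ Galois, $F'$ linearly disjoint from $\Favoid$ over $F$, and points $P_i\in T_i(F')$ lying in the prescribed local opens. (3) Let $X_i/F'$ be the fibre of the Dwork family over $P_i$; by construction $\barr_{l_i,\imath_i}$ of its cohomology is $\barr_i|_{G_{F'}}$, it is crystalline-of-weight-$0$-type at the auxiliary places controlling irreducibility, and it is Steinberg (hence, by item (3) in the list after the $\imath$-ordinary discussion in section \ref{terminology}, $\imath_i$-ordinary) at the places above $l_i$. (4) Now invoke the potential automorphy theorem for a single motive in the ordinary case --- this is the ordinary potential automorphy input referred to in the introduction (as in \cite{blght}, or section \ref{poa} of this paper applied in the ordinary case) --- possibly after one further soluble totally real base change disjoint from $\Favoid$, to conclude that the $l_i$-adic cohomology of $X_i$ is automorphic over $F'$, given by some RAESDC $(\pi_i',1)$ of $GL_{n_i}(\A_{F'})$ of weight $0$, Steinberg at places above $l_i$. (5) Read off $\barr_{l_i,\imath_i}(\pi_i')\cong\barr_i|_{G_{F'}}$ and the weight/Steinberg properties; the final sentence ($\pi_i'$ is $\imath_i$-ordinary and $r_{l_i,\imath_i}(\pi_i')$ ordinary) then follows from facts (1) and (3) recalled in section \ref{terminology}.

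Two bookkeeping points need care. One must run the Moret--Bailly argument simultaneously for all $i\in\CI$ --- this is fine because Proposition \ref{mbp} is stated for a single variety $T$, so one actually applies it to $T=\prod_i \Res_{?}T_i$ or, more cleanly, iterates it over $i\in\CI$, at each stage enlarging $\Favoid$ to also avoid the previously constructed extensions so that the final $F'$ works for every $i$; the disjointness clause in Proposition \ref{mbp} is exactly what makes this iteration legal. Also, the various soluble base changes one makes to arrange diagonal/Steinberg local behaviour and irreducibility must be chosen linearly disjoint from $\Favoid$ over $F$ (possible since they are soluble and one can always pass to the normal closure over $F_0$), and automorphy descends along soluble extensions by base change for $GL_n$, so the final $\pi_i'$ lives over the desired $F'$.

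The main obstacle is step (1): arranging the local opens $\Omega_v$ at places above $l_i$ so that a point of $T_i$ there simultaneously realizes the prescribed residual representation $\barr_i|_{G_{F_v}}$ \emph{and} has Steinberg (equivalently, ordinary with the right unipotent monodromy) reduction. This is precisely where the explicit geometry of the Dwork family and its degenerate fibres enters, and where one must quote the large-monodromy and local-monodromy computations of \cite{blght} (the existence of a point with multiplicative reduction at a chosen prime, and the surjectivity of the map from $T_i(F_v^{\mathrm{unr}}\text{-points})$ to residual representations). Everything else is a matter of assembling the quoted theorems and managing linear disjointness.
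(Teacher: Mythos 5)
Your geometric skeleton (the Dwork family over its parameter space, Proposition \ref{mbp} applied to a product of covers so that all $i\in\CI$ are handled by one field $F'$, and the linear-disjointness bookkeeping) is the same as the paper's, which follows Theorem 6.3 of \cite{blght}. The genuine gap is in your step (4), the step that is supposed to produce automorphy \emph{over $F'$ itself}. You propose to invoke ``the potential automorphy theorem for a single motive in the ordinary case (as in \cite{blght}, or section \ref{poa} of this paper).'' Inside this paper that is circular: Proposition \ref{paord} of section \ref{poa}, and Proposition \ref{ordlift} which it relies on, are themselves proved using Theorem \ref{prop:Dwork}. Independently of circularity, no such theorem can be applied at the prime $l_i$: Theorem \ref{prop:Dwork} makes no irreducibility or image hypothesis on $\barr_i$ (it may be reducible, or have tiny image), whereas any automorphy lifting or potential automorphy statement you could quote at $l_i$ requires $\barr_i|_{G_{F(\zeta_{l_i})}}$ irreducible with adequate/big image. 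Finally, even where applicable, a potential automorphy theorem only yields automorphy over some further, uncontrolled extension of $F'$, not over the $F'$ already produced by Moret--Bailly; that extension need not be soluble, so base-change descent does not repair this.

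The missing idea is the auxiliary prime. In the paper's proof one fixes a second prime $l'$ (prime to the $l_i$, with $l'>n_i+1$) and, for each $i$, a representation $\barr_i'$ obtained by reducing $\Ind_{G_{M_i}}^{G_{\Q}}\theta_i$ mod a prime above $l'$, where $M_i/\Q$ is cyclic of degree $n_i$ and the character $\theta_i$ is made highly ramified at an auxiliary prime $q$ so that the induction has big image. The Moret--Bailly step is then run on the covers $T_{\barr_i\times\barr_i'}$, so the point $t_i\in T_0^{(i)}(F')$ simultaneously realizes \emph{both} residual representations, $V[\lambda_i]_{t_i}\cong\barr_i|_{G_{F'}}$ and $V[\lambda']_{t_i}\cong\barr_i'|_{G_{F'}}$, with $v(t_i)<0$ at $v|l_i$ (Steinberg type) and $v(t_i)>0$ at $v|l'$ (good, crystalline in the Fontaine--Laffaille range). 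Since $\barr_i'|_{G_{F'}}$ is automorphic over $F'$ by automorphic induction, an automorphy lifting theorem applied at $l'$ shows the $l'$-adic cohomology of the fibre is automorphic over $F'$; because the fibre's cohomology forms a compatible system, the resulting $\pi_i'$ also realizes the $\lambda_i$-adic member, whence $\barr_{l_i,\imath_i}(\pi_i')\cong\barr_i|_{G_{F'}}$, with weight $0$ and Steinberg components above $l_i$ read off from the local geometry and local-global compatibility. Your step (1)(b) (``impose a condition making the global representation irreducible'') gestures at the auxiliary place but omits exactly this residual-automorphy-at-$l'$ input, which is the engine of the proof and cannot be replaced by an appeal to potential automorphy at $l_i$.
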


\begin{proof} The proof follows the proof of Theorem 6.3 of \cite{blght}, although the proof here is simpler. 

First note that $\barr_i$ is actually valued in $\GSp_n(\F^{(i)})$ for some finite cardinality
subfield $\F^{(i)} \subset \barFF_{l_i}$. We choose a positive integer $N$ such that 
\begin{itemize}
\item $N$ is coprime to $2 \prod_i l_i$,
\item $N>n_i+1$ for all $i$,
\item $N$ is not divisible by any rational prime which ramifies in $\Favoid$,\item and for each $i \in \CI$ there is a prime $\lambda_i$ of $\Q(\zeta_N)^+$ above $l_i$ and an embedding $\F^{(i)} \into \Z[\zeta_N]^+/\lambda_i$. 
\end{itemize}
(Use Lemma 6.1 of \cite{blght}.) Note that in particular $\Favoid$ is linearly disjoint from $\Q(\zeta_N)$ over $\Q$.

We next choose an imaginary CM field $M_i$ for each $i \in \CI$ such that
 $M_i/\Q$ is cyclic of degree $n_i$ and unramified at all rational primes which ramify in $\Favoid$. For each $i$ let $\tau_i$ denote a generator of $\Gal(M_i/\Q)$. Choose a rational prime $q$ such that
\begin{itemize}
\item $q$ splits completely in $\prod_i M_i$,
\item and $q$ is unramified in $F (\zeta_{4N})$. 
\end{itemize}
Also choose primes $\gq_i$ of $M_i$ above $q$ for each $i$. Choose $M'$ containing the compositum of the $M_i$'s and, for each $i$, a character $\phi_i:\A_{M_i}^\times \ra (M')^\times$ with open kernel such that
\begin{itemize}
\item if $\alpha \in M_i^\times$ then
\[ \phi_i(\alpha)=\prod_{j=0}^{n_i/2-1} \tau^j_i(\alpha)^j\tau_i^{n_i/2+j}(\alpha)^{n_i-1-j}; \]
\item $\phi_i\phi_i^c=\prod_{v \ndiv \infty} |\,\,\,|_v^{1-n_i}$;
\item $\phi_i$ is unramified above all rational primes which ramify in $F/\Q$;
\item $\phi_i$ is unramified above $N$;
\item $\phi_i$ is unramified at all primes above $q$ except $\gq_i$ and $\gq_i^c$, but $q|\# \phi_i(\CO_{M_i,\gq_i}^\times)$.
\end{itemize}
(Apply Lemma \ref{l22}. We take $S$ to be the set of primes of $M_i$ above $Nq$ or any rational prime that ramifies in $F$. We also take $\psi_S=\prod_{v \in S} \psi_v$, where $\psi_v$ is the trivial character unless $v|\gq_i\gq_i^c$. Moreover we choose $\psi_{\gq_i}$ to be wildly ramified and take $\psi_{\gq^c_i}=(\psi_{\gq_i}^c)^{-1}$. )

Next choose a rational prime $l'$ such that
\begin{itemize}
\item $l'$ splits completely in $M'(\zeta_N)$ (so that in particular $l' \equiv 1 \bmod N$);
\item $l'$ is unramified in $F$;
\item $l'\ndiv 6qN\prod_i(l_in_i)$.
\end{itemize}
In particular $\zeta_{l'} \not\in F$.
Also choose primes $\lambda_{M'}'$ of $M'$ and $\lambda'$ of $\Q(\zeta_N)^+$ above $l'$. For each $i$ let
\[ \theta_i: G_{M_i} \lra \CO_{M',\lambda_{M'}'}^\times = \Z_{l'}^\times \]
be the algebraic character defined by
\[ \theta_i(\Art_{M_i} \alpha)=\phi_i(\alpha) \prod_{j=0}^{n_i/2-1}\tau_i^j(\alpha_{l'})^{-j} \tau_i^{n_i/2+j}(\alpha_{l'})^{j+1-n_i}. \]
Note that $\theta_i\theta_i^c=\epsilon_{l'}^{1-n_i}$. Let $r'_i=\Ind_{G_{M_i}}^{G_\Q} \theta_i$ and note that
\[ r'_i \cong (r_i')^\vee \otimes \epsilon_{l'}^{1-n_i}. \]

Choose a lifting $\ttau_i$ of $\tau_i$ to $\ker \epsilon_{l'} \subset G_\Q$. Then
\[ \theta_i(\ttau_i^n) = \theta_i(c(c \ttau_i^{n_i/2})c(c \ttau_i^{n_i/2})) = (\theta \theta^c)(c \ttau_i^{n_i/2}) = \epsilon_{l'}^{1-n_i}(c \ttau_i^{n_i/2})=-1. \]
The module underlying $r_i'$ has a $\Z_{l'}$-basis $e_0,e_1,\dots,e_{n_i-1}$ such that
\begin{itemize}
\item $r_i'(\sigma) e_j = \theta_i^{\tau_i^j}(\sigma) e_j$ for all $\sigma \in G_M$;
\item $r_i'(\ttau_i) e_i = e_{i-1}$ for $i=1,\dots,n_i-1$;
\item and $r_i'(\ttau_i)e_0 = \theta_i(\ttau_i^{n_i})e_{n_i-1}=- e_{n_i-1}$.
\end{itemize}
If we define a perfect pairing on $r_i'$ by setting
\[ \langle e_{j_1},e_{j_2} \rangle = \left\{ \begin{array}{ll} 1 & {\rm if}\,\, j_2=j_1+n_i/2 \\ -1 & {\rm if}\,\, j_1=j_2+n_i/2 \\ 0 & {\rm otherwise}, \end{array} \right. \]
we see that this pairing is preserved by $r_i'$ up to scalar multiplication by $\epsilon_{l'}^{1-n_i}$. 
Thus $r_i'$ factors through $\GSp_{n_i}(\Z_{l'})$ with multiplier $\epsilon_{l'}^{1-n_i}$.

Let $\bartheta_i:G_{M_i} \ra \F_{l'}^\times$ denote the reduction of $\theta_i$ and let
$\barr_i'$ denote the reduction of $r_i'$. We have the following observations.
\begin{itemize}
\item $\bartheta_i^{\tau^j}|_{G_{M_i(\zeta_{l'})}} \neq \bartheta_i^{\tau^{j'}}|_{G_{M_i(\zeta_{l'})}}$ for $j \neq j'$ in the range $0,1,\dots,n_i-1$. (Look at the ramification above $q$.)
\item $\barr_i'|_{\Q(\zeta_{l'})}$ is irreducible. 
\item $l' \ndiv \# \barr_i' G_\Q$.
\item $\barr'_i(G_{\Q(\zeta_{l'})})$ is adequate. (By Proposition \ref{ghtt}.)
\item $\Q(\zeta_N)$ is linearly disjoint over $\Q$ from $\Favoid/\Q$. (Because no rational prime ramifies in both fields.)
\item $\barr_i'(G_{\Q(\zeta_{l'})})=\barr_i'(G_{F(\zeta_{Nl'})})$. (Because $\barr_i'$ is only ramified at primes which are unramified in $F(\zeta_N)$.)
\end{itemize} 

Let $T_0/\Spec F(\zeta_N)^+$ denote the scheme $\PP^1-(\{ \infty\} \cup \mu_N)$. For each $i$ there are
\begin{itemize}
\item lisse $\Z[\zeta_N]^+_{\lambda_i}$ (resp.\ $\Z[\zeta_N]^+_{\lambda'}$) sheaves $V_{n_i,\lambda_i}((N-1-n_i)/2)$ (resp.\ $V_{n_i,\lambda'}((N-1-n_i)/2)$) over $T_0$;
\item locally free etale sheaves $V_{n_i}[\lambda_i]((N-n_i-1)/2)$ (resp.\ $V_{n_i}[\lambda']((N-n_i-1)/2)$) of  $\Z[\zeta_N]^+/\lambda_i$ (resp.\ $\Z[\zeta_N]^+/\lambda'$) modules;
\item a finite cover $T_{\barr_i \times \barr_i'}/(T_0\times \Spec F(\zeta_N)^+)$; 
\end{itemize}
constructed as in section 4 of \cite{blght} using $N=N$ and $n=n_i$. (We have added the subscript $n_i$ to the notation of Section 4 of \cite{blght} as a reminder that we are taking $n=n_i$ in the constructions of that Section. We would like to point out that in the definition of $T_W$ on page 54 of \cite{blght} we should have specified that it represents isomorphisms $W_S \iso V[\gn]((N-1-n)/2)_S$ {\em compatible with the symplectic structures}. We would also like to point out that the fourth occurrence of $S_2$ in the statement of Proposition 6.2 of \cite{blght} should be an $M$. We thank Kevin Buzzard for pointing out these corrections to \cite{blght}.)

 Let $\tT$ denote the product of the $T_{\barr_i \times \barr_i'}$ over $\Spec F(\zeta_N)^+$ and let $t_i$ denote the $i^{th}$ projection to $T_0$. By Proposition 4.2 of \cite{blght} we see that $\tT$ is geometrically irreducible. By Proposition \ref{mbp} we can find a
finite extension of $F'/F(\zeta_N)^+$ and a point $P \in \tT(F')$ such that
\begin{itemize}
\item $F'/F_0$ is Galois;
\item $F'$ is totally real;
\item $F'$ is linearly disjoint from $\Favoid \barF^{\cap_i \ker
     \barr_i'}(\zeta_{Nl'})$ over
  $F(\zeta_N)^+$;
\item $v(t_i(P))<0$ for all places $v|l_i$ of $F'$;
\item $v(t_i(P))>0$ for all places $v|l'$ of $F'$.\end{itemize}
(The only thing we need to check is that $T_{\barr_i \times \barr_i'}(F(\zeta_N)^+_v)\neq \emptyset$ for each $i$ and each $v|\infty$. However, because $\GSp_{n_i}(\Z/l_il'\Z)$ has a unique conjugacy class of elements of order $2$ and multiplier $-1$, we see that every $F(\zeta_N)^+_v$-point of $T_0$ lifts to a $F(\zeta_N)^+_v$-point of $T_{\barr_i \times \barr_i'}$.)

Then we have the following observations.
\begin{itemize}
\item $F'$ is linearly disjoint from $\Favoid$ over $F$ (as $F(\zeta_N)^+$ is linearly disjoint from $\Favoid$ over $F$).
\item $V_{n_i}[\lambda_i]((N-1-n_i)/2)_{t_i(P)} \cong \barr_i|_{G_{F'}}$.
\item $V_{n_i}[\lambda']((N-1-n_i)/2)_{t_i(P)} \cong \barr_i'|_{G_{F'}}$.
\item $\barr_i'(G_{F'(\zeta_{l'})})$ is adequate.
\item $\zeta_{l'} \not\in F'$ (as $F'$ is totally real and $l'>2$).
\item $V_{n_i,\lambda'}((N-1-n_i)/2)_{t_i(P)}$ is ordinary at all primes above $l'$. (See Lemma 5.3(3) of \cite{blght}.)
\item $\HT_\tau(V_{n_i,\lambda'}((N-1-n_i)/2)_{t_i(P)})=\{0,1,\dots,n_i-1\}$ for all $\tau:F' \into \barQQ_{l'}$. (See Lemma 5.3(1) of \cite{blght}.)
\item If $v$ is a place of $F'$ above $l_i$ then 
\[ \imath'\WD(V_{n_i,\lambda'}((N-1-n_i)/2)_{t_i(P)}|_{G_{F'_v}})\cong  \rec_{F'_v} (\Spp_{n_i}(\phi_i)) \]
for some unramified character $\phi_i$  (and for any isomorphism $\imath':\barQQ_{l'} \iso \C$). (See Lemma 5.1(2) of \cite{blght}.)
\end{itemize}

From Theorem 4.2 of \cite{MR1007299} we see that $(r_i'|_{G_{F'}},\epsilon_{l'}^{1-n_i})$ is automorphic of level potentially prime to $l'$, and hence ordinarily automorphic. By Theorem \ref{ger}  we conclude that $(V_{n_i,\lambda'}((N-1-n_i)/2)_{t_i(P)},\epsilon_{l'}^{1-n_i})$ is automorphic over $F'$ of weight $0$, arising from a regular algebraic, cuspidal, polarized automorphic representation $(\pi_i,1)$ with $\pi_{i,v}$ Steinberg for all $v|l_i$, and from an isomorphism $\imath_i':\barQQ_{l_i} \iso \C$. Thus  $\pi_i$ is $\imath_i$-ordinary. The Theorem follows.
\end{proof}

\subsection{Lifting Galois representations I}\label{lgr1} {$\mbox{}$} \newline

We now use the method of Khare and Wintenberger \cite{kw} to show that
certain mod $l$ representations have ordinary lifts with prescribed
local behavior. We will later improve upon this by weakening the
ordinary hypothesis (see Theorem \ref{diaglift}), but we will need to
use this special case before we are in a position to prove the more
general result.

 Let $n$ be a positive integer and $l$ an odd  prime. Suppose that $F$
 is an imaginary CM field not containing $\zeta_l$ and with maximal totally real subfield $F^+$.
Let $S$ be a finite set of finite places of $F^+$ which split in $F$ and suppose that $S$ includes all places above $l$. For each $v \in S$ choose a prime $\tv$ of $F$ above $v$. 

Let $\mu:G_{F^+} \ra \barQQ_l^\times$ be a continuous, crystalline character unramified outside $S$ such that $\mu(c_v)=-1$ for all $v|\infty$. Then there is a $w \in \Z$ such that for each $\tau:F^+ \into \barQQ_l$ we have $\HT_\tau(\mu)=\{ w \}$. For each $\tau:F \into \barQQ_l$ let $H_\tau$ be a set of $n$ distinct integers such that $H_{\tau \circ c} = \{ w-h: \,\,\, h\in H_\tau\}$. 

Let
\[ \barr: G_{F^+} \lra \CG_n(\barFF_l) \]
be a continuous representation unramified outside $S$ with $\nu \circ
\barr = \barmu$ and $\barr^{-1}\CG^0_n(\barFF_l)=G_F$. For $v \in S$ with $v\ndiv l$ let $\rho_v:G_{F_\tv} \ra \GL_n(\CO_{\barQQ_l})$ denote a lift of $\breve{\barr}|_{G_{F_\tv}}$.

\begin{prop}\label{ordlift}
Keep the notation and assumptions already stated in this section. Also make the following additional assumptions:
\begin{enumerate}
\renewcommand{\labelenumi}{(\alph{enumi})}
\item  Suppose that $\breve{\barr}|_{G_{F(\zeta_l)}}$ is irreducible. Also, writing $d$ for the maximal dimension of an irreducible constituent of the restriction of $\breve{\barr}$ to the closed subgroup of $G_{F^+}$ generated by all Sylow pro-$l$-subgroups, suppose that $l\geq 2(d+1)$.

\item Suppose  that for $u|l$ a place of $F$ 
the restriction $\breve{\barr}|_{G_{F_u}}$ admits a lift $\rho_u:G_{F_u} \ra \GL_n(\CO_{\barQQ_l})$ which is ordinary and crystalline with Hodge--Tate numbers $H_\tau$ for each $\tau:F_u \into \barQQ_l$. 
\end{enumerate}

Then there is a lift
\[ r:G_{F^+} \lra \CG_n(\CO_{\barQQ_l}) \]
of $\barr$ such that
\begin{enumerate}
\item $\nu \circ r = \mu$;
\item if $u|l$ is a place of $F$ then $\breve{r}|_{G_{F_u}}$ is ordinary and crystalline with Hodge--Tate numbers
$H_\tau$ for each $\tau:F_u \into \barQQ_l$;
\item if $v \in S$ and $v \ndiv l$ then $\breve{r}|_{G_{F_\tv}} \sim
  \rho_v$;\item $r$ is unramified outside $S$.
\end{enumerate} \end{prop}

\begin{proof}
Choose a place $v_q$ of $F$ above an odd rational prime $q$ such that $v_q$ is split over $F^+$ and $v_q$ does not divide any prime in $S$. Also choose integers $b_\tau$ for all $\tau:F \into \barQQ_l$ such that 
\begin{itemize}
\item $b_\tau+b_{\tau \circ c}=2n-1-w$ for all $\tau$,
\item and $|b_\tau - b_{\tau\circ
    c}|>|h-h'|$ for all $\tau$ and for all $h\in H_\tau, h'\in H_{\tau\circ c}$.
    \end{itemize}
Now choose a character $\psi:G_F \ra \barQQ_l^\times$ such that 
\begin{itemize}
\item $\psi|_{G_{F_\tv}}$ is unramified if $v \in S$ but $v\ndiv l$;
\item $\psi$ is crystalline at all primes above $l$ with $\HT_\tau(\psi)=\{b_\tau\}$ for all $\tau:F \into \barQQ_l$; 
\item $q|\#(\psi/\psi^c)(I_{F_{v_q}})$; and
\item $\psi \psi^c=\epsilon_l^{1-2n} \mu^{-1}|_{G_F}$. 
\end{itemize}
(To do this, apply Lemma \ref{l416} with the set $S$ of that Lemma equal to the primes of $F$ above our $S$, plus $v_q$ and $v_q^c$. For $v$ in this set take $\psi_v$ as follows:
\begin{itemize}
\item if $v \in S$ but $v\ndiv l$ then $\psi_\tv=1$ and $\psi_{\tv^c}=\epsilon_l^{1-2n}\mu^{-1}|_{G_{F_{\tv^c}}}$,
\item $\psi_{v_q}$ is a wildly ramified character and $\psi_{v_q^c}=(\psi_{v_q}^c)^{-1}$,
\item if $v|l$ then $\psi_v$ is crystalline with $\HT_\tau(\psi_v)=\{ b_\tau\}$ for all continuous $\tau:F_v \into \barQQ_l$.) 
\end{itemize}

In the notation of Section \ref{cgn}, we have a homomorphism $
(\barpsi, \barepsilon_l^{1-2n}
\barmu^{-1}\delta_{F/F^+}):G_{F^+}\to\cG_1(\Flbar)$, and we can
consider the representations $\barr \otimes (\barpsi,
\barepsilon_l^{1-2n}
\barmu^{-1}\delta_{F/F^+}):G_{F^+}\to\cG_n(\Flbar)$ and  $I_{\psibar}(\rbar):=I(\barr
\otimes (\barpsi, \barepsilon_l^{1-2n} \barmu^{-1}\delta_{F/F^+})):
G_{F^+} \ra \GSp_{2n}(\barFF_l)$. Note that $I_{\psibar}(\rbar)$ has multiplier $\barepsilon_l^{1-2n}$. By the third condition
on $\psi$ the representation $I_{\psibar}(\rbar)|_{G_{F^+(\zeta_l)}}$
is irreducible. (As it is the induction of an irreducible
representation from the index 2 subgroup $G_{F(\zeta_l)}$, it suffices to check that
the restriction to $G_{F(\zeta_l)}$ is not the sum of two isomorphic
representations, and this follows, as the two representations
differ when restricted to $I_{F_{v_q}}$.)  By Proposition \ref{ghtt}, $I_{\psibar}(\rbar)(G_{F^+(\zeta_l)})$ is adequate.  

Let $F_0/F^+$ be a totally imaginary quadratic extension linearly disjoint from $\barF^{\ker I_{\psibar}(\rbar)}(\zeta_l)$ over $F^+$. By Theorem \ref{prop:Dwork} there is a Galois totally
real field extension $F_1^+/F^+$ and a regular algebraic, cuspidal, polarized automorphic representation $(\pi_1,1)$ of
$\GL_{2n}(\A_{F_1^+})$ such that 
\begin{itemize}
\item $F_1^+$ is linearly disjoint from $\barF^{\ker  I_{\psibar}(\rbar)}F_0(\zeta_l)$ over $F^+$;
\item $\barr_{l,\imath}(\pi_1) \cong I_{\psibar}(\barr)|_{G_{F_1^+}}$;
\item and $\pi_1$ is $\imath$-ordinary.
\end{itemize}
Set $F_1=F_0F_1^+$. It is linearly disjoint from
$\barF^{\ker I_{\psibar}(\barr)}F_1^+(\zeta_l)$
over $F_1^+$. Set (again, in the notation of Section \ref{cgn})
\[ \barr_1= (I_{\psibar}(\barr )|_{G_{F_1^+}})^\wedge_{G_{F_1}}:G_{F_1^+}
\lra \CG_{2n}(\barFF_l). \]
Then $\breve{\barr}_1(G_{F_1(\zeta_l)})$ is adequate and $\zeta_l \not\in F_1$.

Let $T' \supset S$ denote a finite set of primes of $F^+$ including
all those above which $\barpsi$, $\pi_1$ or $F_1$ is ramified. Let
$F_2^+/F^+$ be a finite soluble Galois totally real extension,
linearly disjoint from $\barF_1^{\ker \breve{\barr}_1}(\zeta_l)$ over
$F^+$ such that all primes of $F_3^+=F_1^+F_2^+$ above $T'$ split in
$F_3=F_1F_2^+$. (We have introduced $F_2^+$ in order to be able to apply Theorem \ref{fdrord}. The primes of $F_1^+$ above $T'$ may not split in $F_1$.) Set
\[ \barr_3=\barr_1|_{G_{F_3^+}}:G_{F_3^+}
\lra \CG_{2n}(\barFF_l) \]
so that  $\barr_3^{-1} \CG_{2n}^0(\barFF_l)=G_{F_3}$. Then
$\breve{\barr}_3(G_{F_3}(\zeta_l))$
is adequate and $\zeta_l\not\in F_3$.
Let $T$ denote the set of places of $F_3^+$ lying over $T'$ and for each $u \in T$ choose a prime $\tu$ of $F_3$ above 
$u$ and let $\tT$ denote the set of $\tu$ for $u \in T$.

For $v \in S$ with $v \ndiv l$ let $\CC_v$ denote a component of $R^\Box_{\breve{\barr}|_{G_{F_\tv}}}
\otimes \barQQ_l$ containing $\rho_v$. Choose a finite extension $L$ of $\Q_l$ in $\barQQ_l$ with
integers $\CO$ and residue field $\F$ such that
\begin{itemize}
\item $L$ contains the image of each embedding $F_3 \into \barQQ_l$;
\item for $v \in S$ the component $\CC_v$ is defined over $L$;
\item $\barr$ and $\barpsi$ are defined over $\F$;
\item and $\mu$ is defined over $L$.
\end{itemize}
For $v \in S$ with $v \ndiv l$ let $\calD_v$ denote the deformation problem for $\breve{\barr}|_{G_{F_\tv}}$
corresponding to $\CC_v$. For $v \in S$ with $v|l$ let $\calD_v$ consist of all lifts of $\breve{\barr}|_{G_{F_\tv}}$
which factor through $R^\Box_{\CO,\breve{\barr}|_{G_{F_\tv}},\{
  H_\tau\},\crord}$.
Set
\[ \CS=(F/F^+, S, \tS, \CO, \barr, \mu, \{ \calD_v\}_{v \in S}). \]
For $u \in T$ with $u \ndiv l$ let $\calD_{3,u}$ consist of all lifts of $\breve{\barr}_3|_{G_{F_{3,\tu}}}$. For $u \in T$
with $u|l$ let $\calD_{3,u}$ consist of all lifts of $\breve{\barr}_3|_{G_{F_{3,\tu}}}$
which factor through $R^\Box_{\CO,\breve{\barr}_3|_{G_{F_{3,\tu}}},\{ H_{3,\tau}\},\ssord}$, where 
\[ H_{3,\tau} = \{ h + b_{\tau_1}:\,\, h \in H_{\tau_1}\} \cup \{ h+ b_{\tau_2}:\,\, h \in H_{\tau_2}\}, \]
and $\tau_1$ and $\tau_2$ denote the two embeddings of $F \into
\barQQ_l$ lying above $\tau|_{F^+}$.
Set
\[ \CS_3=(F_3/F_3^+, T, \tT, \CO, \barr_3, \epsilon_l^{1-2n}, \{ \calD_{3,u}\}_{u \in T}). \]
According to Theorem \ref{fdrord} the ring $R^\univ_{\CS_3}$ is a finitely generated $\CO$-module.
Hence by (all three parts of) Lemma \ref{cdr} the ring $R^\univ_\CS$ is also a finitely generated $\CO$-module. On the other 
hand by Proposition \ref{drb} $R^\univ_\CS$ has Krull dimension at least $1$ and so there is a continuous
ring homomorphism $R^\univ_\CS \ra \barQQ_l$. The push forward of the universal deformation of
$\barr$ by this homomorphism is our desired lift $r$.
\end{proof}

\subsection{Potential ordinary automorphy.}\label{poa} {$\mbox{}$} \newline

In this section we improve Theorem \ref{prop:Dwork} to show that a
suitable $\bmod$ $l$ representation is potentially ordinarily automorphic with prescribed ``weight and level''. The proof will combine Theorem \ref{prop:Dwork} and Proposition \ref{ordlift}. We will improve further on this result in Corollary \ref{padiag}.

\begin{prop}\label{paord} Suppose that we are in the following situation. 
\begin{enumerate}
\renewcommand{\labelenumi}{(\alph{enumi})}
\item Let $F/F_0$ be a finite, Galois extension of imaginary CM fields, and let $F^+$ and $F_0^+$ denote their maximal totally real subfields. Choose a complex conjugation $c \in G_{F^+}$.
\item Let $\CI$ be a finite set.
\item For each $i \in \CI$ let $n_i$ be a positive integer and $l_i$ be an odd rational prime with $\zeta_{l_i} \not\in F$.
Also choose $\imath_i:\barQQ_{l_i} \iso \C$ for each $i \in \CI$.
\item For each $i\in \CI$ let $\mu_i:G_{F^+} \ra \barQQ_{l_i}^\times$ be a continuous, totally odd, de Rham character.
Then there is a $w_i \in \Z$ such that for each $\tau:F^+ \into \barQQ_{l_i}$ we have $\HT_\tau(\mu_i)=\{ w_i \}$. 
\item For each $i \in \CI$ let $\barr_i: G_{F} \ra
  \GL_{n_i}(\barFF_{l_i})$ be an irreducible continuous representation
  such that $(\barr_i,\barmu_i)$ is a totally odd polarized mod $l$ representation.
Let $d_i$ denote the maximal dimension of an irreducible subrepresentation of the restriction of ${\barr}_i$ to the subgroup of $G_F$ generated by all Sylow pro-$l_i$-subgroups. Suppose that ${\barr}_i|_{G_{F(\zeta_{l_i})}}$ is irreducible and that $l_i \geq 2(d_i+1)$.
\item For each $i \in \CI$ and each $\tau:F \into \barQQ_{l_i}$ let
   $H_{i,\tau}$ be a set of $n_i$ distinct integers such that
   $H_{i,\tau \circ c} = \{ w_i-h: \,\,\, h\in H_{i,\tau}\}$.
 \item Let $S$ denote a finite $\Gal(F/F^+)$-invariant set of primes
   of $F$ including all those dividing $\prod_i l_i$ and
 all those at which some $\barr_i$ ramifies.
 \item for each $i \in \CI$ and $v \in S$ with $v\ndiv l_i$ let
   $\rho_{i,v}:G_{F_v} \ra \GL_{n_i}(\CO_{\barQQ_{l_i}})$
denote a lift of ${\barr}_i|_{G_{F_v}}$ such that $\rho_{i,cv}^c \cong \mu_i|_{G_{F_v}} \rho_{i,v}^\vee$. 
\item Let $\Favoid/F$ be a finite Galois extension.
\end{enumerate}

Then we can find a finite CM extension $F'/F$ and for each $i \in \CI$ a regular algebraic, cuspidal, polarized automorphic representation $(\pi_i,\chi_i)$ of $\GL_{n_i}(\A_{F'})$ such that
\begin{enumerate}
\item $F'/F_0$ is Galois,
\item $F'$ is linearly disjoint from $\Favoid$ over $F$,
\item $(\barr_{l_i,\imath_i}(\pi_i),\barr_{l_i,\imath_i}(\chi_i)\barepsilon_{l_i}^{1-n_i})\cong ({\barr}_i|_{G_{F'}}, \barmu_i|_{G_{(F')^+}})$;
\item $\pi_i$ is unramified above $l_i$ and outside $S$;
\item $\pi_i$ is $\imath_i$-ordinary;
\item if $\tau:F' \into \barQQ_{l_i}$ then $\HT_\tau(r_{l_i,\imath_i}(\pi_i))=H_{i,\tau|_F}$;
\item if $u\ndiv l_i$ is a prime of $F'$ lying above an element $v \in S$ then $r_{l_i,\imath_i}(\pi_i)|_{G_{F_u'}} \sim \rho_{i,v}|_{G_{F_u'}}$.
\end{enumerate}\end{prop}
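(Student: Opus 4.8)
The plan is to combine the potential ordinary automorphy theorem for symplectic mod $l$ representations (Theorem \ref{prop:Dwork}) with the ordinary lifting result (Proposition \ref{ordlift}), using the standard automorphic induction / tensor-product trick from \cite{blght} to manufacture, for each $i$, an auxiliary large-dimensional symplectic representation whose automorphy can be read off from the Dwork family, and then descend. Since everything is done one prime $l_i$ at a time and the $F'$ we produce can be taken common to all $i$ (we simply run the argument for the product of all the auxiliary varieties, as in Theorem \ref{prop:Dwork}), I will fix $i$ in the exposition and suppress it from the notation, writing $n,l,\imath,\barr,\mu,\{H_\tau\}$, etc.

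First I would extend $\breve{\barr}$ (or rather the $\CG_n$-valued form $\barr$) over $F^+$ and, exactly as in the proof of Proposition \ref{ordlift}, choose an auxiliary character $\psi:G_F \to \barQQ_l^\times$ crystalline at $l$, unramified outside $S$, with $\psi\psi^c = \epsilon_l^{1-2n}\mu^{-1}|_{G_F}$, with $\HT$-numbers spread far apart so that $I(\barr\otimes(\barpsi,\barepsilon_l^{1-2n}\barmu^{-1}\delta_{F/F^+}))|_{G_{F^+(\zeta_l)}}$ is irreducible, and with $q\mid \#(\psi/\psi^c)(I_{F_{v_q}})$ for an auxiliary split prime $v_q$. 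This produces a representation $\barR := I(\barr\otimes(\barpsi,\barepsilon_l^{1-2n}\barmu^{-1}\delta_{F/F^+})):G_{F^+}\to GSp_{2n}(\barFF_l)$ with multiplier $\barepsilon_l^{1-2n}$, open kernel, and adequate image over $F^+(\zeta_l)$ (by Proposition \ref{ghtt}). Now apply Theorem \ref{prop:Dwork}, with $F_0$ of that theorem our $F_0^+$, $F$ of that theorem our $F^+$ (after first choosing a CM quadratic $F_0$ over $F^+$ to keep track of the CM structure), the single symplectic representation $\barR$, and $\Favoid$ enlarged to swallow $F_0$, the CM field $F$, and $\barF^{\ker \ad \barR}(\zeta_l)$ together with the given $\Favoid$. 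This yields a totally real $F_1^+/F^+$, Galois over $F_0^+$, linearly disjoint from everything we care about, and a RAESDC automorphic $(\pi_1,1)$ of $GL_{2n}(\A_{F_1^+})$ with $\barr_{l,\imath}(\pi_1)\cong \barR|_{G_{F_1^+}}$, $\pi_1$ of weight $0$ and Steinberg at all $v\mid l$, hence $\imath$-ordinary.

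Next I would use the ordinary lifting machinery to pass from residual to genuine automorphy in the induced picture. Set $F_1 = F_0 F_1^+$ (CM) and form the $\CG_{2n}$-valued $\barr_1 := \widehat{(\barR|_{G_{F_1^+}})}_{G_{F_1}}$; its image over $F_1(\zeta_l)$ is adequate and $\zeta_l\notin F_1$. By a further soluble CM base change I arrange that all auxiliary ramification splits, giving $(F_3/F_3^+,\barr_3,\ldots)$ exactly as in the proof of Proposition \ref{ordlift}. Now one invokes the ordinary automorphy lifting theorem (Theorem \ref{ger}) together with the finiteness result (Theorem \ref{fdrord}) and the descent Lemma \ref{cdr} (and Proposition \ref{drb} for the Krull-dimension lower bound) to produce an ordinary, crystalline-at-$l$ lift $R$ of $\barr_1$ over $F_3^+$ with the prescribed Hodge--Tate weights $H_{3,\tau}$ (built from $H_\tau$ shifted by the $\HT$-numbers of $\psi$) and prescribed local behaviour $\sim\rho_v\otimes(\text{twist})$ away from $l$ — this is essentially a repackaging of Proposition \ref{ordlift}, except that now we also want the *automorphy* of the lift, which comes from Theorem \ref{ger} since its residual form is automorphic via $\pi_1$ after the base changes. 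One then recognizes that $R$ is, up to the twist by $(\psi,\ldots)$ and the $I(-)$ construction, built out of $\breve{r}\otimes\psi$ where $\breve{r}$ is an ordinary crystalline lift of $\breve{\barr}$ of the right weight; solubly base-changing and using cyclic descent / automorphic induction (Lemma \ref{htcmtr}) one extracts a RAECSDC automorphic representation $(\pi,\chi)$ of $GL_n(\A_{F'})$ over a suitable CM $F'$ with $r_{l,\imath}(\pi)\cong \breve{r}$, $\barr_{l,\imath}(\pi)\cong\breve{\barr}|_{G_{F'}}$, $r_{l,\imath}(\chi)\epsilon_l^{1-n}=\mu|_{G_{(F')^+}}$, $\pi$ unramified above $l$ and outside $S$, $\imath$-ordinary (since $r_{l,\imath}(\pi)=\breve{r}$ is ordinary and $\pi$ has level prime to $l$, by the facts recalled after the definition of $\imath$-ordinary), with $\HT_\tau(r_{l,\imath}(\pi))=H_{\tau|_F}$ and $r_{l,\imath}(\pi)|_{G_{F'_u}}\sim\rho_v|_{G_{F'_u}}$ at $u\mid v\in S$, $u\nmid l$.

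Finally, to get a single $F'$ working simultaneously for all $i\in\CI$, I would run the whole construction with the disjoint union over $i$ of the auxiliary symplectic data and the product variety, feeding all of it into a single application of Theorem \ref{prop:Dwork} and enlarging $\Favoid$ appropriately at each stage; the linear disjointness clauses there and in Lemma \ref{cdr}, Proposition \ref{ordlift} guarantee the base changes for different $i$ do not interfere. The main obstacle, as usual in this circle of ideas, is bookkeeping: making sure the successive soluble and non-soluble CM/totally-real base changes are all compatible, linearly disjoint from the relevant fixed fields (so that adequacy and $\zeta_l\notin F'$ persist), $\Gal$-invariant in the required sense, and that the Hodge--Tate and $\sim$-conditions at $l$ and away from $l$ are preserved through the $I(-)$, $\otimes\psi$, and automorphic-induction steps; once the analogue of the argument in Proposition \ref{ordlift} is set up carefully, the automorphy statements themselves are formal consequences of Theorems \ref{ger} and \ref{prop:Dwork} and Lemma \ref{htcmtr}.
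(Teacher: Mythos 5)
There is a genuine gap at the heart of your second paragraph, namely the step ``one then recognizes that $R$ is, up to the twist by $(\psi,\ldots)$ and the $I(-)$ construction, built out of $\breve{r}\otimes\psi$ where $\breve{r}$ is an ordinary crystalline lift of $\breve{\barr}$.'' You construct $R$ as an abstract point of a $2n$-dimensional (conjugate self-dual, ordinary) deformation ring of $\barr_1$, via the finiteness-plus-Krull-dimension argument; such a point has no reason to be of the form $I(\tilde{r}\otimes(\psi,\ldots))$ for an $n$-dimensional lift $\tilde{r}$ of $\barr$ — deformations of an induced (or $I$-type) residual representation need not themselves be induced, and even if $R$ happened to be induced from the quadratic CM extension, you would still have to control how its Hodge–Tate numbers and its local components at the places of $S$ distribute between the two $n$-dimensional pieces in order to recover the prescribed $H_\tau$ and the condition $\sim\rho_{i,v}$. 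But Lemma \ref{htcmtr} needs exactly this: it only descends an automorphic $\Pi$ whose Galois representation is literally $\Ind_{G_M}^{G_F}r$ for the specific $r$ you want. The paper avoids this entirely by reversing the order: Proposition \ref{ordlift} runs the Khare–Wintenberger argument at the level of the $\CG_n$-valued deformation ring of $\barr$ over $F^+$ (the $2n$-dimensional ring enters only through Lemma \ref{cdr}, to transport finiteness from Theorem \ref{fdrord}), so one obtains the $n$-dimensional ordinary lift $r_i$ of $\barr_i$ first; only then does one form the specific representation $I(\tilde{r}_i\otimes(\psi_i,\ldots))$, which is of induced type by construction, prove it automorphic by Theorem \ref{ger} (its reduction being ordinarily automorphic via the Dwork family output), and descend with Lemma \ref{htcmtr} and a twist. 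Your ``repackaging'' outputs a lift of the wrong object ($\barr_1$ rather than $\barr$), and the recognition step needed to repair this is not available.

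A secondary omission: Proposition \ref{ordlift} has the hypothesis that for every place $u\mid l$ the restriction $\breve{\barr}|_{G_{F_u}}$ admits an ordinary crystalline lift with the prescribed Hodge–Tate numbers, and that the relevant primes split over the totally real subfield; Proposition \ref{paord} assumes neither. The paper therefore begins with a preliminary base change, produced by Proposition \ref{mbp} applied with $T=\Spec F^+$, choosing local extensions above each $l_i$ (typically making $\barr_i$ trivial there, so that sums of crystalline characters supply the required local lifts) and forcing the primes of $S$ to split, before any of the machinery you describe can be set in motion. Your proposal relegates this to ``bookkeeping,'' but without it the ordinary local deformation problems at $l$ may be empty and the existence half of the argument (Proposition \ref{drb} giving a $\barQQ_{l_i}$-point) does not get off the ground.
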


\begin{proof} 
Note that $(\barr_i, \barmu_i|_{G_F})$ extends to a continuous
homomorphism $\tilde{\barr}_{i,\barmu_i}:G_{F^+} \ra \CG_{n_i}(\barFF_{l_i})$   with $\nu
\circ \tilde{\barr}_{i,\barmu_i} = \barmu_i$ (see section \ref{cgn}).

Choose a finite totally real extension $F_1^+/F^+$ so that
\begin{itemize}
\item $F_1^+/F_0^+$ is Galois;
\item $F_1^+$ is linearly disjoint from $\barF^{\cap_i \ker \barr_i}(\zeta_{\prod_il_i})\Favoid$ over $F^+$;
\item all places of $F_1=FF_1^+$ above $S$ are split over $F^+_1$;
\item and for all $i\in \CI$ and all places $u|l_i$ of $F_1$ 
the restriction ${\barr}_i|_{G_{F_{1,u}}}$ admits a lift $\rho_{i,u}:G_{F_{1,u}} \ra \GL_{n_i}(\CO_{\barQQ_{l_i}})$ which is ordinary and crystalline with Hodge--Tate numbers $H_{i,\tau|_F}$ for each $\tau:F_{1,u} \into \barQQ_l$.
\end{itemize}
(For all $v$ a prime of $F^+$ below an element of $S$ there is a finite Galois extension $E_v/F_v^+$ with the following property: The last two bullet points will be satisfied as long as, for all primes $w$ of $F_1^+$ above a prime $v$ of $F^+$ below an element of $S$, we have $(F_1^+)_w \supset E_v$. So we may replace the last two bullet points by this condition. Now the existence of $F_1^+$ follows from Lemma \ref{l412}.)
Replacing $F$ by $F_1$ (and $\Favoid$ by $F_1 \Favoid$) we may reduce the theorem to the special case that all elements of $S$ are split over $F^+$ and that for all $i\in \CI$ and all places $u|l_i$ of $F$ 
the restriction ${\barr}_i|_{G_{F_{u}}}$ admits a lift
$\rho_{i,u}:G_{F_{u}} \ra \GL_{n_i}(\CO_{\barQQ_{l_i}})$ which is
ordinary and crystalline with Hodge--Tate numbers $H_{i,\tau}$ for each
$\tau:F_{u} \into \barQQ_l$. (Note that if $F'/F_1$ is finite and
linearly disjoint from $\Favoid F_1$ over $F_1$ and if $F'/F_0$ is
Galois, then $F'/F$ is linearly disjoint from $\Favoid$ over $F$; thus
replacing $F$ by $F_1$ does not affect the condition that $F'$ is linearly disjoint from $\Favoid$ over $F$.) 

In this case, using Proposition \ref{ordlift}, we see that for each $i \in \CI$ there is a lift
\[ r_i:G_{F^+} \lra \CG_{n_i}(\CO_{\barQQ_{l_i}}) \]
of $\tilde{\barr}_{i,\barmu_i}$ such that
\begin{itemize}
\item $\nu \circ r_i = \mu_i$;
\item if $u|l_i$ is a place of $F$ then $\breve{r}_i|_{G_{F_u}}$ is ordinary and crystalline with Hodge--Tate numbers
$H_{i,\tau}$ for each $\tau:F_{u} \into \barQQ_{l_i}$;
\item if $v \in S$ and $v\ndiv l_i$ then $\breve{r}_i|_{G_{F_{v}}} \sim \rho_{i,v}$;
\item $r_i$ is unramified outside $S$.
\end{itemize} 
(Note that if we write $S=\tS \coprod c\tS$ then we only need check the penultimate assertion for $v\in \tS$ and it will follow also for $v \in c\tS$.)

Choose a place $v_q$ of $F$ which is split over $F^+$ and which lies above an odd rational prime $q$, which in turn does not lie under any prime in $S$. 
Also choose integers $b_{i,\tau}$ for all $i\in \CI$ and all $\tau:F \into \barQQ_l$ such that 
\begin{itemize}
\item $b_{i,\tau}+b_{i,\tau \circ c}=2n_i-1-w_i$ for all $\tau$,
\item and $|b_{i,\tau} - b_{i,\tau\circ
    c}|>|h-h'|$ for all $\tau$ and for all $h\in H_{i,\tau}, h'\in H_{i,\tau\circ c}$.
    \end{itemize}
Now choose a character $\psi_i:G_F \ra \barQQ_{l_i}^\times$ for $i \in \CI$ such that 
\begin{itemize}
\item $\psi_i$ is unramified at places in $S$ which do not divide $l_i$;
\item $\psi_i$ is crystalline at all places above $l_i$ and $\HT_\tau(\psi_i)=\{ b_{i,\tau}\}$ for all $\tau:F \into \barQQ_{l_i}$;
\item $q|\#(\psi_i/\psi_i^c)(I_{F_{v_q}})$; and
\item $\psi_i \psi_i^c=\epsilon_{l_i}^{1-2n_i} \mu_i^{-1}|_{G_F}$. 
\end{itemize}
(To do this apply Lemma \ref{l416} with the set $S$ of that Lemma
equal to the union of our $S$ and the set of primes of $F$ above $q$. For $v$ in this set take $\psi_v$ as follows:
\begin{itemize}
\item if $\{ v,v^c\} \subset S$ but $v\ndiv l_i$ then put $\psi_v=1$ and $\psi_{v^c}=\epsilon_l^{1-2n}\mu^{-1}|_{G_{F_{v^c}}}$ (or the other way round),
\item $\psi_{v_q}$ is a wildly ramified character and $\psi_{v_q^c}=(\psi_{v_q}^c)^{-1}$,
\item if $v|l_i$ then $\psi_v$ is crystalline with $\HT_\tau(\psi_v)=\{ b_{i,\tau}\}$ for all continuous $\tau:F_v \into \barQQ_l$.)
\end{itemize}

Consider 
\[ I_{\barpsi_i}(\tilde{\barr}_{i,\barmu_i}):=I(\tilde{\barr}_{i,\barmu_i} \otimes (\barpsi_i, \barepsilon_{l_i}^{1-2n_i} \barmu_i^{-1}\delta_{F/F^+})): G_{F^+} \ra \GSp_{2n_i}(\barFF_{l_i}), \]
which has multiplier $\barepsilon_{l_i}^{1-2n_i}$. As in the proof of Proposition \ref{ordlift} we see that 
$I_{\barpsi_i}(\tilde{\barr}_{i,\barmu_i})(G_{F^+(\zeta_{l_i})})$
is adequate.
Theorem \ref{prop:Dwork} tells us that there is a finite totally
real field extension $F_1^+/F^+$ 
such that 
\begin{itemize}
\item $F_1^+/F_0^+$ is Galois;
\item $F_1^+$ is linearly disjoint from 
$\barF^{\cap_i \ker I_{\barpsi_i}(\tilde{\barr}_{i,\barmu_i})}(\zeta_{\prod_i l_i})\Favoid$
over $F^+$;
\item each $((\Ind_{G_F}^{G_{F^+}} \barr_i \otimes \psi_i)|_{G_{F_1^+}}, \barepsilon_{l_i}^{1-2n_i})$ is ordinarily automorphic of weight $0$.
\end{itemize}
By Theorem \ref{ger} we conclude that each $((\Ind_{G_F}^{G_{F^+}} \breve{r}_i \otimes \psi_i)|_{G_{F_1^+}}, \epsilon_{l_i}^{1-2n_i})$ is ordinarily automorphic of level prime to $l_i$.

Let $F'=FF_1^+$. 
By Lemma \ref{htcmtr}, we see that each
$(( \breve{r}_i \otimes \psi_i)|_{G_{F'}}, \epsilon_{l_i}^{1-n_i})$ is automorphic of level prime to $l_i$. Hence each $(\breve{r}_i|_{G_{F'}}, \mu_i|_{G_{(F')^+}})$ is automorphic of level prime to $l_i$. As these representations are also ordinary, they are ordinarily automorphic of level prime to $l$.
The theorem follows (using local-global compatibility).

\end{proof}
\newpage

\section{The main theorems.}\label{alt2}

          \subsection{A preliminary automorphy lifting result.}\label{prelalt}{$\mbox{}$} \newline

The proof of the next proposition is our main innovation. The last two parts of assumption (\ref{part5}) are rather restrictive and mean that the proposition is not directly terribly useful. However in the next section we will see how we can combine this result with Theorem \ref{ger} to get a genuinely useful result.  Our main tool will be Harris' tensor product trick (see
        \cite{harris:manin} and \cite{blght}).
          
             \begin{prop} \label{propmlt} 
     Let $F$ be an imaginary CM field with maximal totally real subfield $F^+$
     and let $c$ denote the non-trivial element of $\Gal(F/F^+)$. Suppose that
     $l$ is an odd prime and let $n \in \Z_{\geq 1}$. Assume that $\zeta_l\notin
     F$. Let $(r,\mu)$ be a regular algebraic, irreducible, $n$-dimensional, polarized $l$-adic representation of $G_F$. 
Let $\barr$ denote the semi-simplification of the reduction
of $r$, and let $d$ denote the maximal dimension of an irreducible subrepresentation of the restriction of $\barr$ to the closed subgroup of $G_F$ generated by all Sylow pro-$l$-subgroups. 
Suppose that $(r,\mu)$ enjoys the following properties:
   \begin{enumerate}
\item $r|_{G_{F_v}}$ is potentially diagonalizable (and so in particular potentially crystalline) for all $v|l$.
\item  The restriction $ \barr|_{G_{F(\zeta_l)}}$ is irreducible and
  $l\geq 2(d+1)$. \item\label{part5}  $(\barr,\barmu)$ is automorphic of level prime to $l$ arising from a regular algebraic, cuspidal, polarized automorphic representation $(\pi,\chi)$  such that
\begin{itemize}
      \item $r_{l,\imath}(\pi)|_{G_{F_v}}$ is potentially diagonalizable for all $v|l$;
      \item for all $\tau:F \into \barQQ_l$ the set $\{ h+h': \,\, h \in \HT_\tau(r),\,\, h' \in \HT_\tau(r_{l,\imath}(\pi))\}$ has $n^2$ distinct elements;
      \item if $v \ndiv l$ then $r_{l,\imath}(\pi)|_{G_{F_v}} \sim r|_{G_{F_v}}$.
\end{itemize} \end{enumerate}

Then $(r,\mu)$ is potentially diagonalizably automorphic (of level potentially prime to $l$).
\end{prop}

\begin{proof}
Note that 
$\imath \WD (r_{l,\imath}(\pi)|_{G_{F_v}})^{F-\semis} = \rec(\pi_v |\,\,\,|_v^{(1-n)/2})$ for all $v \ndiv l$. Moreover as $\pi_v$ is generic we have $r_{l,\imath}(\pi)|_{G_{F_v}} \leadsto r|_{G_{F_v}}$ for all $v \ndiv l$.

Also note that by Proposition \ref{ghtt} $\barr(G_{F(\zeta_l)})$ is adequate and so (by the remark in the paragraph before the statement of Proposition \ref{ghtt}) we see that $l \ndiv n$. 

Using Lemma \ref{bc} we see that it is enough to prove the theorem after replacing $F$ by a soluble CM extension which is linearly disjoint from $\barF^{\ker \barr}(\zeta_l)$ over $F$. Thus (using Lemma \ref{l412}) we may suppose that 
\begin{itemize}
\item $F/F^+$ is unramified at all finite primes;
\item all primes dividing $l$ and all primes at which $\pi$ or $r$ ramify are split over $F^+$;
\item if $u$ is a place of $F$ above a rational prime which equals $l$ or above which $\pi$ ramifies, then
$\barr|_{G_{F_u}}$ is trivial;
\item if $u$ is a place of $F$ above $l$ then 
$r|_{G_{F_u}}$ and $r_{l,\imath}(\pi)|_{G_{F_u}}$ are diagonalizable,
and $\pi_u$ is unramified.
\end{itemize}
For each prime $v$ of $F^+$ which splits in $F$, choose once and for all a prime $\tv$ of $F$ above $v$.

For $u$ a prime of $F$ above $l$ suppose that
\[ r|_{G_{F_u}} \sim \psi^{(u)}_1 \oplus \dots \oplus \psi^{(u)}_n, \]
and
\[ r_{l,\imath}(\pi)|_{G_{F_u}} \sim \phi^{(u)}_1 \oplus \dots \oplus \phi^{(u)}_n, \]
for crystalline characters $\psi^{(u)}_i$ and $\phi_i^{(u)}:G_{F_{u}} \ra \CO_{\barQQ_l}^\times$. 
We can, and shall, assume that the characters $\psi^{(u)}_i$ and $\phi_i^{(u)}$ satisfy
$\psi_i^{(cu)}(\psi_i^{(u)})^c=\mu|_{G_{F_{cu}}}$
and $\phi_i^{(cu)}(\phi_i^{(u)})^c=(r_{l,\imath}(\chi)\epsilon_l^{1-n})|_{G_{F_{cu}}}$.
For $\tau:F_u \into \barQQ_l$ write $\HT_\tau(\psi_i^{(u)})=\{ h_{\tau,i}' \}$ and  $\HT_\tau(\phi_i^{(u)})=\{ h_{\tau,i}\}$. There are integers $w$ and $w'$ such that for each $\tau:F^+ \into \barQQ_l$ we have
$\HT_\tau(\mu)=\{ w'\}$ and $\HT_\tau(r_{l,\imath}(\chi))=\{ w+1-n\}$. Then
\[ h_{\tau,i}'+h_{\tau c, i}' =w' \]
and
\[ h_{\tau,i}+h_{\tau c, i}=w \]
for all $\tau$ and $i$.

Using Corollary \ref{cycCM} we may choose a CM extension $M/F$ such that
\begin{itemize}
\item $M/F$ is cyclic of  degree $n$;
\item $M$ is linearly disjoint from $\barF^{\ker \barr}(\zeta_l)$ over $F$;
\item and all primes of $F$ above $l$ or at which $\pi$ ramifies split completely in $M$.
\end{itemize}
Choose a prime $u_q$ of $F$ above a rational prime $q$ such that
\begin{itemize}
\item $q \neq l$ and $q$ splits completely in $M$;
\item $r$, $\mu$, $\pi$ and $\chi$ are unramified above $q$.
\end{itemize}
If $v|ql$ is a prime of $F$ we label the primes of $M$ above $v$ as $v_{M,1},\dots,v_{M,n}$ so that $(cv)_{M,i}=c(v_{M,i})$. Choose continuous characters
\[ \theta,\theta':G_M \lra \barQQ_l^\times \]
such that
\begin{itemize}
\item the reductions $\bartheta$ and $\bartheta'$ are equal;
\item $\theta\theta^c=r_{l,\imath}(\chi)\epsilon_l^{1-n}$ and $\theta'(\theta')^c=\mu $;
\item $\theta$ and $\theta'$ are de Rham;
\item if $\tau:M \into \barQQ_l$ lies above a place $v_{M,i}|l$ of $M$ then $\HT_\tau(\theta)=\{h_{\tau|_F,i}\}$ and $\HT_{\tau}(\theta')=\{h_{\tau|_F,i}'\}$;
\item $\theta$ and $\theta'$ are unramified at $u_{q,M,i}$ for $i>1$, but $q$ divides $\#\theta(I_{M_{u_{q,M,1}}})$ and
$\#\theta'(I_{M_{u_{q,M,1}}})$.
\end{itemize}
(First use the first part of Lemma \ref{l416} to choose (say) $\theta$ and then use the second part of Lemma \ref{l416} to choose $\theta'$.)
Note that if $u|l$ is a place of $F$ and if $K/F_u$ is a finite
extension over which $\theta$ and $\theta'$ become crystalline and
$\bartheta=\bartheta'$ become trivial, then 
\[ (\Ind_{G_M}^{G_F} \theta)|_{G_{K}} \sim \phi_1^{(u)}|_{G_K} \oplus \dots \oplus \phi_n^{(u)}|_{G_K} \]
and
\[ (\Ind_{G_M}^{G_F} \theta')|_{G_{K}} \sim \psi_1^{(u)}|_{G_K} \oplus
\dots \oplus \psi_n^{(u)}|_{G_K}. \] (To see this, note that both
sides are residually trivial by the choice of $K$, and both
sides are sums of crystalline characters with the same Hodge--Tate
numbers. The result then follows from points (5) and (6) of Section \ref{l=p}.)

Now let $F_1/F$ be a solvable CM extension such that
\begin{itemize}
\item $\theta|_{G_{F_1M}}$ and $\theta'|_{G_{F_1M}}$ are unramified away from $l$ and crystalline at all primes above $l$;
\item $\thetabar|_{G_{F_1M}}$  is
  trivial at all primes above $l$; 
\item $F_1$ is linearly disjoint
from $\barF^{\ker (\barr \otimes \Ind_{G_M}^{G_F}\bartheta)}M(\zeta_l)$ over $F$;
\item $MF_1/F_1$ is unramified at all finite places.
\end{itemize}
(Use Lemma \ref{l412}.)
Note that $MF_1/F_1$ is split completely above all places of $F$ at
which $\pi$ is ramified.

Put  
\[R:=(r\otimes (\Ind_{G_M}^{G_{F}} \theta))|_{G_{F_1}},\]
$$R':=(r_{l,\imath}(\pi)\otimes (\Ind_{G_M}^{G_{F}} \theta'))|_{G_{F_1}}.$$
Note that we have the following facts.
\begin{itemize}
\item $\barR \cong \barR'$.
\item $R^c \cong (r^\vee \otimes \Ind_{G_M}^{G_F} \theta^c \otimes \mu )|_{G_{F_1}} \cong R^\vee \otimes (\mu r_{l,\imath}(\chi) \epsilon_l^{1-n}\delta_{F_1/F_1^+})|_{G_{F_1}}$.
\item $(R')^c \cong (R')^\vee \otimes(\mu r_{l,\imath}(\chi) \epsilon_l^{1-n}\delta_{F_1/F_1^+})|_{G_{F_1}}$.

\item $\barR$ is irreducible and $\barR(G_{F_1(\zeta_l)})$ is
  adequate.

[As $\barr|_{G_{M(\zeta_l)}}$ is irreducible, we see that the
restriction to $G_{M(\zeta_l)}$ of any
constituent of $(\barr \otimes \Ind_{G_M}^{G_{F}}
\bartheta)|_{G_{F(\zeta_l)}}$ is a sum of $\barr|_{G_{M(\zeta_l)}}
\bartheta^\tau|_{G_{M(\zeta_l)}}$ as $\tau$ runs over some subset of $\Gal(M/F)$. Looking at ramification above $u_q$ we see that the
$\barr|_{G_{M(\zeta_l)}} \bartheta^\tau|_{G_{M(\zeta_l)}}$ are
permuted transitively by $\Gal(M/F)$ and hence
$(\barr \otimes \Ind_{G_M}^{G_{F}} \bartheta
)|_{G_{F(\zeta_l)}}$ is irreducible. Since $F_1$ is linearly disjoint
from $\barF^{\ker (\barr \otimes \Ind_{G_M}^{G_F}\bartheta)}M(\zeta_l)$ over $F$, we see
that $\barR|_{G_{F_1(\zeta_l)}}$ is irreducible. 
As $l \ndiv n$,
every Sylow pro-$l$ subgroup of $G_{F(\zeta_l)}$ is a subgroup
of $G_{M(\zeta_l)}$. By Proposition \ref{ghtt}, we see that $\barR(G_{F_1(\zeta_l)})$ is
adequate.]

\item $(R',  \mu r_{l,\imath}(\chi) \epsilon_l^{1-n}\delta_{F_1/F_1^+})$ is automorphic of level prime to $l$, say 
\[ (R', \mu r_{l,\imath}(\chi) \epsilon_l^{1-n}\delta_{F_1/F_1^+})\cong (r_{l,\imath}(\pi_1), r_{l,\imath}(\chi_1)\epsilon_l^{1-n^2}). \]
Moreover $\pi_1$ only ramifies at places of $F_1$ where $\BC_{F_1/F}(\pi)$ is ramified.
($\pi_1$ is constructed as the automorphic
induction of 
\[ \BC_{F_1M/F}(\pi) \otimes (\phi' |\,\,|^{n(n-1)/2}\circ \det) \]
to $F_1$, where $r_{l,\imath}(\phi')= \theta'|_{G_{F_1M}}$. Note that if $\sigma \in \Gal(F_1M/F_1)$ then $\barr_{l,\imath}(\pi)|_{G_{F_1M}} \bartheta'|_{G_{F_1M}}^\sigma \not\cong \barr_{l,\imath}(\pi)|_{G_{F_1M}} \bartheta'|_{G_{F_1M}}$, so that $(\BC_{F_1M/F}(\pi) \otimes (\phi' \circ \det))^\sigma \not\cong \BC_{F_1M/F}(\pi) \otimes (\phi' \circ \det)$ and $\pi_1$ is cuspidal.)
 \item For all places $u|l$ of $F_1$, 
\[ \begin{array}{rcl} R|_{G_{F_{1,u}}} & \sim & (\psi^{(u|_{F})}_1 \oplus \dots \oplus \psi^{(u|_F)}_n)|_{G_{F_{1,u}}} \otimes (\phi^{(u|_F)}_1 \oplus \dots \oplus \phi^{(u|_F)}_n)|_{G_{F_{1,u}}} \\ & \sim &
      R'|_{G_{F_{1,u}}}. \end{array} \]
 \item For all places $u\nmid l$ of $F_1$ we have $R'|_{G_{F_{1,u}}}\sim R_{G_{F_{1,u}}}$.
[Because we know that $r_{l,\imath}(\BC_{F_1/F}(\pi))|_{G_{F_{1,u}}}\sim r|_{G_{F_{1,u}}}$ and $(\Ind_{G_M}^{G_F}\theta')|_{G_{F_{1,u}}} \sim
(\Ind_{G_M}^{G_F}\theta)|_{G_{F_{1,u}}}$, the latter because they are both unramified.]
\end{itemize}

We now apply Theorem \ref{twkmlt}, with \begin{itemize}
\item $F$ the present $F_1$,
\item $l$ as in the present setting,
\item $n$ the present $n^2$,
\item $r$ the present $R$,
\item $\mu$ the present $ \mu r_{l,\imath}(\chi) \epsilon_l^{1-n}\delta_{F_1/F_1^+}$,
\item $(\pi,\chi)$ the present $(\pi_1,\chi_1)$.
\end{itemize}
We conclude that $(R,(\mu r_{l,\imath}(\chi) \epsilon_l^{1-n}\delta_{F_1/F_1^+})|_{G_{(F_1M)^+}})$ is automorphic of level prime to $l$. By Lemma \ref{htcmtr}, $(r|_{G_{F_1M}}\otimes \theta|_{G_{F_1M}}, (\mu r_{l,\imath}(\chi) \epsilon_l^{1-n}\delta_{F_1/F_1^+})|_{G_{(F_1M)^+}})$ is automorphic of level prime to
$l$. Using Lemma \ref{untwist} we deduce that $(r|_{G_{F_1M}}, \mu|_{G_{(F_1M)^+}} )$ is automorphic of level prime to
$l$. Finally by Lemma \ref{bc}, $r$ is automorphic of
level potentially prime to $l$, and hence potentially diagonalizably automorphic.
\end{proof}

          \subsection{Automorphy lifting: the potentially diagonalizable case.}\label{malt}{$\mbox{}$} \newline
     
        In this section we will prove our main automorphy lifting theorem. It
        generalizes Theorem \ref{ger}
        from the ordinary case to the potentially diagonalizable
        case. It is proved by combining Theorem \ref{ger} and Propositions \ref{ordlift} and \ref{propmlt}. 
               
        \begin{thm} \label{mainmlt} 
     Let $F$ be an imaginary CM field with maximal totally real subfield $F^+$ and let $c$ denote the non-trivial element of $\Gal(F/F^+)$. Suppose that that $l$ is an odd prime, and that $(r,\mu)$ is a regular algebraic, irreducible, $n$-dimensional, polarized representation of $G_F$. 
     Let $\barr$ denote the semi-simplification of the reduction
of $r$, and let $d$ denote the maximal dimension of an irreducible subrepresentation of the restriction of $\barr$ to the closed subgroup of $G_F$ generated by all Sylow pro-$l$-subgroups. 
Suppose that $(r,\mu)$ enjoys the following properties:
   \begin{enumerate}
\item\label{pdiag} $r|_{G_{F_v}}$ is potentially diagonalizable (and so in particular potentially crystalline) for all $v|l$.
\item  The restriction $ \barr|_{G_{F(\zeta_l)}}$ is irreducible, $l \geq 2(d+1)$, and $\zeta_l \not\in F$.
\item\label{auto}  $(\barr,\barmu)$ is either ordinarily automorphic or potentially diagonalizably automorphic.
\end{enumerate}

Then $(r,\mu)$ is potentially diagonalizably automorphic (of level potentially prime to $l$).  
\end{thm}

We remark that condition (\ref{pdiag}) of the theorem will be satisfied
if, in particular, $l$ is unramified in $F$ and $r$ is crystalline at
all primes above $l$,  and $\HT_\tau(r)$ is contained in an interval of the form $[a_\tau,a_\tau+l-2]$ for all $\tau$
(the ``Fontaine--Laffaille'' case).  
We also remark that the reason we can not immediately apply
Proposition \ref{propmlt} to deduce this theorem is the last two parts
of assumption \ref{part5} in Proposition \ref{propmlt} (i.e.\ roughly
speaking $r$ and $r_{l,\imath}(\pi)$ may have different level or $r
\otimes r_{l,\imath}(\pi)$ may have repeated Hodge--Tate weights).
To get round this problem we use Proposition \ref{ordlift} to
create two ordinary intermediate lifts of $\barr$, one $r_1$ with
similar behavior (`level') to $r$, and
one $r_2$ with similar behavior to $r_{l,\imath}(\pi)$. We also ensure that $r_1
\otimes r$ and $r_2 \otimes r_{l,\imath}(\pi)$ are Hodge--Tate regular. Theorem \ref{ger} tells us
that if $r_2$ is automorphic so is $r_1$. On the other hand
Proposition \ref{propmlt} allows us to show that $r_2$ is automorphic
and that if $r_1$ is automorphic then so is $r$.

   \begin{proof}
   Using Lemma \ref{bc} it is enough to prove the theorem after replacing $F$ by a soluble CM extension which is linearly disjoint from $\barF^{\ker \barr}(\zeta_l)$ over $F$. Thus we may suppose that 
\begin{itemize}
\item $F/F^+$ is unramified at all finite primes;
\item all primes dividing $l$ and all primes at which $\pi$ or $r$ ramify are split over $F^+$;
\item if $u$ is a place of $F$ above $l$ then $F_u$ contains a
  primitive $l^{th}$ root of unity, and
$\barr|_{G_{F_u}}$ and $\barr_{l,\imath}(\pi)|_{G_{F_u}}$ are trivial.
\end{itemize}
Let $S$ denote the set of primes of $F^+$ which divide $l$ or above which $r$ or $\pi$ ramifies. 
For each prime $v\in S$ choose once and for all a prime $\tv$ of $F$ above $v$. 

Note that $\mu(c)=-1$ for all complex conjugations $c$ and that
we may extend $\barr \cong \barr_{l,\imath}(\pi)$ to a homomorphism
\[ \tilde{\barr}_{\barmu}:G_{F^+} \lra \CG_n(\barFF_l) \]
with multiplier $\barmu$.

Choose an integer
$m$ strictly greater than $|h-h'|$ for all $h$ and $h'$, Hodge--Tate numbers for $r$ or $r_{l,\imath}(\pi)$. If
$\tau:F \into \C$ set
\[ H_\tau=\{ 0, m, \dots, (n-1)m \}. \]
Note that if $u|l$ then both $\barr|_{G_{F_u}}$ and $\barr_{l,\imath}(\pi)|_{G_{F_u}}$ have ordinary and 
crystalline lifts $1 \oplus \epsilon_l^{-m} \oplus \dots \oplus
\epsilon_l^{(1-n)m}$ with $\tau$-Hodge--Tate numbers $H_{\tau|_F}$ for
each $\tau:F_u \into \barQQ_l$. (It is here that we use the assumption
that $F_u$ contains a
  primitive $l^{th}$ root of unity, and
$\barr|_{G_{F_u}}$ and $\barr_{l,\imath}(\pi)|_{G_{F_u}}$ are
trivial.) Applying Proposition \ref{ordlift} we see that there is a continuous homomorphism
\[ r_1: G_{F^+} \lra \CG_n(\barQQ_l) \]
lifting $\tilde{\barr}_{\barmu}$ and such that
\begin{itemize}
\item $\nu \circ r_1= \epsilon_l^{(1-n)m} \omega_l^{(n-1)m} \tmu$ where $\tmu$ denotes the Teichmuller lift of $\barmu$;
\item if $u|l$ then $\breve{r}_1|_{G_{F_u}}$ is ordinary and crystalline with Hodge--Tate numbers $H_{\tau|_F}$ for each $\tau:F_u \into \barQQ_l$;
\item $r_1$ is unramified outside $S$;
\item and if $v \in S$ and $v \ndiv l$ then $r|_{G_{F_\tv}} \sim \breve{r}_1|_{G_{F_\tv}}$.
\end{itemize}

First we treat the case that $(\barr,\barmu)$ is ordinarily automorphic. In this case Theorem \ref{ger} tells us that $(\breve{r}_1,\epsilon_l^{(1-n)m}\omega_l^{(n-1)m} \tmu)$ is 
automorphic of level prime to $l$. Then Proposition \ref{propmlt} tells us that $(r,\mu)$ is potentially diagonalizably automorphic, and we have completed the proof of the theorem in this case.

Secondly we treat the case that $(\barr,\barmu)$ is potentially diagonalizably automorphic, say $(\barr,\barmu)=(r_{l,\imath}(\pi),r_{l,\imath}(\chi))$. In doing so we are free to make use of the first case, which we have already proved. Again applying Proposition \ref{ordlift} we find 
a continuous homomorphism
\[ r_2: G_{F^+} \lra \CG_n(\barQQ_l) \]
lifting $\tilde{\barr}_{\barmu}$ and such that
\begin{itemize}
\item $\nu \circ r_2= \epsilon_l^{(1-n)m} \omega_l^{(n-1)(m-1)} \tchi$ where $\tchi$ denotes the Teichmuller lift of $\barr_{l,\imath}(\chi)$;
\item if $u|l$ then $\breve{r}_2|_{G_{F_u}}$ is ordinary and crystalline with Hodge--Tate numbers $H_{\tau|_F}$ for each $\tau:F_u \into \barQQ_l$;
\item $r_2$ is unramified outside $S$;
\item and if $v \in S$ and $v \ndiv l$ then $r_{l,\imath}(\pi)|_{G_{F_\tv}} \sim \breve{r}_2|_{G_{F_\tv}}$.
\end{itemize}
By Proposition \ref{propmlt}, $(r_2,\epsilon_l^{(1-n)m} \omega_l^{(n-1)(m-1)} \tchi)$ is automorphic of level
potentially prime to $l$, say  $r_2=r_{l,\imath}(\pi_2)$. As $r_2$ is ordinary and $\pi_2$ has level potentially prime to 
$l$ we can conclude that $\pi_2$ is $\imath$-ordinary. The second case of our theorem now follows from the first case.
\end{proof}

\subsection{Lifting Galois representations II}\label{lgr2} {$\mbox{}$} \newline

We now use the same idea that we used to prove Theorem \ref{mainmlt} to prove a strengthening of Proposition \ref{ordlift}.

Let $n$ be a positive integer and $l$ an odd  prime. Suppose that $F$
is an imaginary CM field not containing $\zeta_l$ and with maximal totally real subfield $F^+$.
Let $S$ be a finite set of finite places of $F^+$ which split in $F$ and suppose that $S$ includes all places above $l$. For each $v \in S$ choose a prime $\tv$ of $F$ above $v$. 

Let $\mu:G_{F^+} \ra \barQQ_l^\times$ be an algebraic character unramified outside $S$ such that $\mu(c_v)=-1$ for all $v|\infty$.

Also let
\[ \barr: G_{F^+} \lra \CG_n(\barFF_l) \]
be a continuous representation unramified outside $S$ with $\nu \circ \barr = \barmu$ and $\barr^{-1}\CG^0_n(\barFF_l)=G_F$. For $v \in S$, let $\rho_v:G_{F_\tv} \ra \GL_n(\CO_{\barQQ_l})$ denote a lift of $\breve{\barr}|_{G_{F_\tv}}$.

\begin{thm}\label{diaglift} Keep the notation and assumptions already stated in this section. Also make the following assumptions:
\begin{itemize}
\item Suppose that $\breve{\barr}|_{G_{F(\zeta_l)}}$ is irreducible. Also, writing $d$ for the maximal dimension of an irreducible subrepresentation of the restriction of $\barr$ to the closed subgroup of $G_F$ generated by all Sylow pro-$l$-subgroups,  suppose that $l \geq 2(d+1)$.

\item If $v|l$ we suppose that $\rho_v$ is potentially diagonalizable and that, for all $\tau:F_\tv\into\Qlbar$, the multiset $\HT_\tau(\rho_v)$ consists of $n$ distinct integers.
\end{itemize}

Then there
is a lift
\[ r:G_{F^+} \lra \CG_n(\CO_{\barQQ_l}) \]
of $\barr$ such that
\begin{enumerate}
\item $\nu \circ r = \mu$;
\item if $v \in S$ then $\breve{r}|_{G_{F_\tv}} \sim \rho_v$;
\item $r$ is unramified outside $S$.
\end{enumerate} \end{thm}

\begin{proof}
  We may suppose that for $v \in S$ with $v \ndiv l$ the
  representation $\rho_v$ is robustly smooth (see Lemma \ref{gen}) and
  hence lies on a unique component $\CC_v$ of
  $R^\Box_{\breve{\barr}|_{G_{F_\tv}}} \otimes \barQQ_l$.  If $v|l$ is
  a place of $F^+$ then choose a finite extension $K_v/F_\tv$ over
  which $\rho_v$ becomes crystalline, and let $\CC_v$ denote the
  unique component of $R^\Box_{\breve{\barr}|_{G_{F_\tv}}, \{ \HT_\tau(\rho_v)
    \}, K_v-\cris} \otimes \barQQ_l$ on which $\rho_v$ lies.  
Let $\tmu$ denote the Teichmuller lift of $\barmu$. Choose a
  positive integer $m$ which is greater than one plus the difference
  of any two Hodge--Tate numbers of $\rho_v$ for every $v|l$.
 
Choose (using Lemma \ref{l412}) a finite, soluble, Galois, CM extension $F_1/F$ which is linearly disjoint from $\barF^{\ker \barr}(\zeta_l)$ over $F$ such that
\begin{itemize}
\item for all $u$ lying above $S$ we have $\barr(G_{F_{1,u}})=\{ 1\}$;
\item for all $u|l$ we have $\zeta_l \in F_{1,u}$;
\item $\mu|_{G_{F_1^+}}$ is crystalline above $l$;
\item if $u|\tv|l$ with $v \in S$ then $\rho_v|_{G_{F_{1,u}}}$ is crystalline and $\rho_v|_{G_{F_{1,u}}} \sim \psi_{1}^{(u)} \oplus \dots \oplus \psi_{n}^{(u)}$ with each $\psi_{i}^{(u)}$ a crystalline character.
\end{itemize}
If $u|\tv|l$ with $v \in S$, then for $i=1,\ldots,n$, we define
$\psi_{i}^{(cu)} : G_{F_{1,cu}}\ra \Qlbar^\times$ by $(\psi_{i}^{(cu)})^c\psi_{i}^{(u)}=\mu|_{G_{F_1,u}}$.

By Proposition \ref{paord} there is a finite, Galois, CM extension $F_2/F_1$ linearly disjoint from  $F_1\barF^{\ker \barr}(\zeta_l)$ over $F_1$ and a regular algebraic, cuspidal, polarized automorphic representation $(\pi_2, \chi_2)$ of $\GL_n(\A_{F_2})$ such that
\begin{itemize}
\item $\barr_{l,\imath}(\pi_2) \cong \breve{\barr}|_{G_{F_2}}$;
\item $r_{l,\imath}(\chi_2)=\tmu|_{G_{F_2^+}} \omega_l^{(n-1)m}\epsilon_l^{(1-n)(m-1)}$;
\item $\pi_2$ is $\imath$-ordinary and unramified above $l$;
\item if $\tau:F_2 \into \barQQ_l$, then $\HT_\tau(r_{l,\imath}(\pi_2))=\{0,m,2m,\dots,(n-1)m\}$;
\item $\pi_2$ is unramified outside $S$;
\item and if $v \ndiv l$ is in $S$ and if $u$ is  a prime of $F_2$ above  $\tv$ then $r_{l,\imath}(\pi_2)|_{G_{F_{2,u}}} \sim \rho_v|_{G_{F_{2,u}}}$.
\end{itemize}
In particular if $u|l$ is a place of $F_2$ then
\[ r_{l,\imath}(\pi_2)|_{G_{F_{2,u}}} \sim 1 \oplus \epsilon_l^{-m} \oplus \dots \oplus \epsilon_l^{(1-n)m}. \]

Choose (using Corollary \ref{cycCM}) a CM extension $M/F_2$ such that
\begin{itemize}
\item $M/F_2$ is cyclic of  degree $n$;
\item $M$ is linearly disjoint from $\barF^{\ker \barr}(\zeta_l)$ over $F$;
\item and all primes of $F_2$ above $l$ split completely in $M$.
\end{itemize}
Choose a prime $u_q$ of $F_2$ above a rational prime $q$ such that
\begin{itemize}
\item $q \neq l$ and $q$ splits completely in $M$;
\item $\barr$ is unramified above $q$.
\end{itemize}
If $v|ql$ is a prime of $F_2$ we label the primes of $M$ above $v$ as $v_{M,1},\dots,v_{M,n}$ so that $(cv)_{M,i}=c(v_{M,i})$. Choose continuous characters
\[ \theta,\theta':G_M \lra \barQQ_l^\times \]
such that
\begin{itemize}
\item the reductions $\bartheta$ and $\bartheta'$ are equal;
\item $\theta\theta^c=r_{l,\imath}(\chi_2)\epsilon_l^{1-n}$ and $\theta'(\theta')^c=\mu$;
\item $\theta$ and $\theta'$ are de Rham;
\item if $\tau:M \into \barQQ_l$ lies above a place $v_{M,i}|l$ of $M$ then $\HT_\tau(\theta)=\{(i-1)m\}$ and $\HT_{\tau}(\theta')=\HT_{\tau|_{F_1}}(\psi_{i}^{(v_{M,i}|_{F_1})})$;
\item $\theta$ and $\theta'$ are unramified at $u_{q,M,i}$ for $i>1$, but $q$ divides $\#\theta(I_{M_{u_{q,M,1}}})$ and
$\#\theta'(I_{M_{u_{q,M,1}}})$.
\end{itemize}
(First choose (say) $\theta$ using the first part of Lemma \ref{l416}, then choose $\theta'$ using the second part of Lemma \ref{l416}.)

Note the following:
\begin{itemize}
\item If $u|l$ is a place of $F_2$ and if $K/F_{2,u}$ is a finite extension over which $\theta$ and $\theta'$ become crystalline and $\bartheta=\bartheta'$ become trivial, then 
\[ (\Ind_{G_M}^{G_{F_2}} \theta)|_{G_{K}} \sim 1 \oplus \epsilon_l^{-m} \oplus \dots\oplus \epsilon_l^{(1-n)m} \]
and
\[ (\Ind_{G_M}^{G_{F_2}} \theta')|_{G_{K}} \sim \psi_{1}^{(u|_{F_1})}|_{G_K} \oplus \dots \oplus \psi_{n}^{(u|_{F_1})}|_{G_K}. \]
[To see this, note that both
sides are residually trivial by the choice of $K$, and both
sides are sums of crystalline characters with the same Hodge--Tate
numbers. The result then follows from points (5) and (6) of Section \ref{l=p}.]

\item $(\Ind_{G_M}^{G_{F_2}} \theta)^c \cong (\Ind_{G_M}^{G_{F_2}} \theta)^\vee \otimes r_{l,\imath}(\chi_2) \epsilon_l^{1-n}$ and $(\Ind_{G_M}^{G_{F_2}} \theta')^c \cong (\Ind_{G_M}^{G_{F_2}} \theta')^\vee \otimes \mu$.

\item The representation 
\[ \breve{\barr}|_{G_{F_2(\zeta_l)}} \otimes (\Ind_{G_M}^{G_{F_2}} \bartheta )|_{G_{F_2(\zeta_l)}} \cong \barr_{l,\imath}(\pi_2)|_{G_{F_2(\zeta_l)}} \otimes (\Ind_{G_M}^{G_{F_2}} \bartheta' )|_{G_{F_2(\zeta_l)}} \]
is irreducible, and hence by Proposition \ref{ghtt}
\[  (\breve{\barr}|_{G_{F_2}} \otimes (\Ind_{G_M}^{G_{F_2}} \bartheta ))(G_{F_2(\zeta_l)}) \]
is adequate.

 [As $\breve{\barr}|_{G_{M(\zeta_l)}}$ is irreducible, we see that the
  restriction to $G_{M(\zeta_l)}$ of any
 constituent of $(\breve{\barr}|_{G_{F_2}} \otimes
 \Ind_{G_M}^{G_{F_2}} \bartheta)|_{G_{F_2(\zeta_l)}}$ is a sum of
 $\breve{\barr}|_{G_{M(\zeta_l)}} \bartheta^\tau|_{G_{M(\zeta_l)}}$ as
 $\tau$ runs over some subset of $\Gal(M/F_2)$. Looking at ramification above $u_q$ we see that the
$\breve{\barr}|_{G_{M(\zeta_l)}} \bartheta^\tau|_{G_{M(\zeta_l)}}$ are permuted transitively by $\Gal(M/F_2)$ and hence $(\breve{\barr}|_{G_{F_2}} \otimes \Ind_{G_M}^{G_{F_2}} \bartheta) |_{G_{F_2(\zeta_l)}}$ is irreducible.]

\end{itemize}

Let $F_3/F_2$ be a finite, soluble, Galois, CM extension linearly disjoint from $\barF_2^{\ker \Ind_{G_M}^{G_{F_2}} \bartheta} \barF^{\ker \barr}(\zeta_l)$ over $F_2$ such that
\begin{itemize}
\item $\theta|_{G_{F_3M}}$ and $\theta'|_{G_{F_3M}}$ are crystalline above $l$ and unramified away from $l$;
\item $MF_3/F_3$ is unramified everywhere.\end{itemize}
(Use Lemma \ref{l412}.)

Then there is a regular algebraic, cuspidal, polarized automorphic representation $(\pi_3,\chi_3)$ of $\GL_{n^2}(\A_{F_3})$ such
that
\begin{itemize}
\item $r_{l,\imath}(\pi_3) \cong (r_{l,\imath}(\pi_2) \otimes \Ind_{G_{M}}^{G_{F_2}} \theta')|_{G_{F_3}}$;
\item $r_{l,\imath}(\chi_3)=\mu r_{l,\imath}(\chi_2) \epsilon_l^{n(n-1)}\delta_{F_3/F_3^+}$;
\item $\pi_3$ is unramified above $l$ and outside $S$.
\end{itemize}
[The representation $\pi_3$ is the automorphic induction of
$(\pi_2)_{MF_3}\otimes (\phi' |\,\,|^{n(n-1)/2}\circ \det)$
to $F_3$, where $r_{l,\imath}(\phi')= \theta'|_{G_{F_3M}}$. The first
two properties are clear.
The third property follows by the choice of $F_3$ and the fact that
$\pi_2$ is unramified above $l$ and outside $S$.]

Let $\tS$ denote the set of $\tv$ as $v$ runs over $S$, let $S_3$
denote the primes of $F_3^+$ above $S$ and $\tS_3$ the primes of $F_3$
above $\tS$. If $v\in S_3$, let $\tv$ denote the element of $\tS_3$
lying above it. For $v \in S_3$ with $v \ndiv l$ (resp.\ $v|l$) let $\CC_{3,v}$ denote the unique component of $R^\Box_{\barr_{l,\imath}(\pi_3)|_{G_{F_{3,\tv}}}} \otimes \barQQ_l$ (resp.\ $R^\Box_{\barr_{l,\imath}(\pi_3)|_{G_{F_{3,\tv}}}, \{\HT_\tau(r_{l,\imath}(\pi_3)|_{G_{F_{3,\tv}}})\}, \cris} \otimes \barQQ_l$) containing $r_{l,\imath}(\pi_3)|_{G_{F_{3,\tv}}}$. Choose a finite extension $L/\Q_l$ in $\barQQ_l$ such that
\begin{itemize}
\item $L$ contains the image of each embedding $F_3 \into \barQQ_l$;
\item $L$ contains the image of $\mu$ and of $\theta$;
\item $r_{l,\imath}(\pi_3)$ is defined over $L$;
\item each of the components $\CC_v$ for $v \in S$ and $\CC_{3,v}$ for $v \in S_3$ is defined over $L$.
\end{itemize}
Set
\[ s = \Ind_{G_{M^+},G_{M}}^{G_{F_2^+},G_{F_2}, r_{l,\imath}(\chi_2)\epsilon_l^{1-n}} (\theta, r_{l,\imath}(\chi_2) \epsilon_l^{1-n}): G_{F_2^+} \lra \CG_n(\CO_L) \]
in the notation of section 1.1 of this paper and section 2.1 of \cite{cht}. Thus $\nu \circ s = r_{l,\imath}(\chi_2)\epsilon_l^{1-n}$.
For $v \in S$ (resp.\ $v \in S_3$) let $\calD_v$ (resp.\ $\calD_{3,v}$) denote the deformation problem for $\barr|_{G_{F_\tv}}$ (resp.\ $\barr_{l,\imath}(\pi_3)|_{G_{F_{3,\tv}}}$) over $\CO_L$ 
corresponding to $\CC_v$ (resp.\ $\CC_{3,v}$). Also let 
\[ \CS=(F/F^+,S,\tS, \CO_L,\barr, \mu, \{ \calD_v\}) \]
and
\[ \CS_3=(F_3/F_3^+,S_3,\tS_3, \CO_L, \barr_{l,\imath}(\pi_3), \mu r_{l,\imath}(\chi_2) \epsilon_l^{1-n} \delta_{F_3^+/F_3}, \{ \calD_{3,v}\}). \]

We next check that if $u \in S_3$ 
then $\breve{r}_\CS^\univ|_{G_{F_{3,\tu}}} \otimes
(\Ind_{G_M}^{G_{F_2}} \theta)|_{G_{F_{3,\tu}}} \in \calD_{3,u}$. To this end, let
$v=u|_{F^+}$ and let $\rho^\Box_{v,\CC_v}$ denote the universal lift
of $\barr|_{G_{F_\tv}}$ to $R_{\CO_L,\barr|_{G_{F_{\tv}}},\CC_v}^\Box$.
It suffices to show that $\rho^\Box_{v,\CC_v}|_{G_{F_{3,\tu}}} \otimes
(\Ind_{G_M}^{G_{F_2}} \theta)|_{G_{F_{3,\tu}}} \in \calD_{3,u}$. For
this, it suffices to show if $\rho : G_{F_\tv} \ra
\GL_n(\CO_{\Qlbar})$ is a lift of $\barr|_{G_{F_\tv}}$ lying on
$\CC_v$, then $\rho|_{G_{F_{3,\tu}}} \otimes (\Ind_{G_M}^{G_{F_2}}
\theta)|_{G_{F_{3,\tu}}}$ lies on $\CC_{3,u}$. 
If $u|l$, then we have $\rho|_{G_{F_{3,\tu}}} \sim
(\Ind_{G_M}^{G_{F_2}}\theta')|_{G_{F_{3,\tu}}}$ and $ (\Ind_{G_M}^{G_{F_2}}
\theta)|_{G_{F_{3,\tu}}} \sim r_{l,\imath}(\pi_2)|_{G_{F_{3,\tu}}}$   and hence
\[  \rho|_{G_{F_{3,\tu}}} \otimes (\Ind_{G_M}^{G_{F_2}}
\theta)|_{G_{F_{3,\tu}}} \sim (r_{l,\imath}(\pi_2)\otimes \Ind_{G_M}^{G_{F_2}}\theta')|_{G_{F_{3,\tu}}}\cong
r_{l,\imath}(\pi_3)|_{G_{F_{3,\tu}}}.\]
If $u\ndiv l$, note that $\rho|_{G_{F_{3,\tu}}} \sim
r_{l,\imath}(\pi_2)|_{G_{F_{3,\tu}}}$ (since $\rho_v$ is
robustly smooth, we have
$\rho_v|_{G_{F_{3,\tu}}} \leadsto \rho|_{G_{F_{3,\tu}}}$ and
$\rho_v|_{G_{F_{3,\tu}}} \leadsto r_{l,\imath}(\pi_2)|_{G_{F_{3,\tu}}}$).
By the choice of $F_3$ we have
$(\Ind_{G_M}^{G_{F_2}}\theta)|_{G_{F_{3,\tu}}}\sim
(\Ind_{G_M}^{G_{F_2}}\theta')|_{G_{F_{3,\tu}}}$. Hence
\[  \rho|_{G_{F_{3,\tu}}} \otimes (\Ind_{G_M}^{G_{F_2}}
\theta)|_{G_{F_{3,\tu}}} \sim (r_{l,\imath}(\pi_2)\otimes \Ind_{G_M}^{G_{F_2}}\theta')|_{G_{F_{3,\tu}}}\cong
r_{l,\imath}(\pi_3)|_{G_{F_{3,\tu}}}\]
and we are done.

We deduce that there is a natural map
\[ R_{\CS_3}^\univ \lra R_\CS^\univ \]
induced by $r_\CS^\univ|_{G_{F_3^+}} \otimes s|_{G_{F_3^+}}$. It follows from
Lemma \ref{cdr} that this map makes $R_\CS^\univ$ a finite $R_{\CS_3}^\univ$-module. By Theorem \ref{fdmin} $R_{\CS_3}^\univ$ is a finite $\CO_L$-module, and hence $R_\CS^\univ$ is a finite $\CO_L$-module. On the other hand by Proposition
\ref{drb} $R_\CS^\univ$ has Krull dimension at least $1$. Hence $\Spec R_\CS^\univ$ has a $\barQQ_l$-point. This point gives rise to the desired lifting of $\barr$.
\end{proof}

\subsection{Change of weight and level.} \label{sec:cwl} {$\mbox{}$} \newline

In this section we combine Theorems \ref{mainmlt} and \ref{diaglift} to obtain a general ``change of weight and level'' result in the potentially diagonalizable case.

\begin{thm}\label{cwl}
  Let $F$ be an imaginary CM field with maximal totally real subfield
  $F^+$. Let $n\in\Z_{\ge 1}$ be an integer, and let $l>2(n+1)$ be an odd
  prime, such that $\zeta_l \not\in F$ and all primes of $F^+$ above
  $l$ split in $F$. Let $S$ be a finite set of finite places of $F^+$,
  including all places above $l$, such that each place in $S$ splits
  in $F$. For each place $v\in S$ choose a place $\tv$ of $F$ lying
  over $v$.
  Let $\mu$ be an algebraic character of $G_{F^+}$ and let $\barr:G_F \ra \GL_n(\barFF_l)$ be a continuous representation such that 
  \begin{itemize}
  \item $(\barr,\barmu)$ is a polarized mod $l$ representation unramified outside $S$, which either we suppose is ordinarily automorphic or we suppose is potentially diagonalizably automorphic;
  \item $\barr|_{G_{F(\zeta_l)}}$ is irreducible.
  \end{itemize}

For $v\in S$ let $\rho_v:G_{F_\tv}\to \GL_n(\CO_{\Qlbar})$ be a lift of
$\rbar_{l,\imath}(\pi)|_{G_{F_\tv}}$. If $v|l$, assume further that $\rho_v$ is
potentially diagonalizable, and that for all $\tau:F_\tv\into\Qlbar$,
$\HT_\tau(\rho_v)$ consists of $n$ distinct integers.

 Then there is a regular algebraic, cuspidal, polarized automorphic representation $(\pi,\chi)$ of $\GL_n(\A_F)$ such that
\begin{enumerate}
\item $\barr_{l,\imath}(\pi)\cong \barr$;
\item $r_{l,\imath}(\chi)\epsilon_l^{1-n}=\mu$;
\item $\pi$ has level potentially prime to $l$;
\item $\pi$ is unramified outside $S$;
\item for $v \in S$ we have $\rho_v \sim r_{l,\imath}(\pi)|_{G_{F_\tv}}$.
\end{enumerate}
\end{thm}

\begin{proof} By Theorem \ref{diaglift} there is a  continuous homomorphism
 \[r:G_{F^+}\to\Gn(\CO_{\Qlbar})\] 
 such that 
\begin{itemize}
\item $\breve{\barr} \cong \barr$;
\item $r$ is unramified outside $S$;
\item $\nu\circ r=\mu$;
\item if $v|l$ then $\breve{r}|_{G_{F_\tv}}$ is potentially diagonalizable;
\item if $v\in S$ then $\breve{r}|_{G_{F_\tv}}\sim\rho_v$.
\end{itemize}
By Theorem \ref{mainmlt} $(\breve{r},\mu)$ is automorphic of level potentially prime
to $l$ and our present theorem follows (using local-global
compatibility to establish point (4)).
\end{proof}

\subsection{Potential automorphy II}\label{pa1} {$\mbox{}$} \newline

We can now turn to our main potential automorphy theorem for (finite
collections of) single
$l$-adic representations. We will treat the case of compatible systems
in the next section.

\begin{thm}\label{mtpm} Suppose that we are in the following situation. 
\begin{enumerate}
\renewcommand{\labelenumi}{(\alph{enumi})}
\item Let $F/F_0$ be a finite, Galois extension of imaginary CM fields; and let $F^+$ and $F_0^+$ denote their maximal totally real subfields.
\item Let $\CI$ be a finite set.
\item For each $i \in \CI$ let $n_i$ and $d_i$ be positive integers and $l_i$ be an odd rational prime such that $l_i \geq 2(d_i+1)$ and $\zeta_{l_i} \not\in F$.
Also choose $\imath_i:\barQQ_{l_i} \iso \C$ for each $i \in \CI$.
\item For each $i\in \CI$ let $(r_i,\mu_i)$ be a {\bf totally odd, regular algebraic,} $n_i$-dimensional, {\bf polarized} $l_i$-adic representation of $G_F$ such that
$d_i$ is the maximum dimension of an irreducible constituent of the restriction of $\barr_i$ to the closed subgroup of $G_F$ generated by all Sylow pro-$l_i$-subgroups.
\item Let $\Favoid/F$ be a finite Galois extension.
\end{enumerate}

Suppose moreover that the following conditions are satisfied for every $i \in \CI$.
\begin{enumerate}
\item {\bf (Potential diagonalizability at primes above $l_i$)}
  $r_i$ is potentially diagonalizable (and hence potentially crystalline) at each prime $v$ of $F^+$ above $l_i$.
  \item {\bf (Irreducibility)} $\bar{r}_i|_{G_{F(\zeta_{l_i})}}$ is irreducible.
  \end{enumerate}

Then we can find a finite CM extension $F'/F$ and for each $i \in \CI$ a regular algebraic, cuspidal, polarized automorphic representation $(\pi_i,\chi_i)$ of $\GL_{n_i}(\A_{F'})$ such that
\begin{enumerate}
\renewcommand{\labelenumi}{(\roman{enumi})}
\item $F'/F_0$ is Galois,
\item $F'$ is linearly disjoint from $\Favoid$ over $F$,
\item $\pi_i$ is unramified above $l_i$, and
\item $(r_{l_i,\imath_i}(\pi_i),r_{l_i,\imath_i}(\chi_i)\epsilon_{l_i}^{1-n_i}) \cong (r_i|_{G_{F'}}, \mu_i|_{G_{(F')^+}})$.
\end{enumerate}\end{thm}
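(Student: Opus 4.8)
The plan is to deduce the theorem by first establishing potential ordinary automorphy for the residual representations $\barr_i$, with prescribed local behaviour, via Proposition \ref{paord}, and then lifting from $\barr_i$ to $r_i$ itself by the potentially-diagonalizable automorphy lifting theorem \ref{mainmlt}. Harris' tensor-product trick is built into both of these inputs, so the bulk of the argument is the bookkeeping needed to arrange their hypotheses over a single auxiliary field.

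First I would perform a reduction. Replace $\Favoid$ by its compositum with all the fields $\barF^{\ker\barr_i}(\zeta_{l_i})$; then any $F'$ linearly disjoint from $\Favoid$ over $F$ automatically has $\zeta_{l_i}\notin F'$ and $\barr_i(G_{F'(\zeta_{l_i})})=\barr_i(G_{F(\zeta_{l_i})})$, so $\barr_i|_{G_{F'(\zeta_{l_i})}}$ stays irreducible and the dimension invariant ``$d_i$'' does not grow on passing to $F'$. Since $r_i^c\cong r_i^\vee\mu_i$ exhibits $\mu_i^{-1}|_{G_F}$ as a $G_F$-subrepresentation of $r_i\otimes r_i^c$, which is potentially crystalline at every prime above $l_i$, the character $\mu_i$ is de Rham; after enlarging a $\Gal(F/F^+)$-invariant finite set $S$ of primes of $F$ containing all primes above each $l_i$, I may also assume $\mu_i$ and every $r_i$ are unramified outside $S$, and then $(\barr_i,\barmu_i)$ is totally odd essentially conjugate self-dual. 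Finally, using that $r_i$ is potentially diagonalizable, hence potentially crystalline, above $l_i$, I would make one further finite base change, inside a CM field which is Galois over $F_0$ and linearly disjoint from $\Favoid$ over $F$, after which every $r_i$ is crystalline at all primes above $l_i$; I rename this field $F$.

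Next I would apply Proposition \ref{paord} with this data, taking $\rho_{i,v}=r_i|_{G_{F_v}}$ for $v\in S$ with $v\nmid l_i$ (the needed relation $\rho_{i,cv}^c\cong\mu_i|_{G_{F_v}}\rho_{i,v}^\vee$ being a consequence of $r_i^c\cong r_i^\vee\mu_i$) and $H_{i,\tau}$ any set of $n_i$ distinct integers with $H_{i,\tau\circ c}=\{w_i-h:h\in H_{i,\tau}\}$, where $\HT_\tau(\mu_i)=\{w_i\}$. This yields a finite CM extension $F'/F$, Galois over $F_0$ and linearly disjoint from $\Favoid$ over $F$, together with RAECSDC automorphic representations $(\pi_i^{(0)},\chi_i^{(0)})$ of $GL_{n_i}(\A_{F'})$ that are $\imath_i$-ordinary, unramified above $l_i$ and outside $S$, with $\barr_{l_i,\imath_i}(\pi_i^{(0)})\cong\barr_i|_{G_{F'}}$, with $r_{l_i,\imath_i}(\chi_i^{(0)})\epsilon_{l_i}^{1-n_i}=\mu_i|_{G_{(F')^+}}$, and with $r_{l_i,\imath_i}(\pi_i^{(0)})|_{G_{F'_u}}\sim r_i|_{G_{F'_u}}$ for all $u\nmid l_i$.

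Finally I would apply Theorem \ref{mainmlt} over $F'$ to each pair $(r_i|_{G_{F'}},\mu_i|_{G_{(F')^+}})$: the representation $r_i|_{G_{F'}}$ is irreducible, satisfies $(r_i|_{G_{F'}})^c\cong(r_i|_{G_{F'}})^\vee\mu_i|_{G_{(F')^+}}$, ramifies at finitely many primes, and is potentially diagonalizable with $n_i$ distinct Hodge-Tate numbers above $l_i$; moreover $\barr_i|_{G_{F'(\zeta_{l_i})}}$ is irreducible, $\zeta_{l_i}\notin F'$, $l_i\geq 2(d_i+1)$ still bounds the relevant invariant over $F'$, and hypothesis \ref{auto} holds in its $\imath_i$-ordinary alternative with witness $(\pi_i^{(0)},\chi_i^{(0)})$. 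Hence $(r_i|_{G_{F'}},\mu_i|_{G_{(F')^+}})$ is automorphic, say $\cong(r_{l_i,\imath_i}(\pi_i),r_{l_i,\imath_i}(\chi_i)\epsilon_{l_i}^{1-n_i})$ for a RAECSDC $(\pi_i,\chi_i)$ of $GL_{n_i}(\A_{F'})$; and since $r_i$ is now crystalline above $l_i$ while the auxiliary $\pi_i^{(0)}$ is unramified there, the lifting argument delivers $\pi_i$ unramified above $l_i$ as well (equivalently, this is local-global compatibility at $l_i$: the inertial type of $\pi_{i,v}$ equals that of $r_{l_i,\imath_i}(\pi_i)|_{G_{F'_v}}=r_i|_{G_{F'_v}}$, which is trivial). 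The conceptual work is done entirely by Proposition \ref{paord} and Theorem \ref{mainmlt}; the main obstacle is the simultaneous bookkeeping of the auxiliary CM extensions, keeping linear disjointness from $\Favoid$, the CM and Galois-over-$F_0$ conditions, residual irreducibility over the cyclotomic fields, and crystallinity at the $l_i$ all true at once, together with propagating the ``unramified above $l_i$'' refinement through the lifting step.
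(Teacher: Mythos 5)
Your proposal follows the same two-step route as the paper: apply Proposition \ref{paord} to get residual ordinary automorphy over a suitable $F'$, then apply Theorem \ref{mainmlt} in its $\imath_i$-ordinary alternative to lift to automorphy of $r_i|_{G_{F'}}$. The paper's proof is exactly this, stated more tersely, with the linear disjointness and Galois/CM bookkeeping absorbed into the disjointness clause in the invocation of \ref{paord}. Your two supplementary reductions are sound and fill in details the paper leaves implicit: the observation that $\mu_i$ is automatically de Rham (needed since \ref{paord} hypothesis (d) requires it, but the statement of \ref{mtpm} does not list it; your argument, modulo the harmless sign slip $\mu_i^{-1}$ versus $\mu_i$, is correct), and the observation that $d_i$ and the irreducibility of $\barr_i|_{G_{F(\zeta_{l_i})}}$ are stable under base changes linearly disjoint from the enlarged $\Favoid$.

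One small caveat on your closing step. Theorem \ref{mainmlt} delivers only ``automorphic of level \emph{potentially} prime to $l_i$,'' and you justify the sharper conclusion (iii) by invoking local--global compatibility at $v\mid l_i$ (``the inertial type of $\pi_{i,v}$ equals that of $r_i|_{G_{F'_v}}$, which is trivial''). This paper only records Caraiani's local--global compatibility for $v\nmid l$; the crystalline-implies-unramified statement at $v\mid l$ is not among its inputs and should not be assumed here. The cleaner route, which is what the paper implicitly does and which avoids any appeal to $l=p$ local--global compatibility, is to make one final soluble CM base change $F''/F'$ (chosen Galois over $F_0$, CM, and linearly disjoint from $\Favoid$ over $F$ -- which, as you note, follows by the transitivity-of-linear-disjointness argument) over which $\pi_i$ becomes unramified above $l_i$; since $\pi_i$ already has level potentially prime to $l_i$ this is possible, and conclusions (i), (ii), (iv) are preserved under such a base change. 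With that substitution your proof is complete and matches the paper's.
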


We remark that by Lemma \ref{fllift} the hypothesis of potential diagonalizability will hold if $l_i$ is unramified in $F^+$, and $r_i$
is crystalline at all primes $v|l_i$, and $\HT_\tau(r_i)$ is contained in an interval of the form $[a_\tau,a_\tau+l-2]$ for all $\tau$
(the ``Fontaine--Laffaille'' case).

\begin{proof} By Proposition \ref{paord} there
is a finite CM extension $F'/F$ and regular algebraic, cuspidal, polarized automorphic representations $(\pi_i',\chi_i')$ of $\GL_{n_i}(\A_{F'})$
such that 
\begin{itemize}
\item $F'/F_0$ is Galois;
\item $F'$ is linearly disjoint from $\barF^{\bigcap_i \ker \barr_i} \Favoid (\zeta_{\prod_i l_i})$ over $F$;
\item $\barr_{l_i,\imath_i}(\pi_i')\cong \barr_i|_{G_{F'}}$;
\item $r_{l_i,\imath_i}(\chi_i')\epsilon_{l_i}^{1-n_i} = \mu_i|_{G_{(F')^+}}$;
\item $\pi_i'$ is unramified above $l$;
\item $\pi_i'$ is $\imath_i$-ordinary.
\end{itemize}
Then the current theorem follows from Theorem \ref{mainmlt}. 
\end{proof}

We can immediately deduce a version over totally real
fields. For instance we have the following.
\begin{cor} \label{mtpmtotreal}Suppose $F^+$ is a totally real field and $n\in \Z_{\geq 1}$.  Suppose that $l\geq 2(n+1)$ is a rational prime.

Suppose also that $(r,\mu)$ is a {\bf totally odd, regular algebraic}, $n$-dimensional, {\bf polarized} $l$-adic representation of $G_{F+}$.
Let $\barr$ denote the semi-simplification of the reduction of $r$ and
suppose that the following conditions hold:
\begin{enumerate}
\item {\bf (Potential diagonalizability and regularity at primes above $l$)}
  $r$ is potentially diagonalizable (and hence potentially crystalline) at each prime $v$ of $F^+$ above $l$.
  \item {\bf (Irreducibility)}  $\bar{r}|_{G_{F^+(\zeta_l)}}$ is irreducible.
\end{enumerate}
Then there is a Galois totally real extension $F^{+,\prime}/F^+$ 
such 
that $(r|_{G_{F^{+,\prime}}},\mu|_{G_{F^{+,\prime}}})$ is automorphic of level
prime to $l$.
\end{cor}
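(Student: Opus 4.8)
The plan is to reduce Corollary \ref{mtpmtotreal} to Theorem \ref{mtpm} by passing to a suitable imaginary CM extension. First I would choose an imaginary CM quadratic extension $F/F^+$ which is linearly disjoint from $\barF^{+,\ker \barr}(\zeta_l)$ over $F^+$ and in which every prime of $F^+$ above $l$ splits; this is possible by choosing $F$ appropriately ramified at one auxiliary prime and split above $l$. Then the hypotheses on $r$ transfer to $r|_{G_F}$: it remains unramified almost everywhere; since $\barr|_{G_{F^+(\zeta_l)}}$ is irreducible and $F$ is linearly disjoint from $F^+(\zeta_l)$, the restriction $\barr|_{G_{F(\zeta_l)}}$ is still irreducible; potential diagonalizability and Hodge--Tate regularity at primes above $l$ are preserved under restriction to $G_{F_v}$ (as noted in section \ref{l=p}); and the essential conjugate self-duality of $(r,\mu)$ over $F^+$ gives in particular that $r^c \cong r^\vee \mu|_{G_F}$ over $F$, with $\mu$ still totally odd. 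One also checks that the bound $l \geq 2(d+1)$ holds with $d$ the relevant maximal irreducible constituent dimension for the Sylow pro-$l$ subgroups of $G_F$; since these are contained in those of $G_{F^+}$ (note $l \nmid n$, coming from adequacy, guarantees the Sylow pro-$l$ subgroup of $G_{F^+(\zeta_l)}$ lies in $G_{F(\zeta_l)}$), we have $d \leq n$ and the hypothesis $l \geq 2(n+1)$ suffices.

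Next I would apply Theorem \ref{mtpm} with $\CI$ a single element, $F_0 = F^+$ (viewing $F/F_0$ as our CM extension, which is automatically Galois), $r_1 = r|_{G_F}$, $\mu_1 = \mu|_{G_{F^+}}$, $n_1 = n$, $l_1 = l$, and with $\Favoid$ chosen to be $\barF^{+,\ker \barr}(\zeta_l)$ (or any fixed finite Galois extension of $F$ we wish to avoid). This produces a finite CM extension $F'/F$ with $F'/F^+$ Galois, $F'$ linearly disjoint from $\Favoid$ over $F$, a RAECSDC automorphic representation $(\pi_1,\chi_1)$ of $GL_n(\A_{F'})$ unramified above $l$, and an isomorphism $(r_{l,\imath}(\pi_1), r_{l,\imath}(\chi_1)\epsilon_l^{1-n}) \cong (r|_{G_{F'}}, \mu|_{G_{(F')^+}})$. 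Since $\pi_1$ is unramified above $l$ its level is prime to $l$; thus $(r|_{G_{F'}}, \mu|_{G_{(F')^+}})$ is automorphic of level prime to $l$.

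Finally I would set $F^{+,\prime} = (F')^+$, the maximal totally real subfield of $F'$. Because $F'/F^+$ is Galois and $F'$ is a CM field, $F^{+,\prime}/F^+$ is a Galois totally real extension, and $F' = F^{+,\prime} \cdot F = F^{+,\prime} \cdot F^+ \cdot \sqrt{-\ }$ (i.e.\ $F'$ is the CM quadratic extension of $F^{+,\prime}$ cut out by $\delta_{F/F^+}$). The pair $(r|_{G_{F^{+,\prime}}}, \mu|_{G_{F^{+,\prime}}})$ is totally odd and essentially conjugate self-dual over the totally real field $F^{+,\prime}$ (inherited from the corresponding property over $F^+$), and we have just shown that its restriction to $G_{F'}$ is automorphic of level prime to $l$ via $(\pi_1, \chi_1)$. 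One then descends: a RAESDC automorphic representation of $GL_n(\A_{F^{+,\prime}})$ giving rise to $(r|_{G_{F^{+,\prime}}}, \mu|_{G_{F^{+,\prime}}})$ is obtained from $\pi_1$ by the standard argument relating RAECSDC representations of $GL_n$ over a CM field to RAESDC representations over its totally real subfield (as recalled in section \ref{terminology} and used repeatedly, e.g.\ via Lemmas 1.4 and 1.5 of \cite{blght} and Lemma \ref{htcmtr}), noting that $\imath$-ordinarity / level-prime-to-$l$ is preserved. This gives the automorphy of $(r|_{G_{F^{+,\prime}}}, \mu|_{G_{F^{+,\prime}}})$ of level prime to $l$.

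The main obstacle is the descent in the last step: one must ensure that the automorphic representation produced over the CM field $F'$ actually descends to a RAESDC representation over the totally real field $F^{+,\prime}$ with the stated properties (unramified above $l$, correct weight, correct multiplier character), rather than merely over $F'$. This is where one invokes the fact that $(r|_{G_{F^{+,\prime}}},\mu|_{G_{F^{+,\prime}}})$ extends to a homomorphism valued in $GSp_n$ or $GO_n$ — equivalently that it is totally odd essentially conjugate self-dual over the totally real base — so that the conjugate self-dual representation over $F'$ is visibly the base change of something over $F^{+,\prime}$; the descent of automorphy then follows from soluble base change (the Galois extension $F^{+,\prime}/F^+$ and the quadratic extension $F'/F^{+,\prime}$ are soluble) together with the strong multiplicity one / cyclic descent results already used in the body of the paper. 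All the required compatibilities (Hodge--Tate weights, local behaviour, ordinarity) are local and are preserved under these operations, so no new difficulty arises there.
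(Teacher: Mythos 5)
Your argument is essentially the paper's own proof: base change to an imaginary CM quadratic extension $F/F^+$ split above $l$ and linearly disjoint from the relevant field, apply Theorem \ref{mtpm} (with $F_0=F^+$, so that $F'/F^+$ is Galois and hence $F^{+,\prime}=(F')^+$ is Galois and totally real over $F^+$), and descend from $F'$ to $(F')^+$ using Lemma 1.5 of \cite{blght}, exactly as the paper does. The one slip is your assertion that $\mu$ is ``still totally odd'' after passing to $F$: the hypothesis of the corollary also allows the orthogonal case, where $\mu$ is totally even, and to satisfy the oddness requirement in Theorem \ref{mtpm} one must then replace $\mu$ by $\mu\delta_{F/F^+}$ (which leaves $\mu|_{G_F}$ unchanged), as explained in section \ref{terminology}; with that standard adjustment your argument goes through and coincides with the paper's.
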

\begin{proof}
  Choose $F/F^+$ a totally imaginary quadratic extension in which all the
  places lying over $l$ split completely, and which is linearly disjoint
  from $(\barF^+)^{\ker\ad\rbar}(\zeta_l)$ over $F^+$. The representation $r|_{G_F}$
  satisfies the hypotheses of Theorem \ref{mtpm}, so that there is a
  finite Galois CM extension $F'/F$ such that
  $(r|_{G_{F'}},\mu|_{G_{F^{\prime,+}}})$ is automorphic of level
  prime to $l$. By Lemma \ref{bc},
  $(r|_{G_{F^{\prime,+}}},\mu|_{G_{F^{\prime,+}}})$ is also
  automorphic of level prime to $l$, as required.
 \end{proof}
 
  Combining Theorem \ref{mtpm} with Theorem \ref{diaglift} we get a potential automorphy theorem for mod $l$ Galois representations which strengthens Proposition \ref{paord}.
  
  \begin{cor}\label{padiag} Suppose that we are in the following situation. 
\begin{enumerate}
\renewcommand{\labelenumi}{(\alph{enumi})}
\item Let $F/F_0$ be a finite, Galois extension of imaginary CM fields, and let $F^+$ and $F_0^+$ denote their maximal totally real subfields. Choose a complex conjugation $c \in G_{F^+}$.
\item Let $\CI$ be a finite set.
\item For each $i \in \CI$ let $n_i$ and $d_i$ be positive 
  integers and $l_i\geq 2(d_i+1)$ be an odd rational prime such that $\zeta_{l_i} \not\in F$.
Also choose $\imath_i:\barQQ_{l_i} \iso \C$ for each $i \in \CI$.
\item For each $i\in \CI$ let $\mu_i:G_{F^+} \ra \barQQ_{l_i}^\times$ be a continuous, totally odd, de Rham character.
\item For each $i \in \CI$ let $\barr_i: G_{F} \ra
  \GL_{n_i}(\barFF_{l_i})$ be an irreducible continuous representation so that $d_i$ is the maximal dimension of an irreducible constituent of the restriction of $\barr_i$ to the closed subgroup of $G_F$ generated by all Sylow pro-$l_i$-subgroups. Suppose also 
  that $(\barr_i,\barmu_i)$ is a polarized mod $l_i$ representation and that $\barr_i|_{G_{F(\zeta_{l_i})}}$ is irreducible.
\item Let $S$ denote a finite $\Gal(F/F^+)$-invariant set of primes
   of $F$ including all those dividing $\prod_i l_i$ and
 all those at which some $\barr_i$ ramifies.
 \item for each $i \in \CI$ and $v \in S$ let  $\rho_{i,v}:G_{F_v} \ra \GL_{n_i}(\CO_{\barQQ_{l_i}})$ denote a lift of ${\barr}_i|_{G_{F_v}}$ such that $\rho_{i,cv}^c \cong \mu_i|_{G_{F_v}} \rho_{i,v}^\vee$. If $v|l_i$ further assume that $\rho_{i,v}$ is potentially diagonalizable and that for each $\tau:F_v \into \barQQ_{l_i}$
 the set $\HT_\tau(\rho_{v,i})$ has $n$ distinct elements.
\item Let $\Favoid/F$ be a finite Galois extension.
\end{enumerate}

Then we can find a finite CM extension $F'/F$ and for each $i \in \CI$ a regular algebraic, cuspidal, polarized automorphic representation $(\pi_i,\chi_i)$ of $\GL_{n_i}(\A_{F'})$ such that
\begin{enumerate}
\item $F'/F_0$ is Galois,
\item $F'$ is linearly disjoint from $\Favoid$ over $F$,
\item $\barr_{l_i,\imath_i}(\pi_i)\cong {\barr}_i|_{G_{F'}}$;
\item $r_{l_i,\imath_i}(\chi_i) \epsilon_{l_i}^{1-n_i} = \mu_i|_{G_{(F')^+}}$;
\item $\pi_i$ has level potentially prime to $l_i$;
\item if $u$ is a prime of $F'$ not lying above a prime in $S$ then $\pi_{i,u}$ is unramified;
\item if $u$ is a prime of $F'$ lying above an element $v \in S$ then $r_{l_i,\imath_i}(\pi_i)|_{G_{F_u'}} \sim \rho_v|_{G_{F_u'}}$.
\end{enumerate}
  \end{cor}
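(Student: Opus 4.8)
The statement follows by combining Theorems~\ref{diaglift} and~\ref{mtpm}, and the plan is to use the former to build characteristic-zero lifts of the $\barr_i$ with the prescribed local behaviour and then to apply the latter to those lifts. First I would reduce to the case in which every prime of $F^+$ below a prime of $S$ splits in $F$, exactly as in the proof of Proposition~\ref{paord}: Proposition~\ref{mbp} with $T=\Spec F^+$ gives a finite totally real extension $F_1^+/F^+$ with $F_1^+/F_0^+$ Galois and $F_1^+$ linearly disjoint from $\barF^{\bigcap_i\ker\barr_i}(\zeta_{\prod_i l_i})\Favoid$ over $F^+$, in which every prime of $F^+$ below $S$ splits completely; one replaces $F$ by $F_1=FF_1^+$, $\Favoid$ by $F_1\Favoid$, $S$ by the set of primes of $F_1$ above it, and each $\rho_{i,v}$ by its restriction to a decomposition group of $F_1$. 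All the hypotheses survive: linear disjointness keeps each $\barr_i|_{G_{F(\zeta_{l_i})}}$ irreducible and keeps $\zeta_{l_i}\notin F$; the invariant $d_i$ can only decrease under restriction to a subgroup; and potential diagonalizability, the distinctness of the $\HT_\tau(\rho_{i,v})$ at $l_i$, and the compatibility $\rho_{i,cv}^c\cong\mu_i|_{G_{F_v}}\rho_{i,v}^\vee$ are all preserved by restriction. I would also extend $(\barr_i,\barmu_i|_{G_F})$ to a continuous $\tbarr_i:G_{F^+}\to\CG_{n_i}(\barFF_{l_i})$ with $\nu\circ\tbarr_i=\barmu_i$, as in section~\ref{cgn}.

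For each $i\in\CI$ I would then apply Theorem~\ref{diaglift}, with $\barr$ there equal to $\tbarr_i$, with the set of primes of $F^+$ below $S$, and with the local lifts the restrictions of the $\rho_{i,v}$; the hypotheses hold by the reduction above. This gives a lift $r_i:G_{F^+}\to\CG_{n_i}(\CO_{\barQQ_{l_i}})$ of $\tbarr_i$ with $\nu\circ r_i=\mu_i$, with $\breve r_i$ unramified outside $S$, and with $\breve r_i|_{G_{F_\tv}}\sim\rho_{i,\tv}$ at each chosen prime $\tv$ above $S$. Since $r_i$ has multiplier $\mu_i$, so that $\breve r_i{}^c\cong\breve r_i{}^\vee\otimes\mu_i$, combining this with the compatibility on the $\rho_{i,v}$ and the fact that ``$\sim$'' is preserved under dualising, twisting and $c$-conjugation, one gets $\breve r_i|_{G_{F_v}}\sim\rho_{i,v}$ for \emph{every} prime $v$ of $F$ above $S$. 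In particular, at $v\mid l_i$, because ``$\sim$'' is an equivalence relation on potentially crystalline representations that preserves the $\HT_\tau$ and sends potentially diagonalizable representations to potentially diagonalizable ones, $\breve r_i|_{G_{F_v}}$ is potentially diagonalizable with $n_i$ distinct $\tau$-Hodge--Tate numbers for each $\tau$.

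Finally I would feed the $\breve r_i:G_F\to GL_{n_i}(\barQQ_{l_i})$ into Theorem~\ref{mtpm}, keeping $F_0$ and the $\mu_i$, and enlarging $\Favoid$ to absorb the base change of the first step. Its hypotheses are exactly what the second paragraph provides: $\breve r_i$ is unramified almost everywhere, $(\breve r_i,\mu_i)$ is totally odd and essentially conjugate self-dual since $\breve r_i$ extends to the $\CG_{n_i}$-valued $r_i$ with totally odd multiplier $\mu_i$, $\breve r_i$ is potentially diagonalizable and Hodge--Tate regular above $l_i$, and $\barr_i|_{G_{F(\zeta_{l_i})}}$ is irreducible. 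The theorem produces a finite CM extension $F'/F$ with $F'/F_0$ Galois and $F'$ linearly disjoint from $\Favoid$ over $F$, and RAECSDC representations $(\pi_i,\chi_i)$ of $GL_{n_i}(\A_{F'})$ with $\pi_i$ unramified above $l_i$ and $(r_{l_i,\imath_i}(\pi_i),r_{l_i,\imath_i}(\chi_i)\epsilon_{l_i}^{1-n_i})\cong(\breve r_i|_{G_{F'}},\mu_i|_{G_{(F')^+}})$. Reducing modulo $l_i$ yields conclusions (3) and (4); conclusion (5) is immediate; since $\breve r_i$ is unramified outside $S$ we get (6); and for $u$ above $v\in S$ one has $r_{l_i,\imath_i}(\pi_i)|_{G_{F'_u}}\cong\breve r_i|_{G_{F'_u}}\sim\rho_{i,v}|_{G_{F'_u}}$ by restricting the relation from the previous paragraph, giving (7). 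I expect the only real point to check is that the potential diagonalizability and the local connectedness data transfer correctly through the two theorems; the rest is base-change bookkeeping in the style of Proposition~\ref{paord}, and the main difficulty is already resolved inside Theorems~\ref{diaglift} and~\ref{mtpm}.
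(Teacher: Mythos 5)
Your proposal is correct and follows essentially the same route as the paper: extend $(\barr_i,\barmu_i|_{G_F})$ to a $\CG_{n_i}$-valued representation, reduce to the case where the primes below $S$ split in $F$ exactly as in the proof of Proposition~\ref{paord}, apply Theorem~\ref{diaglift} to produce global lifts with the prescribed local behaviour, and then conclude with Theorem~\ref{mtpm}. The extra verifications you spell out (propagation of $\sim$ to the conjugate primes via the duality on the $\rho_{i,v}$, preservation of potential diagonalizability and Hodge--Tate regularity under $\sim$, and the bookkeeping of $d_i$ and linear disjointness under the preliminary base change) are all sound and are implicit in the paper's much terser argument.
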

  \begin{proof}
Note that $(\barr_i, \barmu_i)$ corresponds to a continuous
homomorphism $\tilde{\barr}_{i,\barmu_i}:G_{F^+} \ra \CG_{n_i}(\barFF_{l_i})$   with $\nu
\circ \tilde{\barr}_i = \barmu_i$ (see Section \ref{cgn}). As in the
proof of Proposition \ref{paord}, we may reduce to the case where all
elements of $S$ are split over $F^+$. Then by Theorem \ref{diaglift},
we see that for
each $i \in \cI$ there exists a lift 
\[ r_i : G_{F} \to \GL_{n_i}(\CO_{\Qlbar}) \]
of $\barr_i$ such that
\begin{itemize}
\item $r_i^c \cong r_i^\vee \mu_i|_{G_F}$;
\item if $v\in S$, then $r_i|_{G_{F_v}}\sim \rho_{i,v}$;
\item $r_i$ is unramified outside $S$.
\end{itemize}
The result now follows from Theorem \ref{mtpm}.
  \end{proof}

 \newpage

\section{Compatible systems.}\label{sec:main result}

\subsection{Compatible systems: definitions.}\label{cs}{$\mbox{}$} \newline

Let $F$ denote a number field. 
By a {\em rank $n$ weakly compatible system of $l$-adic representations} $\CR$
{\em of} $G_F$ {\em defined over} $M$ we shall mean a $5$-tuple 
\[ (M,S,\{ Q_v(X) \}, \{r_\lambda \}, \{H_\tau\} ) \]
where
\begin{enumerate}
\item $M$ is a number field;
\item $S$ is a finite set of primes of $F$;
\item for each  prime $v\not\in S$ of $F$, $Q_v(X)$ is a monic degree $n$
polynomial in $M[X]$;
\item for each prime $\lambda$ of $M$ (with residue characteristic $l$ say) 
\[r_\lambda:G_F \lra \GL_n(\barM_\lambda) \]
is a continuous, semi-simple, representation such that 
\begin{itemize}
\item if $v \not\in S$ and $v \ndiv l$ is a prime of $F$ then $r_\lambda$
is unramified at $v$ and $r_\lambda(\Frob_v)$ has characteristic
polynomial $Q_v(X)$,
\item while if $v|l$ then $r_\lambda|_{G_{F_v}}$ is de Rham  and in the case $v \not\in S$ crystalline;
\end{itemize}
\item for $\tau:F \into \barM$, $H_\tau$ is a multiset of $n$ integers such that 
for any $\barM \into \barM_\lambda$ over $M$ we have 
$\HT_\tau(r_\lambda)=H_\tau$.
\end{enumerate}
We will refer to a rank $1$ weakly compatible system of representations as a weakly compatible system of characters.

We make the following subsidiary definitions:

We define the usual linear algebra and group theoretic operations on weakly compatible systems 
by applying the corresponding operation to each $r_\lambda$. For instance
\[ \CR^\vee = (M,S, \{ X^nQ_v(0)^{-1}Q_v(X^{-1}) \}, \{ r_\lambda^\vee\}, \{ -H_\tau \}), \]
where $-H_\tau=\{ -h: h \in H_\tau\}$.

We will call $\CR$ {\em regular} if for each $\tau:F \into \barM$ every element of $H_\tau$ has multiplicity $1$. 

 We will call $\CR$ {\em extremely regular} if it is regular and  for some $\tau:F \into \barM$ the multiset $H_\tau$ has the following property: if $H$ and $H'$ are subsets of $H_\tau$ of the same cardinality and if $\sum_{h \in H} h = \sum_{h \in H'} h$ then $H=H'$.

If $F$ is totally real and if $n=1$ then we will call $\CR$ {\em
  totally odd} (resp.\ {\em totally even}) if for some place $\lambda$ of $M$ we have $r_\lambda(c_v)=-1$ (resp.\ $1$) for all infinite places $v$ of $F$. In this case this will also be true for all places $\lambda$ of $M$.

If $F$ is CM and if $\CM=(M,S_{F^+},\{X-\alpha_v\}, \{ \mu_\lambda \}, \{ w \})$ is a weakly compatible system of characters of $G_{F^+}$ then we will call $(\CR,\CM)$ a {\em polarized} (resp.\ {\em totally odd, polarized}) weakly compatible system if for all primes $\lambda$ of $M$ the pair $(r_\lambda,\mu_\lambda)$ is a polarized (resp.\ totally odd polarized) $l$-adic representation. (Here $S_{F^+}$ denotes the set of places of $F^+$ lying below an element of $S$.) We will call $\CR$ {\em polarizable} (resp.\ {\em totally odd, polarizable}) if there exists a $\CM$ such that $(\CR,\CM)$ is a polarized (resp.\ totally odd polarized) weakly compatible system.

We will call $\CR$ {\em irreducible} if there is a set $\CL$ of rational primes of Dirichlet density $1$ such that for $\lambda|l \in \CL$ the representation $r_\lambda$ is irreducible. 
We will call $\CR$ {\em strictly compatible}  if for each finite place $v$ of $F$ there is a Weil--Deligne representation $\WD_v(\CR)$ of $W_{F_v}$ over $\barM$ such that for each place $\lambda$ of $M$ not dividing the residue characteristic of $v$ and every $M$-linear embedding $\varsigma:\barM \into \barM_\lambda$ the push forward $\varsigma\WD_v(\CR)\cong \WD(r_\lambda|_{G_{F_v}})^\Fsemis$. 

We will call $\CR$ {\em pure} of weight $w$ if
\begin{itemize}
\item for each $v \not\in S$, each root $\alpha$ of $Q_v(X)$ in $\barM$ and each $\imath:\barM \into \C$
we have 
\[ | \imath \alpha |^2 = q_v^w; \]
\item and for each $\tau:F \into \barM$ and each complex conjugation $c$ in $\Gal(\barM/\Q)$ we have
\[ H_{c \tau} = \{ w-h: \,\,\, h \in H_\tau\}. \]
\end{itemize}

We will call $\CR$ {\em strictly pure} of weight $w$ if
\begin{itemize}
\item $\CR$ is strictly compatible and for each prime $v$ of $F$ the Weil--Deligne representation $\WD_v(\CR)$ is pure of weight $w$;
\item and for each $\tau:F \into \barM$ and each complex conjugation $c$ in $\Gal(\barM/\Q)$ we have
\[ H_{c \tau} = \{ w-h: \,\,\, h \in H_\tau\}. \]
\end{itemize}

 If $\imath:M \into \C$ we define the partial L-function 
\[ L^S(\imath \CR,s) = \prod_{v \not\in S} (q_v^{ns}/\imath Q_v(q_v^s)). \] This may or may not converge. If $\CR$ is pure of weight $w$ then it will converge to an analytic
function in $\Re s > 1+w/2$. If $\lambda|l$ and every place of $F$ above $l$ lies in $S$, then $L^S(\imath \CR,s)$ depends only on $r_\lambda$ so, if $\widetilde{\imath}:\barM_\lambda \iso \C$ extends $\imath$, we will sometimes write $L^S(\widetilde{\imath} r_\lambda,s)$ instead of $L^S(\imath \CR, s)$. This makes sense even for $r_\lambda$ not part of a weakly compatible system, provided that $S$ contains all primes above $l$ and all primes at which $r_\lambda$ ramifies.

If $\CR$ is strictly compatible then we can define the $L$-function
\[ L(\imath \CR,s) = \prod_{v\ndiv \infty} L(\imath \WD_v(\CR), s) \]
which differs from $L^S(\imath \CR,s)$ only by the addition of finitely many Euler factors. 

Suppose that $\CR$ is
strictly compatible, pure of weight $w$ and regular. Also let
$\psi=\prod_v \psi_v: \A_F/F \ra \C^\times$ be the non-trivial
additive character such that if $v$ is real then $\psi_v(x)=e^{2 \pi i
  x}$;  if $v$ is complex then $\psi_v(x)=e^{2\pi i
  (x+c_vx)}$; while if $v$ is $p$-adic then
$\psi_v(x)=\psi_p(\tr_{F_v/\Qp}(x))$ where $\psi_p|_{\bb{Z}_p}=1$ and
$\psi_p(1/p)=e^{-2\pi i/p}$. We wish to define the completed $L$-function and $\epsilon$-factors of $\CR$. (See \cite{MR546607}.)
\begin{itemize}
\item Write $\Gamma_\R(s)=\pi^{-s/2}\Gamma(s/2)$ and
$\Gamma_\C(s)=2(2\pi)^{-s}\Gamma(s)=\Gamma_\R(s)\Gamma_\R(s+1)$. 

\item Suppose that $v$ is a complex infinite place of $F$  and that $\tau, \tau' \in \Hom(F, \barM)$ are the two distinct elements such that $\imath \circ \tau$ and $\imath \circ \tau'$ extend to continuous isomorphisms $F_v \iso \C$. Then we set
\[ \begin{array}{rcr} L_v(\imath\CR,s) &= & \Gamma_\C(s-w/2)^{\dim \CR} \prod_{h \in H_\tau, \,\, h< w/2} (\Gamma_\C(s-h)/\Gamma_\C(s-w/2)) \\ && \prod_{h \in H_{ \tau'}, \,\, h< w/2} (\Gamma_\C(s-h)/\Gamma_\C(s-w/2)) \end{array} \]
and
\[ \epsilon_v(\imath \CR, \psi_v,s) = i^{\sum_{h \in H_\tau}|h-w/2|+\sum_{h \in H_{\tau'}}|h-w/2|}. \]

\item Suppose that $v$ is a real infinite place of $F$  and that $\tau:F \into \barM$ is such that $\imath \circ \tau$ extends to a continuous isomorphism $F_v \iso \R \subset \C$. As $\{\det r_\lambda \}$ is a weakly compatible system of characters, $\det r_\lambda(c_v)=\pm 1$ is independent of $\lambda$. We set $\det \CR(c_v)=\det r_\lambda(c_v) \in \{ \pm 1\}$. 

If $\dim \CR$ is even set $d_{\pm}=\dim \CR/2$, while, if $\dim \CR$ is odd (in which case $w/2 \in H_\tau$ so that $w$ is even) set
\[ d_{\pm} = (\dim \CR \pm (-1)^{w/2} (\det \CR)(c_v))/2. \]
(The reader might like to think of $d_{\pm }$ as $\dim r_\lambda^{c_v=\pm (-1)^{w/2}}$. Because we didn't make a compatibility assumption between the $r_\lambda$ at the infinite place $v$ this doesn't make sense directly. However we make use of our regularity assumption to give this alternate definition, which does make sense and suffices for our purposes.)

Now define
\[ L_v(\imath \CR,s) = \Gamma_\R(s-w/2)^{d_+} \Gamma_\R(s+1-w/2)^{d_-} \prod_{h \in H_\tau,\,\, h<w/2} (\Gamma_\C(s-h)/\Gamma_\C(s-w/2)), \]
and
\[ \epsilon_v(\imath \CR, \psi_v,s)=i^{d_-+ \sum_{h \in H_\tau}|h-w/2|}. \]

\item Finally we define the completed $L$-function
\[ \Lambda(\imath \CR,s)=L(\imath \CR,s) \prod_{v|\infty} L_v(\imath\CR,s)  \]
 and the epsilon factor 
 \[ \epsilon(\imath \CR,s)=\left( \prod_{v \ndiv \infty} \epsilon(\imath \WD_v(\CR),\psi_v,s) \right) \left(\prod_{v|\infty} \epsilon_v(\imath \CR,\psi_v,s) \right) . \]
 The latter does not depend on the choice of $\psi$.
\end{itemize}

 We will call $\CR$ {\em automorphic} if there is a regular algebraic, cuspidal  automorphic
representation $\pi$ of $\GL_n(\A_F)$ and an embedding $\imath:M \into \C$, such that if $v \not\in S$ then $\pi_v$ is unramified and 
$\rec(\pi_v |\det|_v^{(1-n)/2})(\Frob_v)$ has characteristic
polynomial $\imath (Q_v(X))$. Note that if $\CR$ is polarizable then
so is $\pi$. It follows from Theorem 3.13 of \cite{MR1044819} that,
when $\CR$ is automorphic,
for any embedding $\imath':M \into \C$ there is a regular, algebraic, cuspidal  automorphic
representation $\pi'$ of $\GL_n(\A_F)$, such that if $v \not\in S$ then $\pi_v'$ is unramified and 
$\rec(\pi_v' |\det|_v^{(1-n)/2})(\Frob_v)$ has characteristic polynomial $\imath' (Q_v(X))$. [It would be more natural not to include the assumption that $\pi$ is regular. However for the purposes of this paper our definition will be more convenient.]

We will call an $n$-dimensional polarized weakly compatible system $(\CR,\CM)$ {\em automorphic} if there is a regular algebraic, cuspidal, polarized automorphic representation $(\pi,\chi)$ of $\GL_n(\A_F)$ and an embedding $\imath:M \into \C$ such that
\begin{itemize}
\item if $v \not\in S$ then $\pi_v$ is unramified and $\rec(\pi_v |\det|_v^{(1-n)/2})(\Frob_v)$ has characteristic polynomial equal to the image under $\imath$ of the polynomial $Q_v(X)$ for $\CR$, 
\item and if $v \not\in S_{F^+}$ then $\chi_v$ is unramified and $\rec(\chi_v |\,\,\,|_v^{1-n})(\Frob_v)$ has characteristic polynomial equal to the image under $\imath$ of the polynomial $Q_v(X)$ for $\CM$.
\end{itemize}

If $F'/F$ is a finite extension we define $\CR|_{G_{F'}}$ to be the weakly compatible system of representations of $G_{F'}$:
\[ (M,S^{(F')},\{ Q_v^{(F')}(X) \}, \{r_\lambda|_{G_{F'}} \}, \{H_\tau^{(F')}\} ), \]
where 
\begin{itemize}
\item $S^{(F')}$ is the set of primes of $F'$ above $S$; 
\item $H^{(F')}_\tau=H_{\tau|_F}$;
\item $Q_v^{(F')}(X)$ is the monic polynomial in $M[X]$ of degree $n$ with roots the $\alpha^{[k(v):k(v|_F)]}$ as $\alpha$ runs over roots of $Q_{v|_F}(X)$.
\end{itemize}

We remark that if $(\pi,\chi)$ is a regular algebraic, cuspidal, polarized automorphic
representation of $\GL_n(\A_F)$ then $\{ r_{l,\imath}(\chi)\}$ is a
strictly pure compatible system of some necessarily even weight
(because $F^+$ is totally real), which we will write $2(w+1-n)$. Moreover  $\{ r_{l,\imath}(\pi) \}$ is a strictly pure compatible system of weight $w$. (See Theorem \ref{grfaf}.)

\subsection{Rational compatible systems.}\label{rcs}{$\mbox{}$}\newline

 In this section we present a formulation of the results of Larsen from \cite{larsen}. 

We will consider a weakly compatible system $\CR=(\Q,S,\{Q_v(X)\},\{r_l\},\{ H_\tau\})$, where for each $l$ we have
\[ r_l:G_F \lra \GL_n(\Q_l). \]
We are going to make a number of definitions which depend on $\CR$, but for simplicity we won't put $\CR$ in the notation. We hope this will cause no confusion.

We will write $V_l$ for the $\Q_l$-vector space underlying $r_l$. We will also write $G_l$ for the Zariski closure of the image of $r_l$ in $\GL_n/\Q_l$, and $G_l^0$ for the connected component of the identity in $G_l$. Thus $G_l^0$ is a reductive group over  $\Q_l$. Let $\Gamma_l$ denote the image of $G_F$ in $G_l(\Q_l)$ and set $\Gamma_l^0=\Gamma_l \cap G_l^0(\Q_l)$. There is a finite Galois extension $F^0/F$ such that $\Gal(F^0/F) \iso \Gamma_l/\Gamma_l^0$ for all $l$ (Proposition 6.14 of \cite{lp}). Since $r_l$ is Hodge-Tate, the Lie algebra of $\Gamma_l$ is algebraic (as defined in Section II.7 of \cite{borellag}) and the group $\Gamma_l$ is open in $G_l(\Q_l)$ (\cite{bogomolov}).

Let $Z_l$ (resp.\ $G_l^\der$) denote the centre (resp.\ derived subgroup) of $G_l^0$.
Let $G_l^\ad=G_l^0/Z_l$ and let $C_l=G_l^0/G_l^\der$. Also let $G_l^\SC$ denote the simply connected cover of $G_l^\ad$.  We have surjective maps with finite kernels
\[ G_l^\SC \lra G_l^\der \lra G_l^\ad. \]
Because the dimension of $G_l^\ad$ is bounded only depending on $n$,
we see that there is a positive constant $A(n)$ depending only on $n$ (and not on $\CR$ or $l$) such that
\[ \# \ker(G_l^\SC \ra G_l^\ad) | A(n). \]
The natural map $Z_l \ra C_l$ is surjective with finite kernel of
order dividing $A(n)$. We will write $\Gamma_l^Z$ for $\Gamma_l^0\cap Z_l(\Q_l)$ and $\Gamma_l^C$ for the image of $\Gamma_l^0$ in $C_l(\Q_l)$. As the cokernel of the map $G_l^0(\Q_l) \ra C_l(\Q_l)$ is finite (because it is a quotient of $C_l(\Q_l)/Z_l(\Q_l) \subset H^1(G_{\Q_l}, \ker(Z_l \ra C_l))$), we see that $\Gamma_l^C$ is open in $C_l(\Q_l)$.

Set $H_l=G_l^\SC \times Z_l$. Then there is a natural surjection of algebraic groups $H_l \onto G_l^0$, with a finite, central kernel with order dividing $A(n)$ (as it equals the kernel of $G_l^\SC \ra G_l^\ad$). Using Galois cohomology we see that the cokernel of the map $H_l(\Q_l) \ra G_l^0(\Q_l)$ is a finite abelian group of order dividing $A(n)^3$ and exponent dividing $A(n)$. (The cokernel embeds in $H^1(G_{\Q_l},\ker(H_l \ra G_l^0))$, which by the local Euler characteristic formula has order dividing $(\#\ker(H_l \ra G_l^0))^3$.) We will write
$\Gamma_l^{00}=\Gamma_l^0 \cap \Im(H_l(\Q_l) \ra G_l^0(\Q_l))$ and $\Gamma_l^H$ for the pre-image of $\Gamma_l^{00}$ in $H_l(\Q_l)$. Thus the kernels of 
\[ \Gamma_l^H \lra \Gamma_l^0 \]
and
\[ \Gamma_l^Z \lra \Gamma_l^C \]
are both finite abelian groups of order dividing $A(n)$, while the cokernels are both 
finite abelian groups of order dividing $A(n)^3$ and exponent dividing $A(n)$. It will often be convenient to work with $\Gamma_l^H$ in place of $\Gamma_l^0$, because it is easier to control. 

As $G_l$ acts by conjugation on $G_l^0$ it also acts on $Z_l$, $G_l^\der$, $G_l^\ad$, $C_l$, $G_l^\SC$ and $H_l$. Thus $\Gamma_l$ does as well. Moreover, $\Gamma_l^{00}$ is a normal subgroup of $\Gamma_l$ and the conjugation action of $\Gamma_l$ on $\Gamma_l^{00}$ lifts to an action on $\Gamma_l^H$. 
Let $T_l$ denote a maximal torus in $G_l^0$ which we assume to be
chosen unramified whenever $G_l^0$ is unramified. Let
$T_l^\ad=T_l/Z_l$, let $T_l^\der=\ker(T_l \ra C_l)^0$, let $T_l^\SC$
equal the connected preimage of $T_l^\ad$ in $G_l^\SC$, and let $T_l^H=T_l^\SC \times Z_l$. We have natural embeddings 
\[ X^*(T_l^\ad) \subset X^*(T_l^\der) \subset X^*(T_l^\SC) \subset (1/A(n)) X^*(T_l^\ad) \]
and
\[ A(n) X^*(T^H_l) \subset X^*(T_l) \subset X^*(T^H_l) \]
and
\[ A(n) X^*(Z_l) \subset X^*(C_l) \subset X^*(Z_l). \]
Let $\Delta \subset X^*(T_l^\ad) \subset X^*(T_l)$ denote a
  basis for the root system of $G_l^0$. 
  
 The dimensions of $V_l$ and $G_l^\SC$ are bounded only in terms of $n$. Hence, if $(1/A(n)) \sum_{\delta \in \Delta} m_\delta \delta$ is a weight of $G_l^\SC$ on $V_l$, then the $|m_\delta|$ can be bounded by a constant $B(n)$ depending only on $n$ (and not on $\CR$ or $l$). 
  
  If $\mu \in X^*(Z_l)$ is a
  $Z_l$-weight of $V_l$ then we can find $m_{\mu, \delta} \in \Z$ for
  $\delta \in \Delta$ such that $(1/A(n))\sum_{\delta \in \Delta}
  m_{\mu,\delta}\delta$ is a weight of $T_l^\der$ on $V_l^\mu$.  Thus 
  \[ \left((1/A(n))\sum_{\delta \in \Delta} m_{\mu,\delta}\delta, \,\, \mu\right) \in X^*(T_l^H) \]
  is a weight of $V_l$ and $|m_{\mu,\delta}|\leq B(n)$ for all $\delta \in \Delta$.
  
  Let $\tS_{F^0,l}$ denote the restriction of scalars from $\CO_{F^0,l}$ to $\Z_l$ of $\G_m$ and let $S_{F^0,l}=\tS_{F^0,l} \times \Q_l$.
  Note that  $\Hom(F^0,\barQQ_l)$ gives a natural basis of $X^*(S_{F^0,l} )$. There is a homomorphism 
  \[ \theta_l: S_{F^0,l} \lra C_l \]
  such that $\theta_l$ agrees with $(r_l  \bmod G_l^\der(\Q_l))\circ \Art_{F^0}$ on an open subgroup of $\CO_{F^0,l}^\times$. (See Sections
  III.1.2 and III.2.1 of \cite{MR0263823}. To aid comparison with \cite{MR0263823} we remark that $i$ of Section II.1.1 of \cite{MR0263823} is the inverse of our $\Art_{F^0}$. Thus $\theta_l$ is constructed from $r_l \mod G_l^\der(\Q_l)$ in just the same way as $r$ is constructed from $\rho$ in Section II.1.1 of \cite{MR0263823}.) 
  
  \begin{lem}\label{charbd} Keep the notation and assumptions established earlier in this section. \begin{enumerate}
  \item $\theta_l:S_{F^0,l} \ra C_l$ is surjective.
  \item If $l \not\in S$ then $\theta_l=(r_l  \bmod G_l^\der(\Q_l)) \circ \Art_{F^0}$ on all of $\CO_{F^0,l}^\times$.
  \item There is a constant $C(\CR)$ depending only on $\CR$ (and not on $l$) such that, if $\mu \in X^*(Z_l)$ is a weight of $Z_l$ on $V_l$, then 
  \[ (A(n) \mu )\circ \theta_l = \sum_{\sigma \in \Hom(F^0,\barQQ_l)} {m_{\mu,\sigma}} \sigma \]
  with $|m_{\mu,\sigma}|<C(\CR)$.
  \item There is a constant $D(\CR)$ depending only on $\CR$ (and not on $l$) such that 
  \[ \# (X^*(S_{F^0,l})/\theta_l^* X^*(C_l))^\tor \leq D(\CR). \]
  \end{enumerate} \end{lem}
  
  \begin{proof}
  For the first part let $U$ denote the open subgroup of $\CO_{F^0,l}^\times$ where 
 $\theta_l$ and $(r_l \bmod G_l^\der(\Q_l)) \circ \Art_{F^0}$ agree. As $C_l(\Q_l)$ has an open subgroup which is pro-$l$, we see that 
 \[ ((r_l \bmod G_l^\der(\Q_l)) \circ \Art_{F^0})(U) =\theta_l(U) \]
 is open, and hence of finite index, in 
 \[ ((r_l \bmod G_l^\der(\Q_l)) \circ \Art_{F^0})(\A_{F^0}^\times) = (r_l \bmod G_l^\der)(G_{F^0}).\]
As the image of $G_{F^0}$ in $C_l(\Q_l)$ is Zariski dense and $C_l$ is connected we deduce that $\theta_l(U)$ is Zariski dense in $C_l/\Q_l$ and the result follows.

 For the second part, if $l\not \in S$, then $r_l:G_F \to G_l(\Q_l)$ is crystalline at
each prime above $l$, and hence so is $r_l\mod G^{\der}_l(\Q_l):G_F
\to C_l(\Q_l)$. Proposition 6.3 of \cite{cco}, then implies that for
each $\mu\in X^*(C_l)$, the characters $\mu\circ \theta_l$
and $\mu \circ (r_l  \bmod G_l^\der(\Q_l))\circ \Art_{F^0}$ agree on
all of $\CO_{F^0_l}^\times$. Since this holds for all $\mu\in
X^*(C_l)$, it follows immediately that $\theta_l$
and $(r_l  \bmod G_l^\der(\Q_l))\circ \Art_{F^0}$ agree on all of $\CO_{F^0_l}^\times$.
    
  For the third part note that $-m_{\mu,\sigma}$ is the $\sigma$-Hodge--Tate number of $(A(n) \mu) \circ (r_l \bmod G_l^0(\Q_l))$.   If $v|l$ is a prime of $F^0$ then there is an element $\nu_{\HT,v} \in X_*(T_l)$ such that for any
  algebraic representation $\rho$ of $G_l^0$ defined over $\Q_l$, the
  Hodge--Tate numbers (with respect to any continuous $\sigma : F^0_v\into \barQQ_l$) of $\rho \circ r_l |_{G_{F^0_v}}$ are the $\langle \mu,
  \nu_{\HT,v} \rangle$, where $\mu$ runs over the weights of $\rho$ in
  $X^*(T_l)$. (See section 1.2 of \cite{wintenberger}.) Thus what we have to show is that if  $\mu \in X^*(Z_l)$ is a weight of $Z_l$ on $V_l$, then
  \[ | \langle A(n) \mu, \nu_{\HT,v} \rangle | \]
  is bounded independently of $l$, $\mu$ and $v$. (Here we think of $A(n) \mu \in A(n)X^*(Z_l) \subset X^*(C_l) \subset X^*(T_l)$.) 
  However
  \[ ((1/A(n))\sum_{\delta \in \Delta} m_{\mu,\delta}\delta, \mu) \in X^*(T_l) \subset X^*(T_l^H) \]
  is a weight of $T_l$ on $V_l$ and so
  \[ | \langle (\sum_{\delta \in \Delta} m_{\mu,\delta}\delta, A(n)\mu), \nu_{\HT,v} \rangle | \]
  is bounded independently of $l$, $\mu$ and $v$ (because the Hodge-Tate numbers of $r_l$ are independent of $l$). As the $m_{\mu,\delta}$ are also bounded independently of $l$ and $\mu$, we see that it suffices to show that each
  \[ | \langle \delta, \nu_{\HT,v} \rangle | \]
  is bounded independently of $l$, $\delta \in \Delta$ and $v$. But $\Lie G_l^0 \subset \Hom_{\Q_l}(V_l,V_l)$ so that each $\delta \in\Delta$ is the difference of two weights of $T_l$ on $V_l$. It follows that 
 \[ | \langle \delta, \nu_{\HT,v} \rangle | \]
  is bounded independently of $l$, $\delta \in \Delta$ and $v$, as desired. (Again because the Hodge-Tate numbers of $r_l$ are independent of $l$.)
  
  The fourth part follows from the third.
  \end{proof}
  
 \begin{prop}\label{lar} Keep the notation and assumptions established earlier in this section. There is a Dirichlet density $1$ set $\CL$ of rational primes with the following properties.
\begin{enumerate}
\item If $l \in \CL$ then $G_l^0$, and hence also $Z_l$, $C_l$, $G_l^\SC$ and $H_l$, are unramified. Write $\tZ_l$ (resp.\ $\tC_l$) for the torus over $\Z_l$ with generic fibre
$Z_l$ (resp.\ $C_l$).  
\item If $l \in \CL$ then there is a semi-simple group scheme
  $\tG_l^\SC/\Z_l$ with generic fibre $G_l^\SC$ 
such that $\Gamma_l^H=\tG_l^\SC(\Z_l) \times \Gamma_l^Z$. Write
  $\tH_l=\tG_l^\SC \times \tZ_l$.
\item The index $[\tZ_l(\Z_l):\Gamma_l^Z]$ is bounded independently of $l\in \CL$.
\item If $l \in \CL$ then the conjugation action of $\Gamma_l$ on $H_l$ extends (uniquely) to an action on $\tH_l$. This makes $V_l$ into an $\tH_l \rtimes \Gamma_l$-module.
\item If $l \in \CL$ then $V_l$ contains an $\tH_l \rtimes \Gamma_l$ invariant $\Z_l$-lattice. 
\item Suppose that $l \in \CL$. There is a finite unramified extension $M_\lambda/\Q_l$ (of degree bounded independently of $l \in \cL$) such that all $G_l^\SC$-irreducible subquotients of $V_l \otimes \barQQ_l$ can be defined over $M_\lambda$. Choose such a field $M_\lambda$ and let 
\[ V_l \otimes M_\lambda = \bigoplus_i V_{\lambda,i}\]
 be the decomposition of $V_l \otimes M_\lambda$ into maximal $G_l^\SC$-isotypical submodules. Suppose also that $\Lambda \subset V_l \otimes M_\lambda$ is a $\tH_l$-invariant $\CO_{M_\lambda}$-lattice. Then
\[ \Lambda = \bigoplus_i (\Lambda \cap V_{\lambda,i}). \]
Moreover all irreducible $\tG_l^\SC(\Z_l)$-subquotients of $\Lambda \cap V_{\lambda,i}$ are absolutely irreducible and isomorphic, say to $\barrho_i$. Moreover $\dim_{k(\lambda)} \barrho_i$ equals the dimension (over $M_\lambda$) of an irreducible constituent of $V_{\lambda,i}$. If $\barrho_i \cong \barrho_j$ then $i=j$. 
\end{enumerate} \end{prop}

\begin{proof}
Proposition 8.9 of \cite{lp} tells us that we can find a Dirichlet density $1$ set $\CL$ of rational primes such that for $l \in \CL$ the group $G_l^0$ is unramified. As the dimension of $T_l$ is bounded independently of $l$ we see that the order of any finite order element of $\Aut(X^*(T_l))$ is independent of $l$. Hence for $l \in \cL$ the group splits over an unramified extension $M_\lambda/\Q_l$ of degree bounded independently of $l$.

Theorem 3.17 of \cite{larsen} tells us that we can replace $\CL$ by a possibly smaller set of Dirichlet density $1$, which we will also denote $\CL$, such that for $l \in \CL$ there is a semi-simple group scheme $\tG_l^\SC/\Z_l$ with generic fibre $G_l^\SC$ such that 
\[ \Gamma_l^H \cap G_l^\SC(\Q_l) = \tG_l^\SC(\Z_l).\]
As $\tG_l^\SC(\Z_l)$ is a maximal compact subgroup of $G_l^\SC(\Q_l)$ we see that $\tG_l^\SC(\Z_l)$ is also the projection of $\Gamma_l^H$ to $G_l^\SC(\Q_l)$ and so $\Gamma_l^H=\tG_l^\SC(\Z_l) \times \Gamma_l^Z$ for some compact subgroup $\Gamma_l^Z \subset Z_l(\Q_l)$. As $\tZ_l(\Z_l)$ is the unique maximal compact subgroup of $Z_l$ we see that $\Gamma_l^Z \subset \tZ_l(\Z_l)$. As the image of $\Gamma_l^Z$ in $\Gamma_l^C$ has finite index we see that $\Gamma_l^Z$ is open in $\tZ_l(\Z_l)$, and we have verified the first two parts of the Proposition.

Removing finitely many primes from $\CL$ we may suppose that for all $l \in \CL$ the representation $r_l$ is crystalline at $l$ and $l$ is unramified in $F^0$. Then for $l \in \CL$ we have
\[ \Gamma_l^C \supset \theta_l(\tS_{F^0,l}(\Z_l)). \]
The final part of Lemma \ref{charbd} together with Lemma \ref{nrtorus} shows that the index of $\Gamma_l^C$ in $\tC_l(\Z_l)$ is bounded by $D(\CR)$ (independently of $l \in \CL$). However the index of $\Gamma_l^Z$ in $\tZ_l(\Z_l)$ is bounded by the product
\[ \# \ker(Z_l \ra C_l) \times [\Gamma_l^C:\Gamma_l^Z] \times [\tC_l(\Z_l):\Gamma_l^C] \leq A(n)^4 D(\CR), \]
and so the third part of the Proposition follows.

Suppose that $\gamma \in \Gamma_l$, then we have seen that $\gamma$ acts on $G_l^\SC$. We will consider the reductive group scheme 
\[ {}^\gamma \tG^\SC_l=\Spec \left( (\gamma^*)^{-1} \CO_{\tG^\SC_l}(\tG^\SC_l)\right), \]
where $(\gamma^*)^{-1} \CO_{\tG^\SC_l}(\tG^\SC_l) \subset \CO_{G^\SC_l}(G^\SC_l)$ is the pre-image of $\CO_{\tG^\SC_l}(\tG^\SC_l) \subset \CO_{G^\SC_l}(G^\SC_l)$ under $\gamma^*$. By Proposition 5.1.40 of \cite{bruhattits2} we see that $\tG^\SC_l$ and ${}^\gamma \tG^\SC_l$ must be the group schemes (with connected fibres) attached to special points $x$ and $x'$ in the building of $G^\SC_l$ in \cite{bruhattits2}. (The group schemes $\gG_x^{0\natural}$ and $\gG_{x'}^{0\natural}$ in the notation of \cite{bruhattits2}.) 
The sets $\{ x \}$ and $\{ x'\}$ are facets for the building of $G_l^\SC$ (as $G_l^\SC$ is semi-simple). As 
\[ ({}^\gamma \tG_l)(\Z_l)={}^\gamma(\tG_l(\Z_l)) = \tG_l(\Z_l) \]
we deduce from the proof of Proposition 5.2.8 of \cite{bruhattits2} that $\{x\}=\{x'\}$, and hence ${}^\gamma\tG_l^\SC=\tG_l^\SC$. Thus the action of $\Gamma_l$ on $G_l^\SC$ extends to one on $\tG_l^\SC$. (We thank Jiu-Kang Yu for valuable help with this argument.) As $\tZ_l$ is the unique torus with generic fibre $Z_l$ we also see that the action of $\Gamma_l$ on $Z_l$ extends to an action on $\tZ_l$. Thus the action of $\Gamma_l$ on $H_l$ extends to one on $\tH_l$, and the fourth part of the Proposition follows. 

As in the fourth paragraph of Section 1.12 of \cite{larsen} we can find a $\Z_l$-lattice $\Lambda \subset V_l$ such that $\Lambda \otimes \Z_l^\nr$ is $\tH_l(\Z_l^\nr)$-invariant. Replacing $\Lambda$ by the sum of its $\Gamma_l$-translates, which is again a lattice because $\Gamma_l$ is compact, we see that we may suppose $\Lambda$ to also be $\Gamma_l$-invariant. Again as in the fourth paragraph of Section 1.12 of \cite{larsen} we see that the natural map $H_l \ra \GL(V_l)$ extends to a map $\tH_l \ra \GL(\Lambda)$. This establishes the fifth part of the Proposition.

Let $\{ \mu_i\}$ be the set of highest weights (with respect to
$\Delta$) of irreducible $G_l^\SC$-submodules of $V_l \otimes
\barQQ_l$. Let $\rho_{\mu_i}$ be the corresponding irreducible
representations of $G_l^\SC$. As in the third paragraph of section 1.12 of \cite{larsen} we see that each $\rho_{\mu_i}$ can be defined over $M_\lambda$. The first assertion of part six follows. Moreover $\rho_{\mu_i}$ extends to a representation $\trho_{\mu_i}$ of $\tG^\SC_l$ over $\CO_{M_\lambda}$. (By the argument of paragraph four of Section 1.12 of \cite{larsen}.) Let $\barrho_{\mu_i}$ denote the reduction of $\trho_{\mu_i}$ modulo $\lambda$, and let $\barrho_{\mu_i}^+$ denote the unique absolutely irreducible subquotient of $\barrho_{\mu_i}$ which contains $\mu_i$. (See paragraph five of section 1.12 of \cite{larsen}.) 

If $\delta \in \Delta$ let $\nu_\delta\in X^*(T_l^\SC)$ denote the fundamental weight corresponding to $\delta$, so that $\{ \nu_\delta\}$ is the basis of $X^*(T_l^\SC)$ dual to the basis of $X_*(T_l^\SC)$ consisting of the coroots corresponding to $\delta \in \Delta$. If $\mu_i=\sum_{\delta \in \Delta} m_{i,\delta} \nu_\delta$ then the $m_{i,\delta}$ are bounded independently of $l$. (As $\dim G_l^\SC$ is bounded independently of $l$, there are only finitely many possibilities for the change of basis matrix between $\Delta$ and  $\{ \nu_\delta: \,\,\delta \in \Delta\}$.) Thus after removing a finite number of elements from $\CL$ we may assume that for all $l \in \CL$ and all $i$ we have  $0 \leq m_{i,\delta} < l$. As in the first paragraph of section 1.13 of \cite{larsen} we see that $\barrho_{\mu_i}^+$ is an absolutely irreducible representation of $\tG^\SC_l(\F_l)$ and that $\barrho_{\mu_i}^+ \cong \barrho_{\mu_j}^+$ implies $i=j$. Again removing a finite number of primes from $\CL$ we can further assure, as in the second paragraph of Section 1.13 of \cite{larsen}, that $\barrho_{\mu_i}^+ = \barrho_{\mu_i}$ (for all $l \in \CL$ and all $i$). 

Now suppose that $\Lambda$ is as in part 6 of the Proposition. (We no longer
use $\Lambda$ to denote the lattice constructed in the proof of the
fifth part.) We see that $\barrho_{\mu_i}^+$ is the only irreducible subquotient of  $\Lambda \cap V_{\lambda,i}$. Suppose that 
\[ \Lambda \neq \bigoplus_i (\Lambda \cap V_{\lambda,i}). \]
Choose a $\tG_l^\SC$-invariant lattice $\Lambda'$ with
\[ \Lambda \supset \Lambda' \supset \bigoplus_i (\Lambda \cap V_{\lambda,i}) \]
with $\Lambda' \left/ \left( \bigoplus_i (\Lambda \cap V_{\lambda,i}) \right) \right.$ simple and non-trivial. Then 
\[ \Lambda' \left/ \left( \bigoplus_i (\Lambda \cap V_{\lambda,i}) \right) \right. \]
must be equivalent to $\barrho_{\mu_j}^+$ for some $j$. Suppose $i \neq j$. Then we have a commutative diagram
\[ \begin{array}{ccc} \Lambda' & \onto & \Lambda' \left/ \left( \bigoplus_i (\Lambda \cap V_{\lambda,i}) \right) \right. \\ \da && \da \\
V_{\lambda,i} & \onto & V_{\lambda,i}/(\Lambda \cap V_{\lambda,i}), \end{array} \]
and we see that the right hand vertical arrow must be zero. Thus the image of $\Lambda'$ under projection to $V_{\lambda,i}$ is $\Lambda \cap V_{\lambda,i}$. We conclude that 
\[ \Lambda' = \left( \bigoplus_{i \neq j} \Lambda \cap V_{\lambda,i} \right) +  \bigcap_{i \neq j} \ker(\Lambda' \ra V_{\Lambda,i}). \]
However 
\[ \bigcap_{i \neq j} \ker(\Lambda' \ra V_{\Lambda,i}) = \Lambda \cap V_{\lambda,j} \]
and so
\[ \Lambda'= \bigoplus_i (\Lambda \cap V_{\lambda,i}), \]
a contradiction. This completes the proof of the Proposition.
\end{proof}

 We remark that we won't need part (3) of the Proposition, but we thought it was worth recording it anyway. We also remark that using part (3) of the Proposition one can prove a version of part (6) in which $H_l$ replaces $G_l^\SC$ and $\Gamma_l^H$ replaces $\tG_l^\SC(\Z_l)$. However, as we won't need it, we chose not to present the details.

\subsection{Compatible systems: lemmas.}\label{csl}{$\mbox{}$} \newline

We now return to more general compatible systems.

\begin{lem}\label{cc} Suppose that $\CR$ is a weakly compatible system of $l$-adic representations of $G_F$ of dimension $n$. Let $G_\lambda$ denote the Zariski closure of $r_\lambda(G_F)$ in $\GL_n/\barM_\lambda$ and let $G_\lambda^0$ denote its connected component. Then:
\begin{enumerate}
\item There is a finite Galois extension $F^1/F$ such that for all $\lambda$ the map $G_F \ra G_\lambda(\barM_\lambda)$ induces an isomorphism
$\Gal(F^1/F) \iso G_\lambda/G_\lambda^0$.

\item If $\CR$ is regular and $H$ is an open subgroup of $G_F$ then any irreducible 
$H$-subrepresentation of $r_\lambda$ has multiplicity one.

\item If $\CR$ is regular, then after replacing $M$ by a finite extension we may suppose that for any open subgroup $H \subset G_F$ and any $\lambda$ and any $H$-subrepresentation $s$ of $r_\lambda$, the representation $s$ is defined over $\CO_{M,\lambda}$. 
\end{enumerate}\end{lem}

\begin{proof} The proof of the first part is the same as the proof of Proposition 6.14
  of \cite{lp}. (One must replace $\CG$ by $G_F$, and $\rho_l$ by $r_\lambda$, and $G_l^0$ by $G_\lambda^0$; and $\Q_l$ by $\barM_\lambda$, and $\Q$ by $M$ in section 6 of \cite{lp}. These arguments are due to Serre.) 
  
As $r_\lambda$ is semi-simple we see that $G_\lambda^0$ is reductive. As $\tr r_\lambda$ is continuous on $G_F$ and $M_\lambda$ is closed in $\barM_\lambda$, we see that $\tr r_\lambda$ is valued in $M_\lambda$.

Now assume that $\CR$ is regular. By Theorem 1 of \cite{sen} there is an element of $(\Lie G_\lambda^0) \otimes \widehat{\barM_\lambda}$ with $n$ distinct eigenvalues. Thus if $T_\lambda$ is a maximal torus in $G_\lambda^0$ it has $n$ distinct weights under $r_\lambda$. In particular all irreducible $G_\lambda^0$-subrepresentations of $r_\lambda$ have multiplicity one.
If $H$ is any open subgroup of $G_F$ then its Zariski closure in
$G_\lambda$ contains $G_\lambda^0$. Thus any irreducible
$H$-subrepresentation of $r_\lambda$ has multiplicity one, and the
second part of the Lemma is proved. (We note that the second part
follows immediatetly from a consideration of the Hodge-Tate
weights but we shall need the facts deduced in this paragraph below.)

 Moreover the set of elements of $G_\lambda^0$ with $n$ distinct eigenvalues  under $r_\lambda$ is a non-empty Zariski open subset. As the images of Frobenius elements at primes which split completely in $F^1/F$ are Zariski
dense in $G_\lambda^0$ we conclude that infinitely many $Q_v(X)$, for $v$ splitting completely in $F^1/F$, have
$n$ distinct roots. Replace $M$ by the splitting field over $M$ of the
product $Q_v(X)Q_{v'}(X)$ for two such $v$, $v'$ with distinct residue characteristic. Then for all $\lambda$ the image $r_\lambda(G_{F^1})$ contains an element 
with $n$ distinct $M_\lambda$-rational eigenvalues.

Let $H$ be any open subgroup of $G_F$. We must show that any $H$-subrepresentation of $r_\lambda$ is defined over $\CO_{M,\lambda}$. As $H$ is compact it suffices to show that it is defined over $M_\lambda$. As the Zariski closure of $H$ in $G_\lambda$ contains $G_\lambda^0$, it equals the Zariski closure of $HG_{F^1}$. Thus if $s$ is an $H$-subrepresentation of $r_\lambda$ it is also a $HG_{F^1}$-subrepresentation. So we are reduced to the case $H \supset G_{F^1}$. In this case the result follows from Lemma \ref{repth}.
\end{proof}

\begin{prop}\label{big} Suppose that $\CR$ is a regular, weakly
  compatible system of $l$-adic representations of $G_F$ defined over
  $M$. 
If $s$ is a subrepresentation of $r_\lambda$ then we will write $\bars$ for the semi-simplification of the reduction of $s$. Also write $l$ for the rational prime below $\lambda$. Then there is a set of rational primes $\CL$ of 
Dirichlet density $1$ (depending only on $\CR$), such that if $s$ is any irreducible subrepresentation of $r_\lambda$ for any $\lambda$ dividing any element of $\CL$ then $\bars|_{G_{F(\zeta_l)}}$ is irreducible.
 \end{prop}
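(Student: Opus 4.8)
The plan is to combine the structural input of Lemma \ref{cc} with a counting/Chebotarev argument of the type due to Serre, exactly as used in similar ``big image almost always'' statements (e.g. the arguments in \cite{lp} and in the work of Larsen--Pink). The starting point is that for a weakly compatible system the algebraic monodromy groups $G_\lambda$ have a component group that is, uniformly in $\lambda$, identified with $\Gal(F^1/F)$ for a fixed finite Galois extension $F^1/F$ (Lemma \ref{cc}). After replacing $F$ by $F^1$ — which is harmless since $F(\zeta_l) \supset F$ and irreducibility over $G_{F^1(\zeta_l)}$ is a stronger statement that we would then need to descend, so in fact I would instead work with $G_\lambda^0$ directly and keep track of the finite group $\Gal(F^1/F)$ — one reduces to controlling how the residual image $\barr_\lambda(G_{F(\zeta_l)})$ sits inside the $\Fbar_l$-points of (a form of) $G_\lambda^0$.

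First I would set up the dichotomy: for a density-one set of primes $l$, and any $\lambda \mid l$, the residual representation $\barr_\lambda$ is ``as large as it can be'' in the sense that its restriction to $G_{F(\zeta_l)}$ has the same invariants and the same irreducible constituents (with the same multiplicities) as $r_\lambda|_{G_{F(\zeta_l)}}$ itself. This is the heart of the matter and is where Serre's method enters: one shows that the set of $l$ for which $\barr_\lambda(G_F)$ fails to contain the $\Fbar_l$-points of the derived group of (a model of) $G_\lambda^0$ — equivalently, fails to be ``hyperspecial'' or fails the relevant Nori-type bound — has Dirichlet density zero. The key numerical inputs are: (i) the characteristic polynomials $Q_v(X)$ have coefficients in the fixed field $M$, so their reductions mod $\lambda$ are controlled independently of $l$; (ii) the bound on the order of the component group is uniform (Lemma \ref{cc}); and (iii) for $l$ large compared to $n$ (in particular $l > 2n+1$, which excludes only finitely many primes), a subgroup of $GL_n(\Fbar_l)$ that is Zariski dense in a connected reductive group and generated by its semisimple elements of order prime to $l$ is forced to be large — this is where one cites the Larsen--Pink / Nori classification, or more elementarily invokes that the only obstruction comes from finitely many primes dividing the relevant root datum.

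Next, granting that $\barr_\lambda|_{G_{F(\zeta_l)}}$ has the same isotypic decomposition as $r_\lambda|_{G_{F(\zeta_l)}}$ for $\lambda$ over a density-one set $\CL$, the conclusion is immediate: if $s$ is an irreducible sub-representation of $r_\lambda$, then $s|_{G_{F(\zeta_l)}}$ is a sum of Galois-conjugates of some irreducible piece, and the semisimplified reduction $\bars|_{G_{F(\zeta_l)}}$ has exactly the same constituents; since $s$ irreducible forces $s|_{G_{F(\zeta_l)}}$ to have a single constituent up to conjugacy (here the index of $G_{F(\zeta_l)}$ in $G_F$ being controlled — $[\Q(\zeta_l):\Q] = l-1$ grows, but $G_F \cap G_{\Q(\zeta_l)}$ corresponds to a cyclotomic twist that cannot break an irreducible into too many pieces once $l$ is large relative to $n$), $\bars|_{G_{F(\zeta_l)}}$ is irreducible. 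The one subtlety to handle carefully is that $F(\zeta_l)/F$ has degree growing with $l$, so a priori $\bars$ could become reducible on restriction; but the reducibility locus is again cut out by a Zariski-closed condition on $G_\lambda^0$ and, by the same density-one argument bounding the failure set, it is avoided.

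The main obstacle, and the step that will require genuine care rather than routine work, is item (ii)/(iii) above: making precise and uniform (over $\lambda$, and over $l$ outside a density-zero — in fact finite — set) the statement that large algebraic monodromy forces large residual image, including the control of the behaviour at $F(\zeta_l)$. This is precisely the content that Serre's method and the Larsen--Pink theory were built to handle, so I would structure the write-up as: (1) cite Lemma \ref{cc}; (2) reduce to a statement about $G_\lambda^0$ and its $\Fbar_l$-points; (3) invoke the relevant integral/residual ``big image'' input for compatible systems (the same one used in \cite{blght} and its sequels) to get density-one largeness of $\barr_\lambda(G_{F(\zeta_l)})$; (4) deduce irreducibility of $\bars|_{G_{F(\zeta_l)}}$ from the matching of isotypic types. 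I expect steps (1), (2), (4) to be short, with essentially all the work (or all the citation) concentrated in step (3).
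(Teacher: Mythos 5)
Your high-level strategy — invoke Lemma \ref{cc}, appeal to a density-one ``big residual image'' statement of Larsen/Serre type, and then match isotypic decompositions — is pointed in the right direction, and the paper's proof is indeed a descendant of that tradition (the main technical input is \cite{larsen}, with \cite{lp} only used for Lemma \ref{cc}). But there is a genuine gap in the step you flag as ``the one subtlety to handle carefully,'' and your proposed resolution of it does not work.

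The real difficulty is not that $s|_{G_{F(\zeta_l)}}$ might split into many pieces; it is that two \emph{distinct} $G_l^0$-constituents $s_1, s_2$ of $r_\lambda$ (or $G_F$-conjugates of a given one) could become isomorphic after reduction mod $\lambda$ and restriction to $G_{F(\zeta_l)}$, because the central characters could differ by a power of $\epsilon_l$ that dies in reduction or on restriction. This is precisely a cyclotomic-twist phenomenon, and it cannot be ruled out by a ``Zariski-closed condition on $G_\lambda^0$'' as you claim, because the condition $\bars_1 \cong \bars_2 \otimes \barepsilon_l^a$ is not a fixed algebraic locus independent of $l$: the character $\barepsilon_l$ and the field $F(\zeta_l)$ change with $l$. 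The paper handles this by a quantitative arithmetic argument that is the heart of the proof: it introduces $r_l = \epsilon_l \oplus \bigoplus_{\lambda|l} r_\lambda$, analyses the connected centre $Z_l$ and abelianization $C_l = G_l^0/G_l^{\der}$ together with the homomorphism $\theta_l\colon S_l \to C_l$ coming from crystalline characters, derives a uniform bound $B$ on the exponents $m_{\mu,\sigma}$ appearing in $\mu^A\circ\theta_l$, imposes $l \geq 4B+4$ on $\CL$, and then does a base-$l$ digit-by-digit comparison to conclude that $\bars_1|_{\tG_l^{\SC}(\Z_l)\times\Gamma_l^{Z,1}}\cong\bars_2|_{\tG_l^{\SC}(\Z_l)\times\Gamma_l^{Z,1}}$ forces $s_1|_{H_l}\cong s_2\otimes\epsilon_l^a|_{H_l}$ for a uniquely determined $a$, and then $a=0$ when $s_1, s_2$ are $\Gamma_l$-conjugate (because the conjugating element has finite order in $\Gamma_l/\Gamma_l^0$). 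None of this is captured by ``invoke the relevant big-image input.''

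Two smaller points. First, it is not quite right to say that $\barr_\lambda(G_{F(\zeta_l)})$ should have the ``same isotypic decomposition'' as $r_\lambda|_{G_{F(\zeta_l)}}$; what the paper proves (and what suffices) is that $\bars|_{G_{F(\zeta_l)}} \cong \Ind_{\ker\barepsilon_l|_{\Gamma_l^1}}^{\ker\barepsilon_l|_{\Gamma_l}}\bars_0$ with $\bars_0$ irreducible, and that the induction remains irreducible because the stabiliser of $\bars_0$ does not grow when passing from characteristic zero to the residual setting — which is exactly the cyclotomic-twist control above. Second, the exceptional set outside $\CL$ is density zero but not in general finite, so ``in fact finite'' is an overstatement. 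The remaining steps in your outline (replacing $M$, using regularity to get a regular element, citing \cite{larsen} for perfectness and standardness of the residual $\tG_l^{\SC}$-representations) are broadly in line with the paper, but without the quantitative centre/cyclotomic analysis the proof does not close.
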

 
 \begin{proof}
     If need be replace $M$ by a finite extension so that 
     \begin{itemize}
     \item $M/\Q$ is Galois, 
     \item and for every  $\lambda$ and every open subgroup $H \subset G_F$ any $H$-subrepresentation $s$ of $r_\lambda$ is defined over $\CO_{M,\lambda}$ (Lemma \ref{cc}). 
     \end{itemize}

For a rational prime $l$ define 
\[ r_l =\epsilon_l \oplus  \bigoplus_{\lambda|l} r_\lambda : G_F \lra \GL_{1+n[M:\Q]}(\Q_l). \]
Let
\[ H_{\Q}= \{-1\} \amalg \coprod_{\tau:F\into \barM} \coprod_{i=1}^{[M:F]}H_\tau, \]
and, for $v$ a prime of $F$ not in $S$, let 
\[ Q_{\Q,v}(X)=(X-q_v^{-1}) \prod_{\sigma \in \Gal(M/\Q)} {}^\sigma Q_v(X). \]

We are going to apply the results of section \ref{rcs} to
\[ \CR_\Q=(\Q,S,\{Q_{\Q,v}\}, \{ r_l\}, \{ H_\Q\}), \]
and we will use the notation established there without comment. If
need be, we replace $M$ by a finite extension so that $M$ contains the image of every embedding $F^0 \into \barM$. Also choose a set $\CL$ of rational primes of Dirichlet density one as in Proposition \ref{lar}. Removing a finite number of primes from $\CL$ we may further assume that
\begin{enumerate}
\item\label{Fnr} if $l\in \CL$ then $l$ is unramified in $F^0/\Q$;
\item\label{size} if $l \in \CL$ then $l > 4A(1+[M:\Q]n) C(\CR_\Q)+ 2$;
\item\label{cryscond} $l \not\in S_\Q$, where $S_\Q$ denotes the set of rational primes which lie below an element of $S$.
\end{enumerate}
Let $\Lambda_l$ be a $\tH_l \rtimes \Gamma_l$-invariant lattice in $V_l$ (Proposition \ref{lar}).

The character 
$\epsilon_l$ of $\Gamma_l$ extends to an algebraic character
\[ \epsilon_l: G_l \onto \G_m. \]
(It is surjective because $\epsilon_l(G_F)$ is infinite.) 
We will write $Z_l^1$, $C_l^1$ and $H_l^1$ for the kernel of $\epsilon_l$ on $Z_l$, $C_l$ and $H_l$. The character $\epsilon_l$ extends to a character of $\tC_l$. We will write $\tC_l^1$ (resp.\ $\tZ_l^1$) for the unique torus over $\Z_l$ with generic fibre $C_l^1$ (resp.\ $Z_l^1$), and set $\tH_l^1=\tG_l^\SC \times \tZ_l^1$. Set $\Gamma_l^{Z,1}=\Gamma_l^Z \cap Z^1_l(\Q_l)$ and $\Gamma_l^{C,1} = \Gamma_l^C \cap C^1_l(\Q_l)$, so that $\Gamma_l^{Z,1}$ is the pre-image of $\Gamma_l^{C,1}$ under
 $\Gamma_l^Z \ra \Gamma_l^C$. Also set $\Gamma_l^{H,1}=\tG_l^\SC(\Z_l) \times \Gamma_l^{Z,1}$. The conjugation action of $\Gamma_l$ on $\tZ_l$, $\tC_l$ and $\tH_l$ preserves $\tZ_l^1$, $\tC_l^1$ and $\tH_l^1$.
 
We will next prove the following claim:

{\em Suppose that $l \in \CL$, that $\lambda|l$ is a prime of $M$, and that $W_1$ and $W_2$ are two irreducible $M_\lambda[\Gamma_l^0]$-sub-modules of $V_l \otimes M_\lambda$. Then (by Zariski density) $W_i$ is $G_l^0$-invariant. Write $s_i$ for the representation of $H_l \rtimes \Gamma_l^0$ on $W_i$, and further assume that the semi-simplified reductions $\bars_i$ of $s_i$ are isomorphic as $\Gamma_l^{H,1}$-modules. Then $s_1 \cong s_2 \otimes \epsilon_l^a$ (as representations of $\Gamma_l^0$) for some $a \in \Z$.} 

\begin{proof} 
Set $\ts_i$ denote the action of $\tH_l$ on the intersection of $\Lambda_l \otimes \CO_{M_\lambda}$ with the space underlying $s_i$ and let $\bars_i$ denote its reduction modulo $\lambda$. From Proposition \ref{lar} we see that $\bars_i|_{\tG_l^\SC(\Z_l)}$ is absolutely irreducible and that $s_1|_{G_l^\SC} \cong s_2|_{G_l^\SC}$. 

Let $\mu_i:Z_l \ra \G_m$ denote the action of $Z_l$ on $s_i$, and let $\tmu_i$ denote the extension of $\mu_i$ to $\tZ_l$ and $\barmu_i$ the reduction of $\tmu_i$ modulo $\lambda$. Then
\[ (A(1+[M:\Q]n) \mu_i) \circ \theta_l = \sum m_{i,\sigma} \sigma \]
with $|m_{i,\sigma}|<C(\CR_\Q)$. We have
\[ \barmu_1|_{\Gamma_l^{Z,1}}=\barmu_2|_{\Gamma_l^{Z,1}}. \]
Hence 
\[ (A(1+[M:\Q]n)^2 \barmu_1)|_{\Gamma_l^{C,1}}=(A(1+[M:\Q]n)^2 \barmu_2)|_{\Gamma_l^{C,1}}, \]
and for some $(1-l)/2 < b < (l-1)/2$ we have
\[ (A(1+[M:\Q]n)^2 \barmu_1)|_{\Gamma_l^{C}}=(A(1+[M:\Q]n)^2 \barmu_2+b\epsilon_l)|_{\Gamma_l^{C}}. \]
By assumption \ref{cryscond} on $\CL$ we know that $\mu_i^{A(1+[M:\Q]n)}$ is a crystalline character of $G_{F^0}$. Thus 
\[ \prod_{\sigma \in \Hom(F^0,\Q_l^\nr)} \barsigma^{A(1+n[M:\Q])(m_{1,\sigma}-m_{2,\sigma})} = \prod_{\sigma \in \Hom(F^0,\Q_l^\nr)} \barsigma^b\]
on $(\CO_{F^0}/l)^\times$. If $v|l$ is a prime of $F^0$, choose an embedding $\sigma_v:F^0 \into \Q_l^\nr$ above $v$. Then the embeddings of $F^0_v$ into $\Q_l^\nr$ are $\Frob_l^i \circ \sigma_v$ for $i=0,\dots,f_v-1$ with $f_v=[k(v):\F_l]$. Then
\[ \sum_{i=0}^{f_v-1} A(1+[M:\Q]n)(m_{1,\Frob_l^i \circ \sigma_v}-m_{2,\Frob_l^i \circ \sigma_v})l^i \equiv b (l^{f_v}-1)/(l-1) \bmod (l^{f_v}-1). \]
As $(1-l)/2<A(1+[M:\Q]n)(m_{1,\sigma}-m_{2,\sigma})<(l-1)/2$ for all $\sigma$ (by assumption \ref{size} on $\CL$), both sides lie in the range $((1-l^{f_v})/2, (l^{f_v}-1)/2)$ and so
\[ \sum_{i=0}^{f_v-1} A(1+[M:\Q]n)(m_{1,\Frob_l^i \circ \sigma_v}-m_{2,\Frob_l^i \circ \sigma_v})l^i = b (l^{f_v}-1)/(l-1). \]
Then we see that $A(1+[M:\Q]n)(m_{1,\Frob_l^0 \circ \sigma_v}-m_{2,\Frob_l^0 \circ \sigma_v})\equiv b \bmod l$ and again using the bounds on both sides we conclude that $A(1+[M:\Q]n)(m_{1,\Frob_l^0 \circ \sigma_v}-m_{2,\Frob_l^0 \circ \sigma_v})= b$. Subtracting these terms, dividing by $l$ and arguing recursively we see that
\[ A(1+[M:\Q]n)(m_{1,\Frob_l^i \circ \sigma_v}-m_{2,\Frob_l^i \circ \sigma_v})= b \]
for all $i$. Thus $A(1+[M:\Q]n)|b$ and 
\[ \mu_1^{A(1+[M:\Q]n)} \circ \theta_l = (\mu_2^{A(1+[M:\Q]n)}  \otimes \epsilon_l^{b/A(1+[M:\Q]n)}) \circ \theta_l \]
as characters of $S_{F^0,l}$.  Thus $\mu_1=\mu_2$ as characters of
$Z_l^1$ and so, for some $a \in \Z$, we have $\mu_1=\mu_2
\epsilon_l^a$  as characters of $Z_l$. We deduce that $s_1=s_2
\epsilon_l^a$ as representations of $H_l$ and hence of $G_l^0$, and
hence again of $\Gamma_l^0$. The claim follows.
\end{proof}

We now return to the proof of the Proposition. Let $\lambda|l \in \CL$ be a prime of $M$. Let $s$ be an irreducible subrepresentation of $r_\lambda$. Then $s$ occurs on a $M_\lambda[\Gamma_l]$-submodule $W \subset V_l \otimes M_\lambda$. Let $W_0$ denote an irreducible $\Gamma_l^0$-submodule of $W$ and let $s_0$ denote the representation of $\Gamma_l^0$ on $W_0$. Note that $W_0$ is also $H_l$-invariant and $G_l^\SC$-irreducible.  
Let $\Gamma_l'$ denote the set of $\gamma$ in $\Gamma_l$ with
$s_0^\gamma \cong s_0$, or what comes to the same thing (by
regularity) $\gamma W_0=W_0 \subset W$ (see Lemma \ref{cc}). Then $s_0$ extends to a
representation of $\Gamma_l'$ and 
\[ s \cong
\Ind_{\Gamma_l'}^{\Gamma_l} s_0.\]
Write $\bars$ (resp.\ $\bars_0$) for the semi-simplified reduction of $s$ (resp.\ $s_0$).

 Let $W_1$ denote the $W_0$-isotypical component of $V_l \otimes M_\lambda$ for the action of $G_l^\SC$. By Proposition \ref{lar} we see that $(W_1 \cap (\Lambda_l \otimes \cO_{M_\lambda}))/\lambda (W_1 \cap (\Lambda_l \otimes \cO_{M_\lambda}))$ is isotypical for the action of $\tG_l^\SC(\Z_l)$ corresponding to an absolutely irreducible representation of dimension equal to $\dim_{M_\lambda} W_0$. Thus $(W_0 \cap (\Lambda_l \otimes \cO_{M_\lambda}))/\lambda (W_0 \cap (\Lambda_l \otimes \cO_{M_\lambda}))$ must be absolutely irreducible as a representation of $\tG_l^\SC(\Z_l)$. 
 
As $F^0/\Q$ is unramified above $l$ (assumption \ref{Fnr} on $\CL$) we see that $F(\zeta_l)$ is linearly disjoint from $F^0$ over $F$. Thus
\[ \bars|_{\ker \barepsilon_l|_{\Gamma_l}} \cong (\Ind_{\ker \barepsilon_l|_{\Gamma_l'}}^{\ker \barepsilon_l|_{\Gamma_l}} \bars_0)^\semis. \]
Hence it suffices to show that for $\gamma \in \Gamma_l-\Gamma_l'$ we have
\[ \bars_0^\gamma|_{\Gamma_l^{H,1}} \not\cong \bars_0|_{\Gamma_l^{H,1}}. \]
If they were equivalent then by the claim we would have 
\[ s_0^\gamma \cong s_0 \otimes \epsilon_l^a \]
as $\Gamma_l^0$-modules (for some $a\in\Z$). As $\gamma$ has finite order in $\Gamma_l/\Gamma_l^0$, but $\epsilon_l$ has infinite order,  we see that $a=0$, and so $\gamma \in \Gamma_l'$, a contradiction. The Proposition follows.
 \end{proof}

\subsection{Potential automorphy for weakly compatible systems.}\label{pa2}{$\mbox{}$} \newline

In this section we prove a potential automorphy theorem for weakly compatible systems of $l$-adic
representations of the absolute Galois group of a CM field. 

\begin{thm}\label{mtcs} Suppose that $F/F_0$ is a finite Galois extension of CM (resp.\ totally real)
  fields and that $\Favoid/F$ is a finite Galois extension. Suppose also for $i=1,\dots,r$ that  $(\CR_i,\CM_i)$ are totally odd, polarized weakly compatible systems of $l$-adic representations of $G_F$, with each $\CR_i$ regular and irreducible. Then there is a finite, CM (resp.\ totally real), extension $F'/F$ linearly disjoint from $\Favoid$ over $F$ and with $F'/F_0$ Galois, such that each $(\CR_i|_{G_{F'}},\CM_i|_{G_{(F')^+}})$ is automorphic. \end{thm}

\begin{proof}
The totally real case follows easily from the imaginary CM case by
Lemma \ref{bc}. (The only
thing to check is that we can find an imaginary
CM extension $F'/F$ such that each $\CR_i$ remains irreducible upon
restriction to $G_{F'}$. It will be enough to choose $F'$ linearly
disjoint from the fields $F^1_i/F$ obtained by applying part
(1) Lemma \ref{cc} to each $\CR_i$. Such a choice is possibly by
Lemma~\ref{l412}.) Thus we treat only the imaginary CM case.

 Replacing each
of the fields $M_i$ associated to $\CR_i$ by their compositum, we may
assume that $M_i=M$ is independent of $i$. Let $\CL_{i,1}$ be the Dirichlet density $1$ set of rational primes provided by applying Proposition \ref{big} to $\CR_i$, and let $\CL_{i,2}$ be the Dirichlet density $1$ set of rational primes above which $\CR$ is irreducible. Let $\CL=\bigcap_i \CL_{i,1} \cap \CL_{i,2}$. Then $\CL$ also has Dirchlet density $1$ (Lemma \ref{dd}). For $\lambda|l \in \CL$ we see that each $\barr_{i,\lambda}|_{G_{F(\zeta_l)}}$ is irreducible.

Removing finitely many primes from $\CL$ we may further suppose that 
\begin{itemize}
\item $l \in \CL$ implies $l\geq 2(\dim \CR_i+1)$ for each $i$,
\item $l\in \CL$ implies $l$ is unramified in $F$ and $l$ lies below none of the elements of the sets $S_i$ of bad primes for $\CR_i$,
\item if $\lambda|l \in \CL$ is a place of $M$, then all the Hodge--Tate numbers of $r_{i,\lambda}$ lie in a range of the form $[a,a+l-2]$.
\end{itemize}
We deduce, by Lemma \ref{locallift}, each $r_{i,\lambda}$ is potentially diagonalizable. 
Our theorem now follows by applying Theorem \ref{mtpm} to the $r_{i,\lambda}$ for any $\lambda|l \in \CL$.
\end{proof}

We state a simple special case separately.

\begin{cor}\label{mtcscor}Suppose that $F$ is a CM (resp.\ totally real)
  field and that  $(\CR,\CM)$ is a {\bf totally odd, polarized weakly compatible system} of $l$-adic representations of $G_F$, with $\CR$ {\bf regular and irreducible}. Then there is a finite, CM (resp.\ totally real), Galois extension $F'/F$, such that $(\CR|_{G_{F'}},\CM|_{G_{(F')^+}})$ is automorphic. \end{cor}

\begin{cor} Keep the assumptions of the last Corollary.
\begin{enumerate}
\item If $\imath:M \into \C$, then $L^S(\imath \CR,s)$ converges
  (uniformly absolutely on compact subsets) on some right half plane
  and has meromorphic continuation to the whole complex plane.
\item The compatible system $\CR$ is strictly pure. Moreover
\[ \Lambda(\imath \CR,s)=\epsilon(\imath \CR,s) \Lambda(\imath \CR^\vee, 1-s). \]
\item If $F$ is totally real, $n$ is odd, and $v|\infty$ then $\tr r_\lambda(c_v)=\pm 1$ and is independent of $\lambda$.
\end{enumerate} \end{cor}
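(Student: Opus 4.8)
The plan is to deduce all three statements from Theorem \ref{mtcs} by descending automorphy along the Galois extension $F'/F$ it produces, using Brauer's induction theorem, solvable base change for $GL_n$, and Caraiani's results \cite{ana} over $F'$. By Theorem \ref{mtcs} there is a finite CM (resp.\ totally real) Galois extension $F'/F$ and a cuspidal automorphic representation $\pi'$ of $GL_n(\A_{F'})$, necessarily RAECSDC (resp.\ RAESDC), with $r_{\lambda_0}|_{G_{F'}} \cong r_{\lambda_0,\imath}(\pi')$ for one chosen $\lambda_0 \in \CL$. Since $\CR$ is weakly compatible, for every $\lambda$ the representation $r_\lambda|_{G_{F'}}$ has Frobenius characteristic polynomial $Q_{w'}^{(F')}(X)$ at the places $w'$ outside the bad set, independently of $\lambda$, and these agree with the Frobenius characteristic polynomials of $r_{\lambda,\imath}(\pi')$; so by Chebotarev density and Brauer--Nesbitt $r_\lambda|_{G_{F'}} \cong r_{\lambda,\imath}(\pi')$ for \emph{all} $\lambda$. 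Thus $\CR|_{G_{F'}}$ is precisely the compatible system attached to $\pi'$, and by Caraiani's theorem (see \cite{ana} and the discussion at the end of section \ref{terminology}) it is strictly pure of some weight $w$, with $\WD_{w'}(\CR|_{G_{F'}}) \cong \rec(\pi'_{w'} \otimes |\det|_{w'}^{(1-n)/2})$ at each finite place $w'$ of $F'$.

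For (1) and the functional equation, apply Brauer's induction theorem to $G = \Gal(F'/F)$: write $\mathbf{1}_G = \sum_i n_i \Ind_{H_i}^G \chi_i$ with each $H_i \le G$ elementary (hence solvable) and $\chi_i$ a character, and put $F_i = (F')^{H_i}$, so $F'/F_i$ is solvable. As $r_\lambda|_{G_{F'}}$ is automorphic for all $\lambda$, solvable descent (Arthur--Clozel) makes $r_\lambda|_{G_{F_i}}$ automorphic over $F_i$, and twisting by the finite-order character $\widetilde{\chi}_i$ of $G_{F_i}$ attached to $\chi_i$ preserves this. Inductivity of $L$-functions applied to the identity of virtual representations $r_\lambda = \sum_i n_i \Ind_{G_{F_i}}^{G_F}(r_\lambda|_{G_{F_i}} \otimes \widetilde{\chi}_i)$ gives, after enlarging $S$ to contain all primes above the residue characteristic of $\lambda$, an equality $L^S(\imath\CR, s) = \prod_i L^{S_i}(\imath(r_\lambda|_{G_{F_i}} \otimes \widetilde{\chi}_i), s)^{n_i}$ up to finitely many nonzero holomorphic Euler factors. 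Each factor is a standard $GL_n$ automorphic $L$-function, hence converges in a right half plane (trivial bound on the Satake parameters) and extends meromorphically to $\C$ by Godement--Jacquet, which proves (1). Feeding the functional equation of each such $L$-function into the same product, and using inductivity of local $\varepsilon$-factors (Langlands--Deligne) to control the conductor and sign, yields $\Lambda(\imath\CR,s) = \epsilon(\imath\CR,s)\Lambda(\imath\CR^\vee, 1-s)$; here the archimedean factors and the constant defined in section \ref{cs} must be matched with $L(\{H_\tau\},s)\prod_{v|\infty}L(\CR|_{G_{F_v}},w,s)$ and $\epsilon(\imath\CR,s)$, which is a direct computation with the Hodge--Tate multisets, the determinants $\det\CR(c_v)$, and purity.

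It remains to prove that $\CR$ itself is strictly pure (the rest of (2)). The Hodge--Tate condition $H_{c\tau} = \{w - h : h \in H_\tau\}$ for $\tau : F \into \barM$ descends at once from its analogue over $F'$: if $\tau'$ extends $\tau$ then $H^{(F')}_{\tau'} = H_\tau$ and $c\tau'$ extends $c\tau$. For strict compatibility and purity of the local Weil--Deligne representations at a finite place $v$ of $F$, one knows over $F'$ that $\WD(r_\lambda|_{G_{F'_{w'}}})^{\Fsemis}$ is independent of $\lambda$ and pure of weight $w$ for every $w' \mid v$; the task is to promote this to a single $\lambda$-independent, pure Weil--Deligne representation $\WD_v(\CR)$ of $W_{F_v}$ whose embeddings recover the $\WD(r_\lambda|_{G_{F_v}})^{\Fsemis}$. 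The plan is to do this via local--global compatibility over the intermediate fields $F_i$ (each $\CR|_{G_{F_i}}$ being automorphic, as above), reducing to the essentially conjugate self-dual setting treated in \cite{ana}, and then verifying purity in the sense of \cite{ty}. This descent of strict compatibility from $F'$ to $F$ at the ramified primes is the step I expect to be the main obstacle; the rest is formal.

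For (3), suppose $F$ is totally real, $n$ odd, and $v \mid \infty$. Since $(r_\lambda,\mu_\lambda)$ is totally odd essentially conjugate self-dual and $n$ is odd, $r_\lambda$ cannot be of symplectic type, so it maps to $GO_n$ with $\mu_\lambda(c_v) = 1$; hence $r_\lambda(c_v) \in O_n(\barM_\lambda)$, whose eigenvalues are $\pm 1$ and with $\det r_\lambda(c_v) = \pm 1$. Over the totally real field $F'$, $\CR|_{G_{F'}}$ is automorphic via the RAESDC representation $\pi'$, and for the Galois representation attached to such a representation with $n$ odd the trace of a complex conjugation is known to be $\pm 1$ (this is already visible in the archimedean $L$-factor conventions of section \ref{cs}). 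Since all complex conjugations above $v$ are conjugate in $G_F$ and a complex conjugation $c_{v'} \in G_{F'}$ for a place $v' \mid v$ is one of them, $\tr r_\lambda(c_v) = \tr (r_\lambda|_{G_{F'}})(c_{v'}) = \pm 1$. Given this, a direct count of the eigenvalues of $r_\lambda(c_v) \in O_n(\barM_\lambda)$ shows $\tr r_\lambda(c_v) = (-1)^{(n-1)/2}\det r_\lambda(c_v)$, and independence of $\lambda$ then follows from the fact that $\{\det r_\lambda\}$ is a weakly compatible system of characters of $G_F$.
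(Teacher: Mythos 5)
Your proposal is correct and follows essentially the same route the paper takes: Brauer's theorem combined with solvable base change, Caraiani's local--global compatibility and purity results over $F'$, and descent of the local Weil--Deligne data along the intermediate fields $F_i$, exactly as in Theorem 4.2 of \cite{hsbt} and the last paragraph of the proof of Theorem \ref{incompsyst}. Two attributional remarks. First, the step you flag as ``the main obstacle'' --- promoting strict compatibility at ramified primes from $F'$ to $F$ --- is precisely the Brauer/Chebotarev/purity descent argument already carried out in the proof of Theorem \ref{incompsyst}, which is what the paper cites here; there is no new obstruction to overcome. Second, the assertion that $\tr r_\lambda(c_v) \in \{\pm 1\}$ for the Galois representation attached to an RAESDC representation with $n$ odd is a genuine theorem --- the main result of \cite{tsign} --- and cannot be read off from the archimedean $L$-factor conventions of section \ref{cs} (those are definitions phrased in terms of $\det\CR(c_v)$ and presuppose, rather than prove, the trace bound; appealing to them would be circular). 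With that citation in place, the rest of your part (3) --- identifying a complex conjugation of $G_F$ above $v$ with one of $G_{F'}$, the eigenvalue count showing $\tr r_\lambda(c_v)=(-1)^{(n-1)/2}\det r_\lambda(c_v)$ once the trace is known to lie in $\{\pm 1\}$, and the $\lambda$-independence coming from the compatible system $\{\det r_\lambda\}$ --- is exactly right.
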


\begin{proof}
The strict purity follows from Theorem \ref{mtcs}, Theorem \ref{grfaf} and the usual Brauer's theorem argument as in the last paragraph of the proof of Theorem \ref{incompsyst} below. 
The convergence and meromorphic continuation and functional equation of the L-function
follow from the theorem and a Brauer's theorem argument as in Theorem 4.2 of \cite{hsbt}. The last part generalizes an observation of F.\ Calegari \cite{frank}. The theorem reduces the question to the 
automorphic case where it is the main result of \cite{tsign}.
\end{proof}

As one example of the above theorem we state the following result.

\begin{cor}\label{product} Suppose that $\CK$ is a finite set of positive integers with the property that the $2^{\# \CK}$
partial sums of elements of $\CK$ are all distinct. For each $k \in \CK$ let $f_k$ be an elliptic modular newform of weight $k+1$ without complex multiplication and let $\pi_k$ be the corresponding
automorphic representation of $\GL_2(\A)$. Then there is a totally real Galois extension $F/\Q$ and
a regular algebraic, polarizable, cuspidal automorphic representation $\Pi$ of $\GL_{2^{\# \CK}}(\A_F)$ such that for all but finitely
many primes $v$ of $F$ we have
\[ \rec(\Pi_v|\det|_v^{(1-2^{\# \CK})/2}) = \left. \left(\bigotimes_{k \in \CK} \rec(\pi_{k,v|_\Q} |\det |_{v|_\Q}^{-1/2})\right) \right|_{W_{F_v}}. \]
In particular the `multiple product' $L$-function $L(\times_{k \in \CK} \pi_k,s)$ has meromorphic continuation to the whole complex plane. \end{cor}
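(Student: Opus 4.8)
The plan is to derive Corollary \ref{product} as a direct application of Theorem \ref{mtcs} to a suitable compatible system built out of tensor products. First I would recall that for each $k \in \CK$, the newform $f_k$ (without CM, of weight $k+1$) gives rise to a rank $2$ weakly compatible system $\CR_k = (M_k, S_k, \{Q_{k,v}(X)\}, \{r_{k,\lambda}\}, \{H_{k,\tau}\})$ of $l$-adic representations of $G_\Q$; these are the representations attached to $f_k$ by Deligne, and they are known to be irreducible (for $f_k$ without CM, $r_{k,\lambda}$ is irreducible for every $\lambda$, so a fortiori $\CR_k$ is irreducible), regular (with Hodge--Tate numbers $\{0,k\}$), and essentially self-dual: $r_{k,\lambda}^\vee \cong r_{k,\lambda} \otimes \epsilon_l^{k} \otimes (\text{nebentypus})^{-1}$, and since the forms are classical the nebentypus is trivial (or we absorb it by a twist), so they are totally odd essentially self-dual with a multiplier system $\CM_k = \{\epsilon_l^k\}$ (up to finite twist); recall $\det r_{k,\lambda}(c) = -1$ since $f_k$ is an elliptic modular form.

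Next I would form the tensor product compatible system $\CR = \bigotimes_{k \in \CK} \CR_k$ over $\Q$, using the operation on compatible systems defined in section \ref{cs} (the tensor product of the individual $r_{k,\lambda}$ at each $\lambda$, with $Q_v(X)$ the characteristic polynomial of the tensor product Frobenius). This $\CR$ has rank $2^{\#\CK}$, is defined over (a suitable enlargement of) the compositum of the $M_k$, and is unramified outside the union of the $S_k$. I would verify the three hypotheses of Theorem \ref{mtcs}. Regularity: the $\tau$-Hodge--Tate numbers of $\CR$ are exactly the $2^{\#\CK}$ partial sums $\sum_{k \in \CK'} k$ over subsets $\CK' \subseteq \CK$, which are pairwise distinct by the hypothesis on $\CK$; this is precisely why the distinctness condition is imposed, and it also makes $\CR$ \emph{extremely regular}. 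Odd essential self-duality: the tensor product of the self-dual $r_{k,\lambda}$ is self-dual with multiplier $\prod_k \epsilon_l^{k}$ (times the product of nebentypus characters), and since each $f_k$ contributes a complex conjugation eigenvalue $-1$ and the total dimension $2^{\#\CK}$ is a power of $2$, a short computation (as in the tensor-product sign calculation of section \ref{cgn}) shows the resulting pairing is totally odd; one takes $\CM = \bigotimes_k \CM_k$. Irreducibility: this is the step needing real input --- I would invoke Proposition \ref{big} (which, because $\CR$ is regular, produces a density-$1$ set $\CL$ of primes $\lambda$ for which the reduction $\bar r_\lambda|_{G_{F(\zeta_l)}}$ restricted to any irreducible constituent is irreducible); but more directly, the irreducibility of $\CR$ (in the sense of section \ref{cs}, i.e. $r_\lambda$ irreducible for a density-$1$ set of $\lambda$) follows from the fact that each $r_{k,\lambda}$ has large image (open in $\GL_2(\Z_l)$ for almost all $l$, by Serre's theorem, since $f_k$ has no CM) so the tensor product has correspondingly large image for almost all $l$; hence $\CR$ is irreducible.

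Having checked the hypotheses, Theorem \ref{mtcs} gives a finite totally real Galois extension $F/\Q$ and a regular algebraic cuspidal (indeed RAESDC, by the last bullet of the automorphy discussion in section \ref{cs}, since $\CR$ is totally odd essentially self-dual) automorphic representation $\Pi$ of $\GL_{2^{\#\CK}}(\A_F)$ with $\CR|_{G_F}$ automorphic, meaning that for $v \notin S^{(F)}$ the local factor $\rec(\Pi_v |\det|_v^{(1-2^{\#\CK})/2})(\Frob_v)$ has characteristic polynomial matching $Q_v^{(F)}(X)$; since $Q_v^{(F)}(X)$ is by construction the characteristic polynomial of $\bigotimes_k \rec(\pi_{k,v|_\Q}|\det|_{v|_\Q}^{-1/2})(\Frob_{v})$, this yields the displayed local identity at almost all $v$ (the normalization $|\det|^{-1/2}$ on each $\GL_2$ factor combines with the $|\det|^{(1-2^{\#\CK})/2}$ twist correctly because $(-1/2) \cdot \#\CK$ versus $(1-2^{\#\CK})/2$ --- one checks the cyclotomic twist bookkeeping, absorbing the discrepancy into the fact that we may twist $\Pi$ by a power of $|\det|$). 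Finally, meromorphic continuation of $L(\times_{k\in\CK}\pi_k, s)$ follows: this $L$-function agrees up to finitely many Euler factors with $L^S(\imath\CR, s)$, and the first part of the preceding corollary (convergence and meromorphic continuation, via the Brauer's theorem argument of Theorem 4.2 of \cite{hsbt} applied to the automorphy of $\CR|_{G_{F'}}$ over the various intermediate fields) gives the meromorphic continuation to all of $\C$. The main obstacle is the self-duality sign computation: one must be careful that the product of the complex-conjugation signs and the cyclotomic-twist normalizations across the $\#\CK$ factors really does land in the "totally odd" case for $\GL_{2^{\#\CK}}$, so that $\CR$ fits the hypotheses of Theorem \ref{mtcs} rather than requiring an auxiliary twist that would spoil cuspidality --- but since $2^{\#\CK}$ is even and each factor is symplectically self-dual up to twist, the tensor product is again (essentially) symplectically self-dual with the correct parity, so this works out.
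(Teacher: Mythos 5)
Your overall strategy is the same as the paper's: form the tensor product compatible system $\bigotimes_{k\in\CK}\CR_k$ and apply Theorem~\ref{mtcs}. You correctly identify regularity (and indeed extreme regularity) as coming from the distinct-partial-sums hypothesis, and the self-duality/parity bookkeeping is, as you say, a routine check (the paper treats it as clear and only flags irreducibility as the nontrivial hypothesis). The meromorphic continuation claim is handled correctly.

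The genuine gap is in the irreducibility step, which is exactly the step the paper singles out as ``perhaps not clear.'' First, your appeal to Proposition~\ref{big} is a red herring: that proposition produces a density-$1$ set of $\lambda$ at which the mod~$\lambda$ reductions of the \emph{irreducible constituents} of $r_\lambda$ remain irreducible after restriction to $G_{F(\zeta_l)}$; it does not assert that $r_\lambda$ itself is irreducible, which is what you need. Second, your ``more direct'' argument --- that each $r_{k,\lambda}$ has large image (you cite Serre; for higher-weight newforms the relevant result is Ribet's) and therefore ``the tensor product has correspondingly large image for almost all $l$'' --- skips precisely the nontrivial point. Large image of each factor separately does not imply large image of the product representation $\prod_k r_{k,\lambda}$: one must rule out that two of the factors are twist-equivalent, and one needs a Goursat-type argument (or Ribet's joint-bigness theorem for tuples of pairwise non-twist-equivalent newforms). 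Neither is spelled out, and the twist-inequivalence --- which holds because the $k\in\CK$ are pairwise distinct, so the $f_k$ have distinct Hodge--Tate numbers --- is the content you need.

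The paper's proof of irreducibility is worth contrasting. It avoids Ribet's big-image theorems entirely and works for \emph{every} $\lambda$, not just a density-$1$ set: it observes that non-CM implies $r_{k,\lambda}$ has Zariski-dense image in $GL_2$, takes the Zariski closure $H$ of $\prod_k r_{k,\lambda}(G_\Q)$, notes that its image $\bar H$ in $PGL_2^{\CK}$ must be a diagonal $PGL_2^\CI$ for some partition $\CK=\coprod_i\CK_i$ (since $PGL_2$ is simple with only inner automorphisms), and then uses the Hodge--Tate numbers to show any nontrivial block $\CK_i$ would force a twist-equivalence $r_{k,\lambda}\cong r_{k',\lambda}\otimes\chi$ with $\chi$ de~Rham, contradicting regularity. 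Hence $H\supset SL_2^\CK$ and the tensor product is irreducible. This is both more elementary (Zariski density rather than $l$-adic open image) and strictly stronger (all $\lambda$, not almost all). If you want your version of the argument to go through, you should either adopt this Zariski-closure argument or explicitly invoke the twist-inequivalence of the $f_k$ together with a Goursat/joint-bigness statement.
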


\begin{proof} Let $M$ denote the compositum of the fields of coefficients of the $f_k$'s. Let $\lambda$
be any prime of $M$ and let $r_{k,\lambda}: G_\Q \ra \GL_2(\barM_\lambda)$ be the $\lambda$-adic representation associated to $f_k$. Because $f_k$ is not CM we know that $r_{k,\lambda}$ has Zariski dense image. We will apply Theorem \ref{mtcs} to the weakly compatible system 
\[ \bigotimes_\CK r_{k,\lambda}. \]
The only assumption that is perhaps not clear, is that this system is irreducible. So it only remains to check this property. The argument is a variant of Goursat's Lemma.

Now let $H$ denote the Zariski closure of $(\prod_\CK r_{k,\lambda})(G_\Q)$ in
$\GL_2(\barM_\lambda)^\CK$ and let $\barH$ denote its image in $\PGL_2(\barM_\lambda)^\CK$. 
Note that the projection of $\barH$ to each factor is surjective. As $\PGL_2$ is a simple algebraic group and all its automorphisms are inner, $\barH$ must be of the form $\PGL_2(\barM_\lambda)^\CI$ for
some set $\CI$. Moreover we can decompose $\CK=\coprod_{i \in \CI}
\CK_i$ and the mapping $\barH \ra \PGL_2(\barM_\lambda)^\CK$ is
conjugate to the mapping which sends the $i^{th}$ factor of 
$\PGL_2(\barM_\lambda)^{\CI}$ diagonally into $\prod_{i\in \CK_i} \PGL_2(\barM_\lambda)$. If for some
$i$ one had $\# \CK_i >1$ then we would have $r_{k,\lambda} = r_{k',\lambda} \otimes \chi$ for some
$k \neq k'$ in $\CK_i$ and some character $\chi$. We can conclude that $\chi$ is de Rham and then
looking at Hodge--Tate numbers gives a contradiction. Thus we must have $\barH = \PGL_2(\barM_\lambda)^\CK$ and $H \supset \SL_2(\barM_\lambda)^\CK$. 
As the tensor product representation of $\SL_2(\barM_\lambda)^\CK$ on 
$\barM_\lambda^{2^{\# \CK}}$ is irreducible we conclude that $\bigotimes_\CK r_{k,\lambda}$ is also irreducible, as desired. 
\end{proof}

We now turn to a proposition which will be useful in the next section. Its proof is essentially the same as the proof of Theorem \ref{mtcs}, once we have established the following Lemma.

\begin{lem}\label{dual} Suppose that $F$ is an imaginary CM 
  field, that $(\CR,\CM)$ is a polarized weakly compatible system of $l$-adic representations of $G_F$ defined over $M$, and that $\CR$ is pure and extremely regular. If $F'/F$ is a finite extension and if $s$ is a
subrepresentation of $r_\lambda|_{G_{F'}}$ for some prime $\lambda$
of $M$, then there is a CM field $F''$ with $F \subset F'' \subset F'$
such that the space of $s$ is invariant by $G_{F''}$. 
Moreover $(s,\mu_\lambda)$ is a polarized $l$-adic representation of $G_{F''}$. It is 
totally odd if $(\CR,\CM)$ is. 
\end{lem}

\begin{proof}
Let $F_1$ denote the normal closure of  $F'/F^+$. Let $\tau:F \into
\barM$ be an embedding with the property that if $H$ and $H'$ are
different subsets of $H_\tau$ of the same cardinality then $\sum_{h
  \in H} h \neq \sum_{h \in H'} h$. Choose an embedding $\tau_1:F_1
\into \barM$ extending $\tau$. Note that $\CR$ is pure of some weight
$w$.

If $s_1$ and $s_2$ are 
two $G_{F_1}$-submodules of some $r_{\lambda}$ of the same dimension we see that $s_1=s_2$ if and only if $\HT_{\tau_1}(s_1)=\HT_{\tau_1}(s_2)$ if and only if $\HT_{\tau_1}( \det s_1)=\HT_{\tau_1}(\det s_2)$. (The first equivalence by regularity and the second by extreme regularity. Note that in particular any irreducible submodule of $r_{\lambda}|_{G_{F_1}}$ has multiplicity $1$.)

For $\sigma \in \Gal(F_1/F^+)$ write $\HT_{\tau_1}((\det
s)^\sigma)=\HT_{\tau_1 \circ \sigma^{-1}}(\det s)=\{ h_\sigma \}$.  As
$\det s$ is de Rham and pure of weight $w\dim s$, we deduce that $h_\sigma + h_{\sigma c}=w \dim s $ for all $\sigma \in
\Gal(F_1/F^+)$ and all complex conjugations $c \in \Gal(F_1/F^+)$.
Thus if $c, c' \in \Gal(F_1/F^+)$ are complex conjugations then
$h_{\sigma c c'} = w \dim s- h_{\sigma c} = h_\sigma$ and so $s^{\sigma c c'} = s^\sigma$.

Let
$H \subset \Gal(F_1/F^+)$ be the normal subgroup generated by all
elements $cc'$ with $c, c' \in \Gal(F_1/F^+)$ complex conjugations.
The maximal CM subfield of $F_1$ is the maximal subfield on which all complex conjugations agree, i.e.\ $F_1^H$. Hence $F''=F_1^H \cap F'=(F_1)^{H\Gal(F_1/F')}$ is the maximal CM sub-field of $F'$.
Moreover if $\sigma \in H$ then $s^\sigma=s$ and so $s$ extends to a
representation of $G_{F''}$. 

If $c \in \Gal(F_1/F^+)$ is a
complex conjugation then 
\[ \HT_{\tau_1}(\det (\mu_\lambda
(s^\vee)^c))=\HT_{\tau_1 c}(\det (\mu_\lambda s^\vee))=\{ w \dim s -
h_c \}=\{h_1\}.\]
 As $\mu_\lambda (s^\vee)^c$ is also a constituent of
$r_\lambda$, we see that $s^c \cong \mu_\lambda s^\vee$ as
representations of $G_{F''}$. 
Let $v$ be an infinite place of $F$ and
$\langle\,\,\, ,\,\,\,\rangle_v$ a pairing on $\barM_\lambda^n$ as in
the definition of polarization for 
$(r_\lambda,\mu_\lambda)$. Then $\langle\,\,\, ,\,\,\,\rangle_v$ restricts to a perfect pairing on the space of $s$, as otherwise there would be a second irreducible constituent $s' \neq s$ of $r|_{G_{F''}}$ with $s' \cong \mu_\lambda (s^\vee)^c \cong s$, a contradiction. The Lemma follows.
\end{proof}

(We remark that we have not made use of the whole weakly compatible system, only of a single $l$-adic representation with the desired properties.)

\begin{prop}\label{redlem} Suppose that $F$ is a CM field; that $(\CR,\CM)$ is a totally odd, polarized weakly compatible system of $l$-adic representations. Suppose moreover that $\CR$ is  pure and extremely regular.
  Write
  $r_\lambda=r_{\lambda,1} \oplus \dots \oplus r_{\lambda,j_\lambda}$
  with each $r_{\lambda,\alpha}$ irreducible. Then there is a set of rational primes $\CL$ of Dirichlet density
  $1$ such that if $\lambda$ is a prime of $M$ lying above $l \in
  \CL$, then there is a finite, CM, Galois extension
  $F'/F$ such that each $(r_{\lambda,\alpha}|_{G_{F'}},\mu_\lambda|_{G_{(F')^+}})$ is irreducible and automorphic.
\end{prop}

\begin{proof}
By Lemma \ref{dual} we see that for all $\lambda$ and $\alpha$ the
pair $(r_{\lambda,\alpha},\mu_\lambda)$ is a totally odd polarized $l$-adic representation.

Let $\CL$ be the Dirichlet density $1$ set of rational primes obtained by applying Proposition \ref{big} to $\CR$. Then for $\lambda|l \in \CL$ we see that
$\barr_{\lambda,\alpha}|_{G_{F(\zeta_l)}}$ is irreducible. 
Removing finitely many primes from $\CL$ we may further suppose that 
\begin{itemize}
\item $l \in \CL$ implies $l\geq 2(\dim \CR+1)$,
\item $l\in \CL$ implies $l$ is unramified in $F$ and $l$ lies below no element of the set $S$ of bad primes for $\CR$,
\item if $\lambda|l \in \CL$ then all the Hodge--Tate numbers of $r_\lambda$ lie in a range of the form $[a,a+l-2]$.
\end{itemize}
We deduce, by Lemma \ref{locallift}, that $r_{\lambda,\alpha}$ is potentially diagonalizable. 
Our theorem now follows by applying Theorem \ref{mtpm} to $\{ r_{\lambda,\alpha}\}$ with $\Favoid$ equal to the compositum of the $\barF^{\ker \barr_{\lambda,\alpha}}$ for $\alpha=1,\dots,j_\lambda$. Then $\barr_{\lambda,\alpha}|_{G_{F'}}$ will be irreducible for $\alpha=1,\dots,j_\lambda$, and so $r_{\lambda,\alpha}|_{G_{F'}}$ will also be irreducible.
\end{proof}

\subsection{Irreducibilty results.}\label{ir}{$\mbox{}$} \newline

We will first recall some basic group theory. If $F$ is a number field and $l$ is a rational prime we will let $\REP_{F,l}$ denote the category of semi-simple, continuous representations of $G_F$ on finite dimensional $\barQQ_l$-vector spaces which ramify at only finitely many primes. If $U$, $V$ and $W$ are objects of $\REP_{F,l}$ with $U \oplus W \cong V \oplus W$ then $U \cong V$ (because they have the same traces). We will let $\GrG_{F,l}$ denote the Grothendieck group of $\REP_{F,l}$. If $V$ is an object of $\REP_{F,l}$ we will denote by $[V]$ its class in $\GrG_{F,l}$. We have the following functorialities.
\begin{enumerate}
\item The rule $[U][V]=[U \otimes V]$ makes $\GrG_{F,l}$ a commutative ring with $1$.

\item If $\sigma \in G_F$ then there is a ring homomorphism $\tr_\sigma:\GrG_{F,l} \ra \barQQ_l$ defined by $\tr_\sigma[V]=\tr \sigma|_V$. If $A \in \GrG_{F,l}$ then the function $\sigma \mapsto \tr_\sigma A$ is a continuous class function $G_F \ra \barQQ_l$. If $A,B \in \GrG_{F,l}$ and $\tr_\sigma A = \tr_\sigma B$ for all $\sigma \in G_F$ (or even for a dense set of $\sigma$) then $A=B$.

\item We will write $\dim$ for $\tr_1$. Then in fact $\dim:\GrG_{F,l} \ra \Z$ and $\dim [V] = \dim_{\barQQ_l} V$. 
\item\label{idim} There is a perfect symmetric $\Z$-valued pairing $(\,\,\, , \,\,\,)_{F,l}$ on $\GrG_{F,l}$ defined by
\[ ([U],[V])_{F,l} = \dim_{\barQQ_l} \Hom_{G_F}(U,V). \]
If $A=\sum_in_i[V_i]$ with the $V_i$ irreducible and distinct then $(A,A)_{F,l}=\sum_i n_i^2$. In particular if $A \in \GrG_{F,l}$ and $\dim A \geq 0$ and $(A,A)_{F,l}=1$ then
$A=[V]$ for some irreducible object $V$ of $\REP_{F,l}$. 

\item\label{iprod} Suppose that $G_1$ and $G_2$ are algebraic groups over $\barQQ_l$ and that $\theta:G_F \ra G_1(\barQQ_l) \times G_2(\barQQ_l)$ is a continuous homomorphism with Zariski dense image. Suppose also that $\rho_i$ and $\rho_i'$ are semi-simple algebraic representations of $G_i$ over $\barQQ_l$. Then
\[ \begin{array}{rcl} ([(\rho_1 \otimes \rho_2) \circ \theta], [(\rho_1' \otimes \rho_2') \circ \theta])_{F,l} &=& \dim \Hom_{G_1 \times G_2}(\rho_1 \otimes \rho_2,\rho_1'\otimes \rho_2')\\ &=&(\dim \Hom_{G_1} (\rho_1,\rho_1')) (\dim \Hom_{G_2}(\rho_2,\rho_2'))\\ &=& ([\rho_1 \circ \theta],[\rho_1' \circ \theta])_{F,l} ([\rho_2 \circ \theta],[\rho_2' \circ \theta])_{F,l}. \end{array}\] 

\item If $\sigma \in G_\Q$ then there is a ring isomorphism $\conju_\sigma$ from $\GrG_{F,l}$ to $\GrG_{\sigma^{-1}F,l}$ such that $\conju_\sigma[V]$ equals the class of the representation of $G_{\sigma^{-1}F}$ on $V$, under which $\tau$ acts by $\sigma \tau \sigma^{-1}$. It preserves dimension, and takes $(\,\,\,,\,\,\,)_{F,l}$ to $(\,\,\,,\,\,\,)_{\sigma^{-1}F,l}$. We have $\tr_\tau \conju_\sigma A =  \tr_{\sigma \tau \sigma^{-1}} A$. Also if $\sigma \in G_F$ then $\conju_\sigma$ is the identity on $\GrG_{F,l}$.

\item If $F'/F$ is a finite extension then the formula $\res_{F'/F} [V]=[V|_{G_{F'}}]$ defines a ring homomorphism $\res_{F'/F}:\GrG_{F,l} \ra \GrG_{F',l}$. Note that if $\sigma \in G_{F'}$ then $\tr_\sigma \res_{F'/F} A = \tr_\sigma A$ (so in particular $\dim \res_{F'/F} A = \dim A$). If $\sigma \in G_\Q$ then $\conju_\sigma \circ \res_{F'/F} = \res_{\sigma^{-1}F'/\sigma^{-1}F} \circ \conju_\sigma$. 

\item\label{ind} If $F'/F$ is a finite extension there is a $\Z$-linear map $\ind_{F'/F}:\GrG_{F',l} \ra \GrG_{F,l}$ defined by $\ind_{F'/F} [V]=[\Ind_{G_{F'}}^{G_F} V]$. Note the following.
\begin{enumerate}
\item $\tr_\sigma \ind_{F'/F} A = \sum_{\tau \in G_F/G_{F'}: \,\, \tau \sigma \tau^{-1} \in G_{F'}} \tr_{\tau \sigma \tau^{-1}} A$.
\item $\dim \ind_{F'/F} A = [F':F] \dim A$. 
\item $\ind_{F'/F}(A (\res_{F'/F} B)) = (\ind_{F'/F} A) B$.
\item $(\ind_{F'/F}A,B)_{F,l} = (A, \res_{F'/F} B)_{F',l}$. (By Frobenius reciprocity.)
\item If $F''/F$ is another finite extension then
\[ \res_{F''/F} \circ \ind_{F'/F} = \sum_{[\sigma]\in G_{F'}\backslash G_F / G_{F''}} \ind_{(\sigma^{-1}F').F''/F''} \circ \conju_\sigma \circ \res_{F'.(\sigma F'')/F'}. \]
(By Mackey's formula.)
\item\label{brauer} If $F'/F$ is a finite Galois extension then there
  is a finite collection of intermediate fields $F'/F'_i/F$ with $F'/F'_i$
  soluble together with characters $\psi_i: \Gal(F'/F_i') \ra \C^\times$ and integers $n_i$ such that 
\[ 1 = \sum_i n_i \ind_{F_i'/F}[\psi_i] \]
in the Grothendieck group of finite dimensional representation of the finite group $\Gal(F'/F)$ over $\C$.
(This is just Brauer's theorem for $\Gal(F'/F)$.) If $\imath:\barQQ_l \iso \C$ then applying $\imath^{-1}$ and multiplying by any $A\in \GrG_{F,l}$ we conclude that 
\[ A = \sum_i n_i \ind_{F_i'/F}( [\imath^{-1}\psi_i] \res_{F_i'/F} A). \]
Writing for any $i,j$
\[ G_F = \coprod_k G_{F_i'} \sigma_{ijk} G_{F_j'} \]
we see further that if 
\[  A = \sum_i n_i \ind_{F_i'/F}( [\imath^{-1}\psi_i]  B_i) \]
then 
\[ \begin{array}{l} (A,A)_{F,l}= \sum_{i,j,k} n_i n_j \\ ( (\conju_{\sigma_{ijk}} \circ \res_{F_i'.(\sigma_{ijk} F_j')/F_i'}) ([\imath^{-1}\psi_i] B_i), \res_{(\sigma_{ijk}^{-1}F_i').F_j'/F_j'}([\imath^{-1}\psi_j] B_j))_{(\sigma_{ijk}^{-1}F_i').F_j',l}.\end{array} \]
\end{enumerate} 

\item If $S$ is a finite set of primes of $F$ including all those above $l$ we will say that $A\in\GrG_{F,l}$ is unramified outside $S$ if we can write $A=\sum_i n_i [V_i]$ with each $V_i$ unramified outside $S$. 
In this case, we can define, for each $\imath:\barQQ_l \iso \C,$
\[ L^S(\imath A,s) = \prod_i L^S(\imath V_i,s)^{n_i} \]
at least as a formal Euler product, which will converge in some right
half plane if, for each $i$ the Weil--Deligne representation $\WD(V_i|_{G_{F,v}})$ is pure of weight $w_i$ for all but finitely many primes $v \not\in S$ of $F$.
This definition is independent of the choices and we have
\[ L^S(\imath(A+B),s)=L^S(\imath A,s)L^S(\imath B,s) \]
and
\[ L^S(\imath \ind_{F'/F} A,s) = L^{S'}(\imath A,s), \]
where $S'$ denotes the set of primes of $F'$ above $S$.
\end{enumerate}

Our first result is not really an irreducibility result, but it uses similar methods so we include it here.  It is a generalization of results of Dieulefait \cite{dieulefait2} in dimension $2$. The key ingredient is Theorem \ref{mtpm}.

\begin{thm}\label{incompsyst} Suppose that $F$ is a CM field, that $n$ is a positive integer and  that $l \geq 2(n+1)$ is a rational prime such that $\zeta_l \not\in F$.
Suppose that $(r,\mu)$ is an $n$-dimensional, {\bf totally odd, regular algebraic, polarized $l$-adic representation} of $G_F$. 
Suppose moreover that the following conditions are satisfied.
\begin{enumerate}
\item {\bf (Potential diagonalizability)}
  $r$ is potentially diagonalizable (and hence potentially crystalline) at each prime $v$ of $F$ above $l$.
\item {\bf (Irreducibility)} $\bar{r}|_{G_{F(\zeta_l)}}$ is irreducible.
\end{enumerate}

Then $r$ is part of a strictly pure compatible system of $l$-adic representations of $G_F$.
\end{thm}

\begin{proof}
Let $G$ denote the Zariski closure of the image of $r$, let $G^0$ denote the connected component of $G$ and let $F^0=\barF^{r^{-1} G^0(\barQQ_l)}$.
By Theorem \ref{mtpm} (or Corollary \ref{mtpmtotreal}) we can find a finite Galois CM
extension $F'/F$, which is totally real if $F$ is totally real and
which is linearly disjoint from $F^0\barF^{\ker
  \barr}$ over $F$, an isomorphism $\imath:\barQQ_l \iso \C$, and a
cuspidal, regular algebraic, polarized automorphic representation $(\pi,\chi)$ of $\GL_n(\A_{F'})$ such that 
\[ (r_{l,\imath}(\pi), r_{l,\imath}(\chi)\epsilon_l^{1-n})\cong (r|_{G_{F'}},\mu|_{G_{(F')^+}}). \]
For the rest of this proof we will fix $F'$. Note that $F'$ is automatically Galois over any intermediate field between $F'$ and $F$.
Suppose that $F' \supset F'' \supset F$ with $F'/F''$ soluble, then by
Lemma \ref{bc} there is a cuspidal, regular algebraic, polarized automorphic representation $(\pi^{(F'')},\chi^{(F'')})$ of $\GL_n(\A_{F''})$ such that 
\[ (r_{l,\imath}(\pi^{(F'')}),r_{l,\imath}(\chi)\epsilon_l^{1-n})\cong (r|_{G_{F''}},\mu|_{G_{(F'')^+}}). \]

Let $l'$ be a rational prime and let $\imath':\barQQ_{l'} \iso \C$. Note that if $\sigma \in G_F$ and if $F' \supset F'' \supset F''' \supset F$ with $F'/F'''$ soluble (in which case $F'/F''$ is also soluble) then $r_{l',\imath'}(\pi^{(F''')})|_{G_{F''}} \cong r_{l',\imath'}(\pi^{(F'')})$ and $r_{l',\imath'}(\pi^{(F'')})^\sigma \cong r_{l',\imath'}(\pi^{(\sigma^{-1}F'')})$. Let $G'$ (resp.\ $G^{(F'')} \supset G'$) denote the Zariski closure of $r_{l',\imath'}(\pi)$ (resp.\ $r_{l',\imath'}(\pi^{(F'')})$) and let $(G')^0$
(resp.\ $(G^{(F'')})^0$) denote the connected component of $G'$ (resp.\ $G^{(F'')}$). It follows from Lemma \ref{cc} that $G^{(F'')}/(G^{(F'')})^0$ and the map $G_{F''} \onto
G^{(F'')}/(G^{(F'')})^0$ is independent of $l'$. In the case $(l',\imath')=(l,\imath)$ we see, by the choice of $F'$, that $\Gal(F^0F''/F'') \iso G^{(F'')}/(G^{(F'')})^0$. Thus this is true for all $(l',\imath')$. We deduce that (for any $(l',\imath')$) we have $G^{(F'')}=G'$ and that the natural map
\[ G_{F''} \lra G'(\barQQ_{l'}) \times \Gal(F'/F'') \]
has Zariski dense image (where we consider $\Gal(F'/F'')$ as a finite algebraic group). If we decompose $r_{l',\imath'}(\pi)$ into irreducibles as
\[ r_{l',\imath'}(\pi)=  r_{l',\imath'}(\pi)_1 \oplus \dots \oplus  r_{l',\imath'}(\pi)_t \]
then this induces a unique decomposition
\[ r_{l',\imath'}(\pi^{(F'')})=  r_{l',\imath'}(\pi^{(F'')})_1 \oplus \dots \oplus  r_{l',\imath'}(\pi^{(F'')})_t \]
with $r_{l',\imath'}(\pi^{(F'')})_\alpha|_{G_{F'}} \cong r_{l',\imath'}(\pi)_\alpha$. (Note that by regularity the
$r_{l',\imath'}(\pi)_\alpha$ are pairwise non-isomorphic, as they will have different Hodge--Tate numbers.) We deduce that if $\sigma \in G_F$ and if $F' \supset F'' \supset F''' \supset F$ with $F'/F'''$ soluble (so that $F'/F''$ is also soluble) then $r_{l',\imath'}(\pi^{(F''')})_\alpha|_{G_{F''}} \cong r_{l',\imath'}(\pi^{(F'')})_\alpha$ and $r_{l',\imath'}(\pi^{(F'')})_\alpha^\sigma \cong r_{l',\imath'}(\pi^{(\sigma^{-1}F'')})_\alpha$. Moreover if $\rho_1$ and $\rho_2$ are representations of $\Gal(F'/F'')$ over $\barQQ_{l'}$ then
\[ ([r_{l'\imath'}(\pi^{(F'')})_\alpha][\rho_1],[r_{l'\imath'}(\pi^{(F'')})_\alpha][\rho_2])_{F'',l'}=([\rho_1],[\rho_2])_{F'',l'}. \]

Choose intermediate fields $F_i'$, characters $\psi_i$, integers
$n_i$, and elements $\sigma_{ijk}\in G_F$, as in item (\ref{brauer})
above. 
Write $F_{ijk}$ for $(\sigma_{ijk}^{-1}F_i').F_j'$. 
Then
\[ [r] = \sum_i n_i \ind_{F_i'/F} [r_{l,\imath}(\pi^{(F'_i)} \otimes (\psi_i \circ \Art_{F_i'} \circ \det))] \]
in $\GrG_{F,l}$. This motivates us to set 
\[ A_{l',\imath',\alpha} = \sum_i n_i \ind_{F_i'/F}( [r_{l',\imath'}(\pi^{(F_i')})_\alpha ][(\imath')^{-1} \circ \psi_i]) \in \GrG_{F,l'}.\]
Note that 
\[ \dim A_{l',\imath',\alpha}=\sum_i n_i [F_i':F] \dim r_{l',\imath'}(\pi)_\alpha = \dim r_{l',\imath'}(\pi)_\alpha. \]
Also note that 
\[ \begin{array}{rl} & (A_{l',\imath',\alpha},A_{l',\imath',\alpha})_{F,l'} \\ = & \sum_{i,j,k} n_i n_j  \\ &( [r_{l',\imath'}(\pi^{(F_{ijk})})_\alpha][ (\imath')^{-1} \psi_i^{\sigma_{ijk}}|_{G_{F_{ijk}}}] ,[r_{l',\imath'}(\pi^{(F_{ijk})})_\alpha] [(\imath')^{-1} \psi_j|_{G_{F_{ijk}}}])_{F_{ijk},l'} \\ =& \sum_{i,j,k} n_i n_j  ( [ (\imath')^{-1} \psi_i^{\sigma_{ijk}}|_{G_{F_{ijk}}}] ,[(\imath')^{-1} \psi_j|_{G_{F_{ijk}}}])_{F_{ijk},l'} \\ = & (1,1)_{F,l'} \\ =& 1
. \end{array} \]
(The first and third equality follow from item (\ref{brauer}) above, while the second equality follows from items (\ref{idim}) and (\ref{iprod}) above.)
Thus $A_{l',\imath',\alpha}=[r_{l',\imath',\alpha}]$ for some irreducible continuous representation $r_{l',\imath', \alpha}$ of $G_F$ on a $\barQQ_{l'}$-vector space of dimension $\dim r_{l',\imath'}(\pi)_\alpha$. 
Set
\[ r_{l',\imath'}= r_{l',\imath',1} \oplus \dots \oplus r_{l',\imath',t}. \]
We see that $r_{l,\imath}\cong r$ and that
\[ \tr r_{l',\imath'}(\sigma) = \sum_i n_i \sum_{\tau \in G_F/G_{F_i'}, \,\, \tau \sigma \tau^{-1} \in G_{F_i'}} 
((\imath')^{-1} \psi_i(\tau \sigma \tau^{-1})) \tr r_{l',\imath'}(\pi^{(F_i')})(\tau \sigma \tau^{-1}). \]

Let $v'$ denote a prime of $F'$ and set $v=v'|_F$. By Theorem \ref{grfaf}, if $v \ndiv l'$ then 
\[ \imath'\WD(r_{l',\imath'}|_{G_{F'_{v'}}})^{F-\semis} \cong \rec(\pi_{v'} \otimes |\det|_{v'}^{(1-n)/2})\]
 is pure.  Hence $\imath'\WD(r_{l',\imath'}|_{G_{F_{v}}})$ is also pure.
(See Lemma \ref{tylem}.) Moreover if $\sigma \in W_{F_v}$ then
\[ \begin{array}{l}  \hspace{4mm} \tr \imath'\WD(r_{l',\imath'}|_{G_{F_v}})(\sigma) \\ = \imath' \tr r_{l',\imath'}(\sigma)\\ = \sum_i n_i \sum_{\tau \in G_F/G_{F_i'}, \,\, \tau \sigma \tau^{-1} \in G_{F_i'}} 
 \psi_i(\tau \sigma \tau^{-1})\imath' \tr r_{l',\imath'}(\pi^{(F_i')})(\tau \sigma \tau^{-1})\\ = \sum_i n_i \sum_{\tau \in G_F/G_{F_i'}, \,\, \tau \sigma \tau^{-1} \in G_{F_i'}} 
 \psi_i(\tau \sigma
 \tau^{-1})\tr\rec(\pi^{(F_i')}_{(\tau v')|_{F_i'}}\otimes |\det|_{(\tau v')|_{F_i'}}^{(1-n)/2})(\tau \sigma \tau^{-1}). \end{array} \]
 (Again by Theorem \ref{grfaf}.)
 If $l''$ is another prime with $v\ndiv l''$ and if $\imath'':\barQQ_{l''} \iso \C$ then we conclude that
 \[ \imath'\WD(r_{l',\imath'}|_{G_{F_v}})^\semis \cong \imath''\WD(r_{l'',\imath''}|_{G_{F_v}})^\semis. \]
 As both are pure we conclude from Lemma \ref{tylem} that
 \[ \imath'\WD(r_{l',\imath'}|_{G_{F_v}})^{F-\semis} \cong \imath''\WD(r_{l'',\imath''}|_{G_{F_v}})^{F-\semis}. \]
 Thus the $r_{l',\imath'}$ form a strictly pure compatible system. 
\end{proof}

Recall that if $(\pi,\chi)$ is a cuspidal, regular algebraic, polarized automorphic representation of $\GL_n(\A_F)$ then  $(\{ r_{l,\imath}(\pi)\},\{r_{l,\imath}(\chi) \epsilon_l^{1-n}\})$ is a strictly pure polarized compatible system  of weight $w$. (See Theorem \ref{grfaf} and the discussion at the end of section \ref{cs}.) Then $|\chi|=||\,\,||_{F^+}^{n-1-w}$. If $\pi$ has central character $\chi_\pi$ then we see that $|\chi_\pi|=||\,\,||_F^{n(n-1-w)/2}$, and so $\pi \otimes ||\det||_F^{(w+1-n)/2}$ has a unitary central character and so is unitary. If $(\pi',\chi')$ is a cuspidal, regular algebraic, polarized automorphic representation of $\GL_{n'}(\A_F)$ and if $\{r_{l,\imath}(\pi')\}$ has weight $w'$ and if $S$ is a finite set of finite places of $F$ then
\[ \begin{array}{rl} & L^S(\pi \times (\pi')^\vee, s+(w-w'+n'-n)/2 ) \\ = & L^S((\pi ||\det||_F^{(w+1-n)/2}) \times (\pi' ||\det||_F^{(w'+1-n')/2})^\vee,s) \end{array} \]
is meromorphic and is holomorphic and non-zero at $s=1$ unless 
\[ \pi \cong \pi' ||\det||_F^{(w'-w+n-n')/2} \]
in which case it has a simple pole at $s=1$ (see \cite{shah} and \cite{MR623137}). 

\begin{thm} \label{irred} Suppose that $F$ is a CM field and that $\pi$ is a regular algebraic, polarizable, cuspidal
automorphic representation of $\GL_n(\A_F)$. If $\pi$ has extremely regular weight, then there is a set of rational primes $\CL$ of Dirichlet density $1$ such that if $l \in \CL$ and $\imath:\barQQ_l \iso \C$ then $r_{l,\imath}(\pi)$ is irreducible. \end{thm}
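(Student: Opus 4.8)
The theorem asserts irreducibility of $r_{l,\imath}(\pi)$ for $l$ in a density-one set when $\pi$ has extremely regular weight. The strategy is to reduce the problem to a place where we can genuinely decompose $r_{l,\imath}(\pi)$, prove each piece is (essentially conjugate self-dual and) automorphic by a potential automorphy argument, and then use analytic properties of Rankin--Selberg $L$-functions to force the number of pieces down to one. First I would choose a prime $\lambda$ (varying over the density-one set supplied by Propositions \ref{ghtt} and \ref{big}) so that every irreducible constituent of $r_{l,\imath}(\pi)$ has adequate image after restriction to $G_{F(\zeta_l)}$, $l$ is unramified in $F$, $l \geq 2(n+1)$, $\zeta_l \notin F$, and $r_{l,\imath}(\pi)$ is crystalline at all primes above $l$ with Hodge--Tate numbers in a Fontaine--Laffaille range $[a, a+l-2]$ (so potential diagonalizability holds by Lemma \ref{fllift}/Lemma \ref{locallift}). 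Here $\{ r_{l,\imath}(\pi)\}$ is a strictly pure compatible system (by \cite{ana}, as noted at the end of section \ref{cs}), and $(\{r_{l,\imath}(\pi)\}, \{\epsilon_l^{1-n} r_{l,\imath}(\chi)\})$ is totally odd, essentially conjugate self-dual; the extreme regularity of $\pi$ means this compatible system is extremely regular in the sense of section \ref{cs}.

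\textbf{Decompose and make the pieces automorphic.} Write $r_{l,\imath}(\pi) = \bigoplus_{\alpha=1}^{j} r_{\lambda, \alpha}$ into irreducibles. Now apply Proposition \ref{redlem}: for $\lambda$ in a density-one set there is a finite CM (or totally real) Galois extension $F'/F$ and RAECSDC (or RAESDC) automorphic representations $(\pi_\alpha, \chi_\alpha)$ of $GL_{n_\alpha}(\A_{F'})$, where $n_\alpha = \dim r_{\lambda,\alpha}$, such that each $r_{\lambda,\alpha}|_{G_{F'}}$ remains irreducible and $(r_{l,\imath}(\pi_\alpha), \epsilon_l^{1-n_\alpha} r_{l,\imath}(\chi_\alpha)) \cong (r_{\lambda,\alpha}|_{G_{F'}}, \mu_\lambda|_{G_{(F')^+}})$, where $\mu_\lambda = \epsilon_l^{1-n} r_{l,\imath}(\chi)$. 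Thus over $F'$ we have $r_{l,\imath}(\pi)|_{G_{F'}} \cong \bigoplus_\alpha r_{l,\imath}(\pi_\alpha)$, an isobaric sum of cuspidal automorphic representations. By soluble descent (Lemma 1.4 of \cite{blght}) we may also arrange, after replacing $F'$ by a larger field, that $\pi_{F'} := \pi_{F'}$ is automorphic over $F'$ and corresponds to $r_{l,\imath}(\pi)|_{G_{F'}}$, so $\pi_{F'} \cong \boxplus_\alpha \pi_\alpha$ as isobaric representations of $GL_n(\A_{F'})$; since $\pi$ is cuspidal and $F'/F$ is (after enlargement) soluble, $\pi_{F'}$ is a sum of Galois conjugates of a single cuspidal representation, which already forces strong constraints.

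\textbf{Kill the extra pieces via $L$-functions.} The final step is to show $j = 1$. Suppose $j \geq 2$. Each $r_{l,\imath}(\pi_\alpha)$ has distinct Hodge--Tate numbers (subsets of those of $r_{l,\imath}(\pi)$), hence the $\pi_\alpha$ are pairwise non-isomorphic even up to twist by a fixed power of $|\det|$, because comparing Hodge--Tate numbers (using extreme regularity of the weight of $\pi$) rules out $\pi_\alpha \cong \pi_\beta |\det|_{F'}^{s}$ for $\alpha \neq \beta$ and any $s$ making the right side algebraic. Now consider the Rankin--Selberg $L$-function $L^S(\pi_{F'} \times \pi_{F'}^\vee, s)$. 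On one hand $\pi_{F'} \cong \boxplus_\alpha \pi_\alpha$ gives $L^S(\pi_{F'} \times \pi_{F'}^\vee, s) = \prod_{\alpha,\beta} L^S(\pi_\alpha \times \pi_\beta^\vee, s)$, and by the results of \cite{shah} and \cite{MR623137} (Jacquet--Shalika) this has a pole at the appropriate point $s=1$ (after the weight normalization recalled just before the theorem statement) of order exactly equal to the number of pairs $(\alpha, \beta)$ with $\pi_\alpha \cong \pi_\beta$ up to the relevant twist, i.e.\ of order $j$ by the non-isomorphism observations. On the other hand, directly: $r_{l,\imath}(\pi)|_{G_{F'}}$ arises from a single cuspidal $\pi_{F'}$ only if $\pi_{F'}$ is cuspidal --- but $\pi_{F'}$ need not be cuspidal. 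The cleaner route is to use instead that over $F$ the original $\pi$ is cuspidal, so $L^S(\pi \times \pi^\vee, s)$ has a \emph{simple} pole at $s=1$; and $L^S(\pi_{F'} \times \pi_{F'}^\vee, s)$ relates to $L$-functions over $F$ via the decomposition into $G_F$-orbits of the $\pi_\alpha$. Running the Brauer-theoretic bookkeeping from section \ref{ir} (the same machinery used in the proof of Theorem \ref{incompsyst}), one expresses $[r_{l,\imath}(\pi)]$ as an explicit $\Z$-combination $\sum_i n_i \ind_{F_i'/F}([r_{l,\imath}(\pi^{(F_i')})][\imath^{-1}\psi_i])$ and computes $(\,[r_{l,\imath}(\pi)], [r_{l,\imath}(\pi)]\,)_{F,l}$ using the orthogonality of the characters $\psi_i$ together with Rankin--Selberg non-vanishing; cuspidality of $\pi$ forces this inner product to equal $1$, hence $r_{l,\imath}(\pi)$ is irreducible. \textbf{The main obstacle} I expect is precisely this last bookkeeping step: one must check that the automorphic descents $\pi^{(F_i')}$ and their constituents behave compatibly under the Mackey/Brauer formulas (conjugation, restriction, induction) exactly as the Galois pieces $r_{\lambda,\alpha}$ do --- and that the analytic input (pole orders of Rankin--Selberg $L$-functions, via \cite{shah}, \cite{MR623137}, and local-global compatibility \cite{ana}) matches the algebraic count $(A,A)_{F,l}$ --- so that the equality $(A,A)_{F,l} = 1$ genuinely comes out; this is the same subtlety handled in the proof of Theorem \ref{incompsyst} and should be quotable almost verbatim, with the role of ``part of a compatible system'' replaced by ``irreducible''.
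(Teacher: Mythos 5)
Your proposal follows essentially the same route as the paper: invoke Proposition \ref{redlem} (which already encodes the density-one set and the use of extreme regularity via Lemma \ref{dual}) to decompose $r_{l,\imath}(\pi)$ into irreducibles realized automorphically over a finite Galois CM extension $F'$, and then compare the simple pole at $s=1$ of $L^S(\pi\times\pi^\vee,s)$ (cuspidality, Jacquet--Shalika) with the Brauer-induction computation giving $\ord_{s=1}L^S(\imath(\CR\otimes\CR^\vee),s)=-([r_{l,\imath}(\pi)],[r_{l,\imath}(\pi)])_{F,l}=-j$, which is exactly the bookkeeping the paper carries out (reusing the machinery of section \ref{ir} and of Theorem \ref{incompsyst}). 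The one slip is your aside that after enlarging $F'$ the extension $F'/F$ becomes soluble so that $\pi_{F'}$ makes sense as an isobaric sum --- it need not, which is precisely why the Brauer argument (your ``cleaner route'') is required --- but since you discard that aside, the argument you settle on is the paper's.
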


\begin{proof}
Let $\CL$ be the set of rational primes of Dirichlet density $1$
provided by Proposition \ref{redlem} applied to the compatible system
$\CR:=\{ r_{l,\imath}(\pi)\}$. 
Suppose $l\in \CL$ and $\imath:\barQQ_l \iso \C$. 
Let 
\[ r_{l,\imath}(\pi)= r_{l,\imath}(\pi)_1 \oplus \dots \oplus r_{l,\imath}(\pi)_j \]
be a decomposition into irreducibles. Let $F'/F$ and $\pi_\alpha$ for $\alpha=1,\dots,j$ be as in Proposition \ref{redlem} for $\{ r_{l',\imath'}(\pi)\}$ and $(l,\imath)$. Let $S$ denote the finite set of primes of $F$ which divide $l$ or above which $\pi$ ramifies or above which $F'$ ramifies.  Then
\[ \ord_{s=1} L^S(\imath (\CR \otimes \CR^\vee), s) = \ord_{s=1} L^S(\pi \times \pi^\vee,s) =-1.\] 
We will show that $\ord_{s=1} L^S(\imath (\CR \otimes \CR^\vee), s)$ also equals $-j$, and the theorem will follow.

Suppose that we are given an intermediate field $F' \supset F'' \supset F$ with $F'/F''$ soluble. By Lemma \ref{bc} there is a regular algebraic, polarizable, cuspidal
automorphic representation $\pi^{(F'')}_\alpha$ of
$\GL_{n_\alpha}(\A_{F''})$ such that
$r_{l,\imath}(\pi_\alpha^{(F'')})\cong
r_{l,\imath}(\pi)_\alpha|_{G_{F''}}$. Moreover
$r_{l,\imath}(\pi)_\alpha|_{G_{F''}}$ is irreducible. Let $n_\alpha
=\dim r_{l,\imath}(\pi)_{\alpha}$. If $\psi:\Gal(F'/F'') \ra \barQQ_l^\times$ is a character then 
\[ \begin{array}{rl} & \ord_{s=1} L^S(\imath r_{l,\imath}(\pi)_\alpha|_{G_{F''}} \otimes r_{l,\imath}(\pi)_\beta^\vee|_{G_{F''}} \otimes \psi, s) \\ = & \ord_{s=1} L^S(\pi_\alpha^{(F'')} \times (\pi_\beta^{(F'')})^\vee \times (\imath \circ \psi \circ \Art_{F''}), s+(n_\beta-n_\alpha)/2)\\ = &-\delta_{\alpha,\beta} \delta_{\psi,1}\\ =& -([r_{l,\imath}(\pi)_\alpha|_{G_{F''}}][\psi],[r_{l,\imath}(\pi)_\beta|_{G_{F''}}])_{F'',l},\end{array} \]
where $\delta_{\alpha,\beta}=1$ if $\alpha=\beta$ and equals $0$
otherwise, and where $\delta_{\psi,1}=1$ if $\psi=1$ and equals $0$
otherwise. [Note that the
$\pi_\gamma^{(F'')}||\det||_{F''}^{(1-n_\gamma)/2}$ have different
weights so that if $\pi_\gamma^{(F'')}||\det||_{F''}^{(1-n_\gamma)/2}
\cong \pi_{\gamma'}^{(F'')}||\det||^{(1-n_{\gamma'})/2} (\imath \circ
\psi \circ \Art_{F''} \circ \det)$ then $\gamma=\gamma'$. Moreover, we
would also have $r_{l,\imath}(\pi)_\gamma|_{G_{F''}} \cong r_{l,\imath}(\pi)_\gamma|_{G_{F''}}  \otimes \psi$, and so, as $r_{l,\imath}(\pi)_{\gamma}|_{G_{F'}}$ is irreducible, $\psi=1$. Similarly the $r_{l,\imath}(\pi)_\gamma|_{G_{F''}}$ have different Hodge--Tate numbers, so if $r_{l,\imath}(\pi)_\gamma|_{G_{F''}} \cong r_{l,\imath}(\pi)_{\gamma'}|_{G_{F''}} \otimes \psi$ then $\gamma=\gamma'$. Moreover as $r_{l,\imath}(\pi)_\gamma|_{G_{F'}}$ is irreducible we see that we also have $\psi=1$.] Thus
\[ \begin{array}{rl} & \ord_{s=1} L^S(\imath (r_{l,\imath}(\pi)|_{G_{F''}} \otimes r_{l,\imath}(\pi)|_{G_{F''}}^\vee \otimes \psi, s) \\ = & - (\res_{F''/F} [r_{l,\imath}(\pi)][\psi], \res_{F''/F} [r_{l,\imath}(\pi)])_{F'',l}. \end{array} \]

Now let $F_i'$, $n_i$ and $\psi_i$ be as in item (\ref{brauer}) of the list at the start of this section. Then
\[ L^S(\imath (\CR \otimes \CR^\vee), s)= \prod_i L^S(\imath (r_{l,\imath}(\pi)|_{G_{F'_i}} \otimes r_{l,\imath}(\pi)|_{G_{F'_i}}^\vee \otimes \psi_i, s)^{n_i} \]
and
\[ \begin{array}{rl} &\ord_{s=1} L^S(\imath (\CR \otimes \CR^\vee), s)\\ = &-\sum_{i} n_i ([\psi_i]\res_{F_i'/F}[r_{l,\imath}(\pi)],\res_{F_i'/F}[r_{l,\imath}(\pi)])_{F_i',l} \\ = &-\sum_{i} n_i (\ind_{F_i'/F}([\psi_i]\res_{F_i'/F}[r_{l,\imath}(\pi)]),[r_{l,\imath}(\pi)])_{F,l} \\ = &- ([r_{l,\imath}(\pi)],[r_{l,\imath}(\pi)])_{F,l} \\ = &-j, \end{array} \]
as desired.
\end{proof}

\begin{thm} Suppose that $F$ is a CM field and that
  $\CR$ is a pure, extremely regular, totally odd, polarizable weakly compatible system of $l$-adic representations of $G_F$.
Then we can write $\CR=\CR_1 \oplus \dots \oplus \CR_s$ where each $\CR_i$ is an irreducible, strictly pure, totally odd, polarizable compatible system of $l$-adic representations of $G_F$. \end{thm}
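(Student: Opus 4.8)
The plan is to reduce everything, at a single well-chosen prime, to Theorem \ref{incompsyst}, and then to glue the resulting compatible systems. All the results invoked below are stated for $F$ CM or totally real, so I will argue uniformly; alternatively the totally real case reduces to the CM case by a quadratic base change and Lemma 1.5 of \cite{blght}, as in the proof of Theorem \ref{mtcs}. Let $\CM=(M,S,\{X-\alpha_v\},\{\mu_\lambda\},\{w\})$ be a weakly compatible system of characters of $G_{F^+}$ with $(\CR,\CM)$ totally odd, essentially conjugate self-dual, and let $n$ be the rank of $\CR$. First I would apply Proposition \ref{big} to $\CR$ and discard finitely many primes to obtain a density-one set of rational primes such that, for $\lambda\mid l$ in it, every irreducible sub-representation $s$ of $r_\lambda$ has $\bar s|_{G_{F(\zeta_l)}}$ irreducible, and moreover $l$ is unramified in $F$, $\zeta_l\notin F$, $l\geq 2(n+1)$, and $r_\lambda$ is crystalline above $l$ with $\HT_\tau(r_\lambda)$ contained in an interval of length $l-2$ (the multisets $H_\tau$ being fixed, the last two conditions hold for all but finitely many $l$). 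Fix such a $\lambda_0\mid l_0$ and decompose $r_{\lambda_0}\cong r_{\lambda_0,1}\oplus\cdots\oplus r_{\lambda_0,s}$ into irreducibles, of dimensions $n_\alpha\leq n$.

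Next, for each $\alpha$ I would verify the hypotheses of Theorem \ref{incompsyst} for $r_{\lambda_0,\alpha}$: it is unramified almost everywhere; it is regular (a constituent of the regular $r_{\lambda_0}$) and, being a crystalline subquotient at primes above $l_0$ with Hodge--Tate numbers in a Fontaine--Laffaille interval, potentially diagonalizable there (Lemma \ref{locallift}(2)); $\bar r_{\lambda_0,\alpha}|_{G_{F(\zeta_{l_0})}}$ is irreducible by the choice of $\lambda_0$; and $(r_{\lambda_0,\alpha},\mu_{\lambda_0})$ is totally odd, essentially conjugate self-dual by Lemma \ref{dual} applied with $F'=F$. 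Since $l_0\geq 2(n+1)\geq 2(n_\alpha+1)$ and $\zeta_{l_0}\notin F$, Theorem \ref{incompsyst} then produces a strictly pure compatible system $\CR_\alpha$ of $l$-adic representations of $G_F$ whose $\lambda_0$-member is $r_{\lambda_0,\alpha}$. I would also observe, by inspecting the proof of Theorem \ref{incompsyst}, that the integer $t$ appearing there (the number of irreducible automorphic constituents produced, which is the same at every prime of the resulting system) must be $1$, since $r_{\lambda_0,\alpha}$ is irreducible; hence every member of $\CR_\alpha$ is irreducible, so $\CR_\alpha$ is an irreducible compatible system, and it is extremely regular because its Hodge--Tate multiset in each embedding is a sub-multiset of the extremely regular $H_\tau$ of $\CR$.

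The gluing step comes next. After enlarging $M$ to a number field containing the coefficient fields of $\CR$ and of all the $\CR_\alpha$, and choosing a common bad set $S'$, I would note that the $\lambda_0$-member of $\CR_\alpha$ is $r_{\lambda_0,\alpha}$, so $r_{\lambda_0}\cong\bigoplus_\alpha r^{(\alpha)}_{\lambda_0}$ and hence $Q_v(X)=\prod_\alpha Q_v^{(\alpha)}(X)$ in $M[X]$ for all but finitely many $v$. Consequently, for every prime $\mu$ of $M$ the semisimple representations $r_\mu$ and $\bigoplus_\alpha r^{(\alpha)}_\mu$ have equal characteristic polynomials of Frobenius on a set of $v$ of density one, so by the Chebotarev density theorem and Brauer--Nesbitt they are isomorphic; comparing Hodge--Tate data, $\CR\cong\CR_1\oplus\cdots\oplus\CR_s$ as weakly compatible systems. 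For the essential conjugate self-duality of each $\CR_\alpha$ I would run the same Chebotarev--Brauer--Nesbitt argument on the two weakly compatible systems $\{(r^{(\alpha)}_\lambda)^c\}_\lambda$ (or $\{r^{(\alpha)}_\lambda\}_\lambda$ when $F$ is totally real) and $\{(r^{(\alpha)}_\lambda)^\vee\otimes\mu_\lambda|_{G_F}\}_\lambda$, which agree at $\lambda_0$ by Lemma \ref{dual}, to deduce $(r^{(\alpha)}_\lambda)^c\cong(r^{(\alpha)}_\lambda)^\vee\otimes\mu_\lambda|_{G_F}$ for all $\lambda$; since $r^{(\alpha)}_\lambda$ is irreducible the self-dual pairing on $r_\lambda$ witnessing the totally odd essential conjugate self-duality of $(\CR,\CM)$ restricts to a perfect pairing of the same symmetry type on the constituent $r^{(\alpha)}_\lambda$, and $\mu_\lambda(c_v)$ is independent of $\lambda$, so $(\CR_\alpha,\CM)$ is totally odd, essentially conjugate self-dual.

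The main obstacle, I expect, is not any single hard computation but the passage from the decomposition at the one prime $\lambda_0$ to a decomposition of compatible systems — in particular, making sure each $\CR_\alpha$ is genuinely \emph{irreducible} as a compatible system (not merely at $\lambda_0$) and that the pieces really reassemble to $\CR$. This rests on the rigidity of weakly compatible systems (equality of Frobenius characteristic polynomials propagates across all primes via Chebotarev) together with the internal structure of the proof of Theorem \ref{incompsyst}, specifically the constancy across the system of the number of automorphic constituents, which is what forces $t=1$; someone wishing to avoid unpacking that proof could instead appeal to Proposition \ref{big} and the Larsen--Pink results used in its proof to see that the generic number of irreducible constituents of $\CR_\alpha$ is $1$. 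Everything else is a routine combination of Lemma \ref{dual}, Theorem \ref{incompsyst}, and Brauer--Nesbitt.
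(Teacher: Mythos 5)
Your overall architecture (decompose at one good prime, feed the pieces into Theorem \ref{incompsyst}, glue by Chebotarev and Brauer--Nesbitt) matches the paper's, and the verification of the hypotheses of Theorem \ref{incompsyst} via Proposition \ref{big}, Lemma \ref{locallift} and Lemma \ref{dual} is fine. But there is a genuine gap at the one step that actually carries the weight of the theorem: the irreducibility of each system $\CR_\alpha$. Your claim that, in the proof of Theorem \ref{incompsyst}, the integer $t$ ``is the same at every prime of the resulting system'' and hence must be $1$ because $r_{\lambda_0,\alpha}$ is irreducible, is not justified and is not what that proof shows. There $t$ is the number of irreducible constituents of $r_{l',\imath'}(\pi)$ at an \emph{auxiliary} prime $l'$ over the potential-automorphy field $F'$; nothing in that argument forces $t$ to be independent of $l'$, let alone equal to $1$ --- irreducibility at one prime of a compatible system does not propagate to other primes, and the whole point of Theorem \ref{irred} (one of the paper's main results) is to establish such irreducibility on a density-one set, using extreme regularity and Rankin--Selberg $L$-functions. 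Your fallback, ``appeal to Proposition \ref{big} and Larsen--Pink to see that the generic number of irreducible constituents of $\CR_\alpha$ is $1$,'' also does not work: Proposition \ref{big} only gives residual irreducibility of the irreducible constituents at generic primes; it says nothing about the number of constituents being constant, and if such constancy were available this cheaply, Theorem \ref{irred} would be superfluous.

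The way the paper closes this gap is to choose $\lambda_0$ in a density-one set that works simultaneously for Proposition \ref{big} \emph{and} Proposition \ref{redlem}. Proposition \ref{redlem} supplies a finite CM (or totally real) Galois extension $F'/F$ and RAECSDC (or RAESDC) representations $\pi_\alpha$ with $r_{l_0,\imath}(\pi_\alpha)\cong r_{\lambda_0,\alpha}|_{G_{F'}}$; then $\CR_\alpha|_{G_{F'}}$ is the compatible system attached to $\pi_\alpha$, and since $\CR$ is extremely regular the weight of $\pi_\alpha$ is extremely regular, so Theorem \ref{irred} gives a density-one set of primes $\lambda'$ at which $r^{(\alpha)}_{\lambda'}|_{G_{F'}}$, and a fortiori $r^{(\alpha)}_{\lambda'}$, is irreducible --- which is exactly what ``irreducible compatible system'' means in this paper. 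You should replace your $t=1$ argument by this appeal to Proposition \ref{redlem} and Theorem \ref{irred}. A smaller point: your verification that $(\CR_\alpha,\CM)$ is totally odd, essentially conjugate self-dual restricts the pairing on $r_\lambda$ to $r^{(\alpha)}_\lambda$ ``since $r^{(\alpha)}_\lambda$ is irreducible,'' but irreducibility is only known at a density-one set of $\lambda$; this is harmless, because once the gluing $r_\lambda\cong\bigoplus_\alpha r^{(\alpha)}_\lambda$ is established you can simply apply Lemma \ref{dual} with $F'=F$ to the summand $r^{(\alpha)}_\lambda$ at every prime $\lambda$.
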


\begin{proof}
Choose a set $\CL$ of rational primes of Dirichlet density $1$ which
simultaneously works for Propositions \ref{big} and
\ref{redlem}. (See Lemma \ref{dd}.) Choose $\lambda|l \in \CL$ such that $l$
is unramified in $F$ and $l \geq 2(n+1)$ and $r_\lambda$ is
crystalline with Hodge--Tate numbers all in an interval of the form
$[a,a+l-2]$. Decompose $r_\lambda$ into irreducible subrepresentations
\[ r_\lambda = r_{\lambda,1} \oplus \dots \oplus r_{\lambda,j_\lambda}. \]
By Theorem \ref{incompsyst} each $r_{\lambda,\alpha}$ is part of a strictly pure compatible system $\CR_\alpha$. Let $F'/F$ and $\pi_\alpha$ for $\alpha=1,\dots,j_\alpha$ be as in Proposition \ref{redlem} for $\CR$ and $\lambda$. Then $\CR_\alpha|_{G_{F'}}$ is the compatible system associated to $\pi_\alpha$. Moreover $\pi_\alpha$ is extremely regular. By Theorem \ref{irred} there is a set $\CL_\alpha$ of rational primes of Dirichlet density $1$ such that if $\lambda'|l' \in \CL_\alpha$ then $r_{\alpha,\lambda'}|_{G_{F'}}$ is irreducible. Thus $\CR_\alpha$ is irreducible.
\end{proof}
\newpage

\appendix
\section{}

The results recorded in this appendix make no reference to results proved elsewhere in this paper.

\subsection{Some algebra}{$\mbox{}$} \newline

The results of this section are probably well known to experts. However as we couldn't find references we include proofs. We start with some commutative algebra.

Suppose that $L$ is a finite extension of $\Q_l$ and let $|\,\,\,|_L$
denote the $l$-adic norm on $L$ normalized by
$|l|_L=l^{-[L:\Q_l]}$. We will denote the $l$-adic completion of the polynomial ring $\CO_L[s_1,\dots,s_r]$ by $\CO_L\langle s_1,\dots,s_r\rangle$, and we will let $L\langle s_1,\dots,s_r\rangle=\CO_L\langle s_1,\dots,s_r\rangle[1/l]$ denote the Tate algebra. The Gauss norm $|f|_L$ of an element $f \in L\langle s_1,\dots,s_r\rangle$ is defined to be the maximum of the $|\,\,\,|_L$ norm of any coefficient of a monomial in the $s_i$'s. The Gauss norm is multiplicative (\cite[5.1.2]{bgr}) and $L\langle s_1,\dots,s_r\rangle$ is a UFD (\cite[5.2.6]{bgr}). From these two facts one can deduce without difficulty that $\CO_L\langle s_1,\dots,s_r\rangle$ is also a UFD. (The units are the units in $L\langle s_1,\dots,s_r\rangle$ with Gauss norm $1$; and the irreducibles are the irreducibles in $L\langle s_1,\dots,s_r\rangle$ with Gauss norm $1$ together with a uniformizer in $\CO_L$ times any unit in $\CO_L\langle s_1,\dots,s_r \rangle$.) The Tate algebra $L\langle s_1,\dots,s_r\rangle$ is noetherian (\cite[5.2.6]{bgr}); all its ideals are closed (\cite[5.2.7]{bgr}); and a prime ideal $\wp$ of $L\langle s_1,\dots,s_r \rangle$ is maximal if and only if $L\langle s_1,\dots,s_r\rangle/\wp$ is a finite extension of $L$ (\cite[6.1.2]{bgr}). 

By an affinoid algebra we shall mean a quotient of a Tate algebra by some ideal. Maximal ideals are dense in the spectrum of an affinoid algebra (\cite[6.1.1]{bgr}). The completed tensor product of two affinoid algebras is again an affinoid algebra (\cite[6.1.1]{bgr}). We will call an affinoid algebra $A$ {\em geometrically connected} if $\Spec A \otimes_L L'$ is connected for all finite extensions $L'/L$. It suffices to check this for a cofinal collection of finite extensions $L'/L$ (because, for any field $L'$, any $L'$-algebra $B$ and any finite extension $L''/L$, if $\Spec B \otimes_{L'} L''$ is connected then so is $\Spec B$). 

\begin{lem}\label{prod} Suppose that $A_1$ and $A_2$ are two geometrically connected affinoid algebras over $L$, then $A_1 \hatotimes_L A_2$ is also geometrically connected. \end{lem}

\begin{proof}
Suppose that 
\[ \Spec A_1 \hatotimes_L A_2 \otimes_L L' = U^{(1)} \coprod U^{(2)} \]
is a decomposition into two non-empty open subsets, for some finite
extension $L'/L$. We will derive a
contradiction. Each $U^{(j)}$ contains a maximal ideal and after
replacing $L'$ by a finite extension we may suppose that each $U^{(j)}$ contains a maximal ideal $\gm^{(j)}$ with residue field $L'$. Let $\gm_j$ denote the contraction of $\gm^{(j)}$ into $A_j\otimes_L L'$. Then $\gm_j$ is a prime ideal and $(A_j \otimes_L L')/\gm_j =L'$. Thus $\gm_j$ is maximal. We have an isomorphism
\[ (A_1 \hatotimes_L A_2 \otimes_L L')/\gm_j \cong A_{j'} \otimes_L L' \]
where $j'\neq j$. Thus the fibre $\Spec (A_1 \hatotimes_L A_2 \otimes_L L')/\gm_j$ is connected and so 
\[ \Spec (A_1 \hatotimes_L A_2 \otimes_L L')/\gm_j \subset U^{(j)}. \]
However 
\[ (A_1 \hatotimes_L A_2 \otimes_L L')/( \gm_1 , \gm_2) \cong (A_1 \otimes L')/\gm_1 \otimes_{L'} (A_2 \otimes L')/\gm_2 \cong L', \]
and so we see $( \gm_1,\gm_2) $ is a maximal ideal of $A_1 \hatotimes_L A_2 \otimes_L L'$ lying in $U^{(1)} \cap U^{(2)}$, a contradiction.
\end{proof}

\begin{lem}\label{ID1} Suppose that $f(t) \in \CO_L[t]$ and $f(0) \in \CO_L^\times$. Then 
\[ A=\CO_L\langle s,t\rangle/(sf(t)-1) \]
is an integral domain complete in the $l$-adic topology. 
\end{lem}

\begin{proof}
As $\CO_L\langle s,t\rangle$ is a UFD, it suffices to prove that $sf(t)-1$ is irreducible. Suppose $sf(t)-1=gh$ in $\CO_L\langle s,t\rangle$. Reducing modulo $\lambda$ and using the fact that $sf(t)-1$ is irreducible in $(\CO_L/\lambda)[s,t]$, we see that one of $g$ and $h$ reduces to a constant and hence is itself a unit. The Lemma follows.
\end{proof}

\begin{cor}\label{corID} If $\alpha \in \CO_L-\{0\}$ then $\CO_L\langle s,t\rangle/(st-\alpha)$ is an integral domain. \end{cor}

\begin{proof}
Consider the map
\[ \begin{array}{rcl} \theta: \CO_L\langle s,t\rangle/(st-\alpha) & \lra & \CO_L\langle s',t\rangle/(s't-1) \\
h(s,t) & \longmapsto & h(\alpha s',t). \end{array} \]
By Lemma \ref{ID1} it suffices to show that $\theta$ is injective. 

If $\beta \in \CO_L-\{0\}$ then any element of $\CO_L\langle s,t\rangle/(st-\beta)$ has a representative of the form
\[ \sum_{i=0}^\infty a_i t^i + \sum_{i=1}^\infty b_i s^i .\]
Moreover we claim this representative is unique. Indeed if
\[ \sum_{i=0}^\infty a_i t^i + \sum_{i=1}^\infty b_i s^i  = (st-\beta)\sum_{i,j=0}^\infty c_{i,j} s^it^j \]
then
\[ c_{i+1,j+1}=\beta^{-1} c_{i,j} \]
for all $i,j\in \Z_{\geq 0}$. If $\sum_{i,j=0}^\infty c_{i,j} s^it^j  \in \CO_L\langle s,t\rangle$ this forces $c_{i,j}=0$ for all $i,j \in \Z_{\geq 0}$, which proves the claim.

Applying these observations for $\beta=\alpha$ and for $\beta=1$, the injectivity of $\theta$ follows. 
\end{proof}

\begin{lem}\label{ID} Suppose that $\alpha_1,\dots,\alpha_r \in \CO-\{0\}$. Then 
\[ A=\CO_L\langle s_1,t_1,s_2,\dots,t_r\rangle /(s_1t_1-\alpha_1,\dots,s_rt_r-\alpha_r)\]
 is an integral domain complete in the $l$-adic topology. \end{lem}

\begin{proof}
By the exactness of completion (and noetherianness of the polynomial ring $\CO_L[s_1,t_1,s_2,\dots,t_r]$) we see that $A$ is the $l$-adic completion of
\[ A^0=\CO_L[ s_1,t_1,s_2,\dots,t_r] /(s_1t_1-\alpha_1,\dots,s_rt_r-\alpha_r). \]
The ring $A^0$ is noetherian and flat over $\CO_L$ (because it is free over $\CO_L$ with basis $\{ \prod s_i^{a_i}t_i^{b_i}:\,\, a_ib_i=0\}$) and so $A$ is also flat over $\CO_L$. Thus it suffices to show that
\[ A[1/l] = L\langle  s_1,t_1,s_2,\dots,t_r\rangle /(s_1t_1-\alpha_1,\dots,s_rt_r-\alpha_r) \]
is a domain. 

The ring $A^0$ is also excellent (being a finitely generated $\CO_L$-algebra), Cohen--Macaulay (being a complete intersection in a polynomial ring over $\CO_L$) and normal (being Cohen--Macaulay and regular in codimension $1$). Hence $A$ is normal (\cite[33.I]{mat}), and so $A[1/l]$ is also normal. Thus it suffices to show that
$A[1/l]$ is geometrically connected (or even just connected).

In the case $r=1$ we see that, for any finite extension $L'/L$, the ring 
\[ L'\langle s_1,t_1\rangle/(s_1t_1-\alpha) = (\CO_{L'}\langle s_1,t_1\rangle/(s_1t_1-\alpha))[1/l]\]
 is a domain by Corollary \ref{corID}. Hence $L\langle s_1,t_1 \rangle/(s_1t_1-\alpha_1)$ is geometrically connected. 

In general we have 
\[ A[1/l] \cong L\langle s_1,t_1 \rangle/(s_1t_1-\alpha_1) \hatotimes \cdots \hatotimes L\langle s_r,t_r \rangle/(s_rt_r-\alpha_r), \]
and so the case $r=1$ and Lemma \ref{prod} imply that $A[1/l]$ is geometrically connected.
\end{proof}

Next we turn to representation theory.

\begin{lem}\label{repth} Let $\Gamma$ be a group and $M$ a field of characteristic $0$. Also let 
\[ r: \Gamma \lra \GL_n(\barM) \]
be a semi-simple representation. Suppose that for all $\gamma \in \Gamma$ we have $\tr r(\gamma) \in M$, and that for some $\gamma \in \Gamma$ the characteristic polynomial of $r(\gamma)$ has distinct, $M$-rational roots. Then $r$ is conjugate to a representation into $\GL_n(M)$. \end{lem}

\begin{proof} Let $B$ denote the $M$-span of the image of $r$ in $M_{n \times n}(\barM)$.
Note that the $\barM$-span $B_{\barM}$ of $B$ is a finite dimensional, semi-simple $\barM$-algebra. Let $e_1,\dots,e_r$ be an $\barM$-basis of $B_{\barM}$ consisting of elements of $B$ and let $e_1',\dots,e_r'$ be the dual basis for the trace pairing. Then $B \supset \sum_i Me_i$. If $b \in B$ we have $\tr(be_i) \in M$ for all $i$ and so $B \subset \sum_i Me_i'$. Thus $B=\sum M e_i = \sum M e_i'$ and
\[ B \otimes_M \barM \liso B_{\barM} \subset M_{n \times n}(\barM). \]
In particular $B$ is a finite dimensional semi-simple $M$-algebra and
$\barM^n$ is a faithful $B \otimes_M \barM$-module. 

We have 
\[ B \cong \bigoplus_j M_{m_j}(D_j) \]
where each $D_j$ is a division algebra with centre a finite extension
$Z(D_j)$ of $M$. Let $r_j^2 = \dim_{Z(D_j)}D_j$. As a representation
of $B\otimes_m\barM \cong \bigoplus_{j,\tau} M_{m_j
  r_j}(\barM)$, where $\tau$ runs over $M$-embeddings $Z(D_j)\into
\barM$ for each $j$,  we have
\[ \barM^n \cong \bigoplus_{j,\tau}\barM^{m_j r_j n_{j,\tau}} \]
for some non-negative integers $n_{j,\tau}$. Since $\tr(B)\subset M$,
we see that $n_{j,\tau}=n_j$ is independent of $\tau$. The existence
of an element of $B$ with $n$-distinct $M$-rational eigenvalues
implies that $r_j= n_j= [Z(D_j):M] = 1$ for all $j$. Thus $D_j=M$ for
all $j$, and the lemma follows.
\end{proof}

Next we have a result about unramified tori.

\begin{lem}\label{nrtorus} Suppose that $\tT_1$ and $\tT_2$ are unramified tori over $\Z_l$ and that $\phi:\tT_1 \ra \tT_2$ is a surjection.  Then the cokernel of $\tT_1(\Z_l) \ra \tT_2(\Z_l)$ is finite of order dividing the order of the torsion subgroup of $X^*(\tT_1)/\phi^*X^*(\tT_2)$. \end{lem}

\begin{proof}
We have an exact (in the centre) sequence
\[ \tT_1(\Z_l) \stackrel{\phi}{\lra} \tT_2(\Z_l) \lra \Hom(X^*(\tT_1)/\phi^*X^*(\tT_2),(\hatZZ_l^\nr)^\times)/(\Frob_l-1). \]
(Recall that $(\hatZZ_l^\nr)^{\Frob_l^r=1}=\Z_{l^r}$.)
There is a positive integer $m$ so that $\Frob_l^m$ acts trivially on $X^*(\tT_1)$. There is also a surjection
\[ \begin{array}{r} \Hom(X^*(\tT_1)/\phi^*X^*(\tT_2),(\hatZZ_l^\nr)^\times)/(\Frob_l^m-1) \onto \\ \Hom(X^*(\tT_1)/\phi^*X^*(\tT_2),(\hatZZ_l^\nr)^\times)/(\Frob_l-1). \end{array} \]
Thus it suffices to show that
\[ \Hom(X^*(\tT_1)/\phi^*X^*(\tT_2),(\hatZZ_l^\nr)^\times)/(\Frob_l^m-1) \]
is finite with order dividing the order of the torsion subgroup of $X^*(\tT_1)/\phi^*X^*(\tT_2)$.

There is also an exact (in the centre) sequence
\[  \begin{array}{r} \Hom((X^*(\tT_1)/\phi^*X^*(\tT_2))^\tf,(\hatZZ_l^\nr)^\times) \lra \Hom(X^*(\tT_1)/\phi^*X^*(\tT_2),(\hatZZ_l^\nr)^\times) \lra \\ \Hom((X^*(\tT_1)/\phi^*X^*(\tT_2))^\tor,(\hatZZ_l^\nr)^\times) \end{array} \]
and hence an exact (in the centre) sequence
\[  \begin{array}{l} \Hom((X^*(\tT_1)/\phi^*X^*(\tT_2))^\tf,(\hatZZ_l^\nr)^\times) \lra  \\\Hom(X^*(\tT_1)/\phi^*X^*(\tT_2),(\hatZZ_l^\nr)^\times)/(\Frob_l^m-1)\\ \lra  \Hom((X^*(\tT_1)/\phi^*X^*(\tT_2))^\tor,(\hatZZ_l^\nr)^\times)/B, \end{array}\]
where $B$ denotes the image of $(\Frob_l^m-1) \Hom(X^*(\tT_1)/\phi^*X^*(\tT_2),(\hatZZ_l^\nr)^\times)$ in the group $\Hom((X^*(\tT_1)/\phi^*X^*(\tT_2))^\tor,(\hatZZ_l^\nr)^\times)$.
As 
\[ \Hom((X^*(\tT_1)/\phi^*X^*(\tT_2))^\tor,(\hatZZ_l^\nr)^\times)\]
 is finite with order the prime-to-$l$ part of $\# (X^*(\tT_1)/\phi^*X^*(\tT_2))^\tor$, it suffices to show that
\[  \Hom((X^*(\tT_1)/\phi^*X^*(\tT_2))^\tf,(\hatZZ_l^\nr)^\times)/(\Frob_l^m-1) =(0). \]
However this follows from the fact that $\Frob_l^m-1$ is surjective on $(\hatZZ_l^\nr)^\times$ (which is proved recursively modulo higher and higher powers of $l$). 
\end{proof}

Finally we recall the following observation, which isn't really algebraic.

\begin{lem}\label{dd} The intersection of a finite number of sets of rational primes of Dirichlet density $1$ has Dirichlet density $1$. \end{lem}

\begin{proof} A set of rational primes has Dirichlet density $1$ if and only if its complement has Dirichlet density $0$. Thus the Lemma follows from the fact that the union of a finite number of sets of rational primes of Dirichlet density $0$ has Dirichlet density $0$.
\end{proof}

\subsection{Building fields and characters}\label{sbuild}{$\mbox{}$} \newline

For the convenience of the reader we recall some results about the construction of fields and characters, but we will start by recalling some facts about algebraic characters. 

Suppose that $F$ is a number field, $l$ is a rational prime and $\imath:\barQQ_l \iso \C$. Let $F_0$ denote the maximal CM subfield of $F$. Suppose also that $\chi$ is an algebraic character of $\A_F^\times/F^\times$ with
\[ \chi|_{(F_\infty^\times)^0}: x \longmapsto \prod_{\tau \in \Hom(F,\C)} (\tau x)^{-a_\tau}. \]
Then
 \[\imath\left((r_{l,\imath}(\chi)\circ\Art_F)(x)\prod_{\tau\in\Hom(F,\C)}(\imath^{-1}\tau)(x_l)^{a_\tau}\right)=\chi(x)\prod_{\tau\in\Hom(F,\C)}(\tau x)^{a_\tau}\]
for $x\in \A_F^\times$. Moreover $r_{l,\imath}(\chi)$ is de Rham at all primes above $l$ and, if $\tau:F \into \barQQ_l$ then $\HT_\tau(r_{l,\imath}(\chi))=\{ a_{\imath \circ \tau}\}$. (See \cite{MR0263823}.)
Moreover we have that 
\begin{enumerate}
\item $a_\tau+a_{\tau'} = \wt(\chi)$ for all $\tau,\tau'\in \Hom(F,\C)$ with
$\tau|_{F_0}=\tau'|_{F_0} \circ c$;
\item $a_\tau$ only depends on $\tau|_{F_0}$;
\item $\wt(\chi_1\chi_2)=\wt(\chi_1)+\wt(\chi_2)$;
\item if $\sigma$ is an automorphism of $F$ then $\wt(\chi^\sigma)=\wt(\chi)$ (as $\sigma|_{F_0}$ commutes with $c$);
\item $\wt(\chi \circ \norm_{K'/K}) = \wt(\chi)$;
\item if $F_0$ is totally real then $\wt(\chi)$ is even;
\item $\wt(||\,\,\,||_F)=-2$;
\item for all places $v$ of $F$ the Weil-Deligne representation $\WD(r_{l,\imath}(\chi))|_{G_{F_v}}$ is pure of weight $\wt(\chi)$.
\end{enumerate}
Any algebraic $l$-adic character of $G_F$ arises in this way.

Next we recall Lemma 4.1.2 of \cite{cht}.

\begin{lem}\label{l412} Suppose that $F$ is a number field, that
  $\Favoid/F$ is a finite Galois extension and that $S$ is a finite set of places of F. For $v \in S$ let $E_v
/F_v$ be a finite
Galois extension. Then we can find a finite, soluble Galois extension $E/F$
linearly disjoint from $\Favoid$ such that for each $v \in S$ and each prime $w$ of $E$
above $v$, the extension $E_w/F_v$ is isomorphic to $E_v/F_v$.
\end{lem}

In a somewhat similar vein we have the following result.

\begin{lem} Suppose that $F$ is a number field, that $S$ is a finite set of places of  $F$, that $\Favoid/F$ is a finite Galois extension and that $N$ is a positive integer.  Then we can find a finite cyclic extension $E/F$ of degree $N$ in which all the elements of $S$ split completely and which is linearly disjoint from $\Favoid$ over $F$. 
\end{lem}

\begin{proof} Let $\{ \Favoid_i \}$ denote the intermediate fields between $\Favoid$ and $F$ for which $\Gal(\Favoid_i/F)$ is simple. Augment $S$ to include, for each $i$, a prime which does not split in $\Favoid_i$. Then the linear disjointness of $E$ from $\Favoid$ will be automatic. Choose a prime $v_0$ of $F$ with $v_0 \not\in S$. Now use Lemma 4.1.1 of \cite{cht} to choose a finite order character
\[ \chi: \A_F^\times/F^\times \lra \barQQ^\times \]
such that
\begin{itemize}
\item $\chi|_{\prod_{v \in S}F_v^\times} =1$,
\item and $\chi|_{F_{v_0}^\times}$ has order $N$.
\end{itemize}
Then $\barF^{\ker \chi \circ \Art_F}/F$ is a cyclic extension of degree divisible by $N$ in which all the primes in $S$ split completely. The unique sub-extension $E/F$ of degree $N$ satisfies the requirements of the lemma.
\end{proof}

\begin{cor}\label{cycCM} Suppose that $F$ is an imaginary CM field, that $S$ is a finite set of places of  $F$, that $\Favoid/F$ is a finite Galois extension and that $N$ is a positive integer.  Then we can find a cyclic CM extension $E/F$ of degree $N$ in which all the elements of $S$ split completely and which is linearly disjoint from $\Favoid$ over $F$. 
\end{cor}

\begin{proof} Let $F^+$ denote the maximal totally real subfield of
  $F$. By the Lemma we can find a totally real cyclic extension
  $E^+/F^+$ of degree $N$ in which all the primes of $F^+$ below $S$ split completely and which is linearly disjoint from the normal closure of $\Favoid/F^+$ over $F^+$. Then we can take $E=E^+F$.
\end{proof}

We now turn to building characters, and record three results, all of which have a similar feel but which are slightly different. We start by restating (a special case of) Lemma 2.2 of \cite{hsbt}.

\begin{lem}\label{l22} Suppose that $F$ is an imaginary CM field with maximal totally real subfield $F^+$ and that $S$ is a finite set of primes of $F$. Let
\[ \chi: (\A_{F^+}^\infty)^\times \lra \barQQ^\times \]
and
\[ \psi_S: \CO_{F,S}^\times \lra \barQQ^\times \]
be continuous characters such that
\[ \psi_S|_{(\A_{F^+}^\infty)^\times \cap \CO_{F,S}^\times}=\chi|_{(\A_{F^+}^\infty)^\times \cap \CO_{F,S}^\times}. \]
Also let
\[ \phi_0: F^\times \lra \barQQ^\times \]
be a character such that 
\[ \phi_0|_{(F^+)^\times}=\chi|_{(F^+)^\times}. \]

Then there is a continuous character
\[ \phi: \A_F^\times \lra \barQQ^\times \]
such that
\begin{itemize}
\item $\phi|_{F^\times}=\phi_0$;
\item $\phi|_{(\A^\infty_{F^+})^\times}=\chi$;
\item and $\phi|_{\CO_{F,S}^\times}=\psi_S$.
\end{itemize}
\end{lem}

When we invoke this Lemma we may not specify $S$ or the character $\psi_S$, but instead we may list local conditions to be satisfied by $\phi$ at a finite number of primes. We hope that the reader will have no difficulty in finding a set $S$ and a character $\psi_S$, so that if we apply the Lemma with these choices it produces a character $\phi$ with the desired local properties.

We next record two similar results about algebraic characters of $G_F$.

\begin{lem} \label{l416}
Suppose that $l$ is a rational prime; that $F$ is an imaginary CM field with maximal totally real subfield $F^+$, and that $S$ is a finite set of primes of $F$ containing all primes above $l$ and satisfying $S^c=S$. Let
\[ \chi:G_{F^+} \lra \barQQ_l^\times \]
be a continuous character; and, for $v \in S$, let
\[ \psi_v:G_{F_v} \lra \barQQ_l^\times \]
be a continuous character such that
\[ (\psi_v \psi^c_{v^c})|_{I_{F_v}} = \chi|_{I_{F_v}}. \]
For $v|l$ suppose that the character $\psi_v$ is de Rham.
\begin{enumerate}
\item Suppose further that every element of $S$ is unramified over $F^+$, and that $\chi(c_v)$ is independent of $v|\infty$. Then 
there is a continuous character 
\[ \theta:G_F \lra \barQQ_l^\times \]
such that
\[ \theta \theta^c = \chi|_{G_F} \]
and, for all $v \in S$, 
\[ \theta|_{I_{F_v}}=\psi_v|_{I_{F_v}}. \]

\item Alternatively, suppose further that $l>2$ and that
\[ \bartheta: G_F \lra \barFF_l^\times \]
is a continuous character such that
\begin{itemize}
\item $\bartheta\bartheta^c$ equal to the reduction of $\chi|_{G_F}$,
\item and for $v \in S$ the restriction $\bartheta|_{G_{F_v}}$ equals the reduction of $\psi_v$.
\end{itemize} 
Then there is a continuous character 
\[ \theta:G_F \lra \barQQ_l^\times \]
lifting $\bartheta$ and such that
\[ \theta \theta^c = \chi|_{G_F} \]
and, for all $v \in S$, 
\[ \theta|_{I_{F_v}}=\psi_v|_{I_{F_v}}. \]
\end{enumerate}
\end{lem}

\pfbegin We deduce the first part from Lemma \ref{l22}. Replacing $\chi$ by $\chi \delta_{F/F^+}$ if need be, we may
suppose that $\chi(c_v)=1$ for all $v|\infty$. Note that $\chi$ is
algebraic, and so, because $F^+$ is totally real, all its Hodge--Tate
numbers are equal to some integer $w$. By class field theory we may think of $\chi:\A_{F^+}^\times \ra \barQQ_l^\times$ and $\phi_v:F_v^\times \ra \barQQ_l^\times$ and look for $\theta:\A_F^\times/F^\times \ra \barQQ_l^\times$. If $v|l$ then
\[ \psi_v = \prod_{\tau:F_v \into \barQQ_l} \tau^{-m_\tau} \]
on a some non-empty open subgroup of $F_v^\times$. Moreover $m_\tau+m_{\tau \circ c}=w$ for all $\tau$. 

Let $\chi':\A_{F^+}^\times \ra \barQQ_l^\times$ be defined by
\[ \chi'(\alpha)=\chi(\alpha) (\norm_{F^+/\Q} \alpha_l)^w. \]
Then $\chi'$ has open kernel containing $(F^+_\infty)^\times$. 
Let 
\[ \phi_0=\prod_{\tau:F \into \barQQ_l} \tau^{m_\tau}:F^\times \ra \barQQ_l^\times. \]
We see that
\[ \phi_0|_{(F^+)^\times} = \chi'|_{(F^+)^\times}. \]
Thus in particular $\chi'$ is actually valued in the algebraic closure $\barQQ$ of $\Q$ in $\barQQ_l$. Also define 
\[ \psi_v':F_v^\times \lra \barQQ_l^\times \]
to be $\psi_v$ if $v\ndiv l$ and
\[ \psi_v \prod_{\tau:F_v \into \barQQ_l} \tau^{m_\tau} \]if $v|l$.
In either case $\psi_v'$ has open kernel and is valued in $\barQQ$.

Now apply Lemma \ref{l22} to $\chi'$, $\phi_0$ and $\prod_{v \in S} \psi_v'$. (Note that the norm map $\norm_{F/F^+}$ from $\prod_{v \in S} \cO_{F,v}$ to the intersection of this group with $\A_{F^+}^\times$ is surjective.) We get a  character
\[ \phi: \A_F^\times \lra \barQQ_l^\times \]
with open kernel such that
\begin{itemize}
\item $\phi|_{F^\times}=\phi_0$;
\item $\phi|_{(\A^\infty_{F^+})^\times}=\chi'$;
\item and $\phi|_{\CO_{F,v}^\times}=\psi_v'$ for all $v \in S$.
\end{itemize}
The character
\[ \theta: \A_F^\times \lra \barQQ_l^\times \]
defined by
\[ \theta(\alpha) = \phi(\alpha) \prod_{\tau:F \into \barQQ_l} (\tau \alpha_l)^{-m_\tau} \]
satisfies the requirements of the first part of the present lemma.

The second part follows easily from Lemma 4.1.6 of \cite{cht}. If $\tau:F \into \barQQ_l$ lies above $v \in S$ then we will let $m_\tau$ denote the $\tau$ Hodge-Tate number of $\psi_v$. We need only verify that $m_\tau+m_{\tau c}$ is independent of $\tau$. However the character $\chi$ must be algebraic and for each $\tau:F \into \barQQ_l$ the $\tau$ Hodge-Tate number of $\chi$ is $m_\tau+m_{\tau c}$. The result follows from the facts recalled at the start of this section.
\pfend

Probably the assumption $l>2$ in the second part of this Lemma could be replaced by the assumption that $\chi(c_v)$ is independent of $v|\infty$ as in the first part of the Lemma.

Again we will often invoke this Lemma without specifying $S$ or the characters $\psi_v$, but instead we may list local conditions to be satisfied by $\theta$ at a finite number of primes. We hope that the reader will have no difficulty in finding a set $S$ and characters $\psi_v$, so that if we apply the Lemma with these choices it produces a character $\theta$ with the desired local properties.

\newpage

\bibliographystyle{amsalpha}
\bibliography{barnetlambgeegeraghty}

\end{document}